\numberwithin{equation}{section}
\title{Factorization and piecewise affine approximation of bi-Lipschitz mappings on large sets}
\date{\today}
\author{Guy C. David}
\address{Department of Mathematical Sciences\\ Ball State University, Muncie, IN 47306}
\email{gcdavid@bsu.edu}
\author{Matthew Romney}
\address{Department of Mathematical Sciences\\ Stevens Institute of Technology, Hoboken, NJ 07030}
\email{mromney@stevens.edu}
\author{Raanan Schul}
\address{Department of Mathematics\\ Stony Brook University\\ Stony Brook, NY 11794}
\email{raanan.schul@stonybrook.edu}
\thanks{G. C. David was partially supported by the National Science Foundation under Grant No. DMS-2054004. M. Romney was partially supported by the National Science Foundation under Grant No. DMS-2413156. R. Schul was partially supported by the National Science Foundation under Grant No. DMS-2154613.}
\subjclass[2020]{28A75, 57N45}
\begin{document}
\maketitle

\theoremstyle{plain}
\newtheorem{theorem}{Theorem}
\newtheorem{exercise}{Exercise}
\newtheorem{corollary}[theorem]{Corollary}
\newtheorem{scholium}[theorem]{Scholium}
\newtheorem{claim}[theorem]{Claim}
\newtheorem{lemma}[theorem]{Lemma}
\newtheorem{sublemma}[theorem]{Lemma}
\newtheorem{proposition}[theorem]{Proposition}
\newtheorem{conjecture}[theorem]{Conjecture}
\newtheorem{maintheorem}{Theorem}
\newtheorem{maincor}[maintheorem]{Corollary}
\newtheorem{mainproposition}[maintheorem]{Proposition}
\renewcommand{\themaintheorem}{\Alph{maintheorem}}

\theoremstyle{definition}
\newtheorem{fact}[theorem]{Fact}
\newtheorem{example}[theorem]{Example}
\newtheorem{definition}[theorem]{Definition}
\newtheorem{remark}[theorem]{Remark}
\newtheorem{question}[theorem]{Question}

\numberwithin{equation}{section}
\numberwithin{theorem}{section}

\newcommand{\allU}{\mathcal{U}}
\newcommand{\badU}{\mathcal{V}}
\newcommand{\goodU}{\mathcal{W}}

\newcommand{\cG}{\mathcal{G}}
\newcommand{\RR}{\mathbb{R}}
\newcommand{\HH}{\mathcal{H}}
\newcommand{\LIP}{\textnormal{LIP}}
\newcommand{\Lip}{\textnormal{Lip}}
\newcommand{\Tan}{\textnormal{Tan}}
\newcommand{\length}{\textnormal{length}}
\newcommand{\dist}{\textnormal{dist}}
\newcommand{\diam}{\textnormal{diam}}
\newcommand{\vol}{\textnormal{vol}}
\newcommand{\rad}{\textnormal{rad}}
\newcommand{\side}{\textnormal{side}}
\def\graph{{\rm graph}}
	\def\proj{{\rm proj}}

\newcommand{\GL}{\text{GL}}
\newcommand{\SO}{\text{SO}}
\def\bA{{\mathbb{A}}}
\def\bB{{\mathbb{B}}}
\def\bC{{\mathbb{C}}}
\def\bD{{\mathbb{D}}}
\def\bR{{\mathbb{R}}}
\def\bS{{\mathbb{S}}}
\def\bO{{\mathbb{O}}}
\def\bE{{\mathbb{E}}}
\def\bF{{\mathbb{F}}}
\def\bH{{\mathbb{H}}}
\def\bI{{\mathbb{I}}}
\def\bT{{\mathbb{T}}}
\def\bZ{{\mathbb{Z}}}
\def\bX{{\mathbb{X}}}
\def\bP{{\mathbb{P}}}
\def\bN{{\mathbb{N}}}
\def\bQ{{\mathbb{Q}}}
\def\bK{{\mathbb{K}}}
\def\bG{{\mathbb{G}}}

\def\nrj{{\mathcal{E}}}
\def\cA{{\mathscr{A}}}
\def\cB{{\mathscr{B}}}
\def\cC{{\mathscr{C}}}
\def\cD{{\mathscr{D}}}
\def\cE{{\mathscr{E}}}
\def\cF{{\mathscr{F}}}
\def\cG{{\mathscr{G}}}
\def\cH{{\mathcal{H}}}
\def\cI{{\mathscr{I}}}
\def\cJ{{\mathscr{J}}}
\def\cK{{\mathscr{K}}}
\def\Layer{{\rm Layer}}
\def\cM{{\mathscr{M}}}
\def\cN{{\mathscr{N}}}
\def\cO{{\mathscr{O}}}
\def\cP{{\mathscr{P}}}
\def\cQ{{\mathcal{Q}}}
\def\cR{{\mathscr{R}}}
\def\cS{{\mathscr{S}}}
\def\Up{{\rm Up}}
\def\cU{{\mathscr{U}}}
\def\cV{{\mathscr{V}}}
\def\cW{{\mathscr{W}}}
\def\cX{{\mathscr{X}}}
\def\cY{{\mathscr{Y}}}
\def\cZ{{\mathscr{Z}}}

  \def\del{\partial}
  \def\diam{{\rm diam}}
	\def\VV{{\mathcal{V}}}
	\def\FF{{\mathcal{F}}}
	\def\QQ{{\mathcal{Q}}}
	\def\BB{{\mathcal{B}}}
	\def\XX{{\mathcal{X}}}
	\def\PP{{\mathcal{P}}}

  \def\del{\partial}
  \def\diam{{\rm diam}}
	\def\image{{\rm Image}}
	\def\domain{{\rm Domain}}
  \def\dist{{\rm dist}}
	\newcommand{\Gr}{\mathbf{Gr}}
\newcommand{\md}{\textnormal{md}}
\newcommand{\vspan}{\textnormal{span}}

\begin{abstract}
    A well-known open problem asks whether every bi-Lipschitz homeomorphism of $\mathbb{R}^d$ factors as a composition of mappings of small distortion. We show that every bi-Lipschitz embedding of the unit cube $[0,1]^d$ into $\mathbb{R}^d$ factors into finitely many global bi-Lipschitz mappings of small distortion, outside of an exceptional set of arbitrarily small Lebesgue measure, which cannot in general be removed. Our main tool is a corona-type decomposition theorem for bi-Lipschitz mappings.
    As corollaries, we obtain a related factorization result for bi-Lipschitz homeomorphisms of the $d$-sphere, and we show that bi-Lipschitz embeddings of the unit $d$-cube in $\mathbb{R}^d$ can be approximated by global piecewise affine homeomorphisms outside of a small set.
\end{abstract}

\tableofcontents

\section{Introduction}

This paper is motivated by the \emph{factorization problem} for bi-Lipshitz mappings: can any bi-Lipschitz homeomorphism of $\mathbb{R}^d$ or $\mathbb{S}^d$ ($d \geq 2$) be written as a composition of finitely many bi-Lipschitz mappings with distortion close to $1$? In full generality, this is a well-known open problem; see, e.g., Conjecture 1.1 of \cite{FletcherMarkovic} or \cite[p. 184]{AstalaIwaniecMartin}. 

An affirmative answer assuming that the map is a $C^1$-diffeomorphism has been given by Fletcher--Markovic \cite{FletcherMarkovic}. Note, however, that a bi-Lipschitz map need not be everywhere differentiable. A basic example is provided by the logarithmic spiral map in the plane, given in polar coordinates by $(r,\theta) \mapsto (r,\theta + k \log r)$ for some parameter $k>0$. The logarithmic spiral indeed factors into bi-Lipschitz maps of small distortion, as shown by Freedman--He \cite{FreedmanHe}, although with more factors than one might naively expect; see \cite{GutlyanskiiMartio} and \cite[Theorem 6.4]{AIPS} for a sharp result on the number of factors. More recently, Fletcher--Vellis have shown how certain ``multi-twist'' maps can be factored into maps of distortion close to $1$ \cite{FletcherVellis}.

In this paper, we consider a weaker version of the factorization problem in which we are allowed to ignore a ``exceptional set'' of arbitrarily small Lebesgue measure. We also consider bi-Lipschitz embeddings of the unit cube $[0,1]^d$ into $\RR^d$ rather than globally defined bi-Lipschitz homeomorphisms (although we discuss the global case further in subsection \ref{subsec:sphere}). Our main result is the following.

\begin{theorem}\label{thm:factorization}
Let $f\colon [0,1]^d \rightarrow \RR^d$ be a bi-Lipschitz embedding, and $\delta, \epsilon>0$. Then there are $(1+\epsilon)$-bi-Lipschitz homeomorphisms $f_1, \dots, f_T$ of $\RR^d$ and a set $E\subseteq [0,1]^d$ such that
$$ f = f_T \circ \dots \circ f_1 \text{ on } [0,1]^d\setminus E$$
and
$$ |E|< \delta.$$
The number $T$ of mappings required depends only on $\delta$, $\epsilon$, $d$, and the distortion of $f$.
\end{theorem}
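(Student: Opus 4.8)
The plan is to deduce Theorem~\ref{thm:factorization} from the corona-type decomposition: run that decomposition for $f$, truncate it at a controlled depth so as to discard an exceptional set of measure $<\delta$, and then telescope the truncated map into one ``$(1+\epsilon)$-bi-Lipschitz correction per depth,'' preceded by a bounded number of factors realizing a single affine map.

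First I would fix a small $\eta=\eta(\epsilon,d)>0$ and apply the corona decomposition to $f$: the dyadic subcubes of $[0,1]^d$ partition into coherent stopping-time trees $S$, each carrying a top cube $Q(S)$ and an affine map $A_S$ of distortion $\lesssim\Lambda:=\dist(f)$ with $A_S^{-1}\!\circ f$ being $(1+\eta)$-bi-Lipschitz on $U_S:=\bigcup_{Q\in S}Q$, and with the tops obeying a Carleson packing bound with constant $C_0=C_0(\Lambda,\eta,d)$. Calling the \emph{depth} of $S$ the number of tops containing $Q(S)$, a Chebyshev estimate off the Carleson bound on $[0,1]^d$ shows that $E:=\bigcup\{Q(S):\operatorname{depth}(S)=N+1\}$ has $|E|\le C_0/(N+1)$, so $N:=\lceil C_0/\delta\rceil$ makes $|E|<\delta$, and $[0,1]^d\setminus E$ is exactly where all trees met have depth $\le N$. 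The key elementary fact I will isolate is that consecutive trees along a chain are dyadic parent/child, so that for a child $S$ of $P$ one has $U_S\subseteq Q(S)\subseteq\widehat Q\subseteq U_P$ (with $\widehat Q\in P$ the dyadic parent of $Q(S)$); hence $A_S^{-1}f$ and $A_P^{-1}f$ are both $(1+\eta)$-bi-Lipschitz on the $d$-dimensionally spread set $U_S\supseteq Q(S)$, which forces the affine map $A_P^{-1}A_S$ to be $(1+C(d)\eta)$-bi-Lipschitz --- not merely of small distortion; its scale is pinned near $1$.

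Next I would introduce, for $1\le n\le N$, the depth-$n$ truncation $f^{(n)}$: it equals $f$ off the depth-$(n{+}1)$ tops, and on each such top $Q=Q(S)$ (with parent tree $P$, of depth $n$) it equals $A_P\circ\widetilde\mu_S$, where $\widetilde\mu_S$ is a $(1+C\eta)$-bi-Lipschitz extension to $Q$ of $(A_P^{-1}f)|_{\partial Q}$. Each $f^{(n)}$ is a bi-Lipschitz embedding of distortion $\lesssim\Lambda$, and $f^{(N)}=f$ on $[0,1]^d\setminus E$. The telescoping is the point: $f^{(n)}$ and $f^{(n-1)}$ agree off the depth-$n$ tops, and on each such top $Q=Q(S)$ they have equal image (same boundary $f(\partial Q)$, so same enclosed region by invariance of domain), so $\Theta_n|_Q:=(f^{(n-1)}|_Q)^{-1}\!\circ f^{(n)}|_Q=\widetilde\mu_S^{-1}\circ A_P^{-1}\circ f^{(n)}|_Q$ is a self-homeomorphism of $Q$ fixing $\partial Q$; on $U_S$ it is $\widetilde\mu_S^{-1}\circ(A_P^{-1}A_S)\circ(A_S^{-1}f)$ and on each deeper ``cap'' $Q(S')$ it is $\widetilde\mu_S^{-1}\circ(A_P^{-1}A_S)\circ\widetilde\mu_{S'}$, both compositions of $(1+C\eta)$-bi-Lipschitz maps by the fact above, so $\Theta_n|_Q$ is $(1+C\eta)$-bi-Lipschitz (after a patching lemma for coherent regions). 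Gluing the $\Theta_n|_Q$ over the (disjoint) depth-$n$ tops and extending by the identity yields a global map $\Theta_n$ that is still $(1+C\eta)$-bi-Lipschitz --- each piece is the identity on a convex cube's boundary, so segments across that boundary split cleanly --- with $f^{(n)}=f^{(n-1)}\circ\Theta_n$. Writing $f^{(1)}=A_{S_0}\circ\Theta_1$ with $S_0$ the root tree and $\Theta_1$ a global $(1+C\eta)$-bi-Lipschitz extension of the $(1+C\eta)$-bi-Lipschitz map $A_{S_0}^{-1}f^{(1)}$ on $[0,1]^d$, and factoring the affine map $A_{S_0}$ (distortion $\lesssim\Lambda$) into $k=k(\Lambda,\epsilon,d)$ affine maps that are $(1+\epsilon)$-bi-Lipschitz, one gets, once $\eta$ is chosen with $1+C\eta\le 1+\epsilon$,
\[ f=f^{(N)}=A_{S_0}\circ\Theta_1\circ\Theta_2\circ\cdots\circ\Theta_N\quad\text{on }[0,1]^d\setminus E, \]
a composition of $T=N+k$ maps, each $(1+\epsilon)$-bi-Lipschitz, with $T$ depending only on $\delta,\epsilon,d,\Lambda$.

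I expect the real obstacles to be the supporting lemmas rather than this outline: (i) the bi-Lipschitz extension results --- extending a near-isometry of a cube or of its boundary to a global near-isometry, and extending a boundary-fixing bi-Lipschitz self-map of a cube by the identity --- and the patching lemma stating that a continuous map which is $(1+C\eta)$-bi-Lipschitz on a coherent region and on each of its complementary cubes, agreeing on the shared boundaries, is globally $(1+C\eta)$-bi-Lipschitz; and (ii) the bookkeeping that every $f^{(n)}$ is a genuine embedding and that the caps glue along $f$-images of cube boundaries (an invariance-of-domain argument). The comparability $A_P^{-1}A_S=(1+C\eta)$-bi-Lipschitz is elementary but indispensable: it is what keeps every correction $\Theta_n$ near the identity, so that each depth costs a single factor; without it a naive recursion on depth would cost a number of factors growing exponentially in $N$.
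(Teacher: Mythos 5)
There is a genuine gap, rooted in the premise that the corona decomposition supplies affine maps $A_S$ for which $A_S^{-1}\circ f$ is $(1+\eta)$-bi-Lipschitz on $U_S$. The decomposition (Proposition \ref{prop:coronafunction}) actually gives $g_S = \phi_S\circ A_S$ with the near-isometric factor $\phi_S$ on the \emph{left} of the affine map, and $g_S$ approximating $f$ only in $L^\infty$ at the scales of $S$. Unwinding, $A_S^{-1}\circ f$ is close to $A_S^{-1}\circ\phi_S\circ A_S$, the conjugate of the near-isometry $\phi_S$ by an affine map of distortion $\approx L$; this conjugate is only $L^2(1+\eta)$-bi-Lipschitz, not $(1+\eta)$-bi-Lipschitz. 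The order of composition cannot be reversed: one cannot rewrite $g_S$ as $A_S\circ\psi_S$ with $\psi_S$ near-isometric. The observation that $A_P^{-1}A_S$ is near-isometric for a child $S$ of $P$ is salvageable (both $A_P$ and $A_S$ approximate $f$ near $Q(S)$ at comparable scales), but it is not the operative obstruction.

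The error propagates through the construction. The boundary restriction $(A_P^{-1}f)|_{\partial Q(S)}$ inherits distortion $\approx L^2$, so any bi-Lipschitz extension $\widetilde\mu_S$ has distortion $\approx L^2$ rather than $1+C\eta$. Consequently the telescoping factors $\Theta_n = (f^{(n-1)})^{-1}\circ f^{(n)}$ have distortion bounded only by powers of $L$, and no choice of $\eta$ brings them within $1+\epsilon$. The mechanism of ``one $(1+\epsilon)$-bi-Lipschitz factor per depth'' collapses.

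This is precisely the obstruction that forces the paper's more elaborate argument. Since each $A_I^k$ carries distortion of order $L$, the paper must factor it into a controlled number of near-isometries (Section \ref{sec:factorlinear}, via the singular value decomposition), and must do so locally and simultaneously for the many cubes appearing at each level. That, in turn, requires rearranging the cubes and their images so that the local factorizations can coexist without overlap, which is the content of the shuffling lemma (Lemma \ref{lem:shuffling}) and the two-stage construction in Propositions \ref{prop:firststage} and \ref{prop:second_shuffle}. None of this has a counterpart in the proposal; in effect, the telescoping assumes away the combinatorial and geometric core of the theorem.
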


Here, $|E|$ denotes the Lebesgue measure of $E$. Note that, although the map $f$ is factored only on a certain large subset of its domain, the factoring mappings $f_i$ are globally defined.

An important feature of Theorem \ref{thm:factorization} is that it is \textit{quantitative}, with the number of mappings in the factorization bounded in a manner depending only on the given parameters and the bi-Lipschitz distortion of $f$, and otherwise not on the map $f$ itself.

\begin{remark}\label{rmk:garbage}
The ``exceptional set'' $E$ cannot in general be removed entirely from Theorem \ref{thm:factorization}. If it could, then the theorem would imply that every bi-Lipschitz embedding of the $d$-dimensional unit cube (or, equivalently, the unit ball) into $\RR^d$ extends to a global homeomorphism of $\RR^d$. (This is because the mappings $f_1, \dots, f_T$ are global homeomorphisms of $\RR^d$.) However, as was first observed by Gehring and proven by Martin in \cite[Theorem 3.7]{Martin}, this is not the case: a version of the Fox--Artin construction yields a bi-Lipschitz embedding of the unit ball of $\RR^3$ into $\RR^3$ that does not extend to a homeomorphism of $\RR^3$.

On the other hand, as far as we know, it is possible that $E$ can be removed entirely when $d=2$. For the general case, it is also conceivable that $E$ can be required to lie in a small neighborhood of the boundary of $[0,1]^d$, but this is not what our proof of Theorem \ref{thm:factorization} provides.
\end{remark}

We now describe the main ideas that go into proving Theorem \ref{thm:factorization}. Similarly to other quantitative results on bi-Lipschitz and related classes of mappings, the major tool used to prove Theorem \ref{thm:factorization} is the David--Semmes theory of uniform rectifiability. A basic principle of this theory is that uniformly rectifiable sets admit ``corona decompositions'': certain multi-scale descriptions in terms of pieces that are well-approximated by Lipschitz graphs and which were introduced by Semmes \cite{semmes1990analysis} and David-Semmes \cite{david1991dela}.

The definition of corona decomposition given in \cite[Definition I.3.19]{DavidSemmes} is stated in terms of \emph{sets}; for our purposes, it is important to have an analogue of this result for bi-Lipschitz maps. This is given as Proposition \ref{prop:coronafunction} below, and it may be of independent interest. Roughly speaking, we show that a bi-Lipschitz embedding of $[0,1]^d$ into $\RR^d$ is well-approximated on many scales and locations by a collection of so-called \emph{almost affine} mappings. Here an ``almost affine'' mapping is the composition of an affine mapping with a $(1+\eta)$-bi-Lipschitz mapping, where $\eta>0$ is small; see Definition \ref{def:almostaffine}. In particular, there must be many subsets of the domain (in a precise sense) where the restriction of a bi-Lipschitz mapping is almost affine.

Since affine mappings are not difficult to factor into mappings of small distortion, this gives an entry to the proof of Theorem \ref{thm:factorization}. The other component of the proof is a detailed scheme that collects the various factorizations of these almost affine maps into a global factorization of the original map off of a small set. This is the content of Section \ref{sec:mainproof}.

\subsection{Piecewise affine approximation}
Theorem \ref{thm:factorization} and the general problem of factoring bi-Lipschitz mappings are closely related to another topic in geometric topology: the approximation of homeomorphisms by piecewise affine homeomorphisms. Recall that a mapping $g\colon\RR^d\rightarrow\RR^d$ is \textit{piecewise affine} if there is a locally finite triangulation of $\RR^d$ such that $g$ is affine on each simplex of the triangulation. 

The basic problem here is whether homeomorphisms between domains in $\RR^d$ can be approximated uniformly by piecewise affine homeomorphisms. Because we also have an eye towards quantitative issues, we may also consider how the ``complexity'' of the piecewise affine approximation depends on the error of approximation. For a piecewise affine map $g\colon \RR^d\rightarrow\RR^d$, a simple way to measure this complexity is to set  $N(g, [0,1]^d)$ to be the minimum integer $N$ such that there is a triangulation of $[0,1]^d$ into $N$ simplices on which the restriction of $g$ is affine.

The connection with Theorem \ref{thm:factorization} is due to the following fact: If $g\colon \RR^d\rightarrow\RR^d$ is a $(1+\epsilon)$-bi-Lipschitz mapping for sufficiently small $\epsilon>0$, then $g$ can be approximated arbitrarily well in the uniform norm by piecewise affine homeomorphisms. This fact may be known, but we provide a complete proof in Proposition \ref{prop:PL} below.

As a consequence, we obtain the following corollary, after whose statement we provide some broader context.

\begin{corollary}\label{cor:PLapproximation}
Let $f$ be an $L$-bi-Lipschitz embedding of $[0,1]^d$ into $\RR^d$ and let $\delta>0$. Then there is a set $E\subseteq [0,1]^d$ with  $|E|<\delta$ such that $f$ can be arbitrarily well-approximated in the supremum norm by piecewise affine homeomorphisms off of $E$.

More precisely, for each $\eta>0$, there is a bi-Lipschitz, piecewise affine homeomorphism $g\colon \RR^d\rightarrow \RR^d$ such that
$$ \sup_{x\in [0,1]^d\setminus E}|g(x)-f(x)| \leq \eta.$$
The bi-Lipschitz constant of $g$ can be bounded by a constant depending only on $d$, $\delta$, and $L$. 

Moreover, we may bound $N(g,[0,1]^d)$ by a constant depending only on $\eta$, $d$, $\delta$, and $L$.
\end{corollary}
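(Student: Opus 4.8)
The plan is to combine Theorem \ref{thm:factorization} with the piecewise affine approximation result for maps of small distortion (Proposition \ref{prop:PL}). Fix $L$ and $\delta$, and apply Theorem \ref{thm:factorization} with $\epsilon>0$ a small constant to be chosen — small enough that Proposition \ref{prop:PL} applies to $(1+\epsilon)$-bi-Lipschitz maps — and with the given $\delta$. This yields $(1+\epsilon)$-bi-Lipschitz homeomorphisms $f_1,\dots,f_T$ of $\RR^d$ and an exceptional set $E$ with $|E|<\delta$ such that $f=f_T\circ\dots\circ f_1$ on $[0,1]^d\setminus E$, where $T=T(\delta,\epsilon,d,L)$. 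Since $\epsilon$ depends only on $d$ (we just need it below the threshold in Proposition \ref{prop:PL}), $T$ in fact depends only on $\delta$, $d$, and $L$. Now for each $i$ apply Proposition \ref{prop:PL} to $f_i$ to obtain a piecewise affine bi-Lipschitz homeomorphism $g_i\colon\RR^d\to\RR^d$ that is uniformly close to $f_i$ on a large ball; set $g=g_T\circ\dots\circ g_1$, which is again a piecewise affine bi-Lipschitz homeomorphism of $\RR^d$ (composition of piecewise affine homeomorphisms is piecewise affine, after passing to a common refinement of triangulations).

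The next step is to propagate the error through the composition. Each $f_i$ and each $g_i$ is $(1+\epsilon)'$-bi-Lipschitz with the constants controlled by $d,\delta,L$ (for $g_i$ this is the bound asserted in Proposition \ref{prop:PL}), so the composition $g$ and the composition $f_T\circ\dots\circ f_1$ are both bi-Lipschitz with constant bounded in terms of $d,\delta,L$; in particular this gives the claimed bound on the bi-Lipschitz constant of $g$. To compare $g$ with $f$ on $[0,1]^d\setminus E$: write $f=f_T\circ\dots\circ f_1$ there and telescope, estimating $\|g_T\circ\dots\circ g_1 - f_T\circ\dots\circ f_1\|$ by inserting intermediate compositions $g_T\circ\dots\circ g_{k+1}\circ f_k\circ\dots\circ f_1$. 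The $k$-th difference is bounded by $\mathrm{Lip}(g_T\circ\dots\circ g_{k+1})\cdot\|g_k-f_k\|_{\infty,K_k}$, where $K_k$ is a ball large enough to contain the relevant intermediate images (these images stay in a ball of radius bounded in terms of $d,L$ since the $f_i$ are $(1+\epsilon)$-bi-Lipschitz and $[0,1]^d$ is bounded). Since the Lipschitz constants of the $g_i$ are uniformly bounded and $T$ is fixed, choosing each $\|g_k-f_k\|_{\infty,K_k}$ small enough — which Proposition \ref{prop:PL} permits — forces $\sup_{x\in[0,1]^d\setminus E}|g(x)-f(x)|\le\eta$.

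Finally, for the complexity bound $N(g,[0,1]^d)$: each $g_i$ is affine on the simplices of some triangulation of a large ball, with the number of simplices bounded by $N(g_i)\le N(g_i,K_i)$ controlled by $d,\delta,L$ and the approximation error for $g_i$ (hence ultimately by $\eta,d,\delta,L$, since the required per-step errors were chosen as functions of $\eta,T,$ and the Lipschitz bounds). The triangulation on which $g=g_T\circ\dots\circ g_1$ is affine is obtained by pulling back and intersecting: $g$ is affine on each cell of the common refinement of $(g_1)^{-1}\circ\dots\circ(g_{k-1})^{-1}$ applied to the triangulation of $g_k$, intersected over $k$, and restricted to $[0,1]^d$. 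The number of cells in such a refinement is bounded by a function of the individual $N(g_i)$ and of $d$ and $T$; one may need to re-triangulate the resulting (convex, or at least bounded-complexity) polytopal cells into simplices, which again costs only a dimensional factor per cell. Hence $N(g,[0,1]^d)$ is bounded in terms of $\eta,d,\delta,L$ alone.

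The main obstacle I expect is the bookkeeping in the last step: controlling the number of simplices in the common refinement of the pulled-back triangulations, since the preimages of simplices under the affine-on-pieces maps $g_i$ are themselves only piecewise affine images and the refinements can compound multiplicatively. The key point making this tractable is that $T$ is a fixed constant (not depending on $\eta$), so even a multiplicative blow-up at each of the $T$ stages yields a final count that is still a function of $\eta,d,\delta,L$ only. Care is also needed to ensure all approximations $g_i$ are valid on balls large enough to contain every intermediate image that arises, but this is handled by a crude a priori diameter bound on those images coming from the uniform bi-Lipschitz constants.
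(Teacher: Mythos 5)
Your proposal is correct and follows essentially the same path as the paper's proof: factor $f$ on $[0,1]^d\setminus E$ via Theorem \ref{thm:factorization} with $\epsilon$ below the threshold $\epsilon_0(d)$ of Proposition \ref{prop:PL}, replace each factor by a close piecewise affine bi-Lipschitz approximant via Proposition \ref{prop:PL}, compose, and let the fixed bound $T=T(d,\delta,L)$ on the number of factors control both the telescoping error and the composed bi-Lipschitz constant. For the complexity bound, the paper avoids your common-refinement bookkeeping by using that Proposition \ref{prop:PL} already delivers all the $g_i$ as affine on the \emph{same} triangulation of $\RR^d$ by congruent simplices of diameter $\eta'/4$, so that the image of any such simplex under a composition of $2$-bi-Lipschitz $g_i$'s meets only a bounded number of simplices; but your alternative ``pullback and refine, with multiplicative blow-up tamed by fixed $T$'' argument is also sound. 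Note also that Proposition \ref{prop:PL} gives a global ($\sup_{\RR^d}$) approximation bound, so your worry about choosing balls $K_k$ large enough to contain intermediate images is unnecessary.
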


\begin{remark}\label{rmk:PLapproximation}
In all dimensions $d\neq 4$, it is known that a topological embedding of the open unit cube of $\RR^d$ into $\RR^d$ (and therefore a bi-Lipschitz embedding of the closed unit cube) can be approximated in the supremum norm by piecewise affine homeomorphisms.  See the discussion in \cite[p. 1399]{MoraPratelli}. Thus, the qualitative conclusion of Corollary \ref{cor:PLapproximation} is weaker than known results if $d\neq 4$. However, even in this case, the quantitative bounds on the ``complexity'' $N(g,[0,1]^d)$ of the approximating piecewise affine maps are new, as far as we know. 

In dimension $4$, it follows from a result of Donaldson and Sullivan \cite[p. 183]{DonaldsonSullivan} that there is a topological embedding of the open unit cube of $\RR^4$ into $\RR^4$ that cannot be approximated by piecewise affine homeomorphisms.

When $d=4$, it seems to be an open question whether every bi-Lipschitz embedding of $[0,1]^4$ in $\RR^4$ can be approximated arbitrarily well in the supremum norm by piecewise affine homeomorphisms on all of $[0,1]^4$. Corollary \ref{cor:PLapproximation} gives some partial information in this direction.
\end{remark}

\subsection{Homeomorphisms of the sphere}\label{subsec:sphere}
It is possible to use Theorem \ref{thm:factorization} to obtain a factorization result for bi-Lipschitz homeomorphisms of the sphere, which is more in line with \cite{FletcherMarkovic}.

We follow \cite{FletcherMarkovic} and consider the chordal metric $\chi$ on a sphere of unit diameter $\mathbb{S}^d\subseteq \RR^{d+1}$. As a corollary of Theorem \ref{thm:factorization} (and Remark \ref{rmk:factorization}), we have the following:

\begin{corollary}\label{cor:sphere}
Let $f\colon \mathbb{S}^d \rightarrow \mathbb{S}^d$ be a bi-Lipschitz homeomorphism (with respect to $\chi$), and fix $\delta, \epsilon>0$. Then there are $(1+\epsilon)$-bi-Lipschitz homeomorphisms $f_1, \dots, f_T$ of $\mathbb{S}^d$ and a set $E\subseteq \mathbb{S}^d$ such that
$$ f = f_T \circ \dots \circ f_1 \text{ on } \mathbb{S}^d\setminus E$$
and
$$ \mathcal{H}^d(E) < \delta.$$
The number $T$ of mappings required depends only on $\delta$, $\epsilon$, $d$, and the distortion of $f$. (Here $\mathcal{H}^d(E)$ is the $d$-dimensional Hausdorff measure of the set $E$ in the metric $\chi$.)
\end{corollary}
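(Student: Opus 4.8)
The plan is to pull the problem back to $\RR^d$ via stereographic projection, apply Theorem \ref{thm:factorization} — in the strengthened form of Remark \ref{rmk:factorization}, which lets us take the factoring maps equal to the identity outside a controlled ball — and then push the resulting factorization forward to $\mathbb{S}^d$. The one genuinely delicate point, and what I expect to be the main obstacle, is that stereographic projection distorts the chordal metric by a fixed, non‑negligible amount on any ball $B(0,R)\subseteq\RR^d$, so the transferred factors are a priori only $C(R)$‑bi‑Lipschitz with respect to $\chi$, not $(1+\epsilon)$‑bi‑Lipschitz. This is resolved by the observation that a $(1+\epsilon')$‑bi‑Lipschitz homeomorphism of $\RR^d$ that equals the identity outside a fixed ball is in fact \emph{uniformly} close to the identity when $\epsilon'$ is small.

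Concretely, fix $p\in\mathbb{S}^d$, set $q=f(p)$, and let $\pi_p\colon\mathbb{S}^d\setminus\{p\}\to\RR^d$ and $\pi_q\colon\mathbb{S}^d\setminus\{q\}\to\RR^d$ be stereographic projections normalized so that $\chi(\pi^{-1}(u),\pi^{-1}(v))=|u-v|\,(1+|u|^2)^{-1/2}(1+|v|^2)^{-1/2}$; in particular $\pi^{-1}$ is $(1+M^2)$‑bi‑Lipschitz from $(\overline{B(0,M)},|\cdot|)$ to its image in $(\mathbb{S}^d,\chi)$. Then $F:=\pi_q\circ f\circ\pi_p^{-1}$ is a homeomorphism of $\RR^d$. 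Choose $R_1=R_1(\delta,d)$ so large that the spherical cap $\mathbb{S}^d\setminus\pi_p^{-1}(Q_0)$, $Q_0:=[-R_1,R_1]^d$, has $\HH^d$‑measure below $\delta/2$. Since $f$ is $L$‑bi‑Lipschitz, $\mathbb{S}^d\setminus\pi_p^{-1}(Q_0)$ is a cap of definite radius around $p$, so $f(\pi_p^{-1}(Q_0))$ avoids a ball around $q$ of radius bounded below in terms of $R_1,L,d$; hence $\pi_q$ is bi‑Lipschitz there, $F(Q_0)\subseteq B(0,R_2)$ for some $R_2=R_2(\delta,L,d)$, and $F|_{Q_0}$ is a bi‑Lipschitz embedding with constant $L'=L'(\delta,L,d)$.

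Next, rescale $Q_0$ to the unit cube by a similarity $\psi$ and apply Theorem \ref{thm:factorization} with Remark \ref{rmk:factorization} to $\psi\circ F\circ\psi^{-1}$, with parameters $\delta_0,\epsilon'$ to be fixed below. Conjugating the output back by $\psi$ yields $(1+\epsilon')$‑bi‑Lipschitz homeomorphisms $g_1,\dots,g_T$ of $\RR^d$, each equal to the identity outside a ball $B(0,R)$ with $R=R(\delta,L,d)$, together with a set $E_0\subseteq Q_0$ with $|E_0|<(2R_1)^d\delta_0$ such that $F=g_T\circ\dots\circ g_1$ on $Q_0\setminus E_0$; here $T$ depends only on $\delta_0,\epsilon',L',d$, hence ultimately only on $\delta,\epsilon,L,d$. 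Choosing $\delta_0$ small (depending on $\delta,R_1,d$) gives $\HH^d(\pi_p^{-1}(E_0))<\delta/2$, so $E:=(\mathbb{S}^d\setminus\pi_p^{-1}(Q_0))\cup\pi_p^{-1}(E_0)$ has $\HH^d(E)<\delta$. Set $h_1:=\pi_q^{-1}\circ g_1\circ\pi_p$ and $h_i:=\pi_q^{-1}\circ g_i\circ\pi_q$ for $2\leq i\leq T$. Since each $g_i$ is the identity near $\infty$, each $h_i$ extends to a homeomorphism of $\mathbb{S}^d$ (with $h_1(p)=q$ and $h_i(q)=q$ for $i\geq2$), and the inner factors $\pi_q\circ\pi_q^{-1}$ telescope to give $h_T\circ\dots\circ h_1=\pi_q^{-1}\circ F\circ\pi_p=f$ on $\pi_p^{-1}(Q_0\setminus E_0)=\mathbb{S}^d\setminus E$.

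Finally, I would choose $\epsilon'$ so that each $h_i$ is $(1+\epsilon)$‑bi‑Lipschitz for $\chi$. The crux is the estimate: if $g$ is a $(1+\epsilon')$‑bi‑Lipschitz homeomorphism of $\RR^d$ with $g=\mathrm{id}$ outside $B(0,R)$, then $\sup_{u}|g(u)-u|\leq 4R\sqrt{\epsilon'}$. Indeed, for $u\in B(0,R)\setminus\{0\}$ and $e=u/|u|$ the points $\pm Re$ are fixed by $g$, and comparing $|g(u)-Re|$ and $|g(u)+Re|$ with the bi‑Lipschitz bounds (and the triangle inequality) confines $g(u)$ to an ellipsoid of revolution with those foci, semi‑major axis $(1+\epsilon')R$, hence semi‑minor axis $O(R\sqrt{\epsilon'})$, while also forcing the coordinate of $g(u)$ along $\mathbb{R}e$ to within $O(R\epsilon')$ of $|u|$. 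Writing $u=\pi_\bullet(x)$, $v=\pi_\bullet(y)$ and using the chordal distance formula,
$$\frac{\chi(h_i(x),h_i(y))}{\chi(x,y)}=\frac{|g_i(u)-g_i(v)|}{|u-v|}\cdot\frac{\sqrt{1+|u|^2}}{\sqrt{1+|g_i(u)|^2}}\cdot\frac{\sqrt{1+|v|^2}}{\sqrt{1+|g_i(v)|^2}}.$$
The first factor lies in $[(1+\epsilon')^{-1},1+\epsilon']$; since $\bigl|\,|g_i(u)|-|u|\,\bigr|\leq 4R\sqrt{\epsilon'}$ and $2|w|/(1+|w|^2)\leq1$, the remaining two factors lie in $[1-cR\sqrt{\epsilon'},1+cR\sqrt{\epsilon'}]$ for an absolute constant $c$. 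Hence $h_i$ is $(1+CR\sqrt{\epsilon'})$‑bi‑Lipschitz in $\chi$, and taking $\epsilon'=\epsilon'(\epsilon,R)=\epsilon'(\epsilon,\delta,L,d)$ small enough completes the proof (using also that $\HH^d$ is distorted by at most a bounded factor under bi‑Lipschitz maps, so the bound on $\HH^d(E)$ is legitimate).
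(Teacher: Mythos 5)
Your overall strategy is the same as the paper's: project to $\RR^d$, restrict to a large cube whose complementary cap has small measure, apply Theorem \ref{thm:factorization} to the (rescaled) Euclidean restriction, and push the factors back to $\mathbb{S}^d$, choosing the Euclidean distortion $\epsilon'$ small enough that the transferred factors are $(1+\epsilon)$-bi-Lipschitz for $\chi$. Where you diverge is in how you control the chordal distortion of each factor. The paper interposes translations so that each factor fixes the origin, invokes Lemma \ref{lem:stereographic} (a $(1+\epsilon')$-bi-Lipschitz map of $\RR^d$ fixing $0$ has chordal distortion $(1+\epsilon')(1-\epsilon')^{-2}$), and then separately factors the translations and the scaling $\sigma_R$ on the sphere via Lemma \ref{lem:translationscaling}, using Remark \ref{rmk:factorization} only to bound the translation lengths. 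You instead conjugate by the similarity once and for all (neatly avoiding Lemma \ref{lem:translationscaling}) and control the chordal distortion by showing that a $(1+\epsilon')$-bi-Lipschitz homeomorphism equal to the identity outside $B(0,R)$ satisfies $\sup_u|g(u)-u|\lesssim R\sqrt{\epsilon'}$; your ellipsoid argument for this, and the resulting bound on the factors $\sqrt{(1+|u|^2)/(1+|g(u)|^2)}$, are correct. Your use of two different stereographic projections $\pi_p,\pi_q$ is also fine, since $\pi_q^{-1}\circ\pi_p$ is a chordal isometry, which is the same normalization the paper achieves by post-composing with an isometry so that $f(\infty)=\infty$.

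The one genuine gap is your citation of Remark \ref{rmk:factorization} as guaranteeing that the factors $g_1,\dots,g_T$ are the identity outside a controlled ball. The remark asserts only that $f_i(0)\in B(f(0),2L\sqrt{d})$, which is strictly weaker: a global $(1+\epsilon')$-bi-Lipschitz homeomorphism with $g(0)$ controlled (e.g.\ $g(u)=(1+\epsilon')u$) need not be uniformly close to the identity, so your key estimate $\sup_u|g_i(u)-u|\leq 4R\sqrt{\epsilon'}$ does not follow from the black-box statements you invoke. The property you need does hold for the factors actually produced in Section \ref{sec:mainproof} (each factor there is the identity outside a bounded set determined by the data, by Proposition \ref{prop:firststage}(5) and the construction of the maps $H_k$), but that is a fact about the construction, not about the stated theorem, and it would have to be recorded explicitly. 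Alternatively, you can repair the argument without strengthening Theorem \ref{thm:factorization} by following the paper's device: write each $g_i=\tau_i\circ h_i$ with $h_i(x)=g_i(x)-g_i(0)$ fixing the origin (so that the distortion transfers by the analogue of Lemma \ref{lem:stereographic}) and factor the bounded translations $\tau_i$ on the sphere separately.
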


\begin{remark}
We observed in Remark \ref{rmk:garbage} that the ``exceptional set'' $E$ cannot be completely removed in Theorem \ref{thm:factorization}. By contrast, it is an open problem whether it can be removed in Corollary \ref{cor:sphere}. This is exactly the setting discussed by Fletcher and Markovic in \cite{FletcherMarkovic}, who show (in a non-quantitative sense) that it can be removed for $C^1$-diffeomorphisms.
\end{remark}

\begin{remark}
A weaker and \textit{non-quantitative} variant of Corollary \ref{cor:sphere} can be deduced from the existing literature as follows. Suppose $f\colon \mathbb{B}\rightarrow\mathbb{B}$ is a bi-Lipschitz homeomorphism of the $d$-dimensional unit ball in the chordal metric $\chi$ (which we view as the lower hemisphere of $\mathbb{S}^d$).  As a consequence of a theorem of White \cite[Theorem 2]{White}, there is a $C^1$-diffeomorphism of $\mathbb{B}$ that agrees with $f$ on a large subset of the ball, and hence a $C^1$-diffeomorphism of the sphere agreeing with $f$ on a large subset of $\mathbb{B}$. This diffeomorphism of the sphere can then be factored by the above-mentioned result of Fletcher-Markovic. This gives a factorization of the original homeomorphism of $\mathbb{B}$ off of a set of small measure. This argument seems to require a homeomorphism that fixes an appropriate domain of the sphere, and does not yield quantitative dependence on the parameters.

\end{remark}

\subsection{Outline of the paper}
In Section \ref{sec:prelim}, we give some basic definitions and facts, including properties of the local topological degree needed for later proofs. Section \ref{sec:almostaffine} contains the definition of almost affine mappings and Proposition \ref{prop:coronafunction}, which is a type of corona decomposition for bi-Lipschitz embeddings. Section \ref{sec:extension} gives some results on extensions and gluings of mappings, and Section \ref{sec:factorlinear} shows how to factor affine mappings ``locally''. The proofs of Theorem \ref{thm:factorization},  Corollary \ref{cor:PLapproximation}, and Corollary \ref{cor:sphere} are then given in Sections \ref{sec:mainproof}, \ref{sec:PL}, and \ref{sec:sphere}, respectively.

\subsection*{Acknowledgments}
The authors would like to thank Dennis Sullivan for helpful conversations, especially regarding the literature on approximation by piecewise affine homeomorphisms discussed in Remark \ref{rmk:PLapproximation}. 

\section{Preliminaries}\label{sec:prelim}
\subsection{Basic notation}
Let $A\subseteq \RR^d$. A mapping $f\colon A \rightarrow \RR^n$ is called a \textit{bi-Lipschitz embedding}, or simply \textit{bi-Lipschitz}, if there is a constant $L\geq 1$ such that
$$ L^{-1}|x-y|\leq |f(x)-f(y)|\leq L|x-y| \text{ for all } x,y\in A.$$
If $d=n$ and $A=\mathbb{R}^d$, then $f$ is necessarily surjective and we say that $f$ is a \textit{bi-Lipschitz homeomorphism of $\mathbb{R}^d$}.

If we wish to emphasize the constant, we say that $f$ is \textit{$L$-bi-Lipschitz}. The minimal possible $L$ for which $f$ is $L$-bi-Lipschitz is called the \textit{bi-Lipschitz constant} or \textit{distortion} of $f$.

Let $Q \subset \mathbb{R}^d$ be a cube. We let $x(Q)$ denote the center of $Q$ and $\ell(Q)$ denote the side length of $Q$. We also use the notation $\mathcal{C}(x,\ell)$ to denote the cube with center $x$ and side length $\ell$. Given $\lambda>0$, $\lambda Q$ denotes the cube with the same center as $Q$ with faces parallel to $Q$ and side length $\lambda \ell(Q)$. More generally, if $A \colon \mathbb{R}^d \to \mathbb{R}^d$ is an affine map and $R = A(Q)$, then $\lambda R$ denotes the set $A(\lambda Q)$. We say that two cubes \textit{overlap} if their interiors intersect. We also assume throughout the paper that all cubes have faces parallel to the coordinate planes.

\subsection{Local degree}\label{subsec:localdegree}
Suppose that $F\colon \RR^d\rightarrow \RR^d$ is a continuous mapping, $D\subseteq \RR^d$ is a bounded domain, and $y\in \RR^d\setminus F(\partial D).$ This triple $(y,D,F)$ induces an integer $\mu(y,D,f)$ called the \textit{local degree} of $F$ at $y$ with respect to $D$. We refer to \cite[Section 2]{HeinonenRickman} for the definition of the local degree. The following properties of local degree are either immediate from the definition in \cite{HeinonenRickman} or are listed there.
\begin{enumerate}[(i)]
\item \label{deg:local} If $F|_{\overline{D}} = G|_{\overline{D}}$, then
$$ \mu(y, D, F) = \mu(y,D, G)$$
for all $y\in\RR^d\setminus F(\partial D)$.
\item\label{deg:homeo} If $F|_D \colon D \rightarrow F(D)$ is a homeomorphism and $y\in F(D)$, then $\mu(y,D,F)=\pm 1$.
\item\label{deg:constant} The integer $\mu(y,D,F)$ remains constant as $y$ varies within a connected component of $\RR^d\setminus F(\partial D)$.
\item\label{deg:homotopic}  If $F,G\colon \RR^d \rightarrow \RR^d$ are homotopic through proper maps $H_t:\RR^d\rightarrow \RR^d$ ($0\leq t \leq 1$) and $y\notin H_t(\partial D)$ for all $t$, then
$$ \mu(y,D,F) = \mu(y,D,G).$$
\item\label{deg:notin} If $y\notin F(\overline{D})$, then $\mu(y,D,F)=0$.
\item\label{deg:preimage} If $y\in\RR^d \setminus F(\partial D)$ and if $F^{-1}(y) \subseteq D_1 \cup \dots \cup D_m$, where $D_i$ are disjoint domains in $D$ with $y\notin F(\partial D_i)$, then 
$$ \mu(y,D,F) = \sum_{i=1}^m \mu(y, D_i, F).$$
\end{enumerate}
A mapping $F\colon\RR^d\rightarrow\RR^d$ is called \textit{orientation-preserving} if 
$$ \mu(y,D, F) > 0$$
whenever $D\subset \RR^d$ is a relatively compact domain and $y\in F(D) \setminus F(\partial D)$; it is called \textit{orientation-reversing} if this inequality is always reversed. A homeomorphism of $\RR^d$ must be either orientation-preserving or orientation-reversing.

The following basic lemma, a consequence of property \eqref{deg:homotopic} of local degree, will be used repeatedly in Section \ref{sec:PL}.

\begin{lemma}\label{lem:closedegree}
Suppose that $D$ is a bounded domain in $\RR^d$ and $h_1$ and $h_2$ are two proper continuous mappings from $\RR^d$ to $\RR^d$. Suppose that $p\in \RR^d$ satisfies
$$ \sup_{x\in \partial D} |h_1(x) - h_2(x)| < \dist(p,h_1(\partial D) \cup h_2(\partial D)).$$
Then
$$ \mu(p, D, h_1) = \mu(p, D, h_2).$$
\end{lemma}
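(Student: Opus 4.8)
The plan is to build a proper homotopy between $h_1$ and $h_2$ and apply property \eqref{deg:homotopic} of the local degree. The natural candidate is the straight-line homotopy $H_t(x) = (1-t)h_1(x) + t h_2(x)$ for $0 \leq t \leq 1$. This is clearly continuous jointly in $(t,x)$. The two things I need to verify are: (a) that $p \notin H_t(\partial D)$ for every $t \in [0,1]$, so that the local degree $\mu(p, D, H_t)$ is defined for each $t$; and (b) that the maps $H_t$ are proper, as required by the hypothesis of \eqref{deg:homotopic} (proper here meaning that preimages of compact sets are compact, equivalently, since these are maps $\RR^d \to \RR^d$, that $|H_t(x)| \to \infty$ as $|x| \to \infty$).

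For (a), fix $t \in [0,1]$ and $x \in \partial D$. I estimate
$$ |H_t(x) - p| \geq |h_1(x) - p| - t|h_1(x) - h_2(x)| \geq \dist(p, h_1(\partial D)) - |h_1(x) - h_2(x)|. $$
Since $x \in \partial D$, the quantity $|h_1(x) - h_2(x)|$ is at most $\sup_{x \in \partial D}|h_1(x) - h_2(x)|$, which by hypothesis is strictly less than $\dist(p, h_1(\partial D) \cup h_2(\partial D)) \leq \dist(p, h_1(\partial D))$. Hence $|H_t(x) - p| > 0$, i.e. $p \neq H_t(x)$. (Alternatively, one can symmetrize and use whichever of $h_1(x)$, $h_2(x)$ is closer to $p$, but the estimate above already suffices.)

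For (b), the issue is that an arbitrary convex combination of two proper maps need not be proper, so I should be slightly careful. However, property \eqref{deg:homotopic} is really only needed here in a local form: the local degree $\mu(p, D, \cdot)$ depends only on the restriction of the map to $\overline{D}$ (by property \eqref{deg:local}), and $\overline{D}$ is compact. Thus I may freely modify $h_1$, $h_2$, and the homotopy outside a large ball $B$ containing $\overline{D}$ without changing any of the relevant local degrees; in particular I can replace each $h_i$ by a map that agrees with $h_i$ on $\overline{D}$ and equals the identity outside $B$, and interpolate appropriately, so that all maps in sight are proper (indeed, homeomorphisms near infinity). With this reduction, the straight-line homotopy between the modified maps is proper and avoids $p$ on $\partial D$ by the estimate in (b), and property \eqref{deg:homotopic} gives $\mu(p, D, h_1) = \mu(p, D, h_2)$. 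I expect step (b) — making the homotopy genuinely proper, rather than just continuous — to be the only real subtlety; everything else is the one-line triangle-inequality estimate in (a).
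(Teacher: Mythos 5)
Your proposal is correct and follows essentially the same route as the paper: the straight-line homotopy $H_t$, the triangle-inequality estimate showing $p\notin H_t(\partial D)$, and the use of the locality property \eqref{deg:local} to modify the maps outside a large ball containing $\overline{D}$ so that the homotopy is genuinely proper. (Minor slip: in your last sentence ``the estimate in (b)'' should read ``the estimate in (a)''.)
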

\begin{proof}
By the locality property \eqref{deg:local}, we may  modify $h_2$ (without changing $\mu(p, D, h_2)$) so that it remains continuous and agrees with $h_1$ outside of a large ball containing $D$. This is to ensure that the obvious homotopy between $h_1$ and $h_2$ remains proper at each step. We then apply that ``straight line'' homotopy 
$$ H_t(x) = (1-t)h_1(x)+th_2(x)$$
and the homotopy invariance property \eqref{deg:homotopic} of local degree. The assumption ensures that
$$ p\notin H_t(\partial D)$$
for all $t$.
\end{proof}

\section{Almost affine maps and coronizations}\label{sec:almostaffine}

\subsection{Almost affine maps}

The goal of this section is Proposition \ref{prop:coronafunction}, which states that bi-Lipschitz mappings can be well-approximated on most locations and scales, in a precise sense, by a certain class of mappings that we call \emph{almost affine}.

\begin{definition}\label{def:almostaffine}
Let $Q$ be a cube in $\mathbb{R}^d$. We say that a map $g\colon Q \rightarrow \RR^d$ is \textit{$\eta$-almost affine} if there is an affine map $A\colon\RR^d\rightarrow\RR^d$ and a $(1+\eta)$-bi-Lipschitz map $\phi\colon \RR^d\rightarrow \RR^d$ such that
\[g = \phi \circ A \text{ on } Q\]

If $A$ is $L$-bi-Lipschitz then we say that $g$ is $(L,\eta)$-almost affine.
\end{definition}

We next observe that two additional properties \eqref{eq:almostaffine} and \eqref{eq:almostaffine2} of almost affine maps can be arranged, as a consequence of the other properties by modifying the maps $A$ and $\phi$.
\begin{lemma}\label{lem:almostaffine}
Let $Q$ be a cube in $\mathbb{R}^d$, $y\in\mathbb{R}^d$, and $g\colon Q\rightarrow \RR^d$. Suppose that there is an $L$-bi-Lipschitz affine map $A$ and a $(1+\eta)$-bi-Lipschitz map $\phi\colon\RR^d\rightarrow\RR^d$ such that
$$ g = \phi \circ A \text{ on } Q.$$
Then there is an $L$-bi-Lipschitz affine map $\tilde{A}$ and a $(1+\eta)$-bi-Lipschitz map $\tilde{\phi}\colon\RR^d\rightarrow\RR^d$ such that
$$ g = \tilde{\phi}\circ \tilde{A} \text{ on } Q,$$
\begin{equation}\label{eq:almostaffine}
 |\tilde{\phi}(x)-x|\lesssim_d L\eta \diam(Q) + |g(x(Q))-y| \text{ for all }x\in \tilde{A}(Q),
\end{equation}
and
\begin{equation}\label{eq:almostaffine2}
    \tilde{A}(x(Q))=y.
\end{equation}
\end{lemma}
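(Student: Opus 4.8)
The plan is to obtain $\tilde A$ and $\tilde\phi$ from $A$ and $\phi$ by composing with a single Euclidean isometry. The point is that if $\psi\colon\RR^d\to\RR^d$ is any isometry, then $\psi\circ A$ is again $L$-bi-Lipschitz and affine, $\phi\circ\psi^{-1}$ is again a $(1+\eta)$-bi-Lipschitz homeomorphism of $\RR^d$, and $(\phi\circ\psi^{-1})\circ(\psi\circ A)=\phi\circ A=g$ on $Q$. So for \emph{any} isometry $\psi$, setting $\tilde A=\psi\circ A$ and $\tilde\phi=\phi\circ\psi^{-1}$ preserves all the hypotheses of the lemma; the task is to choose $\psi$ so that \eqref{eq:almostaffine2} and \eqref{eq:almostaffine} also hold. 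The essential input is the standard ``almost isometry'' fact: if $h$ is a $(1+\eta)$-bi-Lipschitz map on a ball $B(p,R)$, then there is a Euclidean isometry $\iota$ of $\RR^d$ with $|h(w)-\iota(w)|\le C(d)\,\eta R$ for all $w\in B(p,R)$. I would take this as known (it is classical, and can be proved, e.g., by comparing $h$ with the affine map agreeing with $h$ at the $d+1$ vertices of a fixed non-degenerate simplex inscribed in $B(p,R)$, or via Rademacher's theorem together with the fact that the derivative of a $(1+\eta)$-bi-Lipschitz map lies within $C(d)\eta$ of the orthogonal group almost everywhere).

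Granting that fact, the construction is the following. Put $z_0=A(x(Q))$ and let $R=\max_{w\in Q}|A(w)-z_0|$. Since the point of $Q$ farthest from $x(Q)$ is a vertex, at distance $\tfrac12\diam(Q)$, and $A$ is $L$-Lipschitz, we get $R\le \tfrac{L}{2}\diam(Q)$, and $A(Q)\subseteq B(z_0,R)$ by construction. Apply the almost-isometry fact to $\phi$ on $B(z_0,R)$ to obtain an isometry $\psi_0$ with $|\phi(w)-\psi_0(w)|\le C(d)\,\eta R$ for all $w\in B(z_0,R)$. Let $\tau$ be the translation by the vector $v:=y-\psi_0(z_0)$, set $\psi:=\tau\circ\psi_0$ (again an isometry, now satisfying $\psi(z_0)=y$), and finally $\tilde A:=\psi\circ A$ and $\tilde\phi:=\phi\circ\psi^{-1}$.

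It then remains to check the two displayed conclusions. First, $\tilde A(x(Q))=\psi(z_0)=y$, which is \eqref{eq:almostaffine2}. Next, fix $x\in\tilde A(Q)$ and write $z:=\psi^{-1}(x)\in A(Q)\subseteq B(z_0,R)$. Then $\tilde\phi(x)=\phi(z)$ and $x=\psi(z)=\psi_0(z)+v$, so
\[
|\tilde\phi(x)-x|=|\phi(z)-\psi_0(z)-v|\le |\phi(z)-\psi_0(z)|+|v|.
\]
The first term is at most $C(d)\,\eta R$. For the second, using $\phi(z_0)=\phi(A(x(Q)))=g(x(Q))$,
\[
|v|=|y-\psi_0(z_0)|\le |y-\phi(z_0)|+|\phi(z_0)-\psi_0(z_0)|\le |g(x(Q))-y|+C(d)\,\eta R.
\]
Since $R\le \tfrac{L}{2}\diam(Q)$, combining these yields $|\tilde\phi(x)-x|\lesssim_d L\eta\diam(Q)+|g(x(Q))-y|$ for all $x\in\tilde A(Q)$, which is \eqref{eq:almostaffine}.

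The only non-routine ingredient is the almost-isometry fact; everything else is bookkeeping about composing with an isometry and a translation. If that fact is not available as a citation, then carrying out its elementary proof (the simplex-comparison argument, being careful that the comparison affine map has linear part within $C(d)\eta$ of $O(d)$) would be the real content, and is the step I expect to require the most care.
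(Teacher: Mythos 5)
Your argument is correct and takes essentially the same route as the paper: modify $A$ and $\phi$ by a single well-chosen isometry $\psi$, obtained from the ``$(1+\eta)$-bi-Lipschitz maps on a ball are $C(d)\eta R$-close to isometries'' fact, with $\psi$ then postcomposed by the translation that sends $\psi(A(x(Q)))$ to $y$. For the almost-isometry ingredient you leave as a citation, the paper uses David--Toro \cite{DT:99}, Lemma 7.11 (a quantitative version of John's theorem, reproduced here as Lemma \ref{lem:DavidToro}), which is exactly the reference you want; with that in hand your bookkeeping is complete.
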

In other words, by modifying the factorization of an almost affine map $g$ (but not $g$ itself) we can arrange that the almost-isometric factor is additively close to the identity and that the affine piece sends the center of $Q$ to a prescribed point.

Lemma \ref{lem:almostaffine} is a consequence of a result of John \cite{John:61}; we use the following formulation of David--Toro \cite{DT:99}:
\begin{lemma}[\cite{DT:99}, Lemma 7.11]\label{lem:DavidToro}
If $\tilde{B}$ is a subset of $\RR^d$ such that
$$ \overline{B}(0,1) \subset \tilde{B} \subset B(0,10) $$
and if $g\colon \tilde{B}\rightarrow \RR^d$ satisfies
$$ ||g(x)-g(y)| - |x-y||\leq \alpha$$
for all $x,y\in\tilde{B}$ and some $\alpha>0$ sufficiently small, then there is an isometry $J$ of $\RR^n$ such that 
$$ |J(x)-g(x)|\leq C\alpha \text{ for all } x\in\tilde{B}.$$
The constant $C$ depends only on $d$.
\end{lemma}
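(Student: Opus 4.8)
The plan is to prove this stability statement --- essentially F.\ John's theorem on almost-isometries --- by normalizing and then recovering $g$ coordinate by coordinate from the polarization identity. Note first that the points $0, e_1, \dots, e_d$ (the origin and standard basis vectors) all lie in $\overline{B}(0,1) \subset \tilde{B}$. Composing $g$ with the translation $x \mapsto x - g(0)$ changes neither the hypothesis nor the conclusion, so I may assume $g(0) = 0$ and write $v_i := g(e_i)$.

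First I would show that $v_1, \dots, v_d$ form an almost-orthonormal frame. From $\big||v_i| - 1\big| \le \alpha$ and $\big||v_i - v_j| - \sqrt{2}\,\big| \le \alpha$ for $i \ne j$, squaring yields $|v_i|^2 = 1 + O(\alpha)$ and, via $|v_i - v_j|^2 = |v_i|^2 + |v_j|^2 - 2\langle v_i, v_j\rangle$, that $\langle v_i, v_j\rangle = O(\alpha)$ for $i \ne j$; hence the Gram matrix $G = \big(\langle v_i, v_j\rangle\big)_{i,j}$ satisfies $\|G - I\| \lesssim_d \alpha$. If $\alpha$ is small enough (depending only on $d$), then $G$ is positive definite with $\|G^{-1/2} - I\| \lesssim_d \alpha$ by the functional calculus for $t \mapsto t^{-1/2}$ near $t = 1$, so the vectors $w_i := \sum_j (G^{-1/2})_{ij} v_j$ form an orthonormal basis with $|w_i - v_i| \lesssim_d \alpha$. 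Letting $R$ be the linear isometry with $R e_i = w_i$ and setting $J(x) := Rx + g(0)$ produces an isometry of $\RR^d$ with $J(0) = g(0)$ and $|J(e_i) - g(e_i)| = |w_i - v_i| \lesssim_d \alpha$.

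It then remains to propagate this closeness from the affine basis $\{0, e_1, \dots, e_d\}$ to all of $\tilde{B}$. I would pass to $h := J^{-1} \circ g$, which is again an $\alpha$-almost isometry (pre- or post-composition with an isometry leaves the defect $\big||h(x)-h(y)| - |x-y|\big|$ unchanged), with $h(0) = 0$ and $|h(e_i) - e_i| \lesssim_d \alpha$; it suffices to show $|h(x) - x| \lesssim_d \alpha$ for every $x \in \tilde{B}$. Fix such an $x$, so $|x| \le 10$ and $|h(x)| \le |x| + \alpha \le 11$. Squaring the three inequalities $\big||h(x)| - |x|\big| \le \alpha$, $\big||h(e_i)| - 1\big| \le \alpha$, and $\big||h(x) - h(e_i)| - |x - e_i|\big| \le \alpha$ (all quantities being $O(1)$) gives $|h(x)|^2 = |x|^2 + O(\alpha)$, $|h(e_i)|^2 = 1 + O(\alpha)$, and $|h(x) - h(e_i)|^2 = |x - e_i|^2 + O(\alpha)$; combining these with $\langle h(x), h(e_i)\rangle = \langle h(x), e_i\rangle + O(\alpha)$ (using $|h(e_i) - e_i| \lesssim_d \alpha$ and $|h(x)| \le 11$) and the polarization identities for $|x - e_i|^2$ and $|h(x) - h(e_i)|^2$ forces $\langle h(x), e_i\rangle = \langle x, e_i\rangle + O_d(\alpha)$ for each $i = 1, \dots, d$. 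Expanding $h(x) - x = \sum_i \big(\langle h(x), e_i\rangle - \langle x, e_i\rangle\big) e_i$ then gives $|h(x) - x| \lesssim_d \alpha$, hence $|g(x) - J(x)| = |h(x) - x| \lesssim_d \alpha$, as required.

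The one step needing genuine care is the quantitative orthonormalization --- controlling $\|G^{-1/2} - I\|$ and the smallness threshold on $\alpha$ so that both depend only on $d$ --- but this is routine matrix perturbation theory; everything else is elementary estimation via the triangle inequality and polarization.
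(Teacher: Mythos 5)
Your proof is correct. Note, however, that the paper does not prove this lemma at all: it is imported verbatim as Lemma 7.11 of David--Toro \cite{DT:99}, so there is no in-paper argument to compare against. What you have supplied is a self-contained, elementary proof of the cited result, and it holds up: the normalization $g(0)=0$ is harmless; the points $0,e_1,\dots,e_d$ do lie in $\overline{B}(0,1)\subset\tilde{B}$; the polarization computation correctly gives $\|G-I\|\lesssim_d\alpha$ for the Gram matrix, and the L\"owdin-type symmetric orthonormalization $w_i=\sum_j(G^{-1/2})_{ij}v_j$ indeed produces an orthonormal frame with $|w_i-v_i|\lesssim_d\alpha$ once $\alpha$ is small enough that $G$ is positive definite. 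The propagation step is also sound: since $J$ is an isometry, $h=J^{-1}\circ g$ has the same additive defect $\alpha$, and because $|x|\le 10$ on $\tilde{B}$ all the quantities being squared are $O(1)$, so the passage from $\bigl||h(x)|-|x|\bigr|\le\alpha$ to $|h(x)|^2=|x|^2+O(\alpha)$ (and likewise for the other two identities) loses only an absolute constant. Recovering $\langle h(x),e_i\rangle=\langle x,e_i\rangle+O_d(\alpha)$ and expanding in the orthonormal basis then finishes the argument. The only caveat worth recording is that your proof, like the statement as quoted, produces the threshold on $\alpha$ and the constant $C$ depending only on $d$, which is exactly what the application in Lemma \ref{lem:almostaffine} requires.
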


In particular, the lemma holds if $\tilde{B}=\overline{B}(0,1).$

\begin{proof}[Proof of Lemma \ref{lem:almostaffine}]
We may assume without loss of generality that $0$ is the center of $Q$, and that $g(0)=A(0)=\phi(0)=0$. We then have $A(Q) \subset B(0,L\diam(Q))$. Also assume first that $y=g(x(Q))=0$.

Consider the map $h(x)= (L\diam(Q))^{-1}\phi(L\diam(Q) x)$ on $\tilde{B}=\overline{B}(0,1).$ It satisfies
$$ \left||h(x)-h(y)| - |x-y|\right|\leq 2\eta$$
for all $x,y\in\tilde{B}$. Let $J$ be the associated isometry from Lemma \ref{lem:DavidToro}, so that
$$ |J(x)-h(x)| \leq 2C\eta \text{ for all } x\in\tilde{B}.$$
Since $h(0)=0$, we can also take $J$ to have $J(0)=0$, at the cost of changing $2C\eta$ to $4C\eta$ in the above inequality. This makes $J$ linear.

Set $\tilde{\phi} = \phi \circ J^{-1}$ and $\tilde{A}= J\circ A$. We still have $g = \tilde{\phi}\circ \tilde{A}$ on $Q$, and, since $J$ is an isometry, the map $\tilde{\phi}$ is $(1+\eta)$-bi-Lipschitz.
. 

Let $x\in \tilde{A}(Q)$ and $y=J^{-1}(x)\in A(Q)\subseteq B(0,L\diam(Q))$. Thus
$$ (L\diam(Q))^{-1} y \in \tilde{B}.$$
Therefore
\begin{align*}
|\tilde{\phi}(x)-x| &= |\phi(y) - J(y)|\\
&= |(L\diam(Q))h((L\diam(Q))^{-1}y) - J(y)|\\
&= (L\diam(Q))|h((L\diam(Q))^{-1}y) - J((L\diam(Q))^{-1}y)|\\
&\leq CL\diam(Q)\eta.
\end{align*}
This verifies \eqref{eq:almostaffine} and \eqref{eq:almostaffine2} under the assumption that $y=g(x(Q))$.

If $y\neq g(x(Q))$, we apply the construction above, and then modify it by replacing $\tilde{A}$ with $\tilde{A} - \tilde{A}(x(Q)) + y$ and $\tilde{\phi}$ with $\tilde{\phi} + \tilde{A}(x(Q))-y$. This fixes \eqref{eq:almostaffine2}. To see \eqref{eq:almostaffine}, note that the distance from $\tilde{\phi}$ to the identity has increased by at most
$$ |A(x(Q)) - y| \lesssim L\eta\diam(Q) + |g(x(Q))-y|. $$
\end{proof}

\subsection{Corona decomposition by almost affine maps}

We will need some material from David--Semmes \cite{DavidSemmes}. Let $\Delta$ denote the collection of dyadic cubes in $\mathbb{R}^d$, i.e., the collection of all cubes of the form ${[j_12^k, (j_1+1)2^k]} \times  \cdots \times [j_d2^k, (j_d+1)2^k]$ for some $j_1, \ldots, j_d, k \in \mathbb{Z}$.

\begin{definition}[\cite{DavidSemmes}, Definition I.3.13]\label{def:coronization}
A \textit{coronization} of $[0,1]^d$ is a triple $(\cG, \cB, \cF)$, where $\cG$ and $\cB$ are subsets of $\Delta$ and $\cF$ is a family of subsets of $\cG$, satisfying the following:
\begin{enumerate}[(i)]
\item $\Delta = \cG \cup \cB$ and $\cG \cap \cB = \emptyset$.
\item There is a constant $C> 0$ such that $$\sum_{Q\in \cB, Q \subseteq R} |Q| \leq C|R| \text{ for all } R\in\Delta.$$
\item $\cF$ is a collection of mutually disjoint subsets of $\cG$ whose union is $\cG$. Each element $S \in \mathcal{F}$ is called a \textit{stopping time region}.
\item Each $S\in \cF$ is coherent, meaning that it has a unique maximal cube $Q(S)$ containing all its elements, that if a cube is both an ancestor and a descendant of elements of $S$ then it is an element of $S$, and that if $Q\in S$, then either all children of $Q$ are in $S$ or none are.
\item The maximal cubes $\{Q(S): S \in \cF\}$ satisfy
$$ \sum_{S\in\cF, Q(S)\subseteq R} |Q(S)| \leq C|R| \text{ for all } R\in \Delta.$$
\end{enumerate}
The constant $C$ appearing in (ii) and (v) is called the Carleson packing constant of the coronization.
\end{definition}

If $f\colon \RR^d \to \RR^n$ is Lipschitz, then its graph in $\RR^{d+n}$ is a basic example of a \textit{uniformly rectifiable set}, in the language of David--Semmes. It therefore determines a coronization with certain special properties---a \textit{corona decomposition}---in the following way.

\begin{theorem}[David--Semmes \cite{DavidSemmes}]\label{thm:DS}
Let $f\colon [0,1]^d  \rightarrow \RR^n$ be $L$-Lipschitz. Then $\graph(f)$ admits a corona decomposition. That is, for each $\eta, \theta>0$, there is a coronization  $(\cG, \cB, \cF)$ of $[0,1]^d$ with the following property: For each $S\in\mathscr{F}$, there is a $d$-dimensional (rotated) $\eta$-Lipschitz graph $\Gamma=\Gamma(S)$ such that $\dist((x,f(x)),\Gamma(S)) \leq \theta\diam(Q)$ whenever $x\in 2Q$ and $Q\in S$. 

The Carleson packing constant of the coronization depends only on $L$, $d$, $\eta$, and $\theta$.
\end{theorem}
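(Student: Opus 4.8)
The statement to prove is Theorem~\ref{thm:DS}, which asserts that the graph of an $L$-Lipschitz map $f\colon[0,1]^d\to\RR^n$ admits a corona decomposition with Carleson constants controlled by $L,d,\eta,\theta$. The plan is to reduce this to the corona decomposition for uniformly rectifiable \emph{sets} proved by David--Semmes, rather than to reprove it from scratch.

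\textbf{Step 1: $\graph(f)$ is uniformly rectifiable with constants depending only on $L$ and $d$.}
Write $F\colon\RR^d\to\RR^{d+n}$, $F(x)=(x,f(x))$; this is bi-Lipschitz onto its image $\Gamma_0:=\graph(f)$, with bi-Lipschitz constant bounded in terms of $L$ (the lower bound comes from the identity in the first $d$ coordinates, the upper bound from $\sqrt{1+L^2}$). Hence $\Gamma_0$ is Ahlfors $d$-regular with regularity constant depending only on $L,d$. Moreover $\Gamma_0$ is the bi-Lipschitz image of the cube $[0,1]^d\subseteq\RR^d$, which is a ``big piece of Lipschitz graph'' set in the strongest sense, so $\Gamma_0$ is uniformly rectifiable with UR constants depending only on $L,d$. (One can also extend $f$ to an $L'$-Lipschitz map on all of $\RR^d$ with $L'\lesssim_d L$ by Kirszbraun/McShane, so that $\Gamma_0$ becomes a Lipschitz graph over $\RR^d$ globally; this is convenient but not essential.)

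\textbf{Step 2: Invoke the set-level corona decomposition.}
By \cite[Definition I.3.19]{DavidSemmes} and the fact (Theorem/Main Theorem of \cite{DavidSemmes}) that uniform rectifiability is equivalent to the existence of a corona decomposition, $\Gamma_0$ admits, for each choice of parameters $\eta_0,\theta_0>0$, a corona decomposition: a coronization $(\cG_0,\cB_0,\cF_0)$ of $\Gamma_0$ (indexed by the Christ--David dyadic cubes \emph{on the metric space $\Gamma_0$}) such that for each stopping-time region $S$ there is an $\eta_0$-Lipschitz graph $\Gamma(S)\subseteq\RR^{d+n}$ with $\dist(x,\Gamma(S))\le\theta_0\operatorname{diam}(\hat Q)$ for all $x\in\Gamma_0$ lying within a bounded dilate of a cube $\hat Q\in S$, and with Carleson packing constants bounded in terms of the UR constants of $\Gamma_0$ (hence, by Step 1, in terms of $L,d$) and of $\eta_0,\theta_0$.

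\textbf{Step 3: Transport the coronization from $\Gamma_0$ to $[0,1]^d$ via $F$.}
The map $F$ is a bi-Lipschitz bijection $[0,1]^d\to\Gamma_0$ with constant $\le C(L)$. Pull back the Christ--David cubes on $\Gamma_0$: the preimages $F^{-1}(\hat Q)$ are ``pseudocubes'' on $[0,1]^d$ with bounded eccentricity and satisfying the usual nesting/small-boundary properties, and each is comparable to a genuine dyadic cube $Q\in\Delta$ at the same scale, with bounded overlap between the dyadic cubes assigned to a given pseudocube. Assign to each pseudocube a dyadic cube $Q\in\Delta$ with $F^{-1}(\hat Q)\subseteq C(L)Q$ and $\ell(Q)\approx\operatorname{diam}(\hat Q)$; put $Q$ into $\cB$ if the corresponding $\hat Q\in\cB_0$, and otherwise inherit the stopping-time structure. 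Because $F$ distorts measures by a bounded factor and the dyadic-to-pseudocube correspondence has bounded multiplicity, Carleson packing conditions (ii) and (v) are preserved up to constants depending on $L,d$; coherence (iii), (iv) is a combinatorial property preserved under the correspondence after a standard bounded refinement. Dyadic cubes not assigned to any pseudocube (a Carleson-sparse family, since they only occur near scales where the correspondence fails, controlled by small-boundary estimates) are thrown into $\cB$. This yields a coronization $(\cG,\cB,\cF)$ of $[0,1]^d$ with constants depending only on $L,d,\eta_0,\theta_0$.

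\textbf{Step 4: The graph approximation for each $S\in\cF$.}
Fix $S\in\cF$ and let $\hat S\in\cF_0$ be the corresponding stopping-time region on $\Gamma_0$, with approximating $\eta_0$-Lipschitz graph $\Gamma(S)\subseteq\RR^{d+n}$. For $Q\in S$ and $x\in 2Q$, the point $(x,f(x))=F(x)$ lies within $C(L)\cdot 2Q$'s image, hence within a bounded dilate of $\hat Q$, so $\dist((x,f(x)),\Gamma(S))\le\theta_0\operatorname{diam}(\hat Q)\lesssim_L \theta_0\operatorname{diam}(Q)$. Thus choosing $\theta_0=\theta_0(\theta,L,d)$ small gives $\dist((x,f(x)),\Gamma(S))\le\theta\operatorname{diam}(Q)$, and choosing $\eta_0=\eta$ gives that $\Gamma(S)$ is a (rotated) $\eta$-Lipschitz graph as required. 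The Carleson packing constant then depends only on $L,d,\eta,\theta$.

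\textbf{Main obstacle.}
The genuinely delicate point is Step~3: matching the intrinsic Christ--David cubes of the metric space $\Gamma_0$ (which are not literal cubes and live at scales that are only comparable, not equal, to powers of $2$) to standard dyadic cubes in $[0,1]^d$ while preserving \emph{all five} axioms of a coronization — in particular coherence and the two Carleson packing bounds — with quantitative control. The standard devices are: (a) small-boundary / thin-annulus estimates for Christ--David cubes to handle the ``mismatched'' dyadic cubes as a Carleson set; (b) a bounded refinement of the stopping-time regions to restore coherence; and (c) the observation that $F$ and $F^{-1}$ distort Lebesgue measure on $[0,1]^d$ and $\cH^d\!\restriction\!\Gamma_0$ by a factor $\le C(L)$, which turns Carleson packing on $\Gamma_0$ into Carleson packing on $[0,1]^d$. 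None of this is conceptually new — it is exactly the ``change of dyadic grid'' bookkeeping that pervades \cite{DavidSemmes} — but it is where the real work of the proof lies. Alternatively, if one first extends $f$ to a global $L'$-Lipschitz function on $\RR^d$, then $\Gamma_0$ is literally a Lipschitz graph over $\RR^d$, and one can run the corona construction using the \emph{dyadic cubes of $\RR^d$ itself} as the index set (projecting $\Gamma_0$ to the base $\RR^d$), which sidesteps the Christ--David machinery entirely and makes Step~3 essentially trivial; this is probably the cleanest route and the one I would write up.
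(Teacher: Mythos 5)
Your proposal is correct and, in the route you ultimately recommend (using the graph cubes $\hat{Q}=\graph(f|_Q)$ for $Q\in\Delta$ directly as the generalized ``dyadic cubes'' on $\graph(f)$, so that no matching with Christ--David cubes is needed), it coincides with what the paper does: the paper treats Theorem~\ref{thm:DS} as a citation of \cite{DavidSemmes} together with exactly this observation, recorded in the remark following the theorem. The lengthy Christ--David-to-dyadic transfer in your Step~3 is therefore an unnecessary detour, but you correctly identify and prefer the simpler alternative.
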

That $\Gamma$ is a $d$-dimensional (rotated) $\eta$-Lipschitz graph means the following: there is an $\eta$-Lipschitz $H\colon \RR^d\rightarrow \RR^n$ and an orthogonal linear map $O\colon \RR^{d+n}\rightarrow\RR^{d+n}$ such that 
\begin{equation}\label{eq:lipgraph}
\Gamma = O(\graph(H)) \subseteq \RR^{d+n}.
\end{equation}
From now on, we will drop the word ``(rotated)'' and just refer to such a $\Gamma$ as a Lipschitz graph.

\begin{remark}
The notion of ``corona decomposition'' defined in \cite{DavidSemmes} (p. 57) applies to sets much more general than graphs, and therefore uses a general notion of ``dyadic cubes'' in an arbitrary Ahlfors regular set. These cubes are defined by properties (3.1)-(3.4) of I.3 (p. 53) of \cite{DavidSemmes}.

Since we are only looking at Lipschitz graphs, our Theorem \ref{thm:DS} is a simplified version of the main result from \cite{DavidSemmes}. In our case, rather than using these general dyadic cubes, we can use the simpler notion defined as follows: Let $f\colon [0,1]^d\rightarrow \RR^n$ be a Lipschitz function. Each dyadic cube $Q\in\Delta$ has a natural counterpart $\hat{Q}=\graph(f|_Q)$ in $\graph(f)\subseteq [0,1]^d \times \RR^n$. The collection of all these $\hat{Q}$ form a collection of generalized ``dyadic cubes'' that are easily seen to satisfy the properties of David--Semmes. Translating the notion of corona decomposition to this language yields Theorem \ref{thm:DS}.
\end{remark}

The goal of this section is to recast Theorem \ref{thm:DS} into a theorem about maps when $f$ is bi-Lipschitz. This yields a new type of corona decomposition for bi-Lipschitz mappings using almost affine mappings, which may have other applications.
\begin{proposition}\label{prop:coronafunction}
Let $f\colon [0,1]^d  \rightarrow \RR^d$ be $L$-bi-Lipschitz. Then, for each $\eta, \theta>0$, there is a coronization  $(\cG, \cB, \cF)$ of $[0,1]^d$ with the following property: For each $S\in\mathscr{F}$, there is a Lipschitz map $g_S\colon \RR^d \rightarrow \RR^d$ such that
\begin{enumerate}[(i)]
\item $g_S$ is $\eta$-almost affine on $Q(S)$.
\item $|g_S(x) - f(x)| \leq \theta\diam(Q)$ for each $Q\in S$ and $x\in 2Q$.
\end{enumerate}
The Carleson packing constant of the coronization depends only on $\eta$, $\theta$, $d$, and $L$.
\end{proposition}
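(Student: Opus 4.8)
The idea is to apply Theorem~\ref{thm:DS} to the bi-Lipschitz map $f$ viewed as a Lipschitz map $[0,1]^d\to\RR^d$, obtaining a coronization $(\cG,\cB,\cF)$ together with, for each stopping time region $S$, a $d$-dimensional Lipschitz graph $\Gamma(S)\subseteq\RR^{2d}$ that well-approximates $\graph(f)$ over $S$. We choose the parameters $\eta_0,\theta_0$ fed into Theorem~\ref{thm:DS} much smaller than the target $\eta,\theta$; the exact relation will be pinned down at the end. The heart of the argument is to convert the graph approximation into a map approximation by an almost affine map. Write $\Gamma(S)=O(\graph(H))$ for an orthogonal $O\colon\RR^{2d}\to\RR^{2d}$ and an $\eta_0$-Lipschitz $H\colon\RR^d\to\RR^d$. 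The first observation is that, because $f$ is $L$-bi-Lipschitz (not merely Lipschitz), the ``horizontal'' projection $\pi_1\colon\RR^{2d}\to\RR^d$ onto the first factor, restricted to $\Gamma(S)$, must be quantitatively bi-Lipschitz onto its image with constant depending only on $L$: indeed points of $\graph(f)$ over $2Q$ lie within $\theta_0\diam Q$ of $\Gamma(S)$, and the map $x\mapsto(x,f(x))$ distorts distances by a factor controlled by $L$, so collapsing the graph direction cannot be too degenerate. Consequently $\pi_1\circ O$ restricted to $\graph(H)$ is bi-Lipschitz, which forces the linear part of $O$ coupling the two $\RR^d$ factors to be, after the Lipschitz perturbation by $H$, close to an invertible block; concretely, the map $u\mapsto \pi_1(O(u,H(u)))$ from $\RR^d$ to $\RR^d$ is bi-Lipschitz with constant $\lesssim_L 1$ and is $\eta_0$-close (in the appropriate sense) to a linear isomorphism $P_S$.

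\smallskip

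\textbf{Constructing $g_S$.} Given this, define $g_S\colon\RR^d\to\RR^d$ as follows: let $\psi_S\colon\RR^d\to\Gamma(S)$ be the parametrization $u\mapsto O(u,H(u))$, let $\rho_S=\pi_1\circ\psi_S\colon\RR^d\to\RR^d$ (bi-Lipschitz, $\lesssim_L 1$), and set $g_S=\pi_2\circ\psi_S\circ\rho_S^{-1}$, where $\pi_2$ is projection onto the second $\RR^d$ factor. This is a Lipschitz map $\RR^d\to\RR^d$ (compose it with a bounded cutoff/extension if one wants it globally defined and Lipschitz, using that $\rho_S$ is globally bi-Lipschitz). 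Property (ii), that $|g_S(x)-f(x)|\le\theta\diam Q$ for $x\in 2Q$, $Q\in S$, follows because the point $(x,f(x))$ is within $\theta_0\diam Q$ of $\Gamma(S)=\graph(\psi_S\circ\rho_S^{-1})$ over the first coordinate, and $\rho_S^{-1}$ distorts by $\lesssim_L1$: writing $\bar u=\rho_S^{-1}(x')$ for the foot of the approximation and tracking the $\RR^d\times\RR^d$ distance estimate gives $|g_S(x)-f(x)|\lesssim_L\theta_0\diam Q$, so we take $\theta_0\le c_d L^{-1}\theta$. For property (i), that $g_S$ is $\eta$-almost affine on $Q(S)$: the graph $\graph(H)$, and hence $\Gamma(S)$, is within $\eta_0$ (Lipschitz constant) of an affine $d$-plane, so $\psi_S\circ\rho_S^{-1}$ is, on any fixed bounded region, additively $O_d(\eta_0\,\mathrm{rad})$-close to an affine map $\RR^d\to\RR^d$; since the relevant region has diameter $\lesssim_L\diam Q(S)$, the map $g_S$ agrees on $Q(S)$ with an affine map up to additive error $\lesssim_L\eta_0\diam Q(S)$, and a bi-Lipschitz affine map at that, with constant $\lesssim_L 1$. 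Feeding this into Lemma~\ref{lem:DavidToro} / Lemma~\ref{lem:almostaffine} (after rescaling $Q(S)$ to unit size) promotes ``agrees with an affine map up to small additive error'' to ``$=\phi\circ A$ with $\phi$ being $(1+\eta)$-bi-Lipschitz'', provided $\eta_0\le c(d,L)\,\eta$.

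\smallskip

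\textbf{The coronization and its packing constant.} The coronization $(\cG,\cB,\cF)$ is taken to be exactly the one produced by Theorem~\ref{thm:DS} with parameters $\eta_0(\eta,\theta,d,L)$ and $\theta_0(\eta,\theta,d,L)$ chosen as above. By Theorem~\ref{thm:DS}, its Carleson packing constant depends only on $L$, $d$, $\eta_0$, $\theta_0$, hence only on $\eta$, $\theta$, $d$, $L$, which is the claimed dependence. Nothing about the coronization needs to be modified; we only replace each ``graph datum'' $\Gamma(S)$ by the ``map datum'' $g_S$.

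\smallskip

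\textbf{Main obstacle.} The genuinely non-formal step is the passage from the \emph{graph} approximation of Theorem~\ref{thm:DS} to a \emph{map} approximation, i.e.\ verifying that the coordinate projection $\pi_1$ is uniformly bi-Lipschitz on $\Gamma(S)$ with constant depending only on $L$. This is where bi-Lipschitz-ness of $f$ (as opposed to mere Lipschitz-ness, which is all one needs to invoke the David--Semmes theory) is essential: one must show that the approximating plane cannot be ``nearly vertical'' over the first $\RR^d$. The clean way to see it: for $x,y\in 2Q$ with $Q\in S$, the chord of $\graph(f)$ between $(x,f(x))$ and $(y,f(y))$ has length between $|x-y|$ and $\sqrt{1+L^2}\,|x-y|$, and both endpoints are $\theta_0\diam Q$-close to $\Gamma(S)$; projecting to the first factor recovers $|x-y|$ exactly, so on $\Gamma(S)$ (at scales comparable to $\diam Q$) the projection $\pi_1$ cannot contract by more than a factor $\sqrt{1+L^2}$ up to an additive $O(\theta_0\diam Q)$ error. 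Promoting this scale-$\diam Q$ statement to a genuine bi-Lipschitz bound on all of $\Gamma(S)$ uses that $\Gamma(S)$ is a connected Lipschitz graph together with the coherence of $S$ (cubes of all scales down to $0$ appearing), plus a routine chaining argument; once $\theta_0$ is small relative to $L^{-1}$ this gives the bi-Lipschitz constant $\lesssim_L 1$ uniformly. Everything after that is bookkeeping with Lemma~\ref{lem:almostaffine} and careful tracking of the dependence of constants on $d$ and $L$.
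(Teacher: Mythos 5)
Your overall strategy matches the paper's: invoke Theorem~\ref{thm:DS}, use the bi-Lipschitzness of $f$ to show that both coordinate projections are uniformly nondegenerate on the approximating graph $\Gamma(S)$ (this is the content of the paper's Lemmas~\ref{lem:axes} and \ref{lem:graph}), define $g_S=\pi_2\circ\psi_S\circ(\pi_1\circ\psi_S)^{-1}$, and deduce property (ii) from the graph approximation. However, your verification of property (i) has a genuine gap. You reduce to the statement that $g_S$ is \emph{additively} $O_L(\eta_0\diam Q(S))$-close to a bi-Lipschitz affine map on $Q(S)$, and then assert that Lemma~\ref{lem:almostaffine} (or Lemma~\ref{lem:DavidToro}) ``promotes'' this to the exact factorization $g_S=\phi\circ A$ with $\phi$ a globally $(1+\eta)$-bi-Lipschitz homeomorphism, which is what Definition~\ref{def:almostaffine} demands. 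Lemma~\ref{lem:almostaffine} cannot do this: it takes as \emph{input} a map already known to factor as $\phi\circ A$ with $\phi$ being $(1+\eta)$-bi-Lipschitz, and merely normalizes that factorization. More fundamentally, additive closeness to an affine map does not imply the existence of a $(1+\eta)$-bi-Lipschitz factor: a perturbation of size $\eta_0\diam Q(S)$ can oscillate at small scales and have local distortion far from $1$ (it need not even be injective), so no choice of $\eta_0$ small in terms of $\eta$ rescues the implication. Lemma~\ref{lem:DavidToro} likewise requires the hypothesis $\bigl||g(x)-g(y)|-|x-y|\bigr|\le\alpha$, i.e.\ multiplicative near-isometry up to additive error, which is exactly what you have not yet established for the candidate factor $\phi$.

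The paper closes this gap by working directly with the plane $V=O(\RR^d\times\{0\})$: it sets $A=\pi_y\circ\bigl((\pi_x)|_V\bigr)^{-1}$ (an honest linear map) and $\psi=g_S\circ\pi_x\circ\bigl((\pi_y)|_V\bigr)^{-1}$, so that $\psi\circ A=g_S$ exactly, and then proves the two-sided bound $(1+C_L\eta')^{-1}|p_y-q_y|\le|\psi(p_y)-\psi(q_y)|\le(1-C_L\eta')^{-1}|p_y-q_y|$ by comparing the points $w$, $w'=z+q-p$, and $w''=O(s,H(t))$ and exploiting that $H$ is $\eta'$-Lipschitz — i.e.\ it uses the \emph{Lipschitz-constant} closeness of $\Gamma(S)$ to $V$, not merely additive closeness, to control the distortion of $\psi$ at every scale. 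Only after this multiplicative bound is in hand does Lemma~\ref{lem:almostaffine} enter, purely to normalize $\phi$ and $A$. To repair your argument you would need to supply this step: an explicit $(1+C_L\eta_0)$-bi-Lipschitz estimate for the factor $\phi=g_S\circ A^{-1}$ (or an equivalent), derived from the graph geometry rather than from the additive approximation. A secondary, smaller omission is the surjectivity of the projections $\pi_x|_{\Gamma}$, $\pi_y|_{\Gamma}$ and of $\pi_V$ restricted to the image of $x\mapsto(x,f(x))$; the paper establishes the latter via a topological degree argument (Lemma~\ref{lem:degree}), which your chaining sketch does not obviously replace.
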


\begin{remark}\label{rmk:gbilip}
    If $f$ is bi-Lipschitz with constant $L\geq 1$, and $\eta,\theta$ are sufficiently small compared to $L$ and $d$, then it follows that each almost affine $g_S=\phi_S\circ A_S$ appearing is $(2L,\eta)$-almost affine and itself $2L$-bi-Lipschitz. 

    For any $S\in\mathcal{F}$, choose $x,y\in 2Q_S$ with $|x-y|\geq \frac{1}{\sqrt{d}}\diam(Q)$. We have
 
    \begin{align*}
    |A_S(x)-A_S(y)| &\leq (1+\eta)|g_S(x)-g_S(y)|\\
    & \leq (1+\eta)(|f(x)-f(y)| + 2\theta\diam(Q))\\
    & \leq (1+\eta)(L|x-y| + 2\theta\sqrt{d}|x-y|)\\
    & = (1+\eta)(L+2\theta\sqrt{d})|x-y|.
    \end{align*}

Similarly,
    \begin{align*}
    |A_S(x)-A_S(y)| &\geq (1+\eta)^{-1}|g_S(x)-g_S(y)|\\
    &\geq (1+\eta)^{-1}(|f(x)-f(y)| - 2\theta\diam(Q))\\
    &\geq (1+\eta)^{-1}(L^{-1}|x-y| - 2\theta\sqrt{d}|x-y|)\\
    &\geq (1+\eta)^{-1}(L^{-1}-2\theta\sqrt{d})|x-y|.
    \end{align*}

This shows that $A$ satisfies appropriate bi-Lipschitz bounds on $2Q$ for points separated by at least $\frac{1}{\sqrt{d}}\diam(Q)$. In particular, this applies to any pair of vertices of $2Q$. Since $A$ is affine, the bounds apply to all pairs of points in $\mathbb{R}^d$.

By choosing $\eta,\theta$ small, we see that $A_S$ and therefore $g_S$ can be made bi-Lipschitz with constant arbitrarily close to $L$.
\end{remark}

We now work towards the proof of Proposition \ref{prop:coronafunction}. 
First, we prove a simple topological lemma, whose proof uses the notion of local degree defined in Section \ref{subsec:localdegree}. 

\begin{lemma}\label{lem:degree}
For each $M, d\geq 1$, there exists $\epsilon=\epsilon(M,d)>0$ with the following property:

If $Q$ is a cube in $\RR^d$ with center $z$ and $F\colon Q\rightarrow\RR^d$ satisfies
$$M^{-1}|x-y|-\epsilon\diam(Q) \leq |F(x)-F(y)| \leq M|x-y|$$
for all $x,y\in Q$, then
$$ F(Q) \supseteq B\left(F(z), \frac{1}{10M}\ell(Q)\right).$$
\end{lemma}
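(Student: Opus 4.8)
The plan is to use the homotopy invariance of the local degree to compare $F$ with a genuine bi-Lipschitz affine map, for which the conclusion is easy, and then transfer the conclusion back to $F$. First I would normalize: translating in domain and range, I may assume $z = 0$ and $F(0) = 0$, and by rescaling I may assume $\ell(Q) = 1$, so $\diam(Q) = \sqrt d$ and $Q = \mathcal{C}(0,1)$. The hypothesis becomes $M^{-1}|x-y| - \epsilon\sqrt d \le |F(x) - F(y)| \le M|x-y|$ on $Q$. The key point is that for $\epsilon$ small (depending on $M$ and $d$), $F$ restricted to $Q$ is uniformly close to some bi-Lipschitz \emph{affine} map: this is exactly Lemma \ref{lem:DavidToro} after rescaling. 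Concretely, $x \mapsto M F(x/(20M))$, defined on a ball of radius $\ge 1$ around $0$, has $||\cdot| - |\cdot||$-distortion $\lesssim M\epsilon\sqrt d$, so Lemma \ref{lem:DavidToro} produces an isometry $J$ with $|J(x) - MF(x/(20M))| \lesssim_d M\epsilon$ there; unwinding, there is an affine isometry-type map — more precisely an affine map $A$ with $A(0) = 0$ — such that $|F(x) - A(x)| \le C_d M \epsilon$ for all $x \in Q$, and $A$ is itself $M'$-bi-Lipschitz with $M'$ close to $M$ (certainly $M' \le 2M$, say, once $\epsilon$ is small).

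Next I would run a degree argument on the interior domain $D = \mathrm{int}(Q)$. Fix a point $p \in B(0, \tfrac{1}{10M})$ (recall $F(z) = 0$); I claim $p \in F(Q)$. Extend $F$ and $A$ to proper continuous maps $\RR^d \to \RR^d$ agreeing with each other outside a large ball (using the locality property \eqref{deg:local} and gluing, or simply extending $F$ to equal $A$ off $Q$), so that Lemma \ref{lem:closedegree} applies. On $\partial D = \partial Q$, every point $x$ has $|x| \ge \tfrac12$ (half the side length), so $|A(x)| \ge (M')^{-1}|x| \ge \tfrac{1}{4M}$, hence $\dist(p, A(\partial Q)) \ge \tfrac{1}{4M} - \tfrac{1}{10M} > \tfrac{1}{10M}$ — some explicit positive gap. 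Choosing $\epsilon$ small enough that $C_d M \epsilon$ is below this gap, Lemma \ref{lem:closedegree} gives $\mu(p, D, F) = \mu(p, D, A)$. Since $A$ is a bi-Lipschitz homeomorphism and $p \in A(D)$ (indeed $A(D) \supseteq B(0, (M')^{-1}\cdot\tfrac12) \supseteq B(0,\tfrac{1}{10M})$ once $M' \le 2M$, so $p$ is in the image), property \eqref{deg:homeo} gives $\mu(p, D, A) = \pm 1 \ne 0$. Therefore $\mu(p, D, F) \ne 0$, and by property \eqref{deg:notin} this forces $p \in F(\overline D) = F(Q)$. As $p$ was an arbitrary point of $B(F(z), \tfrac{1}{10M}\ell(Q))$ (after undoing the normalization), this is the desired inclusion.

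The main obstacle — really the only subtle point — is bookkeeping the constants so that everything lines up: I need $\epsilon = \epsilon(M,d)$ small enough simultaneously that (a) Lemma \ref{lem:DavidToro} applies, (b) the resulting affine map $A$ has bi-Lipschitz constant $M' \le 2M$ (so that $A(D) \supseteq B(0, \tfrac{1}{10M})$ with room to spare and $\dist(p, A(\partial Q))$ has a definite lower bound like $\tfrac{3}{20M}$), and (c) the sup-distance $C_d M\epsilon$ between $F$ and $A$ on $\partial Q$ is strictly less than that lower bound so Lemma \ref{lem:closedegree} fires. Each of these is a single inequality of the form ``$\epsilon$ less than something depending only on $M, d$'', so taking the minimum is routine; I would also double-check that the extension of $F$ off $Q$ can be done keeping properness and not introducing preimages of $p$ from outside $Q$ — but extending by $A$ (which misses a neighborhood of $p$ on $\RR^d \setminus D$ since $A$ is a homeomorphism sending $\RR^d\setminus D$ off a neighborhood of $p$) handles that cleanly, or one simply invokes property \eqref{deg:notin} directly as above.
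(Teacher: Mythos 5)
There is a genuine gap at the key step: the claimed affine approximation of $F$ via Lemma \ref{lem:DavidToro} is not available under the hypotheses of Lemma \ref{lem:degree}. Lemma \ref{lem:DavidToro} requires the \emph{additive} near-isometry condition $\bigl||g(x)-g(y)|-|x-y|\bigr|\leq\alpha$ with $\alpha$ small, and no rescaling of a map satisfying only $M^{-1}|x-y|-\epsilon\diam(Q)\leq|F(x)-F(y)|\leq M|x-y|$ produces such a map when $M$ is bounded away from $1$: your candidate $x\mapsto MF(x/(20M))$ distorts distances multiplicatively by factors ranging from roughly $1/(20M)$ to $M/20$, neither close to $1$. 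More fundamentally, the conclusion you want from this step is false: an exactly $M$-bi-Lipschitz map on $Q$ (so $\epsilon=0$) need not be within $o(\diam Q)$ of \emph{any} affine map --- e.g.\ $F(x_1,x_2)=(x_1+|x_2|,x_2)$ on the unit square is piecewise linear, bi-Lipschitz, and at distance comparable to $\diam(Q)$ from every affine map. The internal inconsistency in your write-up (the map $A$ is simultaneously claimed to be the isometry $J$ from Lemma \ref{lem:DavidToro} and to be $M'$-bi-Lipschitz with $M'$ close to $M$) is a symptom of this. Note that the paper does use Lemma \ref{lem:DavidToro} elsewhere (in Lemma \ref{lem:almostaffine}), but only for maps $\phi$ that are $(1+\eta)$-bi-Lipschitz, where the additive hypothesis genuinely holds.

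The degree-theoretic skeleton of your argument (compare $F$ on $\partial Q$ with a reference map of known nonzero degree via Lemma \ref{lem:closedegree}, then conclude $B(F(z),\tfrac{1}{10M}\ell(Q))\subseteq F(Q)$ from properties \eqref{deg:homeo} and \eqref{deg:notin}) is sound and is essentially what the paper does; what is missing is a valid way to produce the reference map. The paper gets one by a compactness argument: assuming the lemma fails for every $\epsilon=1/n$, it extends the counterexamples $F_n$ by McShane, extracts an Arzel\`a--Ascoli limit $F$ which is honestly $M$-bi-Lipschitz on $Q$ (hence a homeomorphism onto its image, with degree $\pm1$ on a ball around $F(z)$), and then transfers the degree back to $F_n$ for large $n$ by the straight-line homotopy. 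To repair your proof you would need to replace the affine approximation with some such device; as written, the argument does not go through.
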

\begin{proof}
By rescaling and translating, it suffices to prove the lemma in the case where $F(z)=z=0$ and $Q$ is the cube centered at $0$ with side length $1$.

Suppose that the lemma (with these normalizations) fails for some $M,d\geq 1$. Then there is a sequence $F_n:Q\rightarrow\RR^d$ satisfying
\begin{equation}\label{eq:Fn}
M^{-1}|x-y|-\frac{1}{n}\diam(Q) \leq |F_n(x)-F_n(y)| \leq M|x-y| \text{ for all } x,y\in Q
\end{equation}
and
\begin{equation}\label{eq:Fn2}
 F_n(Q) \not\supseteq B\left(0,\frac{1}{10M}\right).
\end{equation}

By applying McShane's Theorem (\cite[Theorem 6.2]{Heinonen}) to each coordinate, we may extend each $F_n$ to a Lipschitz map from $\RR^d$ into $\mathbb{R}^d$ that is the identity outside of $2Q$.

Apply the Arzel\'a-Ascoli Theorem to obtain a subsequence of $\{F_n\}$, which we continue to call $\{F_n\}$, that converges uniformly on compact sets to an  $M$-Lipschitz limit map $F\colon \RR^d\rightarrow\RR^d$. Equation \eqref{eq:Fn} implies that $F|_Q$ is $M$-bi-Lipschitz.

We will apply the local degree to the map $F$, with the domain $D$ being the interior of $Q$. Because $F$ is a bi-Lipschitz homeomorphism on $Q$, the local degree $\mu(0, D, F) = \pm 1$. 

Since $F$ is $M$-bi-Lipschitz, the ball $B\left(0,\frac{1}{5M}\right)$ does not intersect $F(\partial D)=F(\partial Q)$. Thus, all of $B\left(0,\frac{1}{5M}\right)$ is in the same component of $\RR^d\setminus F(\partial D)$ as $0$, and so
\begin{equation}\label{eq:degree}
 \mu(y,D,F) = \pm 1 \text{ for all } y\in B\left(0,\frac{1}{5M}\right).
 \end{equation}

Choose $n$ large enough so that $$\sup_{x\in Q}|F(x)-F_n(x)| < \frac{1}{5M}.$$

Consider the straight-line homotopy between  $F$ and $F_n$
$$ H_t(x) = (1-t)F(x) + tF_n(x).$$
It is clear that $H_t$ is proper for each $t$. Moreover, if $y\in B\left(0,\frac{1}{5M}\right)$ and $x\in \partial D = \partial Q$, then
$$ |H_t(x) - y| \geq |F(x)-0| - |y-0| - |F(x)-H_t(x)| \geq \frac{1}{2M} - \frac{1}{5M} - \frac{1}{5M}>0. $$
It follows that $y\notin H_t(\partial D)$ for each $t$.

Thus, homotopy invariance of the local degree implies that
$$ \mu(y, D, F_n) = \pm 1 \text{ for all } y\in  B\left(0,\frac{1}{5M}\right).$$
It follows that $F_n(\overline{D}) = F_n(Q)$ contains all of  $B\left(0,\frac{1}{5M}\right)$, contradicting \eqref{eq:Fn2}.
\end{proof}

Below, we write $\pi_x$ and $\pi_y$ for the orthogonal projections from $\mathbb{R}^{2d}$ to $\RR^d\times\{0\}\cong\RR^d$ and $\{0\}\times \RR^d\cong\RR^d$, respectively. If $V$ is another linear subspace of $\mathbb{R}^{2d}$, we write $\pi_V$ for its associated orthogonal projection.

\begin{lemma}\label{lem:axes}
Let $f\colon [0,1]^d  \rightarrow \RR^d$ be $L$-bi-Lipschitz and $Q\subseteq [0,1]^d$ be a dyadic cube. Let $\Gamma$ be a $d$-dimensional $\eta$-Lipschitz graph, with $H, O$ as in \eqref{eq:lipgraph},  such that $\dist(x,\Gamma) \leq \theta\diam(Q)$ for all $x\in 2\hat{Q}$. Assume that $\eta,\theta$ are sufficiently small depending on $L$ and $d$.

Then 
\begin{equation}\label{eq:axes}
 |\pi_x(O(v))| \approx |\pi_y(O(v))| \approx |v|
\end{equation}
for all $v\in \RR^d \times \{0\}$.  The implied constants depend only on $L$.
\end{lemma}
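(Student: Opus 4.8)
The plan is to reduce the statement to a uniform transversality property of the base plane
\[
V:=O(\RR^d\times\{0\})
\]
of $\Gamma$, namely that $|\pi_x(w)|\gtrsim|w|$ and $|\pi_y(w)|\gtrsim|w|$ for all $w\in V$, with implied constants depending only on $L$ and the (fixed) dimension $d$. This suffices: for $v\in\RR^d\times\{0\}$ the vector $w:=O(v)$ lies in $V$ with $|w|=|v|$ since $O$ is orthogonal, and because $\pi_x,\pi_y$ are the orthogonal projections onto complementary orthogonal subspaces we have $|\pi_x(w)|^2+|\pi_y(w)|^2=|w|^2$; combined with $|\pi_x(w)|,|\pi_y(w)|\le|w|$, the two lower bounds force $|\pi_x(O(v))|\approx|v|\approx|\pi_y(O(v))|$. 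Moreover the two desired bounds are symmetric under interchanging $\pi_x\leftrightarrow\pi_y$ and $f\leftrightarrow f^{-1}$, because $\hat Q=\graph(f|_Q)$ is simultaneously the graph of the $L$-Lipschitz map $f^{-1}$ over $\{0\}\times\RR^d$. Thus it is enough, for each of $\pi_x$ and $\pi_y$, to exhibit a basis of $V$ that is well conditioned (with constants depending only on $L,d$) and whose images under that projection form, up to small error, an orthogonal frame of common length $\approx\ell(Q)$.

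To build these frames, set $z:=x(Q)$ and $t:=\ell(Q)/(20L)$. Applying Lemma~\ref{lem:degree} with $M=L$ (its hypothesis holds since $f$ is $L$-bi-Lipschitz), we obtain $f(Q)\supseteq B(f(z),\ell(Q)/(10L))$, so $f(z)+te_i\in f(Q)$ for $i=1,\dots,d$. With $p_0=\bar p_0:=(z,f(z))$, define
\[
p_i:=\bigl(z+te_i,\ f(z+te_i)\bigr),\qquad \bar p_i:=\bigl(f^{-1}(f(z)+te_i),\ f(z)+te_i\bigr),\qquad i=1,\dots,d.
\]
Since $t<\ell(Q)/2$ we have $z+te_i\in Q$, and $f^{-1}(f(z)+te_i)\in Q$ by construction, so all $2d+1$ of these points lie in $\hat Q\subseteq 2\hat Q$ and are therefore within $\theta\diam(Q)$ of $\Gamma$. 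Setting $w_i:=p_i-p_0$ and $\bar w_i:=\bar p_i-\bar p_0$, we get $\pi_x(w_i)=te_i$, $\pi_y(\bar w_i)=te_i$, and $|w_i|,|\bar w_i|\le\sqrt{1+L^2}\,t$, so each of $\{w_1,\dots,w_d\}$ and $\{\bar w_1,\dots,\bar w_d\}$ is well conditioned with constants depending only on $L,d$: for instance $|\sum_i a_iw_i|\ge|\pi_x(\sum_i a_iw_i)|=t|a|$ and $|\sum_i a_iw_i|\lesssim t|a|$ for $a\in\RR^d$, and likewise for $\bar w_i$ using $\pi_y$.

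Next I would push these vectors into $V$. Choose $q_i,\bar q_i\in\Gamma$ with $|q_i-p_i|,|\bar q_i-\bar p_i|\le\theta\diam(Q)$, write $q_i=O(u_i,H(u_i))$ and $\bar q_i=O(\bar u_i,H(\bar u_i))$, and set $e_i':=O(u_i-u_0,0)\in V$ and $\bar e_i':=O(\bar u_i-\bar u_0,0)\in V$. Using that $H$ is $\eta$-Lipschitz and that $|q_i-q_0|,|\bar q_i-\bar q_0|\lesssim t$ (because $\diam(Q)\approx t$ and $|w_i|\lesssim t$), one obtains $|e_i'-w_i|\lesssim(\eta+\theta)t$ and $|\bar e_i'-\bar w_i|\lesssim(\eta+\theta)t$, all implied constants depending only on $L,d$. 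Since $V$ is $d$-dimensional, for $\eta,\theta$ small enough (depending on $L,d$) these perturbations of the well-conditioned families $\{w_i\}$, $\{\bar w_i\}$ are themselves well-conditioned bases of $V$. Finally, for $w\in V$ write $w=\sum_i c_ie_i'$, so that $t|c|\approx|w|$; then
\[
\pi_x(w)=t\sum_i c_ie_i+\sum_i c_i\bigl(\pi_x(e_i')-te_i\bigr),\qquad |\pi_x(e_i')-te_i|\le|e_i'-w_i|\lesssim(\eta+\theta)t,
\]
whence $|\pi_x(w)|\ge t|c|-C(\eta+\theta)t|c|\ge\tfrac12 t|c|\gtrsim|w|$ once $\eta,\theta$ are small. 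Writing instead $w=\sum_i\bar c_i\bar e_i'$ and using $\pi_y(\bar e_i')\approx te_i$ gives $|\pi_y(w)|\gtrsim|w|$ in exactly the same way, completing the argument.

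The main obstacle — and the only place the topological input enters — is the $\pi_y$ bound. One cannot simply push a fat simplex forward from the domain, since $f$ is not affine and a priori the points $f(z),f(z+te_1),\dots,f(z+te_d)$ could be nearly affinely degenerate. The remedy is to place the fat simplex in the \emph{target}, around $f(z)$, and pull it back by the $L$-Lipschitz map $f^{-1}$; this is legitimate precisely because $f(Q)$ contains a ball around $f(z)$, which is Lemma~\ref{lem:degree}. Everything else is bookkeeping with the $\theta\diam(Q)$-closeness of $\Gamma$ to $\hat Q$ and the $\eta$-Lipschitz graph error, and it is this bookkeeping that forces $\eta$ and $\theta$ to be taken small in terms of $L$ and $d$.
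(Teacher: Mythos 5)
Your proof is correct, and it takes a genuinely different route from the paper's, even though both hinge on the same topological input, Lemma~\ref{lem:degree}. The paper first proves an auxiliary claim that the orthogonal projection $\pi_V$ is nearly an isometry on the graph $\Gamma_0 = \{(x,f(x)) : x\in 2Q\}$, uses this to show that the composite map $J(x) = \pi_V(x,f(x))$ satisfies the approximate bi-Lipschitz hypothesis of Lemma~\ref{lem:degree}, applies the degree lemma to $J$ to conclude $J(Q)$ contains a ball in $V$, and then argues pointwise: for a given $v$, it finds a preimage $x$ with $J(x) = J(y_0) + O(v)$ and compares $O(v)$ to the graph vector $(x,f(x)) - (y_0,f(y_0))$. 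You instead apply Lemma~\ref{lem:degree} directly to $f$ itself (where it requires no preliminary claim, since $f$ is genuinely $L$-bi-Lipschitz), use it to pull a standard simplex in the target back into $Q$, and then build from the resulting $2d+1$ points of $\hat Q$ two explicit well-conditioned bases of $V$: one whose $\pi_x$-projections are close to $te_1,\dots,te_d$, and one whose $\pi_y$-projections are. The transversality of $V$ to both coordinate planes then follows by linear algebra. Your route is somewhat more elementary in that it sidesteps Claim~\ref{claim:projection} (the near-isometry of $\pi_V$ on the graph) and the intermediate map $J$ entirely; what the paper's argument buys instead is a formulation ($J$ approximately bi-Lipschitz, then the degree lemma applied to $J$) that meshes cleanly with the later analysis in the proof of Proposition~\ref{prop:coronafunction}, where the map $\pi_x\circ\pi_V$-type compositions reappear. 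Two small stylistic remarks: the Pythagorean identity $|\pi_x(w)|^2 + |\pi_y(w)|^2 = |w|^2$ you invoke early on is not actually needed once you have both lower bounds together with the trivial upper bounds $|\pi_\cdot(w)|\le|w|$; and the ``symmetry under $f\leftrightarrow f^{-1}$'' remark is only heuristic, since $f^{-1}$ is not defined on a cube --- but you correctly abandon it and prove both bounds by hand.
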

\begin{proof}
The inequalities $|\pi_x(O(v))|\leq |v|$ and $|\pi_y(O(v))|\leq|v|$ are trivial. It remains to bound $|v|$ from above by the sizes of these projections.

Let 
$$V=O(\RR^d\times\{0\})\subset\RR^d$$ and
$$ \Gamma_0 = \{(x,f(x)): x\in 2Q\}.$$
\begin{claim}\label{claim:projection}
There is a constant  $\epsilon=\epsilon(\theta,\eta)$ that tends to zero with $\theta$ and $\eta$ such that 
$$ |p-q|-\epsilon \leq |\pi_V(p)-\pi_V(q)| \leq |p-q|$$
for all $p,q\in\Gamma_0$.
\end{claim}
\begin{proof}
Fix $p=(x,f(x)),q=(y,f(y))\in 2\hat{Q}$. By our assumption, there are points $p'=O(x',H(x'))$ and $q'=O(y',H(y'))$ in $\Gamma$ such that $|p'-p|$ and $|q'-q|$ are at most $\theta\diam(Q)$ apart.

Because $\Gamma$ is an $\eta$-Lipschitz graph, the map $H$ is $\eta$-Lipschitz. It follows that
$$|x'-y'| \leq |(x',H(x')) - (y',H(y'))| \leq \sqrt{1+\eta^2}|x'-y'|.$$

We therefore have
\begin{align*}
|\pi_V(p) - \pi_V(q)| &\geq |\pi_V(p') - \pi_V(q')| - 2\theta\diam(Q)\\
&= |O(x',0) - O(y',0)| - 2\theta\diam(Q)\\
&= |x'-y'| - 2\theta\diam(Q)\\
&\geq (1+\eta^2)^{-1/2}|(x',H(x')) - (y',H(y'))| - 2\theta\diam(Q)\\
&= |p-q| - (1-(1+\eta^2)^{-1/2}-2\theta)\diam(Q),
\end{align*}
so we may take $\epsilon = (1-(1+\eta^2)^{-1/2}-2\theta)$.

The upper bound in the claim is simply because the projection is $1$-Lipschitz.
\end{proof}

Let $y_0$ denote the center point of $Q$, and define $J\colon [0,1]^d\rightarrow V$ by 
$$ J(x) = \pi_V(x,f(x)).$$ By the previous claim and our assumption on $f$, we have
\begin{equation}\label{eq:Jprops}
(1+L^2)^{-1/2}|x-y| - \epsilon\diam(Q) \leq |J(x)-J(y)| \leq (1+L^2)^{1/2}|x-y|
\end{equation}
for all $x,y\in 2Q$

By choosing $\theta,\eta$ sufficiently small (depending on $L$ and $d$), we may therefore arrange that Lemma \ref{lem:degree} applies to $J$ on $Q$ (identifying the target $V$ of $J$ with $\RR^d$). Thus, for a constant $c=c(M,d)>0$, we get that
\begin{equation}\label{eq:Jball}
J(Q) \supseteq B(J(y_0),c\diam(Q))\cap V.
\end{equation}

Now fix any non-zero $v\in\RR^d\times\{0\}$. As the conclusion of the lemma is unaffected by rescaling $v$, we may assume that $|v|=c\diam(Q)$, where $c$ is the constant from \eqref{eq:Jball}. By \eqref{eq:Jball}, we may find $x\in Q$ such that
$$ J(x) = J(y_0) + O(v).$$
Let $z_0 = (y_0,f(y_0))$ and $w=(x,f(x))$, so that $J(y_0) = \pi_V(z_0)$ and $J(x) = \pi_v(w)$. Choose points $z'_0$ and $w'$ in $\Gamma$ that are at most $\theta\diam(Q)$ away from $z_0$ and $w$, respectively. Observe that, because $\Gamma$ is an $\eta$-Lipschitz graph over $V$,
$$|(z_0'-\pi_V(z_0')) - (w'-\pi_V(w'))| \leq \eta|\pi_V(z_0) - \pi_V(w_0)| \leq (\eta+2\theta)\diam(Q). $$
Thus, we get
\begin{align*}
|O(v) - (w-z_0)| &= |(\pi_V(w) - \pi_V(z_0)) - (w-z_0)|\\
&\leq |(\pi_V(w') - \pi_V(z'_0)) - (w'-z'_0)| +4\theta\diam(Q)\\
&\leq (\eta+6\theta)\diam(Q).
\end{align*}
It follows that
\begin{align*}
|\pi_x(O(v))| &\geq |\pi_x(w-z_0)| - (\eta+6\theta)\diam(Q)\\
&=|x-y_0|- (\eta+6\theta)\diam(Q) \\
&\gtrsim_L |J(x)-J(y_0)| - (\eta+6\theta)\diam(Q)\\
&= |v|-(\eta+6\theta)\diam(Q).
\end{align*}
Since $|v|=c\diam(Q)$, where $c=c(L,d)$, we may take $\eta,\theta<c/2$ to yield
$$|\pi_x(O(v)) \gtrsim_L |v|.$$
For the projection in the $y$-direction, we argue similarly, using the fact that $f$ is bi-Lipschitz:
\begin{align*}
|\pi_y(O(v))| &\geq |\pi_y(w-z_0)| - (\eta+6\theta)\diam(Q)\\
&=|f(x)-f(y_0)|- (\eta+6\theta)\diam(Q) \\
&\gtrsim_L |x-y_0|- (\eta+6\theta)\diam(Q)\\
&\gtrsim_L |J(x)-J(y_0)| - (\eta+6\theta)\diam(Q)\\
&= |v|-(\eta+6\theta)\diam(Q),
\end{align*}
and so again conclude that
$$|\pi_y(O(v)) \gtrsim_L |v|.$$
\end{proof}

The next lemma shows that, under the assumptions of Proposition \ref{prop:coronafunction}, the (rotated) Lipschitz graphs $\Gamma$ are actually unrotated bi-Lipschitz graphs.
\begin{lemma}\label{lem:graph}
Let $f\colon [0,1]^d  \rightarrow \RR^d$ be $L$-bi-Lipschitz. Let $Q\subseteq [0,1]^d$ be a dyadic cube. Let $\Gamma$ be a $d$-dimensional $\eta$-Lipschitz graph, with $H, O$ as above,  such that $\dist(x,\Gamma) \leq \theta\diam(Q)$ for all $x\in 2\hat{Q}$. Assume that $\eta,\theta$ are sufficiently small depending only on $L,d$.

Then there is a bi-Lipschitz $g\colon \RR^d\rightarrow \RR^d$ such that $\Gamma = \graph(g)$. The bi-Lipschitz constant of $g$ can be controlled depending only on $L$.
\end{lemma}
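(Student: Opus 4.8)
The plan is to show that $\Gamma$, a priori a rotated $\eta$-Lipschitz graph $O(\graph(H))$, is in fact a graph over the horizontal plane $\RR^d \times \{0\}$ with controlled bi-Lipschitz constant. The key structural input is Lemma \ref{lem:axes}, which says that for the rotation $O$, the restriction of $\pi_x$ to $V := O(\RR^d \times \{0\})$ satisfies $|\pi_x(O(v))| \approx_L |v|$; equivalently, $\pi_x|_V \colon V \to \RR^d \times \{0\}$ is a linear isomorphism with $\|(\pi_x|_V)^{-1}\|$ bounded in terms of $L$. This is exactly the condition that allows one to rewrite a Lipschitz graph over $V$ as a bi-Lipschitz graph over $\RR^d \times \{0\}$.

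\medskip

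\textbf{Step 1: Parametrize $\Gamma$ over the horizontal plane.} Write points of $\Gamma$ as $O(u, H(u))$ for $u \in \RR^d$. Define $\Psi \colon \RR^d \to \RR^d \times \{0\} \cong \RR^d$ by $\Psi(u) = \pi_x(O(u, H(u)))$. Since $H$ is $\eta$-Lipschitz, the map $u \mapsto (u, H(u))$ is bi-Lipschitz with constants close to $1$, and $O$ is an isometry, so $\Gamma$ is (the image of) a bi-Lipschitz copy of $\RR^d$ and $|O(u, H(u)) - O(u', H(u'))| \approx |u - u'|$. By Lemma \ref{lem:axes} applied to the vector $v = (\pi_x|_V)^{-1}$ of $\pi_V(u,H(u)) - \pi_V(u',H(u'))$ — more directly, by decomposing $O(u,H(u)) - O(u',H(u'))$ into its $\pi_x$ and $\pi_y$ parts and using \eqref{eq:axes} with $v$ the $V$-component — one gets
\[
|\Psi(u) - \Psi(u')| \gtrsim_L |O(u,H(u)) - O(u',H(u'))| \gtrsim_L |u - u'|,
\]
while the upper bound $|\Psi(u) - \Psi(u')| \lesssim |u-u'|$ is immediate since $\pi_x$ and $O$ are $1$-Lipschitz. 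Hence $\Psi$ is a bi-Lipschitz embedding of $\RR^d$ into $\RR^d$ with constant depending only on $L$.

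\medskip

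\textbf{Step 2: Invert and define $g$.} A bi-Lipschitz embedding of $\RR^d$ into $\RR^d$ is surjective (standard; it is proper, hence closed, and a bi-Lipschitz image of $\RR^d$ is also open by invariance of domain), so $\Psi$ is a bi-Lipschitz homeomorphism of $\RR^d$ with $\Psi^{-1}$ bi-Lipschitz with the same constant. Now set
\[
g := \pi_y \circ O \circ \iota_H \circ \Psi^{-1} \colon \RR^d \to \RR^d,
\]
where $\iota_H(u) = (u, H(u))$. For $x \in \RR^d$, writing $u = \Psi^{-1}(x)$, we have $\pi_x(O(u, H(u))) = x$ and $\pi_y(O(u,H(u))) = g(x)$, so $O(\iota_H(\Psi^{-1}(x))) = (x, g(x))$; as $x$ ranges over $\RR^d$, $u = \Psi^{-1}(x)$ ranges over all of $\RR^d$, so $\graph(g) = \{ O(u, H(u)) : u \in \RR^d \} = \Gamma$. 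Finally, $g$ is bi-Lipschitz: the upper bound follows since $\pi_y$, $O$, $\iota_H$ are all Lipschitz and $\Psi^{-1}$ is; for the lower bound, use \eqref{eq:axes} in the $\pi_y$ direction — $|g(x) - g(x')| = |\pi_y(O(u,H(u)) - O(u',H(u')))| \gtrsim_L |O(u,H(u)) - O(u',H(u'))| \gtrsim_L |u - u'| \gtrsim_L |x - x'|$, the last step by bi-Lipschitzness of $\Psi^{-1}$.

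\medskip

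\textbf{Main obstacle.} The only genuine content is verifying that Lemma \ref{lem:axes} really gives the lower bound in Step 1 in the form needed — i.e., translating ``$|\pi_x(O(v))| \approx_L |v|$ for $v$ in the horizontal plane'' into ``$\pi_x|_V$ is a quantitative isomorphism,'' which requires noting $V = O(\RR^d \times \{0\})$ and that every $w \in V$ is $O(v)$ for a unique horizontal $v$ with $|v| = |w|$. The surjectivity claim in Step 2 (bi-Lipschitz embeddings $\RR^d \to \RR^d$ are onto) is standard but should be cited or given a one-line argument via invariance of domain and properness. Everything else is bookkeeping with Lipschitz constants of compositions.
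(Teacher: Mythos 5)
Your proof is correct and follows essentially the same route as the paper's: use Lemma~\ref{lem:axes} to show that $\pi_x$ restricted to $\Gamma$ is a bi-Lipschitz map onto $\RR^d$ (your $\Psi$, after precomposing with the parametrization $u\mapsto O(u,H(u))$, is exactly $\pi_x|_\Gamma$, so your $g$ is identical to the paper's $g = (\pi_y|_\Gamma)\circ(\pi_x|_\Gamma)^{-1}$). The paper phrases Step~1 directly in terms of two points $p,q\in\Gamma$ and the orthogonal decomposition $p-q = \pi_V(p-q) + \pi_{V^\perp}(p-q)$ rather than through the explicit parametrization, but the content — applying \eqref{eq:axes} to the $V$-component and absorbing the small $V^\perp$-component — is the same.
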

\begin{proof}
Fix two points $p,q\in \Gamma$. Write $V=O(\RR^d\times\{0\})$ as before. Since $\Gamma$ is a Lipschitz graph over $V$,
\begin{align}
|\pi_V(p-q) - (p-q)| &= |\pi_{V^\bot}(p) - \pi_{V^\bot}(q)|\label{eq:perp}\\
&\leq \eta |\pi_V(p) - \pi_V(q)|\nonumber\\
&\leq \eta|p-q|\nonumber
\end{align}
Therefore,
\begin{align*}
|\pi_x(p-q)| &\geq |\pi_x \pi_V(p-q)| - |\pi_x\pi_V(p-q) - \pi_x(p-q)|\\
&\geq |\pi_x \pi_V(p-q)| - |\pi_V(p-q) - (p-q)|\\
&\geq|\pi_x \pi_V(p-q)| -\eta|p-q|  && \text{by \eqref{eq:perp}}\\
&\gtrsim_L |\pi_V(p-q)| - \eta|p-q| && \text{by Lemma \ref{lem:axes}}\\
&\gtrsim_L |p-q| && \text{by \eqref{eq:perp} and $\eta<<1$}
\end{align*}
An identical argument shows that 
$$ |\pi_y(p-q)| \gtrsim_L |p-q|$$
Thus, $\pi_x|_{\Gamma}$ and $\pi_y|_{\Gamma}$ are bi-Lipschitz mappings from $\Gamma$ to $\RR^d$. Since $\Gamma$ is a Lipschitz graph and therefore homeomorphic to $\RR^d$, these maps must be surjective. (Here we make the obvious identifications between $\RR^d\times\{0\}$ and $\RR^d$ and $\{0\}\times\RR^d$ and $\RR^d$.)

Define a map $g\colon \RR^d\rightarrow \RR^d$ by
$$ g = \left(\pi_y|_\Gamma\right) \circ \left(\pi_x|_\Gamma\right)^{-1}.$$
As a composition of bi-Lipschitz maps, $g$ is bi-Lipschitz. 

To see that $\graph(g)=\Gamma$ (without rotation), note that if $p\in \Gamma$, then
$$ p = (\pi_x(p), \pi_y(p)) = (\pi_x(p), g(\pi_x(p))) \in \graph(g).$$
Conversely, if $(x,g(x))\in \graph(g)$, then $x=\pi_x(p)$ for some $p\in \Gamma$ and so $(x,g(x)) =(\pi_x(p), \pi_y(p))= p\in \Gamma$.
\end{proof}

We finally now complete the proof of Proposition \ref{prop:coronafunction}

\begin{proof}[Proof of Proposition \ref{prop:coronafunction}]
Let $f\colon [0,1]^d\rightarrow \RR^d$ be $L$-bi-Lipschitz. Fix $\eta,\theta>0$. Based on these, choose $\eta',\theta'>0$ sufficiently small as specified below. Apply Theorem \ref{thm:DS} to $f$ with parameters $\eta',\theta'$ to obtain a corona decomposition of $\graph(f)$.

Fix $S\in\mathcal{F}$. We obtain a rotated $\eta$-Lipschitz graph $\Gamma$, with associated $H$, $O$ as in \eqref{eq:lipgraph}. By Lemma \ref{lem:graph}, $\Gamma = \graph(g)$ for some $L'$-bi-Lipschitz $g$, where $L'$ depends only on $L$. If $Q\in S$ and $x\in Q$, then $(x,f(x))$ is within distance $\theta'\diam(Q)$ of a point $(z,g(z))\in\Gamma$. It follows that
$$ |f(x) - g(x)| \leq |f(x) - g(z)| + |g(z)-g(x)| \leq \theta'\diam(Q) + L'|z-x| \lesssim_L \theta'\diam(Q).$$
This shows that $g$ approximates $f$ in the way required by Proposition \ref{prop:coronafunction}, if $\theta'$ is sufficiently small depending on $L$ and $\theta$.

Next, we show that $g$ factors in the required way. Let $V = O(\RR^d\times\{0\})$. By Lemma \ref{lem:axes}, $(\pi_x)|_V$ is an invertible linear map from $V$ to $\RR^d\times\{0\} \cong\RR^d$; in fact, it is bi-Lipschitz. Let
$$ A = \pi_y \circ \left((\pi_x)|_V\right)^{-1} : \RR^d \rightarrow \RR^d.$$

By Lemma \ref{lem:axes}, we also have that $(\pi_y)|_V$ is an invertible, indeed bi-Lipschitz, linear map from $V$ to $\{0\}\times\RR^d \cong\RR^d$. Let $\psi\colon \RR^d\rightarrow \RR^d$ be the map 
$$\psi = g \circ \pi_x \circ \left((\pi_y)|_V\right)^{-1}.$$

Then $\psi \circ A = g$. We now explain why $\psi$ is bi-Lipschitz with constant close to $1$. Fix any two points $p_y, q_y\in\mathbb{R}^d$. The reason for these names are that, since $\pi_y|_V$ is surjective by Lemma \ref{lem:axes}, there are points $p=(p_x,p_y),q=(q_x,q_y)\in V\subseteq \RR^d\times\RR^d$.

Let $z=(p_x,g(p_x))\in\Gamma$ and $w=(q_x,g(q_x))\in\Gamma$. Since $z$ and $w$ are on $\Gamma$, so $z = O(t,H(t))$ and $w=O(s,H(s))$ for some $s,t\in\RR^d$.
Note that $\psi(p_y) = \pi_y(z)$ and $\psi(q_y) = \pi_y(w)$.

Let $w' = z + q - p \in \RR^d\times\RR^d$ and $w'' = O(s,H(t))$. Note that
$$w''-w' = O(t-s,0) + q-p \in V.$$
Recall from the proof of Lemma \ref{lem:graph} that $\pi_x|_{\Gamma}$ and $\pi_y|_{\Gamma}$ are bi-Lipschitz (with constant depending only on $L$). We therefore have
\begin{align*}
|w'' - w'| &\approx_L |\pi_x(w'') - \pi_x(w')|&&\text{by Lemma \ref{lem:axes}}\\
&= |\pi_x(w'') - \pi_x(w)|\\
&\leq |w'' - w|\\
&= | O(s,H(t)) - O(s,H(s))|\\
&= |(0,H(t)-H(s))|\\
&\leq \eta'|t-s|\\
&\leq \eta'|z-w|\\
&\lesssim_L \eta'|\pi_y(z) - \pi_y(w)| && \text{as $\pi_y$ is bi-Lipschitz on $\Gamma$}\\
&= \eta'|\psi(p_y) - \psi(q_y)|.
\end{align*}
Observe that this chain also shows along the way that $|w''-w|\lesssim_L  \eta'|\psi(p_y) - \psi(q_y)|$.

Then 
\begin{align}
|\psi(p_y)-\psi(q_y)| &= |g(p_x) - g(q_x)|\nonumber\\
&= |\pi_y(z) - \pi_y(w)|\nonumber\\
&\leq |\pi_y(z) - \pi_y(w')| + |\pi_y(w'-w)|\label{eq:triangle}\\
&\leq |p_y - q_y| + |w'-w''| + |w''-w|\nonumber\\
&\leq |p_y-q_y| + C_L\eta'|\psi(p_y)-\psi(q_y)|\nonumber,
\end{align}
where $C_L$ is the implied constant from the preceding chain of inequalities. If $\eta'$ is sufficiently small depending on $\eta, L$, we get that
\begin{equation}\label{eq:psiupper}
|\psi(p_y)-\psi(q_y)| \leq (1-C_L\eta')^{-1}|p_y-q_y|. 
\end{equation}

By a similar argument (just using the triangle inequality in reverse in \eqref{eq:triangle}), we obtain
$$|\psi(p_y)-\psi(q_y)| \geq |p_y-q_y| - C_L\eta'|\psi(p_y)-\psi(q_y)|$$
and hence
\begin{equation}\label{eq:psilower}
|\psi(p_y)-\psi(q_y)| \geq (1+C_L\eta')^{-1}|p_y-q_y|. 
\end{equation}

If $\eta'$ is sufficiently small (depending on $\eta, L, d$), then Lemma \ref{lem:almostaffine} shows that $g$ can be written as
$$ g = \tilde{\psi}\circ\tilde{A},$$
where $A$ is affine, $\tilde{\psi}$ is $(1+\eta)$-bi-Lipschitz, and
$$ |\tilde{\psi}(x)-x| \leq \eta\diam(Q(S)) \text{ for all }x\in \tilde{A}(Q(S)).$$
This shows that $g$ is $\eta$-almost affine on $Q(S)$, and completes the proof of Proposition \ref{prop:coronafunction}.

\end{proof}

\begin{remark}
There is a somewhat simpler statement closely related to Proposition \ref{prop:coronafunction} that is often useful in geometric measure theory:

\begin{proposition}[David-Semmes \cite{DavidSemmes}, Proposition IV.2.4]\label{prop:linearcoronization}
Let $f\colon \RR^d\rightarrow \RR^k$ be $1$-Lipschitz. For each $\rho, \eta>0$, there is a coronization $(\cB, \cG, \cF)$ of $\RR^d$ and an assignment $Q\mapsto A_Q$ of an affine map $A_Q\colon \RR^d\rightarrow\RR^k$ to each $Q\in\cG$ such that 
$$ \sup_{2Q} |f-A_Q| \leq \rho \diam(Q) \text{ for all } Q\in\cG,$$
and
$$ |A'_Q - A'_{Q(S)}| \leq \eta \text{ whenever } Q\in S\in \cF.$$
The Carleson packing constant can be chosen to depend only on $\rho, \eta, d, k$. 
\end{proposition}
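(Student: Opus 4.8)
The plan is to deduce Proposition~\ref{prop:linearcoronization} from Theorem~\ref{thm:DS} (in its unbounded form), applied to the Lipschitz graph $\graph(f)\subseteq\RR^{d+k}$. Since $f$ is defined on all of $\RR^d$, I would use the David--Semmes corona decomposition for the Ahlfors $d$-regular set $\graph(f)$ directly; via the correspondence $Q\leftrightarrow\hat Q=\graph(f|_Q)$ this produces a coronization $(\cG,\cB,\cF)$ of $\RR^d$ together with, for each $S\in\cF$, an $\eta'$-Lipschitz graph $\Gamma(S)=O_S(\graph(H_S))$ over a $d$-plane $V(S)=O_S(\RR^d\times\{0\})$ such that $\dist((x,f(x)),\Gamma(S))\le\theta'\diam(Q)$ whenever $x\in 2Q$ and $Q\in S$. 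Here $\eta',\theta'>0$ are auxiliary parameters chosen small in terms of the target $\rho,\eta$ and of $d,k$; the resulting Carleson packing constant then depends only on $\rho,\eta,d,k$.

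Given such an $S$, I would read the affine maps off of $V(S)$. First, because $\graph(f)$ is a $1$-Lipschitz graph over the horizontal $\RR^d\times\{0\}$ and $\Gamma(S)$ stays within $\theta'\diam(Q(S))$ of it over $2Q(S)$, the plane $V(S)$ must itself be a graph over the horizontal with slope $\lesssim 1$ (a short argument, using that $\pi_x$ restricted to $\graph(f|_{2Q})$ is a bijection onto $2Q$, rules out $\pi_x|_{V(S)}$ being degenerate). Let $L_S\colon\RR^d\to\RR^k$ be the linear map with $\graph(L_S)=V(S)$, so $\|L_S\|\lesssim 1$. For $Q\in S$, let $\Gamma_Q\subseteq\Gamma(S)$ be the part within $\theta'\diam(Q)$ of $\graph(f|_{2Q})$; since $f$ is $1$-Lipschitz, $\diam\Gamma_Q\lesssim\diam(Q)$, so $\Gamma_Q$ is the $O_S$-image of the graph of $H_S$ over a set of diameter $\lesssim\diam(Q)$, on which the $\eta'$-Lipschitz function $H_S$ oscillates by $\lesssim\eta'\diam(Q)$. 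Taking $c_Q$ to be a value of $H_S$ on that set, I set $A_Q:=L_S+b_Q$, where $b_Q\in\RR^k$ is the corresponding constant vector; thus $A_Q$ is affine, $A_Q'=L_S$, and $\graph(A_Q)$ lies within $\lesssim\eta'\diam(Q)$ of $\Gamma_Q$.

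Now I would verify the two required properties. For every $x\in 2Q$, the point $(x,f(x))$ is within $\theta'\diam(Q)$ of $\Gamma(S)$, and its nearest point there lies in $\Gamma_Q$ by definition; hence $(x,f(x))$ is within $(\theta'+C\eta')\diam(Q)$ of $\graph(A_Q)$, and since $\|A_Q'\|\lesssim 1$ this yields $|f(x)-A_Q(x)|\lesssim(\theta'+\eta')\diam(Q)\le\rho\diam(Q)$ once $\eta',\theta'$ are chosen small enough depending on $\rho,d,k$. For the consistency of linear parts, the construction gives $A_Q'=L_S$ for every $Q\in S$, in particular for $Q=Q(S)$, so $A_Q'=A_{Q(S)}'$ exactly, and certainly $|A_Q'-A_{Q(S)}'|\le\eta$. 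Together with the Carleson bound inherited from Theorem~\ref{thm:DS}, this gives the proposition.

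The step I expect to require the most care is not the affine extraction --- which is routine once $V(S)$ is known to be a horizontal graph of controlled slope --- but the passage from the bounded statement recorded as Theorem~\ref{thm:DS} to the global statement over $\RR^d$: a naive patching over a unit grid fails the Carleson packing estimate at large scales (the dyadic cubes of side length $>1$ contained in a large cube $R$ contribute on the order of $(\log\ell(R))\,|R|$ to the would-be bad collection), so one genuinely needs the unbounded David--Semmes corona decomposition of $\graph(f)$ as the input. The remaining mild point is the verification that $V(S)$ is a horizontal graph, which is exactly where the hypothesis that $f$ is $1$-Lipschitz (rather than merely Lipschitz, or bi-Lipschitz as in Proposition~\ref{prop:coronafunction}) is used.
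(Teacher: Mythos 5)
The paper does not prove Proposition \ref{prop:linearcoronization}: it is quoted from David--Semmes (their Proposition IV.2.4) inside a remark purely to contrast with the new Proposition \ref{prop:coronafunction}, and the paper explicitly says it does \emph{not} use this result in its arguments. So there is no internal proof to compare against, and the intended reading is simply to cite David--Semmes.

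Taken on its own, your derivation from the corona decomposition of $\graph(f)$ is a sound route to the statement, and the individual steps are in order. Your claim that $V(S)$ is a horizontal graph of controlled slope is exactly the $|\pi_x(O(v))|\gtrsim|v|$ half of Lemma \ref{lem:axes}; inspecting that proof shows that this half uses only the Lipschitz bound on $f$ together with the $\Gamma(S)$-approximation, not the bi-Lipschitz lower bound, so it does transfer to the present setting (the $\pi_y$ half, which needs the lower bound, is not used here). Your construction achieves $A'_Q = A'_{Q(S)}$ exactly on each stopping region, which is stronger than what the proposition asks for but consistent with the corona decomposition furnishing a single Lipschitz graph per region. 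Your final caveat is the one that matters: the paper records Theorem \ref{thm:DS} only over $[0,1]^d$, while the proposition is over $\RR^d$, so carrying out this argument rigorously would require the global corona decomposition of an unbounded Lipschitz graph as input, which the paper does not state and you invoke without proof. It is also worth noting that this detour is heavier than strictly necessary: in David--Semmes, IV.2.4 is obtained by a stopping-time construction applied directly to the affine approximation numbers of $f$, without passing through the corona decomposition for the set $\graph(f)$, so the logical dependency in your route runs in the opposite direction from theirs.
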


Where Proposition \ref{prop:coronafunction} assigns to each stopping time region $S$ a \textit{single} almost affine map $g_S$, Proposition \ref{prop:linearcoronization} assigns to $S$ a collection of (honest) affine maps $\{A_Q: Q\in S\}$ that approximate $f$ and whose slopes do not change too rapidly. 

Unfortunately, Proposition \ref{prop:linearcoronization} does not appear to suffice for our purposes below. Our argument crucially uses the fact that, for each stopping time region, we have a single map that approximates $f$ well simultaneously on all cubes of the stopping time region. This map will not be affine in general, but Proposition \ref{prop:coronafunction} shows that it can be made almost affine.
\end{remark}

\section{Extension and gluing}\label{sec:extension}

In this section, we give a lemma on extending a bi-Lipschitz map quantitatively from a subset of $\mathbb{R}^d$ to a globally defined map. This lemma is based on a theorem of V\"ais\"al\"a \cite{Vai:86}; see also Theorem III in Azzam--Schul \cite{AS:12}. We also give two lemmas concerning gluing: when a map of $\mathbb{R}^d$ that is bi-Lipschitz on certain subsets of $\mathbb{R}^d$ is globally bi-Lipschitz quantitatively. 

\subsection{Extending from cubes}

Fix an integer $d \geq 2$. A compact set $A \subset \mathbb{R}^d$ has the \textit{bi-Lipschitz extension property} if for every $L'>1$ there exists $L>1$ such that every $L$-bi-Lipschitz embedding $f\colon A \to \mathbb{R}^d$ extends to an $L'$-bi-Lipschitz map $F \colon \mathbb{R}^d \to \mathbb{R}^d$. This is equivalent to the statement that every $L$-bi-Lipschitz map $f\colon A \to \mathbb{R}^d$ extends to an $L'$-bi-Lipschitz map for all sufficiently small $L>1$, where $L' \to 1$ as $L \to 1$. The following theorem was proved by V\"ais\"al\"a \cite[Theorem 5.19]{Vai:86}. 

\begin{theorem} \label{thm:vaisala}
Every set $A \subset \mathbb{R}^d$ that is the union of finitely many compact piecewise linear manifolds of dimension $d-1$ or $d$, with or without boundary, has the bi-Lipschitz extension property.
\end{theorem}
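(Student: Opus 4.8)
The plan is to prove Theorem~\ref{thm:vaisala} by citing V\"ais\"al\"a's theorem \cite[Theorem 5.19]{Vai:86} directly, but since the excerpt frames this as the statement whose proof we must sketch, I will instead describe how one \emph{would} prove it from scratch, following V\"ais\"al\"a's strategy. The key structural fact is that the bi-Lipschitz extension property is \emph{local and additive} in a suitable sense: if $A = A_1 \cup A_2$ where $A_1, A_2$ each have the property and their intersection is not too wild, then $A$ has it as well. Thus the first step is to reduce to the case of a single piecewise linear (PL) manifold, and then, by a further decomposition using a fine triangulation, to reduce to a single simplex of dimension $d-1$ or $d$, or rather to a small neighborhood of such a simplex where the map is nearly affine.

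First I would establish the base cases. For a $d$-dimensional object this is essentially the statement that a convex body (say a cube or simplex) in $\RR^d$ has the bi-Lipschitz extension property; one builds the extension by the standard radial/Alexander-type trick or via a partition-of-unity interpolation between the given map on the boundary and an affine map in the interior, with careful bookkeeping to show $L' \to 1$ as $L \to 1$. For a $(d-1)$-dimensional PL manifold, the model case is a piece of a hyperplane $\RR^{d-1} \times \{0\}$; here one extends an $L$-bi-Lipschitz map of $\RR^{d-1}$ to $\RR^d$ by the V\"ais\"al\"a construction, which adjusts the last coordinate using the distance to the hyperplane and the local Lipschitz behavior of $f$. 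Corners and edges where several simplices meet require a genuinely quantitative local gluing lemma: one covers a neighborhood of the $(d-2)$-skeleton by overlapping cubes on which the combined map is controlled, then patches with a Lipschitz partition of unity, checking that the resulting map remains injective (this uses the degree theory / local homeomorphism arguments, as one must preclude the patched map from folding).

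The main technical engine is therefore a \emph{quantitative gluing principle}: given $\RR^d = \bigcup U_i$ a bounded-overlap cover by nice open sets, and a map $F$ that is $L'$-bi-Lipschitz on each $\overline{U_i}$ with compatible behavior on overlaps, conclude $F$ is $L''$-bi-Lipschitz globally with $L'' \to 1$ as $L' \to 1$. The injectivity half of this is the subtle point — bi-Lipschitz-on-pieces does \emph{not} imply globally bi-Lipschitz without further hypotheses — and V\"ais\"al\"a handles it by a combination of a connectedness/chaining argument (to upgrade local distance estimates to global ones along paths) and, where needed, topological degree to rule out self-intersections of the image. This is exactly the kind of argument foreshadowed in the paper's Section~\ref{sec:extension} on ``gluing,'' so I would invoke those gluing lemmas once they are available.

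The hardest part, as indicated, is the quantitative control near the lower-dimensional skeleton where many PL pieces meet at possibly bad angles: here one cannot simply work chart-by-chart, because the dependence $L' \to 1$ must be uniform over the (finitely many, but geometry-dependent) configurations of simplices. V\"ais\"al\"a's resolution is to exploit that the set is PL, so near any point the configuration is a \emph{cone} over a fixed finite complex, reducing the estimate to a compactness argument on the (finitely many) cone types; one proves the extension exists for each cone type with a constant tending to $1$, and the finiteness of the triangulation then gives a uniform bound. Modulo the base cases and the gluing principle, assembling these gives the theorem; in our paper we simply cite \cite[Theorem 5.19]{Vai:86}.
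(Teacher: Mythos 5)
The paper gives no proof of this theorem at all; it is stated as a cited result of V\"ais\"al\"a \cite[Theorem 5.19]{Vai:86}, which is exactly what your opening and closing remarks say the ``real'' proof is. Your intervening sketch of V\"ais\"al\"a's strategy (localization to simplices, base cases for a convex body and a hyperplane piece, a quantitative gluing lemma with degree-theoretic injectivity control, and uniformity via the finitely many PL cone types) is a reasonable high-level account of that reference, but since the paper simply defers to V\"ais\"al\"a without reproving the theorem, there is nothing in the paper against which to compare the details.
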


\begin{remark}
In Theorem \ref{thm:vaisala}, for any bi-Lipschitz embedding $f \colon A \to \mathbb{R}^d$, its extension $F$ may be chosen to be piecewise linear outside of $A$. If $f$ itself is piecewise linear, then $F$ may be chosen to be globally piecewise linear. (This final sentence is not stated in \cite{Vai:86}, but it is evident from the proof.)

This remark is not needed here, but it is worth pointing out in light of Corollary \ref{cor:PLapproximation}.
\end{remark}

Of course, the constant $L$ in \ref{thm:vaisala} in general depends on the particular set $A$. We want to show that $L$ can be chosen uniformly under suitable conditions.

\begin{lemma} \label{lemm:vaisala2}
Let $K \geq 1$ and $\Lambda>1$. For all $\epsilon>0$ there exists $\eta>0$ with the following property. Let $Q$ be the image of the unit cube under a $K$-bi-Lipschitz affine map. Let $A = Q \cup \partial(\Lambda Q) $. Then every $(1+\eta)$-bi-Lipschitz embedding $f\colon A \to \mathbb{R}^d$ that is the identity on $\partial(\Lambda Q)$ extends to an $(1+\epsilon)$-bi-Lipschitz map $F\colon \mathbb{R}^d \to  \mathbb{R}^d$ fixing the set $\lambda Q$. The value $\eta$ depends only on $d,K, \Lambda,\epsilon$.

Moreover, let $B = (\mathbb{R}^d \setminus Q^\circ) \cup \partial(\Lambda^{-1} Q)$. Then every $(1+\eta)$-bi-Lipschitz map $f \colon B \to \mathbb{R}^d$ that is the identity on $\partial(\Lambda^{-1}Q)$ extends to an $(1+\epsilon)$-bi-Lipschitz map $F \colon \mathbb{R}^d \to \mathbb{R}^d$ fixing the set $\mathbb{R}^d \setminus (\Lambda^{-1}Q^\circ)$.

\end{lemma}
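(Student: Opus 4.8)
The plan is to deduce this from Theorem \ref{thm:vaisala} via a compactness/normal families argument, exploiting that the set $A$ (resp. $B$) is, up to a controlled bi-Lipschitz change of coordinates, one of only ``finitely many shapes'' once we normalize the cube $Q$. First I would reduce to the model case: since $Q = T([0,1]^d)$ for a $K$-bi-Lipschitz affine map $T$, pre- and post-composing $f$ with $T^{-1}$ and $T$ changes bi-Lipschitz constants by a factor controlled by $K$ (and preserves the property of being the identity on the outer boundary up to conjugation), so it suffices to prove the statement for a fixed bounded family of cubes $Q$ — say, all affine images of $[0,1]^d$ under $K$-bi-Lipschitz affine maps fixing the origin as center, which after a further translation normalization form a \emph{compact} family in an appropriate topology. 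Actually, cleanest: prove it just for $Q$ equal to a single fixed cube and absorb $T$ into the constants; the set $A = Q \cup \partial(\Lambda Q)$ then has the bi-Lipschitz extension property by Theorem \ref{thm:vaisala}, since $Q$ is a compact piecewise linear $d$-manifold with boundary and $\partial(\Lambda Q)$ is a compact piecewise linear $(d-1)$-manifold without boundary.

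The core of the argument is upgrading the \emph{qualitative} extension property to the \emph{quantitative, uniform} statement with $\eta \to 0$ forcing $\epsilon \to 0$, together with the side conditions (the extension fixes $\lambda Q$ and is the identity far away). I would argue by contradiction: suppose for some fixed $K, \Lambda, \epsilon$ there is a sequence $\eta_n \to 0$ and $(1+\eta_n)$-bi-Lipschitz embeddings $f_n \colon A_n \to \mathbb{R}^d$, each the identity on $\partial(\Lambda Q_n)$, admitting no $(1+\epsilon)$-bi-Lipschitz extension fixing $\lambda Q_n$. Using the normalization above we may assume all $Q_n$ lie in a fixed bounded region and (passing to a subsequence) $Q_n \to Q_\infty$, a cube image that is still $K$-bi-Lipschitz affine. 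Since the $f_n$ are uniformly Lipschitz and uniformly bounded (they fix the fixed compact set $\partial(\Lambda Q_n)$, which converges), Arzelà–Ascoli gives a subsequence converging uniformly to a limit $f_\infty$ on $A_\infty$; the condition $\|f_n(x)-f_n(y)| - |x-y|\| \le \eta_n (\cdots)$ forces $f_\infty$ to be an isometry on each connected piece of $A_\infty$, in fact (being the identity on the outer boundary) the identity map. Now I would apply Theorem \ref{thm:vaisala} to $f_\infty = \mathrm{id}$ on $A_\infty$ — trivially extended by the identity — and argue that for $n$ large, $f_n$ is a small perturbation of $f_\infty$ on $A_n$ and hence, by the \emph{stability} of V\"ais\"al\"a's construction under small perturbations (or by a direct interpolation between $f_n$ and the identity near the boundary using a Lipschitz cutoff), admits an extension that is $(1+\epsilon)$-bi-Lipschitz and equals the identity off a neighborhood of $Q_n$, and which can be post-composed with a small squeeze so that it fixes $\lambda Q_n$ exactly — contradiction.

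The main obstacle I anticipate is the last sentence: Theorem \ref{thm:vaisala} as quoted is a black box giving \emph{existence} of an extension with $L' \to 1$ as $L \to 1$, but it is not stated with uniformity over a family of sets $A$, nor does it automatically produce an extension fixing $\lambda Q$ or agreeing with the identity outside $\Lambda Q$. To handle the ``fixes $\lambda Q$'' clause I would split $A$ further, e.g. first extend $f$ from $Q$ to a $(1+\epsilon')$-bi-Lipschitz map on $\Lambda^{1/2} Q$ that is the identity on $\partial(\Lambda^{1/2}Q)$, using Theorem \ref{thm:vaisala} on the annular set $\partial Q \cup \partial(\Lambda^{1/2}Q)$ or on $Q \cup \partial(\Lambda^{1/2} Q)$, then post-compose with a radial-type bi-Lipschitz map supported in $\Lambda^{1/2}Q$ that pushes the image of $\lambda Q$ back onto $\lambda Q$ (possible because $f$ is $C\eta$-close to the identity there, so the correction is $(1+C\eta)$-bi-Lipschitz), and finally glue with the identity outside $\Lambda Q$ using Theorem \ref{thm:vaisala} again on $\partial(\Lambda^{1/2}Q) \cup \partial(\Lambda Q)$; the uniformity over $Q$ comes, as above, from the compactness reduction. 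The ``moreover'' statement for $B = (\mathbb{R}^d \setminus Q^\circ) \cup \partial(\Lambda^{-1}Q)$ is entirely symmetric: $B$ is again a union of finitely many compact piecewise linear $d$- and $(d-1)$-manifolds (the closed complement of $Q^\circ$ intersected with a large ball, together with the small inner boundary), so the same compactness-plus-stability scheme applies, now arranging the extension to fix $\mathbb{R}^d \setminus (\Lambda^{-1} Q^\circ)$ by the analogous post-composition with a squeeze supported in the small cube $\Lambda^{-1/2}Q$.
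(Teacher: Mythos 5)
Your overall strategy---upgrading V\"ais\"al\"a's qualitative extension theorem to the required uniform statement via compactness of the family of $K$-bi-Lipschitz affine maps---is in the right spirit, but the execution has a genuine gap. The parenthetical ``cleanest: prove it just for $Q$ equal to a single fixed cube and absorb $T$ into the constants'' is incorrect: conjugating a $(1+\eta)$-bi-Lipschitz map by a $K$-bi-Lipschitz affine $T$ produces a $(1+\eta)H(T)$-bi-Lipschitz map, and $H(T)$ can be as large as $K^2$; this destroys the ``close to identity'' property, which is the entire difficulty of the lemma. More seriously, your contradiction argument relies on ``the stability of V\"ais\"al\"a's construction under small perturbations,'' which is not something Theorem \ref{thm:vaisala} provides. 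After you pass to the limit $f_\infty=\mathrm{id}$ on $A_\infty$, you cannot directly apply Theorem \ref{thm:vaisala} to the nearby $f_n$, because those are defined on the different sets $A_n$, and the theorem furnishes a constant for one fixed set $A$ with no uniformity across nearby sets. The missing step is an explicit transport: pick affine $M_n\to I$ with $M_n(Q_\infty)=Q_n$, observe that $M_n^{-1}\circ f_n\circ M_n$ is a $(1+\eta_n)H(M_n)$-bi-Lipschitz embedding of the \emph{fixed} set $A_\infty$ that is the identity on $\partial(\Lambda Q_\infty)$, apply V\"ais\"al\"a on $A_\infty$ for $n$ large, then conjugate back, paying one more factor $H(M_n)\to 1$. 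Your alternative---``direct interpolation between $f_n$ and the identity \dots using a Lipschitz cutoff''---also fails: convex interpolation preserves the Lipschitz bound but not the lower bi-Lipschitz bound, which is precisely why V\"ais\"al\"a's theorem is nontrivial in the first place.

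For comparison, the paper's proof avoids the sequential argument and the stability question entirely by a finite covering: it shows the set $\mathcal{F}_K$ of $K$-bi-Lipschitz linear maps is compact, covers it by finitely many balls $B_{S_i}$ in the pseudodistance $D(S,T)=H(S^{-1}T)$, applies Theorem \ref{thm:vaisala} once at each fixed center $S_i$ to obtain a constant $\eta(S_i)$, takes $\eta$ to be (essentially) the minimum of these, and handles an arbitrary $T\in B_{S_i}$ by conjugating with $T\circ S_i^{-1}$---a map whose linear dilatation is close to $1$ exactly because $T$ is $D$-close to $S_i$, so the conjugation only costs a factor close to $1$. Your sequential-compactness variant can be made to close in the same way once the conjugation step above is inserted, so the fix is modest; but as written the argument is incomplete at precisely the point you yourself flag as ``the main obstacle.''
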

Before giving the proof, we fix some notation. We identify a linear map from $\mathbb{R}^d$ to itself with its representation in $\mathcal{M}_{d \times d}$, the space of $d \times d$ matrices, with respect to the standard basis. If such a map $S \in \mathcal{M}_{d \times d}$ is invertible, we defined its linear dilatation $H(S)$ as
\[ H(S) = \frac{\sup\{\|Sx\|: \|x\| = 1 \}}{\inf\{\|Sx\|: \|x\| = 1 \}} .\]
We note that $H(S)$ is the quotient of the square root of the largest and smallest eigenvalues of $S^*S$, where $S^*$ is the adjoint of $S$; see \cite[Sec. 14]{Vai:71}. We also observe that $H(S)$ depends continuously on the matrix $S$ and that $H(S) = H(S^{-1})$. 

We define a ``pseudodistance'' $D$ between two maps $S,T \in \GL(d)$ by $D(S,T) = H(S^{-1}T)$. Based on the previous paragraph, we see that $D(\cdot, \cdot)$ is continuous as a function on $\GL(d) \times \GL(d)$. 

\begin{proof}
Let $\mathcal{F}_K$ denote the set of $K$-bi-Lipschitz linear maps of $\mathbb{R}^d$. By translating, we may assume that $Q$ is the image of the unit cube under a map in $\mathcal{F}_K$. By the continuity of the linear dilatation, we see that $\mathcal{F}_K$ is a closed subset of $\GL(d)$. It is also bounded, so we conclude that $\mathcal{F}_K$ is compact.

Consider now a fixed $\epsilon > 0$. For each $S \in \mathcal{F}_K$, by Theorem \ref{thm:vaisala} there exists a constant $\eta(S)$ such that every $(1+\eta(S))$-bi-Lipschitz map $f$ as in the statement of this lemma (with $Q$ the image of $S$) extends to a $\sqrt[3]{1+\epsilon}$-bi-Lipschitz map $F \colon \mathbb{R}^d \to \mathbb{R}^d$. For each $S \in \mathcal{F}_K$, consider the $D$-ball of radius $\sqrt[3]{1+\eta(S)}$ centered at $S$, denoted by $B_S$. Observe that $\sqrt[3]{1+\eta(S)} \leq 1+\eta(S) \leq \sqrt[3]{1+\epsilon}$. Note that, since $D(S,\cdot)$ is a continuous function, the set $B_S$ is open. Since $\mathcal{F}_K$ is compact, we may find a finite set $S_1, \ldots, S_m$ in $\mathcal{F}_K$ such that $\mathcal{F}_K \subset \bigcup_{i=1}^m B_{S_i}$. Now take $\eta = \min_{i=1,\ldots,n}\left\{\sqrt[3]{1+\eta(S_i)}\right\}-1$. Let $T \in \mathcal{F}_K$ be arbitrary and let $Q$ be the image of the unit cube under $T$, and consider a $(1+\eta)$-bi-Lipschitz map $f \colon A \to \mathbb{R}^d$ as in the lemma. There exists $S_i$ such that $T$ is in $B_{S_i}$. We see that $F = S_i \circ T^{-1} \circ f \circ T \circ S_i^{-1}$ is an $(1+\eta(S_i))$-bi-Lipschitz map defined on $Q' \cup \partial(\Lambda Q')$ that is the identity on $\partial(\Lambda Q')$. Here, $Q' = S_iT^{-1}Q$. Hence $F$ extends to a $\sqrt[3]{1+ \epsilon}$-bi-Lipschitz map on $\mathbb{R}^d$, also denoted by $F$. Note that $S_i \circ T$ is $\sqrt[3]{1+\eta(S_i)}$-bi-Lipschitz and hence $\sqrt[3]{1+\epsilon}$-bi-Lipschitz. It follows that the map $T \circ S_i^{-1} \circ F \circ S_i \circ T^{-1}$ is $(1+\epsilon)$-bi-Lipschitz. Finally, it is immediate that $\eta$ depends only on $d,K,\Lambda,\epsilon$, since $\mathcal{F}_K$, the maps $S_i$ and the sets $S_i(Q_0) \cup \partial(\Lambda S_i(Q_0))$ depend only on these quantities. 

The second statement regarding extending a map defined on $B$ can be established similarly. The argument produces a potentially smaller value of $\eta$, so the smaller value of the two satisfies the conclusion of the lemma.

\end{proof}

\subsection{Gluing lemmas}
We state our first lemma on gluing:

\begin{lemma} \label{lemm:pasting}
Let $\mathcal{A}$ be a collection of closed subsets of $\mathbb{R}^d$ for which any two do not intersect except possibly on the boundary. Let $f \colon \mathbb{R}^d \to \mathbb{R}^d$ be a map that is the identity outside $\bigcup_{A \in \mathcal{A}} A^\circ$ and which restricts to an $L$-bi-Lipschitz map of $A$ to itself for all $A \in \mathcal{A}$. Then $f$ is $L^2$-bi-Lipschitz. 

\end{lemma}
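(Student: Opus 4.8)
The plan is to verify the bi-Lipschitz inequalities for $f$ by fixing two points $x, y \in \mathbb{R}^d$ and examining the line segment $[x,y]$ joining them, using the piece-by-piece bi-Lipschitz control together with the fact that $f$ is a continuous self-map that is the identity outside the union of interiors. First I would record that $f$ is well-defined and continuous: on the overlap of two sets in $\mathcal{A}$, which lies in their common boundary, and outside $\bigcup A^\circ$, the various prescriptions for $f$ must agree (each restriction of $f$ to $A$ is genuinely $f|_A$), so there is no ambiguity. The key structural observation is that each set $A \in \mathcal{A}$ is mapped into \emph{itself}, so $f$ preserves the decomposition of $\mathbb{R}^d$ into the pieces $A$ and the complementary ``identity region'' $\mathbb{R}^d \setminus \bigcup_{A} A^\circ$.

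The main step is the upper bound $|f(x) - f(y)| \le L^2 |x - y|$. I would argue as follows. Consider the segment $\sigma = [x,y]$ and let $t \in [0,1]$ run over a parametrization $\gamma(t) = (1-t)x + ty$. The crucial point is that $f \circ \gamma$ is a path from $f(x)$ to $f(y)$ whose length I want to bound by $L^2|x-y|$. Decompose $[0,1]$ into the (relatively open) set $U$ where $\gamma(t) \in A^\circ$ for some $A$, and the closed complement; on each component of $U$ the point stays in a single $A$ (components of $A^\circ \setminus \bigcup_{A' \ne A} A'$ — here one needs that a segment cannot pass from one $A^\circ$ to another without crossing the identity region or a boundary, which follows since the $A$'s have disjoint interiors), where $f$ is $L$-bi-Lipschitz, and on the complement $f$ is the identity. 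Summing lengths of images of subsegments, each contributes at most $L$ times its own length on the $U$-part and exactly its own length on the identity part, giving $\mathrm{length}(f\circ\gamma) \le L \cdot \mathrm{length}(\gamma) = L|x-y|$, hence $|f(x)-f(y)| \le L|x-y|$. Applying the same reasoning to $f^{-1}$ — which is also the identity outside $\bigcup A^\circ$ and restricts to an $L$-bi-Lipschitz self-map of each $A$, since $f(A) = A$ forces $f|_A$ to be a bijection of $A$ (a bi-Lipschitz embedding of the compact $A$ into itself that is surjective by a degree/invariance-of-domain argument, or is assumed so) — gives $|f^{-1}(u) - f^{-1}(v)| \le L|u-v|$, i.e. $|f(x) - f(y)| \ge L^{-1}|x-y|$. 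Combining, $f$ is actually $L$-bi-Lipschitz, which is stronger than claimed; the factor $L^2$ in the statement presumably leaves room for the subtlety that $f|_A$ need not be exactly surjective, in which case one instead compares $|x-y|$ to $|f(x)-f(y)|$ by chaining the lower bound on $f|_A$ along the same segment decomposition, picking up no extra loss — so even then $L$ suffices, but I will state $L^2$ to be safe and match the hypotheses.

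\textbf{Main obstacle.} The delicate point is the claim that a segment meeting two distinct pieces $A^\circ$ and $(A')^\circ$ must pass through the identity region or through $\partial A$: since the pieces only have disjoint \emph{interiors}, a priori a point of $\partial A$ could lie in $(A')^\circ$. I would handle this by not decomposing along pieces directly but along the open set $\Omega = \bigcup_{A} A^\circ$ versus its complement $C = \mathbb{R}^d \setminus \Omega$: on $C$, $f$ is the identity, so $f\circ\gamma$ restricted to $\gamma^{-1}(C)$ has the same length as $\gamma$ there; on each component $I$ of the open set $\gamma^{-1}(\Omega)$, the image $\gamma(I)$ is a connected subset of $\Omega$ — but this still might meet several $A^\circ$. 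To fix this cleanly, I instead observe that for any subinterval $[a,b] \subseteq I$ with $\gamma([a,b])$ contained in a single $A^\circ$ we get the factor-$L$ length bound, and since $\gamma([a,b])$ for general $[a,b] \subset I$ is covered by finitely many such (by compactness and the fact that each point of $\Omega$ has a neighborhood inside some $A^\circ$), a standard subdivision argument gives $\mathrm{length}(f(\gamma|_{[a,b]})) \le L(b-a)|x-y|$, and letting $[a,b] \uparrow I$ and summing over components yields the bound. The other minor issue — that $f$ is a homeomorphism so that $f^{-1}$ makes sense and enjoys the same structure — follows from invariance of domain applied piecewise: each $f|_A \colon A \to A$ is an injective continuous self-map of a compact set, and globally $f$ is injective (two points in different pieces or in the identity region cannot collide, since their images stay in the respective pieces) and proper, hence a homeomorphism of $\mathbb{R}^d$.
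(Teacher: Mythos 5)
Your approach is genuinely different from the paper's, and it is correct in spirit. The paper defines, for each $A \in \mathcal{A}$, the auxiliary map $f_A$ equal to $f$ on $A$ and the identity elsewhere, proves each $f_A$ is $L$-bi-Lipschitz (via the same observation that a segment from $f(x)$ to $y$ hits $\partial A$ at a fixed point), and then, for $x \in A_1$, $y \in A_2$, uses that $f_{A_2} \circ f_{A_1}$ agrees with $f$ at $x$ and $y$, getting the $L^2$ bound. You instead estimate $|f(x) - f(y)|$ directly along the segment $\gamma$ from $x$ to $y$, which actually yields the sharper constant $L$. A cleaner implementation of your idea avoids curve length altogether: with $C = \mathbb{R}^d \setminus \bigcup_A A^\circ$, if $\gamma^{-1}(C) = \emptyset$ then $\gamma$ lies in a single $A$; otherwise set $a = \min \gamma^{-1}(C)$, $b = \max \gamma^{-1}(C)$, note $\gamma([0,a]) \subset A_1$ and $\gamma([b,1]) \subset A_2$ by connectedness, and since $\gamma(a), \gamma(b)$ are fixed by $f$ the triangle inequality gives $|f(x)-f(y)| \le La|x-y| + (b-a)|x-y| + L(1-b)|x-y| \le L|x-y|$. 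The tradeoff is that the paper's composition trick dodges any path argument at the price of the weaker exponent, while your direct approach is tighter but needs the segment decomposition.

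Two corrections to your write-up. First, the ``delicate point'' you flag is a non-issue: if $z \in \partial A \subseteq A$ lay in $(A')^\circ$, then $z \in A \cap A' \subseteq \partial A \cap \partial A'$, contradicting $z \in (A')^\circ$. So $\Omega = \bigcup_A A^\circ$ really is a disjoint union of open sets, a connected subset of $\Omega$ lies in exactly one $A^\circ$, and your compactness/covering detour (together with the delicate accounting of length over possibly infinitely many components) is unnecessary. Second, your parenthetical claim that a bi-Lipschitz embedding of a compact set $A$ into itself must be surjective by a degree argument is false --- $x \mapsto x/2$ on $[0,1]$ is a counterexample --- so surjectivity of $f|_A$ cannot be derived; as you also note, ``a bi-Lipschitz map of $A$ to itself'' in the statement should be read as a bijection. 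The paper makes the same implicit assumption when it passes to $f_A^{-1}$.
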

\begin{proof}
For each $A \in \mathcal{A}$, define a map $f_A \colon \mathbb{R}^d \to \mathbb{R}^d$ to agree with $f$ on $A$ and be the identity on $\mathbb{R}^d \setminus A$. 
We claim that each $f_A$ is $L$-bi-Lipschitz. Let $x,y \in \mathbb{R}^d$. We want to verify the bi-Lipschitz property for the pair $x,y$. This is immediate if both $x,y \in A$ or both $x,y \in \mathbb{R}^d \setminus A$. In the case that $x \in A$ and $y \in \mathbb{R}^d \setminus A$, we consider the straight line segment from $f(x)$ to $y = f(y)$. This path intersects $\partial A$ at a point $z$. Note that $f_A(z) = z$. Then 
\begin{align*}
    \|f_A(x) - f_A(y)\| = & \|f_A(x) - z\| + \|z-y\|  \\ 
    & \geq (1/L)\|x-z\| + \|z-y\| \\
    & \geq (1/L) (\|x-z| + \|z-y\|) \geq (1/L)\|x-y\|.
\end{align*}
This verifies one inequality. The other inequality can be shown by the same argument applied to $f_A^{-1}$. 

We now verify the lemma. Let $x,y \in \mathbb{R}^d$. Let $A_1, A_2$ be such that $x \in A_1$ and $y \in A_2$. If $x \notin \bigcup \mathcal{A}$ (resp. $y \notin \bigcup \mathcal{A}$), then pick $A_1$ (resp. $A_2$) arbitrarily. Then $f_{A_2} \circ f_{A_1}$ agrees with $f$ on the points $x,y$ and satisfies the bi-Lipschitz condition for $L^2$ on the points $x,y$. 
\end{proof}

We also give the following additional lemma.

\begin{lemma} \label{lem:glue_2}
Let $A_1, A_2$ be closed sets that cover $\mathbb{R}^d$. For each $i \in \{1,2\}$, we are given a $L$-bi-Lipschitz map $f_i$ from $A_i$ to a subset of $\mathbb{R}^d$. Assume these have the property that $f_{A_1} = f_{A_2}$ on the set $A_1 \cap A_2$. Consider the map $f \colon \mathbb{R}^d \to \mathbb{R}^d$ defined by $f|_{A_i} = f_i$. If $f$ is bijective, then $f$ is $L$-bi-Lipschitz. 
\end{lemma}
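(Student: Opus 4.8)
The plan is to verify the two bi-Lipschitz inequalities for an arbitrary pair $x,y\in\mathbb{R}^d$ directly, treating the easy case where $x,y$ lie in a common set $A_i$ first, and then the case where $x$ and $y$ are separated. The hypotheses give $f|_{A_i}=f_i$ is $L$-bi-Lipschitz, and compatibility on $A_1\cap A_2$ ensures $f$ is well-defined; the only nontrivial content is that the constant $L$ (rather than some worse constant) is inherited by the glued map. The upper Lipschitz bound is the part that requires a genuine argument; the lower bound $|f(x)-f(y)|\geq L^{-1}|x-y|$ will follow from the upper bound applied to $f^{-1}$, which exists because $f$ is assumed bijective and is itself a gluing of the inverses $f_i^{-1}$ along $f_1(A_1)\cap f_2(A_2)$.

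For the upper bound, first I would dispose of the case $x,y\in A_1$ or $x,y\in A_2$, which is immediate. Otherwise, after relabeling, $x\in A_1\setminus A_2$ and $y\in A_2\setminus A_1$. Consider the straight segment $[x,y]$. Since $A_1$ and $A_2$ are closed and cover $\mathbb{R}^d$, the segment is covered by the two relatively closed sets $[x,y]\cap A_1$ and $[x,y]\cap A_2$; by connectedness of $[x,y]$ these two closed subsets must intersect, so there is a point $z\in [x,y]\cap A_1\cap A_2$ with $z$ on the segment, hence $|x-z|+|z-y|=|x-y|$. Then
\begin{align*}
|f(x)-f(y)| &\leq |f(x)-f(z)| + |f(z)-f(y)|\\
&= |f_1(x)-f_1(z)| + |f_2(z)-f_2(y)|\\
&\leq L|x-z| + L|z-y| = L|x-y|,
\end{align*}
using that $z\in A_1$ so $f(z)=f_1(z)$, and $z\in A_2$ so $f(z)=f_2(z)$, and these agree. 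This gives the upper bound in all cases.

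For the lower bound, since $f$ is bijective, $f^{-1}\colon\mathbb{R}^d\to\mathbb{R}^d$ is defined. Write $B_i=f_i(A_i)=f(A_i)$; these are closed (as $f_i$ is bi-Lipschitz, hence a homeomorphism onto its image, and $A_i$ closed — one should note here that $B_i$ being closed in $\mathbb{R}^d$ follows since $f$ is a bijection of $\mathbb{R}^d$ and $B_1\cup B_2=\mathbb{R}^d$, so $B_1=\mathbb{R}^d\setminus(B_2\setminus B_1)$ together with properness of bi-Lipschitz maps, or more simply $B_i$ is closed because $f$ is a homeomorphism of $\mathbb{R}^d$, which in turn holds because a continuous bijection of $\mathbb{R}^d$ arising from this gluing is proper). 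The sets $B_1,B_2$ cover $\mathbb{R}^d$, and $f^{-1}|_{B_i}=f_i^{-1}$ is $L$-bi-Lipschitz with $f_1^{-1}=f_2^{-1}$ on $B_1\cap B_2$ (this agreement is exactly the compatibility of the $f_i$ on $A_1\cap A_2$ transported over). Applying the upper-bound argument already proved to $f^{-1}$ in place of $f$ yields $|f^{-1}(u)-f^{-1}(v)|\leq L|u-v|$ for all $u,v$, i.e. $|f(x)-f(y)|\geq L^{-1}|x-y|$.

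The main obstacle is the bookkeeping point that $B_1$ and $B_2$ are closed and cover $\mathbb{R}^d$, so that the symmetric argument applies to $f^{-1}$; once one observes that $f$ is automatically a homeomorphism of $\mathbb{R}^d$ (a continuous — by the upper bound — bijection of $\mathbb{R}^d$ that is proper, since $|f(x)|\to\infty$ as $|x|\to\infty$ by the upper bound applied with a basepoint), this is routine, and $B_i=f(A_i)$ is then the continuous image of a closed set under a proper map, hence closed. Everything else is the triangle inequality along a segment, exactly as in the proof of Lemma \ref{lemm:pasting}.
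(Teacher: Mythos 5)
Your proof is correct and is essentially the mirror image of the paper's. Both rest on the same idea: a straight segment joining two points, one in each piece, must cross the overlap (or its image), so the triangle inequality together with the separate $L$-bi-Lipschitz estimates on $A_1$ and $A_2$ gives one of the two inequalities, and the other follows by running the identical argument for $f^{-1}$. You prove the upper Lipschitz bound first, with the segment in the domain where $A_1, A_2$ are hypothesized closed, and obtain the lower bound by passing to $f^{-1}$; the paper proves the lower bound first, running the segment in the target inside $f(A_1)\cup f(A_2)$, and obtains the upper bound by passing to $f^{-1}$. Your ordering has a small but real advantage: your first step uses only the stated hypotheses, whereas the paper's first step already needs that $f(A_1)$ and $f(A_2)$ are closed and cover $\mathbb{R}^d$, a fact the paper's one-line proof leaves silent. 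You isolate this point correctly and supply an adequate justification: $f$ is a continuous, proper bijection of $\mathbb{R}^d$ (continuity and properness both coming from the already-proved upper bound), hence a homeomorphism, so $f(A_i)$ is closed. (One could also invoke invariance of domain to see $f$ is open, hence a homeomorphism, if one prefers to avoid the properness discussion.)
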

\begin{proof}
    Let $x \in A_1$,  $y \in A_2$. Let $x' = f(x)$ and $y' = f(y)$. The straight line segment from $x'$ to $y'$ must cross a point $z' \in f(A_1 \cap A_2)$. Then $d(x',y') = d(x',z') + d(z',y') \geq 1/L(d(x,z) + d(z,y)) \geq 1/Ld(x,y)$. The same argument applied to the map $f^{-1}$ shows the reverse inequality. 
\end{proof}

\section{Factoring affine maps locally}\label{sec:factorlinear}

The composition in Theorem \ref{thm:factorization} is built by applying linear maps and translations to individual cubes. In this section, we show how these two types of model maps can be factored into bi-Lipschitz maps with small constant that are the identity away from a given cube.

\subsection{Linear maps}

\begin{lemma}\label{lem:linear}
Let $Q$ be a cube in $\RR^d$ centered at the origin, $A$ an orientation-preserving $L$-bi-Lipschitz linear map from $\RR^d$ to itself, $C>1$, and $\epsilon>0$.
\begin{enumerate}[(i)]
    \item There are $(1+\epsilon)$-bi-Lipschitz maps $h_1, \dots, h_T$ from $\RR^d$ to itself such that
\[h_T \circ \dots \circ h_1 = A \text{ on } Q,\]
and each $h_i$ is the identity outside $CL\sqrt{d}Q$. Moreover, for all $1 \leq i \leq T$, the restriction of $h_i$ to the set $h_{i-1} \circ \cdots \circ h_1(Q)$ is a linear map, and the composition $h_i \circ \cdots \circ h_1|_Q$ is $L$-bi-Lipschitz.
    \item There are $(1+\epsilon)$-bi-Lipschitz maps $\hat{h}_1, \dots, \hat{h}_T$ from $\RR^d$ to itself such that
\[\hat{h}_T \circ \dots \circ \hat{h}_1 = A \text{ on } \mathbb{R}^d \setminus Q\]
and each $\widehat{h}_i$ is the identity inside $\frac{1}{CL}Q$. Moreover, for all $1 \leq i \leq T$, the restriction of $\hat{h}_i$ to the set $\hat{h}_{i-1} \circ \cdots \circ \hat{h}_1(\mathbb{R}^d \setminus Q)$ is a linear map, and the composition $\hat{h}_i \circ \cdots \circ \hat{h}_1|_{\mathbb{R}^d \setminus Q}$ is $L$-bi-Lipschitz.
\end{enumerate}

The number $T$ of mappings can be bounded depending only on $d,L,\epsilon, C$.
\end{lemma}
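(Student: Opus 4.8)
The plan is to prove part (i) and then obtain part (ii) by a conjugation/inversion trick. For part (i), the key idea is that the group of orientation-preserving linear bi-Lipschitz maps with distortion at most $L$ is connected (indeed, $\GL^+(d)$ is connected and path-connected), so we can connect $A$ to the identity by a path, and then factor $A$ as a product of many linear maps each close to the identity. More precisely, since $\GL^+(d)$ is path-connected, choose a continuous path $\gamma\colon[0,1]\to \GL^+(d)$ with $\gamma(0)=\mathrm{Id}$ and $\gamma(1)=A$, and---using compactness of the path and continuity of the bi-Lipschitz constant---we may arrange (by first deforming $A$ to a multiple of a rotation, say through the polar decomposition $A = O\cdot P$ with $O\in\SO(d)$ and $P$ symmetric positive definite, then connecting $O$ to $\mathrm{Id}$ inside $\SO(d)$ and $P$ to $\mathrm{Id}$ through symmetric positive definite matrices) that the path stays within $L$-bi-Lipschitz maps, or at worst within maps whose distortion is controlled by that of $A$. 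Partitioning $[0,1]$ into $T$ subintervals $0=t_0<t_1<\dots<t_T=1$ with $T=T(d,L,\epsilon)$ large, set $g_i = \gamma(t_i)\circ\gamma(t_{i-1})^{-1}$; by uniform continuity of $\gamma$ and of matrix inversion on the compact image, each $g_i$ can be made $(1+\epsilon')$-bi-Lipschitz as a linear map for $\epsilon'$ as small as we like, and $g_T\circ\cdots\circ g_1 = A$ globally, with each partial composition $g_i\circ\cdots\circ g_1 = \gamma(t_i)$ having distortion at most that of the path, hence at most $L$.

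The second ingredient is to localize each $g_i$ to the cube: a global linear map $g_i$ that is $(1+\epsilon')$-bi-Lipschitz can be modified to a $(1+\epsilon)$-bi-Lipschitz map $h_i$ that equals $g_i$ on a prescribed cube and is the identity outside a slightly larger cube. This is a standard radial-interpolation construction: writing $g_i = \mathrm{Id} + (g_i - \mathrm{Id})$ where $\|g_i - \mathrm{Id}\|$ is small, one sets $h_i(x) = x + \chi(x)(g_i(x)-x)$ for a suitable Lipschitz cutoff $\chi$ that is $1$ on the relevant cube and supported in its dilate; since $g_i-\mathrm{Id}$ is linear and small, $h_i$ stays $(1+\epsilon)$-bi-Lipschitz provided $\epsilon'$ is small enough relative to $\epsilon$, $d$, and the size ratio $C$. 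One must track the cubes carefully: at stage $i$ the relevant cube is $h_{i-1}\circ\cdots\circ h_1(Q) = \gamma(t_{i-1})(Q)$, which is contained in $L\sqrt{d}Q$ (using that each partial composition is $L$-bi-Lipschitz and fixes the origin), so taking the cutoff supported in $CL\sqrt{d}Q$ works for every $i$ simultaneously, giving the claimed ``identity outside $CL\sqrt d Q$'' property. On the support the map $h_i$ may be taken to be exactly the linear map $g_i$ (we only need it to be the identity \emph{outside} the large cube, and on $h_{i-1}\circ\cdots\circ h_1(Q)$ it is linear by construction). The bi-Lipschitz bound $L$ on the composition $h_i\circ\cdots\circ h_1|_Q = \gamma(t_i)|_Q$ is inherited from the path.

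For part (ii), apply part (i) to the map $A^{-1}$, which is also orientation-preserving and $L$-bi-Lipschitz, with the cube $Q$ replaced by $Q' = A^{-1}(\tfrac{1}{CL}Q)$ or some fixed large cube: concretely, conjugate by the inversion $x\mapsto x/|x|^2$, or more simply observe that the construction in (i) applied to $A^{-1}$ produces maps $g_i$ and then one localizes in the \emph{complementary} way, replacing the cutoff $\chi$ (equal to $1$ near the origin, $0$ far away) by $1-\chi$ so that the modified maps $\hat h_i$ equal $g_i^{-1}$ (or the appropriate linear piece) outside a small cube and are the identity inside $\tfrac{1}{CL}Q$. One checks that on $\hat h_{i-1}\circ\cdots\circ\hat h_1(\RR^d\setminus Q)$ the map $\hat h_i$ is linear, that $\hat h_i\circ\cdots\circ\hat h_1|_{\RR^d\setminus Q}$ is $L$-bi-Lipschitz, and that the composition recovers $A$ on $\RR^d\setminus Q$, all by the same bookkeeping as in (i) with inclusions of cubes reversed. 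The number $T$ from (i) is unchanged, and the final $T$ for the lemma is the maximum of the two.

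The main obstacle, and the step requiring the most care, is the localization: ensuring that when we cut off a global near-identity linear map to the identity outside a cube, the resulting piecewise-defined map is genuinely $(1+\epsilon)$-bi-Lipschitz (injectivity is the subtle point), and that the cube containing the relevant image $\gamma(t_{i-1})(Q)$ is controlled \emph{uniformly in $i$} so that a single enlargement factor $CL\sqrt d$ suffices at every stage. Quantifying how small $\epsilon'$ must be chosen---hence how large $T$ must be---in terms of $d$, $L$, $\epsilon$, and $C$ is a routine but slightly delicate estimate on the Lipschitz constant of the cutoff times $\|g_i-\mathrm{Id}\|$ against the lower bi-Lipschitz bound; I would isolate it as the key quantitative lemma and verify it by the standard perturbation-of-the-identity argument.
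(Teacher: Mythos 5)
Your proposal is correct, and the high-level skeleton (factor $A$ into near-identity linear maps, then localize each factor to be the identity off a controlled cube) matches the paper's. The execution of both steps, however, is genuinely different. For the factorization, the paper uses the singular value decomposition $A=UDV$ and treats the diagonal and orthogonal parts by separate explicit lemmas (taking $n$-th roots of the singular values, and subdividing a geodesic in $\SO(d)$); you instead connect $A$ to the identity by a path in $\GL^+(d)$ via the polar decomposition and subdivide. These are morally equivalent, but note that a bare appeal to path-connectedness would not give $T=T(d,L,\epsilon)$: you need the paths to have uniformly bounded length and a uniform modulus of continuity over all $A$ of distortion at most $L$, which the concrete choice $t\mapsto O_t P^t$ does supply and which you should make explicit. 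For the localization, the paper defines $h_i$ only on $Q_i\cup\partial R_i$ and invokes the quantitative V\"ais\"al\"a extension lemma (Lemma \ref{lemm:vaisala2}) to fill in the annulus, whereas you interpolate directly with a cutoff, $h_i=\mathrm{id}+\chi\cdot(g_i-\mathrm{id})$; since $g_i-\mathrm{id}$ is linear with small norm and $\mathrm{Lip}(\chi)\cdot\sup_{\mathrm{supp}\,\nabla\chi}|g_i(x)-x|\lesssim_{C,L,d}\|g_i-\mathrm{id}\|$, the map is a small Lipschitz perturbation of the identity, hence automatically an injective, surjective, $(1+O(\alpha))$-bi-Lipschitz homeomorphism --- so your approach buys a self-contained argument that avoids the extension machinery entirely, at the cost of the (routine) perturbation estimate you correctly isolate as the key quantitative point. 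Two minor cautions: in part (ii) the containment of $\tfrac{1}{CL}Q$ in the region where the partial images $\gamma(t_{i-1})(Q)$ live involves a $\sqrt d$ of slack that needs to be tracked (the paper's own write-up glosses over the same factor), and the inversion $x\mapsto x/|x|^2$ you float as an alternative for (ii) should be dropped, since conjugating a linear map by inversion destroys linearity; your complementary-cutoff version is the one that works.
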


To prove Lemma \ref{lem:linear}, we first consider the case of diagonal maps (i.e., linear maps represented by a diagonal matrix) and isometries separately. The general case is then obtained using the singular value decomposition and applying a bi-Lipschitz extension result. For a matrix $A$, we let $\|A\|$ denote the usual operator norm of $A$, that is, $\|A\| = \sup\{\|Ax\|: x \in \mathbb{R}^d, \|x\| = 1\}$.

\begin{lemma}\label{lem:diagfactor}
Suppose $D$ is an $L$-bi-Lipschitz diagonal linear map from $\RR^d$ to itself, with non-negative entries. Let $\alpha>0$. Then there are $(1+\alpha)$-bi-Lipschitz diagonal linear maps $D_1, \dots, D_k$ such that
$$ D = D_k \cdots D_1,$$
\begin{equation}\label{eq:diag1}
 \|D_i - I\| < \alpha \text{ for each } i,
 \end{equation}
and
\begin{equation}\label{eq:diag2}
\text{the product }D_j \cdots D_1 \text{ is } L\text{-bi-Lipschitz for each } 1\leq j \leq k
\end{equation}

The number $k$ can be bounded depending only on $d,L,\alpha$.
\end{lemma}
\begin{proof}
Since $D$ is $L$-bi-Lipschitz, the diagonal entries $\sigma_1, \dots, \sigma_d$ of $D$ are bounded between $L^{-1}$ and $L$. Choose $n\in\mathbb{N}$ as small as possible so that 
$$1-\alpha < (1+\alpha)^{-1} < L^{-1/n} \leq L^{1/n} < 1+\alpha.$$

By changing one entry at a time in $n$ steps, we may write $D$ as a product of $dn$ diagonal matrices $D_i$, each of which has diagonal values
\[1,1,\dots,1,\sigma_k^{1/n},1, \dots,1\] 
for some choice of $k\in \{1, \dots, d\}$.

Property \eqref{eq:diag1} follows from our choice of $n$ above. For \eqref{eq:diag2}, notice that, for all $j = 1, \ldots, dn$, the product $D_j\dots D_1$ is a diagonal matrix with all entries between $L^{-1}$ and $L$, and so is $L$-bi-Lipschitz. 
\end{proof}

Next, we consider the case of isometries. 

\begin{lemma}\label{lem:isomfactor}
Let $U$ be an orientation-preserving linear isometry of $\RR^d$ and $\alpha>0$. Then $U$ can be written as a composition of finitely many isometries $U_i$ that are close to the identity in the sense that $\|U_i - I\|<\alpha$ for each $i$. The number of isometries $U_i$ can be bounded depending only on $\alpha$ and $d$.
\end{lemma}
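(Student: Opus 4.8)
The plan is to use the fact that the orientation-preserving linear isometry group $\SO(d)$ is a path-connected topological group, and to exploit compactness of a path from the identity to $U$. First I would fix $U \in \SO(d)$ and choose a continuous path $\gamma\colon [0,1] \to \SO(d)$ with $\gamma(0) = I$ and $\gamma(1) = U$; such a path exists since $\SO(d)$ is connected (indeed path-connected, being a connected Lie group). The map $(s,t) \mapsto \|\gamma(s)^{-1}\gamma(t) - I\|$ (equivalently $\|\gamma(s) - \gamma(t)\|$, using that left multiplication by the isometry $\gamma(s)^{-1}$ preserves the operator norm) is continuous on the compact set $[0,1]^2$ and vanishes on the diagonal, so by uniform continuity there is $\delta > 0$ such that $|s-t| < \delta$ implies $\|\gamma(s)^{-1}\gamma(t) - I\| < \alpha$.

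Next I would choose $N \in \mathbb{N}$ with $1/N < \delta$ and set $U_i = \gamma((i-1)/N)^{-1}\gamma(i/N)$ for $i = 1, \dots, N$. Each $U_i$ is a composition of two orientation-preserving linear isometries, hence itself an orientation-preserving linear isometry, and by the choice of $\delta$ we have $\|U_i - I\| < \alpha$. Moreover the telescoping product satisfies
\[ U_N \cdots U_1 = \gamma((N-1)/N)^{-1}\gamma(1) \cdot \gamma((N-2)/N)^{-1}\gamma((N-1)/N) \cdots \gamma(0)^{-1}\gamma(1/N) = \gamma(0)^{-1}\gamma(1) = U, \]
where one has to be slightly careful about the order of composition: writing $U_N \circ \cdots \circ U_1$ and reading the telescoping from the inside out gives $\gamma(0)^{-1}\gamma(1) = U$. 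The number $N$ of factors depends only on $\delta$, which in turn depends only on $\alpha$ and on the chosen path $\gamma$; to get dependence on $\alpha$ and $d$ alone I would note that it suffices to fix \emph{one} path for each $U$, but since $U$ is arbitrary this is not quite uniform — instead I would use compactness of $\SO(d)$ itself.

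To make the bound depend only on $\alpha$ and $d$, rather than on the individual $U$, I would argue as follows. The group $\SO(d)$ is compact and connected, hence it has finite diameter in any left-invariant metric; concretely, cover $\SO(d)$ by finitely many operator-norm balls of radius $\alpha/3$ centered at points $V_1, \dots, V_m$ (possible by compactness, with $m = m(\alpha, d)$), and connect consecutive centers by paths, or more simply observe that there is a fixed bound $K = K(\alpha,d)$ such that any two elements of $\SO(d)$ can be joined by a chain of at most $K$ elements each within operator-norm distance $\alpha$ of the previous one. Applying this with the chain from $I$ to $U$ produces isometries $U_1, \dots, U_K$ (padding with copies of the identity if the chain is shorter) with $\|U_i - I\| < \alpha$ and $U_K \circ \cdots \circ U_1 = U$. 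The main obstacle here is purely bookkeeping: verifying that a uniform chain-length bound $K(\alpha,d)$ exists, which follows from compactness and local path-connectedness of $\SO(d)$ (e.g. every point has a neighborhood, say an operator-norm ball of some radius $r(d) > 0$, that is path-connected within a ball of radius $\alpha$, and then a Lebesgue-number argument on a fixed finite cover bounds the number of steps). Everything else — that compositions of isometries are isometries, that orientation is preserved, and the telescoping identity — is routine.
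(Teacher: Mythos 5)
Your argument takes a route that differs from the paper's in an instructive way. The paper works in the Riemannian category: it takes a \emph{geodesic} $\gamma$ from $I$ to $U$ in a Riemannian metric on $\SO(d)$, so the length of $\gamma$ is automatically bounded by the diameter of the compact group, and the increments $\gamma(i/N)\gamma((i-1)/N)^{-1}$ therefore have size $\lesssim_d 1/N$ uniformly in $U$; comparability of the geodesic metric, the ambient metric, and the operator norm then closes the argument. You instead take an arbitrary continuous path, correctly observe that the resulting modulus of continuity (and hence $N$) depends on the path and so on $U$, and then repair the uniformity with a compactness/chain argument. The repair is the right idea and can be made rigorous, but two things need tightening. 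First, a genuine algebraic slip: with $U_i = \gamma((i-1)/N)^{-1}\gamma(i/N)$ the product that telescopes is $U_1 U_2 \cdots U_N = \gamma(0)^{-1}\gamma(1) = U$, not $U_N \cdots U_1$; since $\SO(d)$ is non-abelian for $d\geq 3$, the latter does not collapse, and ``reading the telescoping from the inside out'' is not a legitimate operation. Defining $U_i = \gamma(i/N)\gamma((i-1)/N)^{-1}$, as the paper does, makes $U_N\cdots U_1$ telescope correctly, and $\|U_i - I\|$ is the same for either choice because the operator norm is invariant under multiplication by orthogonal matrices on either side. Second, the uniform chain-length bound $K(\alpha,d)$ is asserted with only a sketch; it is true, and the cleanest elementary justification is that the minimal chain length $n(V)$ from $I$ to $V$ is finite by connectivity, is a locally bounded function of $V$, and is therefore bounded on the compact group. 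In effect your argument re-derives the finiteness of the diameter of $\SO(d)$ by hand, which the paper's geodesic approach gets for free; the trade-off is that your route avoids Riemannian comparison inequalities at the price of slightly more point-set topology.
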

\begin{proof}
Consider $\SO(d)$ as a smooth submanifold of $\RR^{d^2}$ in the usual way. There are (at least) two ways to measure distances between elements of $\SO(d)$. One is to equip $\SO(d)$ with a Riemannian metric inherited from the ambient space, and consider the associated geodesic distance $d_g$. The other is to simply use the ambient distance $d_a$ from $\RR^{d^2}$, which is not Riemannian on $\SO(d)$.

Since $\SO(d)$ is smooth and compact, the two distances $d_a$ and $d_g$ satisfy $d_a \approx_d d_g$. Note also that
$$ d_a(A,B) \approx_d \|A-B\|$$
for all $d\times d$ matrices $A,B$.

If $U\in \SO(d)$, then there is a geodesic $\gamma\colon [0,1]\rightarrow \SO(d)$ (in the metric $d_g$) with $\gamma(0)=I$ and $\gamma(1)=U$. Choose $N\in\mathbb{N}$ and set
$$ \gamma_i = \gamma(i/N).$$
Note that
$$ d_g(\gamma_i, \gamma_{i-1}) = d_g(I,U)/N \lesssim_d 1/N,$$
since $\SO(d)$ is compact.

Define
$$ U_i = \gamma_i \cdot \gamma_{i-1}^{-1} $$
for $i=1,\dots N$. Then each $U_i$ is in $\SO(d)$ and
$$ U = U_N \cdots U_1.$$

For any unit vector $v\in\RR^d$, let $w=\gamma_{i-1}^{-1} v$. Then
\[ |(U_i - I)v| = | \gamma_i\gamma_{i-1}^{-1} v - v| = |\gamma_i w - \gamma_{i-1} w| \leq \|\gamma_i - \gamma_{i-1}\| \]
and hence
\begin{align*}
|(U_i - I)v|
&\leq \|\gamma_i - \gamma_{i-1}\|\\
&\lesssim_d d_a(\gamma_i,\gamma_{i-1})\\
&\lesssim_d d_g(\gamma_i,\gamma_{i-1})\\
&\lesssim_d \frac{1}{N}.
\end{align*}
We conclude that $\|U_i - I\|<\alpha$ if $N$ is chosen sufficiently large, depending only on $d$ and $\alpha$.\end{proof}

\begin{proof}[Proof of Lemma \ref{lem:linear}]
Using the singular value decomposition, write $A = U D V$, where $\Sigma$ is a diagonal matrix with non-negative entries and $U$ and $V$ are orthogonal, i.e., isometries. Since $A$ is orientation-preserving, $U$ and $V$ can also be assumed to be orientation-preserving.

By Lemma \ref{lemm:vaisala2}, there is a choice of $\epsilon'>0$ such that if $K$ is an $L$-bi-Lipschitz affine image of a cube and $h\colon K \cup \partial(CK) \rightarrow CK$ is $(1+\epsilon')$-bi-Lipschitz and equal to the identity on $\partial(CK)$, then it extends to a $(1+\epsilon/2)$-bi-Lipschitz map of $CK$.

Fix $\alpha>0$ sufficiently small, depending on $\epsilon', C, d,L$. First, apply Lemma \ref{lem:isomfactor} to factor $V$ by isometries 
$$ V = V_n \cdots V_1 $$
with each $\|V_i - I \|<\alpha$.

Then apply Lemma \ref{lem:diagfactor} to write $D = D_{m+k}\dots D_{m+1}$ with each $D_i$ $(1+\epsilon')$-bi-Lispchitz and $\|D_i - I\| < \alpha$ for each $i$.

Finally, apply Lemma \ref{lem:isomfactor} again to $U$ to factor $U$ as 
$$ U = U_{m+k+n} \cdots U_{m+k+1}$$
with each $\|U_i - I\|<\alpha$.

For $1\leq i \leq m+k+n$, let $A_i$ be either $V_i$, $D_i$, or $U_i$, whichever is numbered appropriately. 
Note that, for each $i$, 
$$ \| A_i - I \| <\alpha$$
and 
$$ A_{i-1} A_{i-2} \cdots A_1$$
is $L$-bi-Lipschitz (by \eqref{eq:diag2}).

Define a map $h_i$ on $\RR^d$ by setting $h_i$ to be $A_i$ on 
$$Q_i:=A_{i-1}A_{i-2}\cdots A_1(Q)$$
and the identity outside of 
$$R_i:=A_{i-1}A_{i-2}\cdots A_1(CQ) \subseteq CL\sqrt{d}Q.$$
Note that 
$$ h_i(Q_i) = A_i(Q_i) \subseteq R_i$$
if $\alpha$ is small compared to $C$.

We have
$$ h_{m+k+n} \circ \dots \circ h_1 = A \text{ on } Q.$$

Now suppose that $x\in Q_i$ and $y\in \partial(R_i)$. It follows that
$$ |x-y|\gtrsim_{C,L,d} \diam(Q_i)$$

If $\alpha$ is sufficiently small, we therefore have
$$ |h_i(x) -h_i(y)| \leq |h_i(x) - x| + |x-y| \leq \alpha\diam(Q) + |x-y| \leq (1+\epsilon')|x-y|.$$
Similarly
$$|h_i(x) -h_i(y)| \geq |h_i(x) - x| + |x-y| \geq |x-y| - \alpha\diam(Q) \geq (1+\epsilon')^{-1}|x-y|. $$

Thus, $h_i$ extends to a $(1+\epsilon)$-bi-Lipschitz map from $R_i$ into $\RR^d$. Since $h_i$ is the identity on the boundary of $R_i$, we may extend it further to all of $\RR^d$ by the identity and maintain the bi-Lipschitz bound. This completes the proof of (i).

The second claim is proven similarly to the first, and we indicate only the changes required. The linear maps $A_i$ and sets $Q_i$ are defined in the same way as above. Fix $\epsilon'$ as above.

We set $\hat{h}_i$ to be $A_i$ on $Q_i$ and the identity inside
$$ \hat{R}_i := A_{i-1}A_{i-2}\cdots A_1\left(\frac{1}{C}Q\right) \supseteq \frac{1}{CL}Q. $$
By a similar argument as in case (i), if $\alpha>0$ is sufficiently small, the maps $\hat{h}_i$ and  $\hat{h}_i\circ A_i^{-1}$ are both $(1+\epsilon')$-bi-Lipschitz on $R_{i} \cup \partial(Q_{i})$ and $R_{i-1} \cup \partial(Q_{i-1})$, respectively.

Since $h_i \circ A_i^{-1}$ is the identity on $\partial Q_{i-1}$, extends to a global $(1+\epsilon/2)$-bi-Lipschitz map $\hat{j}_i$ as above. It follows that the map $\hat{h_i}$ can be extended to a global map by setting $\hat{h}_i = j_i \circ A_i$. Since $j_i$ is $(1+\epsilon/2)$-bi-Lipschitz and $A_i$ is $(1+\alpha)$-bi-Lipschitz (as it satisfies $\|A_i-I\|<\alpha$), we see that $\hat{h}_i$ is $(1+\epsilon/2)(1+\alpha) < (1+\epsilon)$-bi-Lipschitz if $\alpha$ is sufficiently small. The extended maps $\hat{h}_i$ therefore satisfy the conditions of claim (ii) of the lemma.
\end{proof}

We also record the following variation on Lemma \ref{lem:linear}. The proof is omitted. 

\begin{lemma} \label{lem:shrink}
    Let $Q \subset \mathbb{R}^d$ be a cube centered at the origin, $\lambda>1$, $\epsilon>0$, and $c \in (0,1)$. Then there are $(1+\epsilon)$-bi-Lipschitz maps $h_1, \ldots, h_T$ from $\mathbb{R}^d$ to itself such that $(h_T \circ \cdots \circ h_1)(x) = cx$ on $Q$ and each $h_i$ is the identity outside $\lambda Q$. The number $T$ of mappings depends only on $d, \lambda, \epsilon, c$.
\end{lemma}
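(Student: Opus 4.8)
The statement is a scaling analogue of Lemma \ref{lem:linear}, but with the crucial improvement that the factor maps are supported in $\lambda Q$ with $\lambda$ arbitrarily close to $1$. Since the dilation $x\mapsto cx$ has bi-Lipschitz constant $1/c$, which need not be close to $1$, we cannot invoke Lemma \ref{lem:linear} directly. Instead the plan is to factor the dilation into $n$ copies of a single small dilation $x\mapsto\beta x$ with $\beta=c^{1/n}$ close to $1$, and to realize this one small dilation as a compactly supported $(1+\epsilon)$-bi-Lipschitz perturbation of the identity inside $\lambda Q$, using the extension Lemma \ref{lemm:vaisala2}.

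First I would reduce to a normalized cube: the entire statement is invariant under rescaling $\RR^d$, so we may assume $\ell(Q)=1$, i.e.\ $Q=[-\tfrac12,\tfrac12]^d$, which is an isometric image of the standard unit cube. Apply Lemma \ref{lemm:vaisala2} with $K=1$ and $\Lambda=\lambda$ to obtain $\eta_0=\eta_0(d,\lambda,\epsilon)>0$ such that every $(1+\eta_0)$-bi-Lipschitz map on $Q\cup\partial(\lambda Q)$ that is the identity on $\partial(\lambda Q)$ extends to a $(1+\epsilon)$-bi-Lipschitz self-map of $\RR^d$ equal to the identity outside $\lambda Q$. Then fix $\alpha=\alpha(d,\lambda,\epsilon)\in(0,1)$ small (to be pinned down in the next step) and choose $n=n(c,\alpha)$ so that $\beta:=c^{1/n}$ satisfies $1-\alpha<\beta<1$; set $T:=n$. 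Note $T$ depends only on $d,\lambda,\epsilon,c$.

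Next I would build the single factor map $h$. Consider the map $f$ that equals $x\mapsto\beta x$ on $Q$ and the identity on $\partial(\lambda Q)$. For $x,y$ both in $Q$ it is $(1/\beta)$-bi-Lipschitz, for $x,y$ both in $\partial(\lambda Q)$ it is an isometry, and for $x\in Q$, $y\in\partial(\lambda Q)$ the triangle inequality gives $\bigl||f(x)-f(y)|-|x-y|\bigr|\le|f(x)-x|=(1-\beta)|x|\le(1-\beta)\tfrac{\sqrt d}{2}$, while $|x-y|\ge\dist(Q,\partial(\lambda Q))=\tfrac{\lambda-1}{2}$. Hence, once $\alpha$ (and therefore $1-\beta$) is small enough in terms of $d,\lambda,\eta_0$ — quantities that depend only on $d,\lambda,\epsilon$ — the map $f$ is $(1+\eta_0)$-bi-Lipschitz on $Q\cup\partial(\lambda Q)$. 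Let $h$ be the $(1+\epsilon)$-bi-Lipschitz extension supplied by Lemma \ref{lemm:vaisala2}, which is the identity outside $\lambda Q$, and set $h_1=\dots=h_T=h$.

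Finally I would verify the composition. Since $\beta<1$, the dilation $x\mapsto\beta x$ maps $Q$ into its own interior, so $h$ acts on $h^{\circ(i-1)}(Q)\subseteq Q$ again by $x\mapsto\beta x$; by induction $h^{\circ i}(x)=\beta^i x$ for all $x\in Q$, and taking $i=n$ gives $(h_T\circ\dots\circ h_1)(x)=\beta^n x=cx$ on $Q$. Each $h_i=h$ is $(1+\epsilon)$-bi-Lipschitz and the identity outside $\lambda Q$, and $T=n$ depends only on $d,\lambda,\epsilon,c$, as required. There is no serious obstacle here; the only point requiring care is the order of quantifiers — first extract $\eta_0$ from Lemma \ref{lemm:vaisala2}, then shrink $\alpha$ (hence $1-\beta$) relative to $\eta_0$ and $\lambda$, and only afterwards choose $n$ (hence $T$) in terms of $c$ and $\alpha$ — together with the elementary annulus estimate above, which is exactly what makes the one small dilation $(1+\eta_0)$-bi-Lipschitz on $Q\cup\partial(\lambda Q)$. (If one preferred to avoid Lemma \ref{lemm:vaisala2}, the map $h$ could also be written down explicitly as a radial interpolation in the $\ell^\infty$-norm between $x\mapsto\beta x$ on $Q$ and the identity on $\partial(\lambda Q)$, with the same estimates.)
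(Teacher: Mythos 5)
Your proof is correct. The paper explicitly omits the proof of this lemma, so there is nothing to diverge from; your argument is the natural one and is consistent with the toolkit the authors use for the analogous Lemma \ref{lem:linear} (factor the linear map into pieces close to the identity, then localize each piece via the extension Lemma \ref{lemm:vaisala2}). The key points all check out: the order of quantifiers ($\eta_0$ from V\"ais\"al\"a first, then $\alpha$, then $n$), the annulus estimate $\bigl||f(x)-f(y)|-|x-y|\bigr|\le(1-\beta)\sqrt d/2$ against $|x-y|\ge(\lambda-1)/2$ making the single dilation $(1+\eta_0)$-bi-Lipschitz on $Q\cup\partial(\lambda Q)$, and the observation that $\beta Q\subseteq Q$ so the iteration $h^{\circ i}=\beta^i\,\mathrm{id}$ on $Q$ goes through.
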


\subsection{Translations}

\begin{lemma} \label{lemm:translation}
    Let $Q$ be a cube and $L,\epsilon>0$ be constants. Assume we are given a curve $\gamma$ with initial point $x(Q)$ and some terminal point $y$, with $\ell(\gamma) \leq C \ell(Q)$. There exists $N = N(C,\epsilon,d)$ and mappings $h_1, \ldots, h_N \colon \mathbb{R}^d \to \mathbb{R}^d$ such that each $h_j$ is $(1+\epsilon)$-bi-Lipschitz and is the identity on all points $z$ satisfying $d(z,|\gamma|) \geq 2\diam(Q)$, and the composite $h_N \circ \cdots \circ h_1$ takes $Q$ to the cube $\mathcal{C}(y,\ell(Q))$.  
\end{lemma}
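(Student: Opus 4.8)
The plan is to transport $Q$ along $\gamma$ in many small steps, each realized by a compactly supported Lipschitz perturbation of the identity by a short displacement vector. (The constant $L$ in the hypothesis plays no role.)

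First I would set up a discretization of $\gamma$. Parametrize $\gamma$ by arc length on $[0,\ell(\gamma)]$, which is legitimate since $\ell(\gamma)\le C\ell(Q)$ forces $\gamma$ to be rectifiable; thus $\gamma(0)=x(Q)$ and $\gamma(\ell(\gamma))=y$. Fix a small parameter $c=c(\epsilon,d)\in(0,1)$ (specified below), put $N=\lceil \ell(\gamma)/(c\,\ell(Q))\rceil$, and set $p_j=\gamma(\min\{jc\,\ell(Q),\,\ell(\gamma)\})$ for $j=0,1,\dots,N$. Then $p_0=x(Q)$, $p_N=y$, every $p_j$ lies on $|\gamma|$, and $|p_j-p_{j-1}|\le c\,\ell(Q)$ for all $j$. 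Since $\ell(\gamma)\le C\ell(Q)$ we have $N\le\lceil C/c\rceil$, depending only on $C,\epsilon,d$, as required.

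Next I would define the building blocks. Let $v_j=p_j-p_{j-1}$ and let $\psi_j\colon\RR^d\to[0,1]$ be a radial cutoff equal to $1$ on $\overline B(p_{j-1},\diam(Q))$, equal to $0$ on $\RR^d\setminus B(p_{j-1},2\diam(Q))$, interpolating in between, so $\psi_j$ is $\tfrac{1}{\diam(Q)}$-Lipschitz. Define $h_j(x)=x+\psi_j(x)\,v_j$. If $\dist(z,|\gamma|)\ge 2\diam(Q)$ then in particular $|z-p_{j-1}|\ge 2\diam(Q)$, so $\psi_j(z)=0$ and $h_j(z)=z$; hence $h_j$ is the identity where the lemma demands. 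Since $\mathcal{C}(p_{j-1},\ell(Q))$ has circumradius $\tfrac12\diam(Q)<\diam(Q)$, it lies in the plateau $\{\psi_j=1\}$, so $h_j$ restricts there to the translation $x\mapsto x+v_j$; therefore $h_j(\mathcal{C}(p_{j-1},\ell(Q)))=\mathcal{C}(p_j,\ell(Q))$. By induction $(h_j\circ\cdots\circ h_1)(Q)=\mathcal{C}(p_j,\ell(Q))$, and in particular $(h_N\circ\cdots\circ h_1)(Q)=\mathcal{C}(y,\ell(Q))$. To see each $h_j$ is a $(1+\epsilon)$-bi-Lipschitz homeomorphism of $\RR^d$, estimate for $x,x'\in\RR^d$
\[
\bigl|\,|h_j(x)-h_j(x')|-|x-x'|\,\bigr|\le|\psi_j(x)-\psi_j(x')|\,|v_j|\le\frac{|v_j|}{\diam(Q)}|x-x'|\le\frac{c}{\sqrt d}|x-x'|,
\]
so $h_j$ distorts distances by a factor in $[1-c/\sqrt d,\,1+c/\sqrt d]$; taking $c=\tfrac{\epsilon\sqrt d}{1+\epsilon}$ (or any smaller value depending only on $\epsilon,d$) makes $h_j$ a $(1+\epsilon)$-bi-Lipschitz embedding, and, being a compactly supported Lipschitz perturbation of the identity of Lipschitz norm $<1$, it is a homeomorphism of $\RR^d$ (injective by the lower bound; surjective via the proper straight-line homotopy $x\mapsto x+t\psi_j(x)v_j$ to the identity, which has local degree $1$ on a large ball) --- exactly the type of fact used already in the proof of Lemma \ref{lem:linear}.

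The argument is elementary and I do not anticipate a serious obstacle. The only points requiring a little care are the geometric bookkeeping of the third paragraph --- keeping the plateau $\{\psi_j=1\}$ wide enough to carry the moving cube while the support of $\psi_j$ stays inside the prescribed $2\diam(Q)$-neighborhood of $|\gamma|$ --- and the routine verification that each small perturbation is a global homeomorphism rather than merely an embedding.
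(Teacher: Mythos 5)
Your proof is correct. The high-level strategy is the same as the paper's — discretize $\gamma$ by arc length into short steps and translate the cube along the path one step at a time — but the realization of each small step differs. The paper obtains each short translating homeomorphism by invoking the V\"ais\"al\"a extension result (Lemma \ref{lemm:vaisala2}): it defines a map that translates the cube $Q_{(j-1)\delta}$ to $Q_{j\delta}$ and is the identity on $\partial(2Q_{(j-1)\delta})$, then extends it bi-Lipschitzly to the annulus. You instead build the step map explicitly as $h_j(x) = x + \psi_j(x)v_j$ with a radial cutoff $\psi_j$ of Lipschitz norm $1/\diam(Q)$, and verify the bi-Lipschitz bound directly from $|v_j|/\diam(Q)\le c/\sqrt d$. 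Your route is more elementary (it avoids the extension lemma and yields a closed-form formula for each factor, with a transparent bound $N\le \lceil C/c\rceil$), and the verification that each $h_j$ is a global homeomorphism — injectivity from the bi-Lipschitz lower bound, surjectivity via the proper straight-line homotopy to the identity — is routine, as you note. One could even skip the degree argument: a map of $\RR^d$ to itself that is bi-Lipschitz and equal to the identity outside a compact set is automatically surjective since its image is both open (invariance of domain) and closed (properness). Both approaches give the same quantitative dependence of $N$ on $C,\epsilon,d$, as the lemma requires.
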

\begin{proof}
Assume that $\gamma$ is parametrized by arc length, and let $\ell = \ell(\gamma)$. For all $t \in [0, \ell]$, let $Q_t$ denote the cube $\mathcal{C}(\gamma(t),\ell(Q))$. Let $\delta>0$ be sufficiently small so that any translation of $Q$ by at most $\delta$ has a $\epsilon$-bi-Lipschitz extension that is the identity outside $2Q$. The existence of such $\delta$ follows from Lemma \ref{lemm:vaisala2}.

Let $N$ be the smallest integer such that $N\delta \geq \ell$. Extend the definition of $\gamma$ to $[0,N\delta]$ by setting $\gamma(t) = \gamma(\ell)$ for all $t \geq \ell$. For all $j \in \{1, \ldots, N\}$, let $h_j$ denote the map given by Lemma \ref{lemm:vaisala2} that translates the cube $Q_{(j-1)\delta}$ to $Q_{j\delta}$ and is the identity outside the set $2Q_{(j-1)\delta}$. It is clear that $h_j$ satisfies the requirements of the lemma and that $h_N \circ \cdots \circ h_1(Q) = Q_\ell = \mathcal{C}(y,\ell(Q))$.  
\end{proof}

\section{Proof of Theorem \ref{thm:factorization}}\label{sec:mainproof}

A convention we will use in this section is to say that constants may depend on {\it the data} to mean that they depend on the given constants in Theorem \ref{thm:factorization} ($d, L, \epsilon, \delta$) as well as all constants previously chosen up to that point (which will ultimately depend only on $d,L,\epsilon,\delta$).

\subsection{Multi-level decomposition}

Fix $f\colon [0,1]^d \rightarrow\RR^d$ an $L$-bi-Lipschitz embedding, and constants $\epsilon,\delta>0$. We first apply Proposition \ref{prop:coronafunction} to obtain a coronization $(\mathscr{G},\mathscr{B},\mathscr{F})$ of $[0,1]^d$ for the map $f$. In doing this, we take $\eta>0$ the value given by Lemma \ref{lemm:vaisala2} with the given $\epsilon$ and with $\Lambda = 2$, and we take $\theta$ so that $L(\theta+\eta)< 1/(4\sqrt{d}L)$. We may assume that the top cube $Q^0_1=[0,1]^d$ is an element of $\mathscr{B}$; this can be arranged and at worst increases the Carleson constant by $1$.

In the first step of the proof, we pass to a useful multi-level decomposition of $[0,1]^d$:

\begin{lemma}\label{lem:stopping}
Let $\alpha>0$. There is an integer $N\in\mathbb{N}$, and constants $\zeta >0$, $K\in\mathbb{N}$, $\lambda=1-2^{-K}$, and for each $n\in\{1, \dots, N\}$ the following objects:
\begin{itemize}
\item Stopping time regions $\{S^n_k\}_{k\in K_n}$ from $\mathscr{F}$. 
\item Dyadic cubes $\{R^n_i\}_{i\in I_n}$, with mutually disjoint interiors and with each $R^n_i$ in some $S^n_k$.
\item Dyadic cubes $\{Q^n_j\}_{j\in J_n}$ with mutually disjoint interiors, each of which is a minimal cube for one of the stopping time regions $S^n_k$ (i.e., $Q^n_j\in S^n_k$ but none of its descendants are).
\end{itemize}
These objects can be chosen to satisfy the following properties:
\begin{enumerate}[(i)]
\item\label{eq:QinR} For all $n\geq 1$, each $Q^n_j$ is contained in some $R^n_i$ and belongs to the same stopping time region as $R^n_i$.
\item\label{eq:RinQ} For all $n\geq 1$, each $R^n_i$ is strictly contained in some $Q^{n-1}_j$. 
\item\label{eq:Rside} If $R^n_i\subseteq Q^{n-1}_j$, then $$ \zeta \ell(Q^{n-1}_j) \leq \ell(R^n_i) \leq 2^{-K}\ell(Q^{n-1}_j).$$
Note in particular that for each cube $R_i^n$ and ancestor $R_{i'}^{n'}$, $R_i^n$ is either contained in $\lambda R_{i'}^{n'}$ or does not overlap with $\lambda R_{i'}^{n'}$. 
\item\label{eq:Bni}
If we set $$B^n_i=(\lambda R^n_i)\setminus \left(\bigcup_j Q^{n}_j \cup \bigcup_{m=1}^N \bigcup_i (R^m_i \setminus \lambda R^m_i\big) \right), $$
then 
$$ |\cup_{n=1}^N \cup_i B^n_i| \geq 1-\alpha.$$
\end{enumerate}

The constants $N$, $\zeta$, $K$, $\lambda$ depend on $\alpha$, $L$, $d$, and the Carleson packing constant $C$ of the coronization.
\end{lemma}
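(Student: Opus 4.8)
The plan is to build the objects recursively in the levels $n=1,\dots,N$, using the Carleson packing properties of the coronization $(\mathscr{G},\mathscr{B},\mathscr{F})$ at each stage to guarantee that most of the mass is captured. First I would fix, independently of $n$, two integer parameters: a large $K$ (so that $\lambda=1-2^{-K}$ is close to $1$, controlling the ``collar'' $R^n_i\setminus\lambda R^n_i$), and a small $\zeta>0$ controlling how far down inside a cube we are allowed to go; both will be chosen at the end in terms of $\alpha$, $L$, $d$, and the Carleson constant $C$. The recursion starts with $Q^0_1=[0,1]^d$. Given the cubes $\{Q^{n-1}_j\}$, for each $j$ I would look inside $Q^{n-1}_j$: since $Q^{n-1}_j$ is a minimal cube of a stopping time region, its dyadic children already lie in new stopping time regions of $\mathscr{F}$ (or in $\mathscr{B}$). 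I select as the cubes $R^n_i\subseteq Q^{n-1}_j$ a maximal (hence disjoint) family of dyadic subcubes $R$ of $Q^{n-1}_j$ with $R$ strictly inside $Q^{n-1}_j$, lying in $\mathscr{G}$, with side length in the window $[\zeta\ell(Q^{n-1}_j),2^{-K}\ell(Q^{n-1}_j)]$, and belonging to a common stopping time region with some descendant we will keep. This immediately gives \eqref{eq:RinQ} and \eqref{eq:Rside}, and the geometric consequence about $\lambda R^{n'}_{i'}$ follows because dyadic cubes are nested and the collar $R\setminus\lambda R$ has controlled relative width. Then, for each such $R^n_i$, inside the stopping time region $S^n_k$ containing it I take $\{Q^n_j\}$ to be (a subcollection of) the minimal cubes of $S^n_k$ that lie in $R^n_i$; this gives \eqref{eq:QinR}.

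The heart of the argument is the measure bound \eqref{eq:Bni}, and this is where the Carleson packing estimates do the work. There are three ways a point $x\in[0,1]^d$ can fail to be in $\cup_n\cup_i B^n_i$: (a) at every level it sits in a ``bad'' cube of $\mathscr{B}$ rather than inside a selected $R^n_i$; (b) it sits in the thin collar $R^m_i\setminus\lambda R^m_i$ of some selected cube; or (c) the recursion never terminates above it, i.e., it lies in $\bigcap_n\bigcup_j Q^n_j$ with the cubes $Q^n_j$ not shrinking to it fast enough—but in fact by \eqref{eq:Rside} the cubes shrink by a definite factor $2^{-K}$ each level, so after $N$ levels the residual set $\bigcup_j Q^N_j$ has measure going to $0$ as $N\to\infty$, independent of $f$. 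Contribution (b) is handled by choosing $K$ large: for a fixed maximal family of disjoint cubes $\{R^m_i\}_m$, the collars $R^m_i\setminus\lambda R^m_i$ have total measure at most $(1-\lambda^d)\sum_{m,i}|R^m_i|\le (1-\lambda^d)\cdot|[0,1]^d|\cdot N$ once we know the selected cubes at each level are disjoint and the levels number $N$; so I would choose $K=K(\alpha,N,d)$ after fixing $N$. Contribution (a) is the subtle one: I need that at each passage from level $n-1$ to level $n$, the union of the selected $\lambda R^n_i$ captures all but a fixed small fraction of $\bigcup_j Q^{n-1}_j$. This is exactly where properties (ii) and (v) of the coronization (Definition \ref{def:coronization}) enter: inside each $Q^{n-1}_j$, the dyadic cubes that do \emph{not} get covered either belong to $\mathscr{B}$ or are maximal cubes $Q(S)$ of descendant stopping time regions that are ``too small'' (side $<\zeta\ell(Q^{n-1}_j)$)—but the Carleson packing bounds say $\sum_{Q\in\mathscr{B},Q\subseteq R}|Q|\le C|R|$ and $\sum_{S,Q(S)\subseteq R}|Q(S)|\le C|R|$, which after the standard stopping-time/iteration argument gives that a positive fraction (depending only on $C$ and $d$, not on the particular $R$) of each $Q^{n-1}_j$ is covered, provided $\zeta$ is small. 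Iterating over $N$ levels, the uncovered mass decays geometrically, so taking $N$ large forces the total uncovered measure below $\alpha$.

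Concretely, I would organize the quantitative bookkeeping as follows. Let $\beta=\beta(C,d)\in(0,1)$ be the fraction of a cube that is guaranteed to be covered by selected $\lambda R$'s at a single step (this is where a Vitali-type or dyadic-maximal argument against the Carleson estimate is applied, with $\zeta$ chosen small depending on $C,d,\beta$). Then, ignoring collars for the moment, $\big|\bigcup_j Q^n_j\big|\le (1-\beta)\big|\bigcup_j Q^{n-1}_j\big|\le (1-\beta)^n$, and the set $\bigcup_i(\lambda R^n_i)\setminus\bigcup_j Q^n_j$ picks up most of the ``newly resolved'' mass at level $n$. Summing over $n=1,\dots,N$ and subtracting the collar losses $\le N(1-\lambda^d)$ and the residue $(1-\beta)^N$, I get $\big|\bigcup_{n,i}B^n_i\big|\ge 1-(1-\beta)^N-N(1-\lambda^d)$, and choosing first $N$ so that $(1-\beta)^N<\alpha/2$, then $K$ so that $N(1-2^{-K})^{\text{(complement)}}<\alpha/2$ — more precisely $N(1-(1-2^{-K})^d)<\alpha/2$ — yields \eqref{eq:Bni}. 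The main obstacle I anticipate is making step (a) rigorous: one must verify that the ``bad'' cubes and the ``too-small maximal stopping cubes'' inside a given $Q^{n-1}_j$ can both be absorbed by the Carleson estimates simultaneously, and that the coherence of stopping time regions (property (iv) of Definition \ref{def:coronization}) is compatible with selecting the $R^n_i$ strictly inside $Q^{n-1}_j$ while keeping them and their kept descendants $Q^n_j$ in a common stopping time region. Everything else — the nesting and side-length relations \eqref{eq:QinR}, \eqref{eq:RinQ}, \eqref{eq:Rside}, and the collar estimate — is routine dyadic geometry.
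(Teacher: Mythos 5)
Your construction of the families $\{R^n_i\}$, $\{Q^n_j\}$ matches the paper's exactly, as does the identification of three sources of loss: collars, levels where a point falls through the $\mathscr{B}$-layer without being captured by any $R^n_i$, and points that survive all $N$ levels. Your treatments of the first two are fine (your bounds are a bit looser than the paper's, with $N$-dependence that the paper avoids by invoking the Carleson bound $\sum_{n,i}|R^n_i|\leq C$, but that is harmless since you choose $K$ and $\zeta$ after $N$).

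The gap is in case (c). You assert that $|\bigcup_j Q^N_j|\to 0$ as $N\to\infty$ because each $Q^n_j$ has side length at most $2^{-K}\ell(Q^{n-1}_j)$, and later write the recursion $|\bigcup_j Q^n_j|\leq(1-\beta)|\bigcup_j Q^{n-1}_j|$. Neither claim is justified. Shrinking side lengths do not shrink total measure, since the number of cubes grows. And there is no uniform $\beta>0$: because each stopping time region $S$ is coherent, its minimal cubes contained in a given $R^n_i\in S$ tile $R^n_i$ exactly, so $\bigcup_j Q^n_j=\bigcup_i R^n_i$ in measure. In the extreme case where every $R^n_i$ is the top (and only) cube of its region, $Q^n_j=R^n_i$, and if the $R^n_i$ almost tile each $Q^{n-1}_j$, then $|\bigcup_j Q^n_j|$ stays close to $|\bigcup_j Q^{n-1}_j|$ for every $n$. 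In that scenario each set $B^n_i$ is essentially empty and your tally $\geq 1-(1-\beta)^N-N(1-\lambda^d)$ never improves. The reason this cannot persist for many levels is not a per-level geometric contraction; it is the Carleson packing of \emph{top cubes} (property (v) of Definition \ref{def:coronization}), applied once across all levels: a point surviving $N$ levels lies in $N$ distinct top cubes $Q(S^n_k)$, so $\hat N(x)\geq N$ where $\hat N$ counts bad cubes and top cubes through $x$, and $\int\hat N\lesssim C$ forces the measure of survivors to be at most $C/N$. This is the argument the paper uses for its set $T_2$, and it is the key ingredient missing from your proposal. Everything else in your outline can be repaired or is already correct, but as written the proof of \eqref{eq:Bni} does not close.
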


The union of the sets $B^n_i$ is a ``good'' set for us, and will eventually essentially form the set on which the factorization of Theorem \ref{thm:factorization} agrees with $f$.

\begin{proof}
Recall the families of cubes $\cG, \cB$ and the family of stopping time regions $\cF$ from Definition \ref{def:coronization}, with their associated properties. Recall also our assumption that the unit cube $[0,1]^d\in \cB$.

Let $N,\zeta,K, \lambda = 1-2^{-K}$ be constants to be specified below during the course of the proof.

Let $Q^0_1$ be the unit cube and set $J_0=\{1\}$. Define $\{R^1_i\}_{i\in I_1}$ to be the maximal cubes of $\cG$ that are strictly contained in $Q^0_1$ and satisfy
$$ \zeta \ell(Q^{0}_1) \leq \ell(R^1_i) \leq 2^{-K}\ell(Q^{0}_1).$$

Observe that, by definition, the cubes $\{R^1_i\}$ are mutually disjoint and in $\mathscr{G}$
{
and may be of somewhat varying sizes.
}

Therefore, each $R^1_i$ is contained in a stopping time region $S^1_k$. 
Let $\{Q^1_j\}_{j\in J_1}$ denote the collection of all minimal cubes of all the stopping time regions $\{S^1_k\}$
such that there is some $R^1_i \supseteq Q^1_j$.

With $\{R^1_i\}$ and $\{Q^1_j\}$ so defined, we then repeat the construction inductively. That is, for $n\geq 2$, $\{R^n_i\}_{i\in I_n}$ are the maximal sub-cubes of any $Q^{n-1}_j$ that are contained in $\cG$ and satisfy
$$ \zeta \ell(Q^{n-1}_1) \leq \ell(R^n_i) \leq 2^{-K}\ell(Q^{n-1}_1).$$
Each $R^n_i$ is contained in a stopping time region $S^n_k$. We define $\{Q^n_j\}_{j\in J_n}$ to be all the minimal cubes of these stopping time regions that are contained in some $R^n_i$. We terminate this process after $N$ steps, for a value of $N$ to be specified below. (Note that stopping time regions with fewer than $K$ generations are ``skipped over'' in this process.)

Properties \eqref{eq:QinR}, \eqref{eq:RinQ}, and \eqref{eq:Rside} are immediate from the construction. It remains to choose $\zeta, N, K$ so that property \eqref{eq:Bni} holds.

\begin{claim}\label{claim:leftover}
For each $n\geq 0$ and $j\in J_n$, we have
$$ |Q^n_j \setminus \cup_{I_{n+1}} R^{n+1}_i| \leq  \frac{C}{\log_2(1/\zeta) - K} |Q^n_j|.$$
\end{claim}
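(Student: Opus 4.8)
The plan is to reduce Claim \ref{claim:leftover} to the Carleson packing bound for $\mathscr{B}$ (property (ii) in Definition \ref{def:coronization}) via a pigeonhole argument over the dyadic scales lying strictly between $2^{-K}\ell(Q^n_j)$ and $\zeta\ell(Q^n_j)$. Recall from the construction that $\{R^{n+1}_i\}$ consists of the maximal dyadic subcubes of $Q^n_j$ that lie in $\mathscr{G}$ and have side length in the window $[\zeta\ell(Q^n_j),\,2^{-K}\ell(Q^n_j)]$. Call a dyadic cube $Q\subseteq Q^n_j$ \emph{eligible} if $\ell(Q)=2^{-k}\ell(Q^n_j)$ for some integer $k$ with $K\le k\le\lfloor\log_2(1/\zeta)\rfloor$, and let $M=\lfloor\log_2(1/\zeta)\rfloor-K+1$ be the number of eligible scales, so that $M\ge\log_2(1/\zeta)-K$. (If $\log_2(1/\zeta)\le K$ the asserted bound is vacuous, so we may assume $\log_2(1/\zeta)>K$.)

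The first and only genuinely non-routine step is the combinatorial observation: if $x\in Q^n_j$ lies in no dyadic hyperplane and is not contained in any $R^{n+1}_i$, then for \emph{every} eligible scale the dyadic cube of that scale containing $x$ lies in $\mathscr{B}$. Indeed, suppose some eligible cube containing $x$ lay in $\mathscr{G}$. The eligible cubes containing $x$ form a finite nested chain, so among those in $\mathscr{G}$ there is a largest one, $\tilde Q$; any eligible $\mathscr{G}$-cube strictly containing $\tilde Q$ would still contain $x$ and hence belong to the chain, contradicting maximality, so $\tilde Q$ is in fact maximal among all eligible cubes in $\mathscr{G}$ contained in $Q^n_j$. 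Thus $\tilde Q=R^{n+1}_i$ for some $i$, and $x\in R^{n+1}_i$ --- a contradiction.

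It then remains to run the pigeonhole and apply the Carleson condition. For each eligible $k\in\{K,\dots,\lfloor\log_2(1/\zeta)\rfloor\}$ let $U_k$ be the union of all dyadic $Q\subseteq Q^n_j$ with $\ell(Q)=2^{-k}\ell(Q^n_j)$ and $Q\in\mathscr{B}$. Up to a set of measure zero (the union of the dyadic hyperplanes), the previous step gives $Q^n_j\setminus\bigcup_i R^{n+1}_i\subseteq U_k$ for each of the $M$ eligible values of $k$, whence
\[
\Bigl|Q^n_j\setminus\bigcup_i R^{n+1}_i\Bigr|\;\le\;\frac1M\sum_{k}|U_k|\;=\;\frac1M\sum_{k}\ \sum_{\substack{Q\subseteq Q^n_j,\ Q\in\mathscr{B}\\ \ell(Q)=2^{-k}\ell(Q^n_j)}}|Q|\;\le\;\frac1M\sum_{\substack{Q\in\mathscr{B}\\ Q\subseteq Q^n_j}}|Q|\;\le\;\frac{C}{M}\,|Q^n_j|,
\]
since the cubes summed are pairwise distinct (distinct scales give distinct cubes, and at a fixed scale they are disjoint), and the last inequality is Definition \ref{def:coronization}(ii) applied with $R=Q^n_j\in\Delta$. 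As $M\ge\log_2(1/\zeta)-K$, this is exactly the claimed estimate. The one thing to be careful about is the maximality bookkeeping in the combinatorial step --- making sure the maximal eligible $\mathscr{G}$-cube through $x$ is genuinely one of the selected $R^{n+1}_i$; everything else is routine.
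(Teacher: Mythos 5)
Your proof is correct and follows essentially the same route as the paper: both hinge on the observation that a point of $Q^n_j$ avoiding all the $R^{n+1}_i$ must have every dyadic ancestor at scales between $\zeta\ell(Q^n_j)$ and $2^{-K}\ell(Q^n_j)$ lying in $\mathscr{B}$, and then invoke the Carleson packing condition of Definition \ref{def:coronization}(ii). The paper phrases the final step as an integral of a counting function $N(x)$ over $Q^n_j$ rather than your scale-by-scale pigeonhole, but the two computations are identical after unwinding.
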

\begin{proof}
Let $S =Q^n_j \setminus \cup_{I_{n+1}} R^{n+1}_i$. Given a point $x\in [0,1]^d$, let $N(x)$ denote the number of cubes in $\cB$ containing $x$ inside $Q^n_j$. Note that if $x\in S$ then every cube in $Q^n_j$ of side length between $\zeta\ell(Q^n_j)$ and $2^{-K}\ell(Q^n_j)$ that contains $x$ is in $\cB$. Therefore, using Definition \ref{def:coronization},
\begin{align*}
    C|Q^n_j| &\geq \sum_{B\in\cB, B\subseteq Q^n_j} |B|\\
    &= \int_{Q^n_j} N(x) \,dx\\
    &\geq  \int_{S} N(x) \,dx\\
    &\geq |S|(\log_2(1/\zeta) - K),
\end{align*}
which proves the claim.
\end{proof}

\begin{claim}\label{claim:carleson}
We have
$$ \sum_{n=1}^N \sum_{i\in I_n} |R^n_i| \leq C$$
and
$$ \sum_{n=1}^N \sum_{j\in J_n} |Q^n_j| \leq C,$$
where $C$ is the Carleson packing constant
\end{claim}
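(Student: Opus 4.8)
The plan is to deduce both inequalities from the Carleson packing condition (v) of Definition \ref{def:coronization} for the tops $\{Q(S):S\in\mathscr{F}\}$ of the stopping time regions, applied with $R=Q^0_1=[0,1]^d$. The whole point is that a given stopping time region $S$ can contain cubes $R^n_i$ (and hence cubes $Q^n_j$) from only a \emph{single} level $n$, and within that level those cubes are pairwise disjoint, so the total measure of the $R^n_i$'s (resp. $Q^n_j$'s) lying in $S$ is at most $|Q(S)|$. Summing over $S$ then finishes both estimates.

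First I would record what coherence gives. Fix $n\geq 2$ and a cube $R^n_i$, and let $S$ be the stopping time region containing it. By \eqref{eq:RinQ}, $R^n_i$ is strictly contained in some $Q^{n-1}_j$, which is a minimal cube of its own stopping time region; thus $R^n_i$ is a proper descendant of $Q^{n-1}_j$, and since distinct stopping time regions are disjoint, coherence forces $Q(S)\subsetneq Q^{n-1}_j$. (Indeed, if $Q(S)\supseteq Q^{n-1}_j$, then $Q^{n-1}_j$ would be sandwiched between $R^n_i\in S$ and $Q(S)\in S$ and hence lie in $S$, contradicting that it lies in a different region; note $Q(S)=Q^{n-1}_j$ is impossible since $Q^{n-1}_j$ lies in a region other than $S$.) Writing $S(\cdot)$ for the region containing a cube, and using \eqref{eq:QinR} (which puts $Q^{n-1}_j$ in the same region as the cube $R^{n-1}_{i'}\supseteq Q^{n-1}_j$), we get $Q(S)\subsetneq Q^{n-1}_j\subseteq R^{n-1}_{i'}\subseteq Q(S(R^{n-1}_{i'}))$. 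Hence, travelling up the nesting chain $R^n_i\subsetneq Q^{n-1}_{j_{n-1}}\subseteq R^{n-1}_{i_{n-1}}\subsetneq\dots\subseteq R^1_{i_1}$ supplied by \eqref{eq:QinR}–\eqref{eq:RinQ}, the region tops $Q(S(R^m_{i_m}))$ strictly increase as $m$ decreases; in particular the regions along any such chain are all distinct, and the top of the bottom region is strictly contained in every cube of the chain above it.

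The key claim is: if $R^n_i$ and $R^m_{i'}$ lie in the same stopping time region $S$, then $n=m$. Suppose $n<m$. Running the chain up from $R^m_{i'}$ reaches a level-$n$ cube $R^n_{i_n}$ with $R^m_{i'}\subseteq R^n_{i_n}$ and $S(R^n_{i_n})\neq S(R^m_{i'})=S=S(R^n_i)$, so $R^n_{i_n}\neq R^n_i$; but $Q(S)$ is strictly contained in every cube of that chain above $R^m_{i'}$, so $R^n_i\subseteq Q(S)\subsetneq R^n_{i_n}$, contradicting that distinct level-$n$ cubes have disjoint interiors. Granting this, for a fixed $S$ the cubes $\{R^n_i:R^n_i\in S\}$ all occur at one level and are pairwise disjoint subsets of $Q(S)$, so $\sum_{R^n_i\in S}|R^n_i|\leq|Q(S)|$. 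Since every region arising here has top contained in $[0,1]^d$ (if $Q(S)\supseteq Q^0_1$ then coherence would put $Q^0_1\in S\subseteq\mathscr{G}$, contradicting $Q^0_1\in\mathscr{B}$), summing over $S\in\mathscr{F}$ and applying (v) with $R=Q^0_1$ yields
$$\sum_{n=1}^N\sum_{i\in I_n}|R^n_i|\leq\sum_{S\in\mathscr{F},\,Q(S)\subseteq Q^0_1}|Q(S)|\leq C.$$

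For the second inequality, observe that by \eqref{eq:QinR} each $Q^n_j$ lies in the same region as the cube $R^n_i\supseteq Q^n_j$, so the key claim again forces all $Q^n_j$ in a fixed region $S$ to occur at a single level; since the $Q^n_j$ have pairwise disjoint interiors and all lie in $Q(S)$, we get $\sum_{Q^n_j\in S}|Q^n_j|\leq|Q(S)|$, and summing over $S$ and invoking (v) once more gives $\sum_{n,j}|Q^n_j|\leq C$. The main obstacle is the key claim — extracting from coherence the fact that a stopping time region is never ``revisited'' at two different levels of the multi-level decomposition; the rest is routine bookkeeping against the Carleson packing bound.
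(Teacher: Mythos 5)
Your proof is correct and follows essentially the same route as the paper: group the (disjoint) cubes at each level by their stopping time region, bound each group by $|Q(S)|$, and sum using Definition \ref{def:coronization}(v). The only difference is that you supply a careful proof of the fact that cubes $R^n_i$, $R^m_{i'}$ from different levels lie in distinct stopping time regions, which the paper simply asserts; your coherence argument for this is valid.
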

\begin{proof}
Recall that $\{S^n_k\}_{k\in K_n}$ are the stopping time regions containing the cubes $\{R^n_i\}_{i\in I_n}$. Since the cubes $R^n_i$ are mutually disjoint, we have for each $n$ that
$$ \sum_{i\in I_n} |R^n_i| \leq \sum_{k\in K_n} |Q(S^n_k)|,$$
where $Q(S^n_k)$ are the top cubes of the stopping time regions $S^n_k$. Also note that if $n\neq m$, then any $R^n_i$ and $R^m_{i'}$ must be in distinct stopping time regions. It then follows by Definition \ref{def:coronization} that
$$ \sum_{n=1}^n \sum_{i\in I_n} |R^n_i| \leq \sum_{S\in \mathscr{F}}|Q(S)| \leq C.$$

For the second bound, again observe that for fixed $n$, the cubes $\{Q^n_j\}_{j\in J_n}$ are all disjoint, and that if $n\neq m$, then any $Q^n_j$ and $Q^m_{j'}$ must be in distinct stopping time regions. We therefore get as above that
$$ \sum_{j\in J_n} |Q^n_j| \leq \sum_{k\in K_n} |Q(S^n_k)|,$$
and so
$$ \sum_{n=1}^n \sum_{j\in J_n} |Q^n_j| \leq \sum_{S\in \mathscr{F}}|Q(S)| \leq C.$$
\end{proof}

Set 
$$T = Q^0_1 \setminus \cup_{n=1}^N\cup_i B^n_i,$$
so that our goal is to make $|T|<\alpha$.

Note that $T \subseteq T_{1} \cup T_{2} \cup T_3$, where
$$ T_1 = \cup_{n=1}^N\cup_i (R^n_i \setminus \lambda R^n_i), $$
$T_2$ is the set of all points in $Q^0_1$ that are contained in some $Q^n_j$ for each 
{and every} 
$n=1, 2, \dots, N$, and
$$ T_3 = \cup_{n=1}^{N-1} \cup_j (Q^n_j \setminus \cup_i R^{n+1}_i).$$

Using Claim \ref{claim:carleson}, we bound $T_1$ by
$$ |T_1| \lesssim_d (1-\lambda)\sum_{n=1}^N \sum_{i\in I_n} |R^n_i| \leq C2^{-K}.$$

To bound $T_2$, let $\hat{N}(x)$ denote the number of cubes containing a point $x$ that are either in $\cB$ or are top cubes $Q(S_i)$ of some stopping time region. By Definition \ref{def:coronization}, we have $\int_{Q^1_0} \hat{N}(x) \leq C$.

Each $x\in T_2$ has $\hat{N}(x)\geq N$, because it is contained in at least $N$ minimal cubes of different stopping time regions. We therefore have
$$|T_2| \leq \frac{1}{N}\int_{Q^1_0} \hat{N}(x)\,dx \leq \frac{C}{N}.$$

Finally, using Claims \ref{claim:leftover} and \ref{claim:carleson}, we bound $T_3$ by 
$$ |T_3| \leq \sum_{n=1}^N \sum_{j\in J_n} |(Q^n_j \setminus \cup_i R^{n+1}_i)| \leq \frac{C}{\log_2(1/\zeta) - K}.$$

To force $|T| \leq |T_1|+|T_2|+|T_3| < \alpha$, we therefore first choose $K$ and $N$ large enough so that
$$ C2^{-K} < \alpha/3 \text{ and } \frac{C}{N}<\alpha/3.$$
We then choose $\zeta$ depending on $K$ so that
$$\frac{C}{\log_2(1/\zeta) - K} < \alpha/3.$$
\end{proof}

We apply Lemma \ref{lem:stopping} with parameter $\alpha=\delta/2$. Once these cubes have been chosen, Proposition \ref{prop:coronafunction} gives each $R^n_i$ an almost affine mapping $g^n_i$ associated to the stopping time region $S^n_k$ containing it.

We may apply Lemma \ref{lem:almostaffine} to each $g^n_i$ to write it as $g^n_i = \phi^n_i \circ A^n_i$, where $\phi^n_i$ is $(1+\eta)$-bi-Lipschitz, $A^n_i$ is affine,
$$ |\phi^n_i(x)-x| \lesssim_d (L\eta+\theta)\ell(R^n_i) \text{ on } A^n_i(R^n_i),$$
and
$$ A^n_i(x(R^n_i)) = f(x(R^n_i)).$$

\subsection{Construction of the map $g$}

In this section, we define a map $g$ that agrees with $f$ on a large set and can be factored into $(1+\epsilon)$-bi-Lipschitz maps, thus proving Theorem \ref{thm:factorization}. We fix the following notation.

\begin{itemize}
    \item $I$ is a multiindex $I = (i_1, j_1, \ldots, i_{k-1},j_{k-1},i_k)$ for some parameter $k$. We call $k$ the \textit{order} of $I$.
    \item $J$ is a multiindex $J = (i_1, j_1, \ldots, i_k,j_k)$ for some parameter $k$. We call $k$ the \textit{order} of $J$.
    \item If $I$ is a multindex as above, we let $J(I)$ denote the parent of $I$: $J(I) = (i_1, j_1, \ldots, i_{k-1},j_{k-1})$. If $k=1$, then $J(I)$ is the empty multiindex. Similarly, if $J$ is a multiindex as above, we let $I(J)$ denote the parent of $J$.
    \item We denote the cubes arising in the multilevel decomposition Lemma \ref{lem:stopping} by $R_I^k$ and $Q_J^k$, with $k$ matching the level of the multiindex. The parameter $k$ is technically redundant since it is part of the multiindex $I$, but we include it for clarity. The cubes are indexed so that $1 \leq i_m \leq K_{i_1,j_1, \ldots, i_{m-1},j_{m-1}}$ and $1 \leq j_m \leq K_{i_1,j_1, \ldots, i_{m-1},j_{m-1},i_m}$ for all $1 \leq m \leq k$, where possibly $K_{i_1,j_1, \ldots, i_{m-1},j_{m-1},i_m}= \infty$. Note on the other hand that $K_{i_1,j_1, \ldots, i_{m-1},j_{m-1}} \leq \kappa$ for a constant $\kappa$ depending on the data; that is, the $i_m$ indices are uniformly bounded. Consistent with this notation, the very top cube $[0,1]^d$ is denoted by $Q^0$ (that is, with an empty subscript). 
    \item The center of the cube $R_I^k$ is denoted by $x_I^k$. The center of the cube $Q_J^k$ is denoted by $y_J^k$. 
    \item Each cube $R_I^k$ comes with an $L$-bi-Lipschitz affine map $A_I^k$ and a $(1+\epsilon')$-bi-Lipschitz map $\phi_I^n$ giving an almost affine map $g_I^k = \phi_I^k \circ A_I^k$. Here, $\epsilon'$ is chosen so that every $(1+\epsilon')$-bi-Lipschitz map in the situation of Lemma \ref{lemm:vaisala2} extends to a $\sqrt{1+\epsilon}$-bi-Lipschitz map.  
    \item For each cube $Q_J^k$, $k \geq 1$, we let $\widetilde{Q}_J^k$ denote the closure of $\text{int}(Q_J^k) \cap (\lambda R_{I(J)}^k)$. We also let $\widetilde{Q}^0 = Q^0$. By Lemma \ref{lem:stopping}(\ref{eq:Rside}), $\widetilde{Q}_J^k$ is the empty set or is a rectangle of aspect ratio at most $2$. Moreover, the center $y_J^k$ of $Q_J^k$ is contained in $\widetilde{Q}_J^k$; we continue to refer to $y_J^k$ as the center of $\widetilde{Q}_J^k$. 
    \item Each cube $R_I^k$ is either contained in $\lambda R_{I'}^{k'}$ for all ancestor cubes $R_{I'}^{k'}$, or there is some ancestor $R_{I'}^{k'}$ for which $R_I^k$ and $\lambda R_{I'}^{k'}$ have no overlap. In the following proof, we ignore any cubes of the latter type along with their descendants. Let $\mathcal{R}$ denote the collection of good cubes, i.e., cubes $R_I^k$ of the first type.
\end{itemize}

\subsubsection{Preliminary lemmas} We start with a basic lemma about rearranging collections of cubes. We say that a map is a \textit{similarity map} if it is a map $f\colon \mathbb{R}^d \to \mathbb{R}^d$ of the form $f(x) = rx + x_0$ for some $r>0$ and $x_0 \in \mathbb{R}^d$. Note that such a map preserves coordinate directions.

\begin{lemma}[Shuffling lemma] \label{lem:shuffling}
    Fix $\ell>0$ and $\mu, C_1 >1$. Let $\Omega$ be the image of $[0,\ell]^d$ under an $L$-bi-Lipschitz map $\psi$, where we assume $L \geq 2$. Let $R_1, \ldots, R_N$ be cubes satisfying $\ell(R_j) \geq \ell/C_1$ for each $1 \leq j\leq n$ such that the enlarged cubes $\mu R_j$ are mutually disjoint and contained in $\Omega$. Let $S_1, \ldots, S_N$ also be cubes satisfying $\ell(S_j) \geq \ell/C_1$ for each $1 \leq j\leq n$ such that the enlarged cubes $\mu S_j$ are mutually disjoint and contained in $\Omega$. Then there is a map $G \colon \mathbb{R}^d \to \mathbb{R}^d$ satisfying the following:
    \begin{enumerate}
        \item $G|_{R_j}$ is a similarity map preserving coordinate directions and taking $R_j$ to $S_j$.
        \item $G$ is the composite of $N_0$ maps which are $(1+\epsilon)$-bi-Lipschitz homeomorphisms, where the number of factors $N_0$ depends only on $d,L,C_1,\mu$.
        \item $G$ is the identity map on $\mathbb{R}^d \setminus \Omega$.
    \end{enumerate}
\end{lemma}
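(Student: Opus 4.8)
The plan is to build the map $G$ as a composition of a controlled number of ``moves,'' each of which is a $(1+\epsilon)$-bi-Lipschitz homeomorphism supported in $\Omega$, using the previously established Lemmas \ref{lem:linear}, \ref{lem:shrink}, and \ref{lemm:translation} as the atomic building blocks. The key structural idea is to route everything through a ``parking lot'': since $L \geq 2$ and $\ell(R_j) \geq \ell/C_1$ with the $\mu R_j$ disjoint in $\Omega$, there is a uniformly bounded number $N$ of cubes (bounded by a constant $N_{\max}(d, L, C_1, \mu)$ via a volume-packing argument, since $\Omega$ has volume at most $(L\ell)^d$ and each $\mu R_j$ has volume at least $(\mu\ell/C_1)^d$). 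So I may fix in advance a standard ``staging'' configuration of $N_{\max}$ disjoint cubes $P_1, \dots, P_{N_{\max}}$ of a common small side length inside a fixed small subcube of $\Omega^\circ$ (one can be extracted using the bi-Lipschitz structure of $\psi$ and $\psi([0,\ell]^d) = \Omega$; alternatively, just fix the $P_j$'s inside $\psi((\tfrac13,\tfrac23)^d)$). The map $G$ is then the composite of three stages: (1) for each $j$, shrink $R_j$ and translate it to $P_j$; (2) permute/reposition among the parking cubes (trivial since the target is indexed compatibly — in fact stage (1) can send $R_j \to P_j$ and stage (3) sends $P_j \to S_j$, so no permutation is needed); (3) for each $j$, translate $P_j$ to the location of $S_j$ and dilate to the correct size. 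Each of these per-cube operations is a composition of a $h_i$-type map from Lemma \ref{lem:shrink} (dilation supported in a slightly enlarged cube) and an $h_j$-type map from Lemma \ref{lemm:translation} (translation along a curve of length $\lesssim \diam(\Omega) \lesssim_{d,L} \ell$ inside $\Omega$), each being the identity outside a small neighborhood of the relevant cube or path.

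The main subtlety, and the one that needs care, is \textbf{keeping all the intermediate cubes disjoint and inside $\Omega^\circ$ while the moves are performed}, so that the supports of successive maps either are disjoint (allowing them to be done ``in parallel,'' i.e., glued via Lemma \ref{lemm:pasting}) or are nested in a way that keeps the composition $(1+\epsilon)$-bi-Lipschitz and supported in $\Omega$. To handle this cleanly I would first, as a preliminary move, shrink every $R_j$ to a tiny cube $R_j'$ concentric with $R_j$ (Lemma \ref{lem:shrink}, supported in $\mu R_j$, hence these are disjoint moves done in one parallel step via Lemma \ref{lemm:pasting}), with the side length so small that the curves used to transport the $R_j'$ to their parking spots — each curve of length $\lesssim_{d,L} \ell$, chosen inside $\Omega^\circ$ — can be taken with pairwise-disjoint $2\diam(R_j')$-neighborhoods. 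Then all $N$ translations happen in one parallel step (Lemma \ref{lemm:translation} + Lemma \ref{lemm:pasting}). At the parking lot, dilate each tiny cube to the common size of $P_j$ (again parallel, via Lemma \ref{lem:shrink} applied in reverse, i.e., with $c>1$, which is covered by combining Lemma \ref{lem:linear}(i) for the scalar matrix $cI$ — note $cI$ is orientation-preserving and $L$-bi-Lipschitz for appropriate $L$). Stage (3) is the mirror image: shrink $P_j$ tiny, transport along disjoint-neighborhood curves to the location of $S_j$ inside $\mu S_j \subseteq \Omega^\circ$, then dilate to $S_j$'s size. Finally compose with a scalar at the very end if needed so that $G|_{R_j}$ is exactly the prescribed similarity; in fact tracking the composition of dilations and translations shows $G|_{R_j}$ is automatically a coordinate-preserving similarity taking $R_j$ to $S_j$, giving conclusion (1).

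For the bi-Lipschitz bound in (2): each atomic map is $(1+\epsilon)$-bi-Lipschitz by the cited lemmas; the total number of atomic maps is $N$ times a constant, and $N \leq N_{\max}(d,L,C_1,\mu)$, so the number of factors $N_0$ depends only on $d, L, C_1, \mu$ — note we do \emph{not} need the final constant to be $(1+\epsilon)$ after composing, since the statement only asks that $G$ be a \emph{composite} of $N_0$ maps each of which is $(1+\epsilon)$-bi-Lipschitz (reading conclusion (2) literally). If instead a genuinely $(1+\epsilon)$ composite is wanted, one applies the standard trick of running each atomic map with parameter $\epsilon/N_0$ in place of $\epsilon$, which only inflates $N_0$ by a controlled amount, still depending only on $d,L,C_1,\mu,\epsilon$ — but since the lemma's stated dependence of $N_0$ excludes $\epsilon$, I will read (2) in the literal ``composite of $(1+\epsilon)$-bi-Lipschitz maps'' sense. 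Conclusion (3) is immediate: every atomic map is the identity outside a small cube or path-neighborhood contained in $\Omega^\circ$, so their composite is the identity on $\mathbb{R}^d \setminus \Omega$. The step I expect to be the main obstacle is the bookkeeping in the preliminary paragraph: verifying that the transport curves can simultaneously be chosen inside $\Omega^\circ$ with mutually disjoint tubular neighborhoods and of length $\lesssim_{d,L}\ell$ — this uses that $\Omega = \psi([0,\ell]^d)$ is bi-Lipschitz to a cube, hence quasiconvex with constant $\lesssim L$ and has an interior point with a definite-size ball around it, so one routes all curves through a common small sub-ball of $\Omega^\circ$ and then separates them there.
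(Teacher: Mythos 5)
Your proposal is correct and captures the same atomic operations (Lemmas \ref{lem:shrink}, \ref{lemm:translation}, \ref{lemm:pasting}, plus the volume-packing bound $N \leq N_{\max}(d,L,C_1,\mu)$), but it organizes the transport differently from the paper. The paper moves each shrunk cube in one leg from the center $x_j$ of $R_j$ directly to a point $z_j$ chosen inside $S_j$ (picked to avoid all the small collars $c_1 R_k$), then a short second leg from $z_j$ to the center $y_j$ of $S_j$; collisions are avoided not by making the transport tubes disjoint but by running the translation legs \emph{sequentially} in $j$ and choosing each path $\gamma_j$ to dodge the static small cubes $c_1 R_k$ ($k>j$, not yet moved) and $T_k$ ($k<j$, already parked). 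Your version instead routes everything through a fixed ``parking lot'' of staging cubes $P_1,\dots,P_{N_{\max}}$ deep inside $\Omega$, and makes the transport legs \emph{parallel} by shrinking the cubes small enough that the transport tubes have mutually disjoint neighborhoods (this relies on $d\ge 2$ to separate the curves, which is fine for this paper). Both strategies work and give $N_0$ bounded by the data. The trade-off is that the paper's path-modification bookkeeping is a bit fiddly, whereas your parking-lot version adds an extra intermediate set of moves but makes the disjointness and gluing more transparent; your stated factor count ``$N$ times a constant'' matches the paper's serial formula but is a slight overcount for your own parallel construction, where after gluing the count is just a constant — either way bounded by the data via $N\le N_{\max}$. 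Two small inefficiencies worth noting: the ``dilate to $P_j$'s common size, then shrink tiny again'' at the parking lot is redundant and can be dropped (just keep the shrunk cubes tiny for both legs and dilate once at $S_j$); and your reading of conclusion (2) as ``a composite of $(1+\epsilon)$-bi-Lipschitz factors'' rather than ``a $(1+\epsilon)$-bi-Lipschitz composite'' is the correct reading and matches both the statement and how the lemma is used downstream.
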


\begin{proof}
    For each $1 \leq j \leq N$, let $x_j$ denote the center of $R_j$ and $y_j$ denote the center of $S_j$. Pick an intermediate point $z_j$ for each $j$ as follows. If $y_j$ is not in $R_k$ for any $1 \leq k \leq N$, then take $z_j = y_j$. Otherwise, $y_j \in R_k$ for some $1 \leq k \leq N$. Set $c_1 = 1/(4\sqrt{d} L C_1)$. Observe that $\ell(c_1 R_k) \leq c_1 (L \sqrt{d}\ell) \leq \ell/(4C_1) \leq \ell(S_j)/4$. This implies that there exists some point in $(2^{-1}S_j) \cap (R_k \setminus c_1 R_k)$. We take $z_j$ to be such a point. In addition, we let 
    {the cube}
    $T_j = \mathcal{C}(z_j, c_1 \ell)$. Since $\ell(S_j) \geq \ell/C_1> 4c_1\ell$, we see that $T_j \subset S_j$. In particular, the sets $T_j, T_{j'}$ are disjoint for all $j \neq j'$. Moreover, since $\ell(R_k) \geq \ell/C_1$ and $z_j \in R_k \setminus c_1R_k$, we see that $T_j$ is disjoint from $c_1R_k$ for all $k$. 

    Next, we choose a suitable path $\gamma_j$ from $x_j$ to $z_j$ as follows. The points $\psi^{-1}(x_j)$ and $\psi^{-1}(z_j)$ are connected by a straight-line path in $[0,1]^d$, which we denote by $\widetilde{\gamma}_j^0$. We let $\widetilde{\gamma}_j = \psi \circ \widetilde{\gamma}_j^0$. We obtain $\gamma_j$ by modifying $\widetilde{\gamma}_j$ to remove possible intersections with other cubes  as follows. Suppose $|\widetilde{\gamma}_j|$ intersects the cube $c_1R_k$ for some $k>j$. For any maximal interval $(a,b)$ such that $\widetilde{\gamma}_j|_{(a,b)}$ is contained in $\text{int}(c_1R_k)$, we redefine $\widetilde{\gamma}_j$ on that interval to traverse a shortest path from $\widetilde{\gamma}_j(a)$ to $\widetilde{\gamma}_j(b)$ in $\partial (c_1R_k)$. Next, suppose $|\widetilde{\gamma}_j|$ intersects the cube $T_j$ for some $k<j$. For any maximal interval $(a,b)$ such that $\widetilde{\gamma}_j|_{(a,b)}$ is contained in $\text{int}(T_j)$, we redefine $\widetilde{\gamma}_j$ on that interval to traverse a shortest path from $\widetilde{\gamma}_j(a)$ to $\widetilde{\gamma}_j(b)$ in $\partial \mathcal{C}(x_j,l(j))$. Note that the sets $T_k$, $\mathcal{C}(x_k,c_1 \ell)$ are mutually disjoint for all $k,k'$ (except possible $j=k=k'$), so there is no overlap between the intervals on which the modifications take place. The resulting path after these modifications is called $\gamma_k$.

    We now give a lower bound on $d(|\gamma_j|, \partial \Omega)$. We observe that $d(x_j, \partial \Omega) \geq \ell/C_1$, and consequently that $d(\psi^{-1}(x_j), \partial ([0,1]^d)) \geq \ell/(C_1L)$. The same lower bound holds for $y_j$. It follows that $d(|\widetilde{\gamma}_j^0|, \partial ([0,1]^d)) \geq \ell/(C_1L)$, and hence that $d(|\widetilde{\gamma}_j|, \partial \Omega) \geq \ell/(C_1L^2)$. Any arc on which $\gamma_j$ deviates from $\widetilde{\gamma}_j$ must be within a set $\partial R_k$ for some $1 \leq k \leq N$. Since $d(R_k, \partial Q) \geq (\mu-1)\ell/C_1$, we conclude that \[d(|\gamma_j|, \partial \Omega) \geq \ell \cdot \min\left\{\frac{1}{C_1^2L^2}, \frac{\mu-1}{C_1}\right\}.\]

    We also give an upper bound on $\ell(\gamma_j)$. Consider first an interval $(a, b)$ on which $\widetilde{\gamma}_j$ was modified to give $\gamma_j$. It is easy to see that $\ell(\gamma_j|_{(a,b)}) \leq d \cdot \|\gamma_j(a) - \gamma_j(b)\| \leq d \ell(\widetilde{\gamma}_j|_{(a,b)})$. From this, we conclude that $\ell(\gamma_j) \leq d \ell(\widetilde{\gamma_j})$. But $\ell(\widetilde{\gamma}_j) \leq L \ell(\widetilde{\gamma}_j^0) = L \|\psi^{-1}(x_j) - \psi^{-1}(z_j)\| \leq L^2 \|x_j - z_j\|$. We conclude that 
    \[\ell(\gamma_j) \leq dL^2 \|x_j - z_j\|.\]   

    We are now ready to define the map $G$, which we do by specifying the sequence of factors $G_1, \ldots, G_N$. The strategy is to rescale each cube $R_j$ sufficiently, then translate each $R_j$ to the new center $z_j$ along the path $\gamma_j$, then translate a second time to the final position centered at $y_j$, then rescale to the correct final size. Observe that necessarily $\ell(R_j) \leq \ell \sqrt{d} L$. Let 
    \[c_2 = \min\left\{\frac{c_1}{3\sqrt{d}C_1L}, \frac{1}{2\sqrt{d}C_1^2L^2}, \frac{\mu-1}{2\sqrt{d}C_1}\right\}.\] 
    We define maps $G_1, G_2, \ldots, G_{M_1} \colon \mathbb{R}^d \to \mathbb{R}^d$ as follows. For each $j \in \{1, \ldots, N\}$, Lemma \ref{lem:shrink} gives a sequence of $\sqrt{1+\epsilon}$-bi-Lipschitz maps taking the cube $R_{j}$ to $\mathcal{C}(x_j, c_2\ell)$ that are the identity outside $\mu R_{j}$. Since $\ell(R_j)/(c_2\ell)$ is bounded by $\sqrt{d}L/c_2$, we obtain a bound on the number of maps in this sequence. By postcomposing with the identity as needed, we may ensure that this sequence contains $M_1$ maps for some $M_1 \in \mathbb{N}$ depending only on $d,L,C_1,\mu$. We define $G_1, G_2, \ldots, G_{M_1}$ on each set $\mu R_{j}$ to agree with the sequence produced by Lemma \ref{lem:shrink} and as the identity map elsewhere. It follows from Lemma \ref{lemm:pasting} that each of the maps $G_1, G_2, \ldots, G_{M_1}$ is $(1+\epsilon)$-bi-Lipschitz. 

    Apply Lemma \ref{lemm:translation} to the path $\gamma_j$, $1 \leq j \leq N$, to get a sequence of $(1+\epsilon)$-bi-Lipschitz maps $G_{j}^{1}, G_{j}^{2}, \ldots, G_{j}^{M_2}$ with the properties given in the lemma taking $\mathcal{C}(x_j,c_2\ell)$ to the cube $\mathcal{C}(z_j,c_2\ell)$. Here, the constant $M_2$ is the least upper bound on the number of factors given by Lemma \ref{lemm:translation}. Some of the maps in this sequence may be taken to be the identity if needed to obtain exactly $M_2$ maps. By Lemma \ref{lemm:translation}, the composite $G_j^{M_2} \circ \cdots \circ G_j^1$ is an isometry on the set $\mathcal{C}(x_j,c_2\ell)$. Moreover, each map also fixes each point $z$ satisfying $d(z,|\gamma_j|) \geq 2 \sqrt{d}c_2\ell$. Using this, we show that each map also is the identity on each set $\mathcal{C}(x_k,c_2\ell)$ (for $k>j$) and $\mathcal{C}(z_k,c_2\ell)$ (for $k<j$). Assume first that $k>j$. The definition of the path $\gamma_j$ guarantees that $\gamma_j$ is disjoint from the interior of $aR_k$. Note that 
    \[d(\mathcal{C}(x_j,c_2\ell), \partial (c_1 R_j)) \geq \frac{c_1\ell}{C_1} - c_2\ell \geq \ell \left(\frac{c_1}{C_1} - c_2 \right) \geq \ell \left(\frac{c_1}{C_1L} - \frac{c_1}{3C_1L} \right) = \frac{2c_1 \ell}{3C_1L}.\]
    But $2c_1\ell/(3C_1L) \geq 2\sqrt{d}c_2\ell$, which shows that the cube $\mathcal{C}(x_j,c_2\ell)$ is fixed as claimed. For the case that $k<j$, we use a similar argument for the cube $T_k= \mathcal{C}(z_k,c_1\ell)$. Since $c_1\ell \geq c_1\ell/C_1$, the cube $T_j$ also satisfies the same inequality.

    Moreover, since $2\sqrt{d}c_2\ell \leq d(|\gamma_j|,\partial \Omega)$, we see that each map is the identity on $\mathbb{R}^2 \setminus \Omega$. 

    We now apply an identical procedure to move each cube $\mathcal{C}(z_j,c_1\ell)$ to the cube $\mathcal{C}(y_j, c_1\ell)$ along a path $\zeta_j$ defined in the same way as $\gamma_j$. This produces maps $\widetilde{G}_j^1, \ldots, \widetilde{G}_j^{M_3}$ for each $1 \leq j \leq n$, for some sufficiently large $M_3$ based on Lemma \ref{lemm:translation}. Finally, we define maps $\widetilde{G}_1, \ldots, \widetilde{G}_{M_4}$ similarly to the maps $G_1, \ldots, G_{M_2}$ for some sufficiently large $M_4$ based on Lemma \ref{lem:shrink} that rescales each cube $\mathcal{C}(y_j,a\ell)$ to $S_j$ within the set $\mu S_j$. 

    The final map is $G = \widetilde{G}^{M_4} \circ \cdots \circ \widetilde{G}_1 \circ \widetilde{G}_N^{M_3} \circ \cdots \widetilde{G}_N^1 \circ \cdots \circ \widetilde{G}_1^{M_3} \circ \cdots \circ \widetilde{G}_1^1 \circ G_N^{M_2} \circ \cdots G_N^1 \circ \cdots \circ G_1^{M_2} \circ \cdots \circ G_1^1 \circ G^{M_1} \circ \cdots \circ G_1$. It is evident from the construction that $G$ has the properties stated in the lemma.
\end{proof}

We also have the following lemma.

\begin{lemma} \label{lem:RIk_map}
    Consider a cube $R_I^k$. There is a map $G \colon \mathbb{R}^d \to \mathbb{R}^d$ satisfying the following.
    \begin{enumerate}
        \item $G$ is the composite of $\sqrt{1+\epsilon}$-bi-Lipschitz homeomorphisms $G_1, \ldots, G_{N_0}$, where the number of factors $N_0$ depends only on $d,L,\epsilon$. 
        \item $G = f- f(x_I^k)$ on the set $B_I^k$ 
        {(defined in item \eqref{eq:Bni} of Lemma \ref{lem:stopping}).}
        \item Each $G_i$ is the identity map outside the set $(2L\sqrt{d})R_I^k$.
        \item For each child cube $Q_J^k$ and all $1 \leq i \leq N_0$, $G_i \circ \cdots \circ G_1$ acts as a translation map on the set $(G_{i-1} \circ \cdots \circ G_1)((C_2L\sqrt{d})^{-1}\widetilde{Q}_J^k)$, where $C_2 \geq 2$ is a constant depending only on $L,d$.  
    \end{enumerate}
\end{lemma}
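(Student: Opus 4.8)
The plan is to realize $f|_{B_I^k}$ as a bounded composition of near-identity maps by factoring the almost affine model $g_I^k=\phi_I^k\circ A_I^k$ and then composing with a \emph{correction}; although $f$ and $g_I^k$ disagree on $B_I^k$, the correction $f\circ(g_I^k)^{-1}$ turns out to be close to an isometry on $g_I^k(B_I^k)$ \emph{at every scale}, and this is the one genuinely new point. Normalize so that $x_I^k=0$ and $f(x_I^k)=0$; then $A:=A_I^k$ is linear and $L$-bi-Lipschitz, $\phi:=\phi_I^k$ is $(1+\epsilon')$-bi-Lipschitz with $|\phi(w)-w|\lesssim_d(L\eta+\theta)\ell$ for $w\in A(R_I^k)$ by Lemma~\ref{lem:almostaffine}, and $|g_I^k-f|\le\theta\sqrt d\,\ell$ on $2R_I^k$ by clause (ii) of Proposition~\ref{prop:coronafunction} (write $\ell:=\ell(R_I^k)$). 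Since $R_I^k$ lies in the good collection $\mathcal R$, Lemma~\ref{lem:stopping}(\ref{eq:Bni}) together with (\ref{eq:Rside}) reduces $B_I^k$ to $\lambda R_I^k$ with the sub-rectangles $\widetilde Q_J^k$ deleted, so that $\overline{B_I^k}\subseteq\lambda R_I^k$ is disjoint from every cube $Q_J^k$.

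\textbf{Step 1: the correction is near-identity at all scales.} Let $S\in\mathcal F$ be the stopping time region of $R_I^k$. If $z\in B_I^k$ then $z$ lies in no minimal cube of $S$, so by coherence the descending chain of dyadic cubes through $z$ stays in $S$ down to arbitrarily small scales. A routine dyadic argument then gives, for any $z,z'\in B_I^k$, a cube $Q'\in S$ with $z,z'\in 2Q'$ and $\diam(Q')\lesssim_d|z-z'|$; applying clause (ii) of Proposition~\ref{prop:coronafunction} to $Q'$ yields $|f(z)-g_I^k(z)|+|f(z')-g_I^k(z')|\lesssim_d\theta|z-z'|$. Since $g_I^k$ is bi-Lipschitz with constant $\lesssim L$, the map $E:=f\circ(g_I^k)^{-1}$ is $(1+C_{d,L}\theta)$-bi-Lipschitz on $g_I^k(\overline{B_I^k})$ and satisfies $|E(u)-u|\le\theta\sqrt d\,\ell$ there. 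Running the same argument with the child cube $Q_J^k\in S$ in place of $Q'$ shows, in addition, that $E$ is $(1+C_{d,L}(\theta+\epsilon'))$-bi-Lipschitz on $g_I^k(2Q_J^k)$ when restricted to pairs at distance $\gtrsim\ell(Q_J^k)$: this is the uniform control near each ``hole''. The parameters $\eta,\theta,\epsilon'$ are ours to choose (they are fed into Proposition~\ref{prop:coronafunction} and the convention defining $\epsilon'$), so $E$ may be made as close to an isometry on these sets as required.

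\textbf{Step 2: assembling $G$.} We build $G$ from blocks, each a bounded-length composition of $\sqrt{1+\epsilon}$-bi-Lipschitz maps supported in $(2L\sqrt d)R_I^k$, together with a device for the cores. \textbf{(a)} Apply Lemma~\ref{lem:linear}(i) to $A$ and the cube $R_I^k$, with the constant $C$ there equal to $2$, producing maps whose composition is $A$ on $R_I^k$, which are the identity outside $(2L\sqrt d)R_I^k$, and whose partial compositions restrict to near-identity linear maps on the image of $R_I^k$ (this is explicit in the proof of Lemma~\ref{lem:linear}). \textbf{(b)} Interpolate $\phi|_{A(R_I^k)}$ to the identity on $\partial A(2R_I^k)$ --- a near-identity operation, as $\phi$ is within $O_d((L\eta+\theta)\ell)$ of the identity on a set of diameter $\approx_L\ell$ separated by $\approx_L\ell$ from that sphere --- and extend by Lemma~\ref{lemm:vaisala2}; after (a) and (b) the composition equals $g_I^k$ on $R_I^k$. \textbf{(c)} Extend $E=f\circ(g_I^k)^{-1}$ from $g_I^k(\overline{B_I^k})$ to a global near-identity map that is the identity outside $(2L\sqrt d)R_I^k$: the outer transition from $g_I^k(\lambda R_I^k)$ to $\partial((2L\sqrt d)R_I^k)$ takes place across a shell of width $\gtrsim_{d,L}\ell$ and is near-identity by the additive estimate of Step 1; the holes $g_I^k(\widetilde Q_J^k)$ are first shrunk and mutually separated via Lemmas~\ref{lem:shrink} and~\ref{lem:shuffling} and then filled via Lemma~\ref{lemm:vaisala2}, using precisely the per-scale control on $E$ near each $Q_J^k$ from Step 1. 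After (a)--(c) one has, on $\overline{B_I^k}$, the composition $E\circ g_I^k=f$, which is property (2) (in normalized coordinates). Throughout, every factor is modified inside each $\widetilde Q_J^k$ --- disjoint from $\overline{B_I^k}$ --- so that its restriction to the current image of the core $(C_2L\sqrt d)^{-1}\widetilde Q_J^k$ is the translation matching that factor at the image of $y_J^k$: the discrepancy between a near-identity map and this translation, on a cube of side $(C_2L\sqrt d)^{-1}\ell(\widetilde Q_J^k)$, is a tiny fraction of the annulus $\tfrac12\widetilde Q_J^k\setminus(C_2L\sqrt d)^{-1}\widetilde Q_J^k$ once $C_2=C_2(d,L)$ is large, so it is absorbed there by Lemma~\ref{lemm:pasting} without touching $\overline{B_I^k}$; this gives property (4), and the core is carried, via these translations, to $f(y_J^k)$ (a short translation, handled if necessary by Lemma~\ref{lemm:translation}, may be appended to place it exactly where the outer recursion wants it). Gluing with Lemmas~\ref{lemm:pasting} and~\ref{lem:glue_2} keeps each factor $\sqrt{1+\epsilon}$-bi-Lipschitz, and $N_0$ is the sum of the bounds from Lemmas~\ref{lem:linear}, \ref{lemm:vaisala2}, \ref{lem:shrink}, \ref{lem:shuffling}, \ref{lemm:translation}, each depending only on $d,L,\epsilon$.

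\textbf{Main obstacle.} The conceptual heart is Step 1: that $f\circ(g_I^k)^{-1}$ is near-identity \emph{at every scale} on $g_I^k(B_I^k)$, which is exactly where the definition of $B_I^k$ (deleting the minimal cubes of $S$) and the scale-by-scale approximation of Proposition~\ref{prop:coronafunction} are used. The remaining difficulty is purely organizational: interpolating the various near-identity maps to the identity outside $(2L\sqrt d)R_I^k$ and across the (shrunken, separated) holes while keeping each factor $\sqrt{1+\epsilon}$-bi-Lipschitz, and simultaneously arranging the translation property for every child core --- which is where Lemmas~\ref{lemm:vaisala2}, \ref{lemm:pasting}, \ref{lem:glue_2}, \ref{lem:shrink}, \ref{lemm:translation}, and~\ref{lem:shuffling} are combined, and where the largeness of $C_2$ is spent.
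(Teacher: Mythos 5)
Your steps (a), (b), and your device for property (4) (replacing each factor by its translation at $y_J^k$ on the core of each $\widetilde Q_J^k$ and absorbing the discrepancy in the surrounding annulus via the gluing lemmas) are exactly the paper's construction. But your ``genuinely new point'' in Step 1 stops one line short of the fact the paper actually uses, and this omission is what forces the problematic Step 2(c). You correctly observe that for $z\in B_I^k$ the dyadic chain through $z$ stays in the stopping region $S$ down to arbitrarily small scales; clause (ii) of Proposition \ref{prop:coronafunction} then gives $|f(z)-g_I^k(z)|\le\theta\diam(Q)$ for cubes $Q\ni z$ of \emph{arbitrarily small} diameter, i.e.\ $f(z)=g_I^k(z)$ \emph{exactly} on $B_I^k$ (this is precisely how property (2) is obtained in the paper, and how it is invoked in the proof of Theorem \ref{thm:factorization}). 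Your premise that ``$f$ and $g_I^k$ disagree on $B_I^k$'' is therefore false: the correction $E=f\circ(g_I^k)^{-1}$ is the identity on $g_I^k(B_I^k)$, and property (2) already holds after your steps (a) and (b). No correction factor is needed.

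As written, Step 2(c) is a genuine gap, not just dead weight. A cube $R_I^k$ may contain \emph{infinitely many} child cubes $Q_J^k$, of arbitrarily small side length (the paper explicitly allows $K_{i_1,\dots,i_m}=\infty$; only the $i$-indices are uniformly bounded). Lemma \ref{lem:shuffling} requires all cubes to have side length at least $\ell/C_1$ and produces a number of factors depending on $C_1$, so it cannot ``shrink and mutually separate'' this family of holes within a bounded composition of $(1+\epsilon)$-bi-Lipschitz maps. Likewise, $g_I^k(\overline{B_I^k})$ is a cube with infinitely many holes removed, which is not among the sets to which Theorem \ref{thm:vaisala} or Lemma \ref{lemm:vaisala2} applies, so ``then filled via Lemma \ref{lemm:vaisala2}'' is not justified. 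The per-scale bi-Lipschitz control on $E$ from your Step 1 does not rescue this: the obstruction is combinatorial (unboundedly many holes of unbounded size ratio), not metric. The fix is simply to delete Step 1 and Step 2(c): conclude $f=g_I^k$ on $B_I^k$ outright, take $G$ to be the composition from (a) and (b) with the local translation modifications for (4), and you recover the paper's proof.
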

\begin{proof}
    Without loss of generality, assume that $R_I^k$ is centered at the origin and that $f$ fixes $x_I^k = 0$. First, we use Lemma \ref{lem:linear} to produce a sequence of $\sqrt{1+\epsilon}$-bi-Lipschitz maps $h_1, h_{T_0}$ such that $h_{T_0} \circ \cdots h_1 = A_I^k$ on $R_I^k$ and each $h_i$ is the identity outside $2L\sqrt{d}R_I^k$ and also fixes the origin. The restriction of $h_i$ to the set $(h_{i-1} \circ \cdots \circ h_1)(Q_I^k)$ is an affine map, which we denote by $A^i$. We also require, as we may, that each $A^i$ is $(1+\eta/2)$-bi-Lipschitz. Define $h_{T_0+1}\colon \mathbb{R}^d \to \mathbb{R}^d$ to be a $\sqrt{1+\epsilon}$-bi-Lipschitz map that equals $\phi_I^k$ on $A_I^k(R_I^k)$ and is the identity outside $2A_I^k(R_I^k)$. The existence of such a map $h_{T_0+1}$ follows from Lemma \ref{lemm:vaisala2} together with the choice of $\eta$ made earlier. Observe that $h_{T_0+1}$ is the identity outside the set $(2L\sqrt{d})R_I^k$. Moreover, the restriction to $A^k_I(R^k_I)$ of $h_{T_0+1}$ is $(1+\eta)$-bi-Lipschitz. 

    Consider now a child cube $Q_J^k$ of $R_I^k$. For each $i$, we define a map $\widetilde{A}_J^i$ inductively as follows. 
    \begin{itemize}
        \item For $x \in \mathbb{R}^d \setminus (A^{i-1} \circ \cdots \circ A^1)(Q_J^k)$, we set $\widetilde{A}_J^i(x) = A^i(x)$.
        \item For $x \in (A_J^{i-1} \circ \cdots A_J^1)((2L\sqrt{d})^{-1}Q_J^k)$, we set \[\widetilde{A}_J^i(x) = x + w_J^i,\]
        where $w_J^i = (A^i\circ \cdots \circ A^1)(x_J^k) - (A^{i-1}\circ \cdots \circ A^1)(x_J^k)$.
        That is, the point $x$ gets translated by the same amount as $(A^{i-1}\circ \cdots \circ A^1)(x_J^k)$ under the map $A^i$.
        \item Consider the map $\widetilde{A}_J^i$ defined on \[(A_J^{i-1} \circ \cdots \circ A_J^1)((2L\sqrt{d})^{-1}Q_J^k) \cup (\mathbb{R}^d \setminus (A_J^{i-1} \circ \cdots A_J^1)(Q_J^k)).\]

        This map is $(1+\eta)$-bi-Lipschitz, as we now verify. Let $x,y$ be points in this set. If $x,y \in (A_J^{i-1} \circ \cdots \circ A_J^1)((C_2L\sqrt{d})^{-1}Q_J^k)$, then $\|\widetilde{A}_J^i(x) - \widetilde{A}_J^i(y)\| = \|x-y\|$. If $x,y \in \mathbb{R}^d \setminus (A_J^{i-1} \circ \cdots A_J^1)(Q_J^k)$, then $\|\widetilde{A}_J^i(x) - \widetilde{A}_J^i(y)\| = \|A^i(x) - A^i(y)\|$, and hence $x,y$ satisfy the $(1+\eta/2)$-bi-Lipschitz inequality. The last and main case is when $x \in (A_J^{i-1} \circ \cdots \circ A_J^1)((C_2L\sqrt{d})^{-1}Q_J^k)$ and $y \in \mathbb{R}^d \setminus (A_J^{i-1} \circ \cdots A_J^1)(Q_J^k)$. We combine the following inequalities. Since each $\widetilde{A}_J^l$ is on isometry on $(\widetilde{A}_J^{l-1} \circ \cdots \circ \widetilde{A}_J^1)(2L\sqrt{d})^{-1}\ell(Q_J^k)$,
        \[\|(\widetilde{A}_J^i \circ \cdots \circ \widetilde{A}_1^i)(x_J^i) - \widetilde{A}_J^i(x)\| \leq (C_2L\sqrt{d})^{-1}\ell(Q_J^k).\]
        Second, 
        \[ (1+\eta/2)^{-1}\|(\widetilde{A}_J^{i-1} \circ \cdots \circ \widetilde{A}_1^i)(x_J^i) - y\| \leq \|(\widetilde{A}_J^i \circ \cdots \circ \widetilde{A}_1^i)(x_J^i) - \widetilde{A}_J^i(y)\| \leq (1+\eta/2)\|(\widetilde{A}_J^{i-1} \circ \cdots \circ \widetilde{A}_1^i)(x_J^i) - y\|. \]
        Third,
        \[((2L)^{-1} - (C_2L\sqrt{d})^{-1})\ell(Q_J^k) \leq \|x - y\|.\]
        We conclude that 
        \[\|\widetilde{A}_J^i(x) - \widetilde{A}_J^i(y)\| \leq (1+\eta/2)\|x-y\|+ (C_2L\sqrt{d})^{-1}\ell(Q_k)\]
        and 
        \[\|\widetilde{A}_J^i(x) - \widetilde{A}_J^i(y)\| \geq (1+\eta/2)^{-1}\|x-y\| - (C_2L\sqrt{d})^{-1}\ell(Q_k) \]
        By taking $C_2$ large enough, we guarantee that
        \[ (1+\eta)^{-1} \|x-y\| \leq \|\widetilde{A}_J^i(x) - \widetilde{A}_J^i(y)\|  \leq (1+\eta) \|x-y\|.\] 

        According to Lemma \ref{lemm:vaisala2}, $\widetilde{A}_J^i$ extends to a $\sqrt{1+\epsilon}$-bi-Lipschitz map defined on all $\mathbb{R}^d$, which we also denote by $\widetilde{A}_J^i$.
    \end{itemize}
    Finally, we define the map $G_i$ inductively by setting $G_i(x) = \widetilde{A}_J^i(x)$ if $x \in (\widetilde{A}_J^{i-1} \circ \cdots \circ \widetilde{A}_J^1)(Q_J^k)$ for some $J$. Otherwise, we set $G_i(x) = A^i(x)$. Observe that $G_i$ is well-defined since the above sets are disjoint. We check that $G_i$ is $\sqrt{1+\epsilon}$-bi-Lipschitz. Consider two points $x,y \in \mathbb{R}^d$. The only case requiring special consideration is if $x \in (\widetilde{A}_J^{i-1} \circ \cdots \circ \widetilde{A}_J^1)(Q_{J_1}^k)$ but $y \in (\widetilde{A}_J^{i-1} \circ \cdots \circ \widetilde{A}_J^1)(Q_{J_2}^k)$ for some $J_2 \neq J_1$. Consider the map $A_{x,y}^i$ defined by gluing $\widetilde{A}_{J_1}^k$ (defined on the closure of $\mathbb{R}^d \setminus (\widetilde{A}_J^{i-1} \circ \cdots \circ \widetilde{A}_J^1)(R_{J_2}^k)$) and $\widetilde{A}_{J_2}^k$ (defined on the closure of $\mathbb{R}^d \setminus (\widetilde{A}_J^{i-1} \circ \cdots \circ \widetilde{A}_J^1)(R_{J_1}^k)$). It follow from Lemma \ref{lem:glue_2} that $A_{x,y}^i$ is $\sqrt{1+\epsilon}$-bi-Lipschitz. But $A_{x,y}^i(x) = G^i(x)$ and $A_{x,y}^i(y) = G^i(y)$. Thus we get the same conclusion for $G^i$.

    We define the map $G_{T_0+1}$ similarly using $h_{T_0+1}$. 
\end{proof}

\subsubsection{First stage to construct the map $g$: move cubes to their almost affine images, up to a similarity map}

The map $g$ is constructed in two stages. The first stage is contained in the following proposition.

\begin{proposition}\label{prop:firststage}
    There is a map $H \colon \mathbb{R}^d \to \mathbb{R}^d$ satisfying the following.
    \begin{enumerate}
        \item $H$ is the composite of $(1+\epsilon)$-bi-Lipschitz homeomorphisms, where the number of factors depends only on $d,L,\epsilon, \delta$.
        \item For each cube $R_I^k \in \mathcal{R}$, there is a similarity map $s_I^k\colon \mathbb{R}^d \to \mathbb{R}^d$ with scaling factor $r$ satisfying $r_0 \leq r \leq 1$ for some $r_0$ depending only on $d,L,\epsilon, \delta$, such that 
            \[H|_{\lambda R_I^k \setminus \bigcup_J Q_J^k} = s_I^k \circ \phi_I^k \circ A_I^k|_{\lambda R_I^k \setminus \bigcup_J Q_J^k}.\]
        \item The distance between $H(\lambda R_{I_1}^k)$ and $H(\lambda R_{I_2}^k)$, for any $R_{I_1}^{k} \neq R_{I_2}^{k}$, is at least 
        \[c_3 \max\{\ell(R_{I_1}^{k}),\ell(R_{I_2}^{k})\}\]
        for some constant $c_3$ depending only on $d,L,\epsilon, \delta$. 
        \item $H(R_I^k) \subset H(c_1 Q_{J(I)}^{k-1})$, where $c_1 = (C_2 \sqrt{d}L^2)^{-1}$. Here, the constant $C_2$ comes from Lemma \ref{lem:RIk_map}.
        \item $H$ is the identity map outside $B(0, 2L\sqrt{d})$. 
    \end{enumerate}
\end{proposition}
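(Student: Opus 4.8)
The plan is to build $H$ by induction on the level $k$, producing $H=\Psi_N\circ\cdots\circ\Psi_1$, where $N$ is the number of levels furnished by Lemma~\ref{lem:stopping} (applied with $\alpha=\delta/2$, so $N$ depends only on the data) and each $\Psi_k$ is a composition of a data-bounded number of $(1+\epsilon)$-bi-Lipschitz homeomorphisms supported in $B(0,2L\sqrt d)$; since $N$ is a data constant this gives item~(1). Writing $H_k=\Psi_k\circ\cdots\circ\Psi_1$, the inductive hypothesis I would carry is: $H_k$ is the identity outside $B(0,2L\sqrt d)$; it satisfies items~(2)--(4) for all cubes of level at most $k$; it restricts to a \emph{similarity map} in the sense of Lemma~\ref{lem:shuffling} (a positive scaling followed by a translation, no rotation), of scaling factor in $[r_0,1]$, on each rectangle $\widetilde Q_J^k$ of level $k$; and the images under $H_k$ of the concentric subcubes $(C_2L\sqrt d)^{-1}\widetilde Q_J^k$ are pairwise separated at least as far as a data constant times their diameters and lie in $B(0,2L\sqrt d)$. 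Here $r_0>0$ is a data constant. The base case $k=0$ is trivial, with $H_0=\mathrm{id}$ and $\widetilde Q^0=Q^0=[0,1]^d$.

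For the inductive step, fix a level-$(k-1)$ cube $Q_J^{k-1}$. Its level-$k$ children --- the cubes $R_I^k\in\mathcal R$ with $J(I)=J$ --- are mutually disjoint dyadic cubes of side length at least $\zeta\ell(Q_J^{k-1})$ by Lemma~\ref{lem:stopping}(\ref{eq:Rside}), hence number at most $\kappa$ (a data constant), and since $R_I^k\in\mathcal R$ they lie in $\lambda R_{I(J)}^{k-1}\cap Q_J^{k-1}\subseteq\widetilde Q_J^{k-1}$; as $H_{k-1}$ is a similarity on $\widetilde Q_J^{k-1}$, it carries each $R_I^k$ to a genuine cube. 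All the maps comprising $\Psi_k$ will be supported inside the rectangles $H_{k-1}(\widetilde Q_J^{k-1})$ and equal to the identity on their boundaries, so --- although these rectangles need not be disjoint --- they may be run in parallel over all $J$ and glued by Lemma~\ref{lemm:pasting} (taking each piece to be $\sqrt{1+\epsilon}$-bi-Lipschitz, so that the glued maps are $(1+\epsilon)$-bi-Lipschitz). Inside a fixed $H_{k-1}(\widetilde Q_J^{k-1})$: first, by a preliminary application of Lemma~\ref{lem:shrink} followed by the Shuffling Lemma~\ref{lem:shuffling}, move the $\le\kappa$ cubes $H_{k-1}(R_I^k)$ to mutually well-separated cubes, shrunk by a single data-dependent factor $\epsilon_0\in(0,1)$, lying inside the $H_{k-1}$-image of a fixed concentric subcube of $\widetilde Q_J^{k-1}$ (which we may take inside $c_1Q_J^{k-1}$) and having pairwise disjoint $(2L\sqrt d)$-fold enlargements; this is possible because there are only $\le\kappa$ of them and they have been shrunk, and it costs a bounded number of $(1+\epsilon)$-bi-Lipschitz steps. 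Then, on each relocated, separated copy, apply the map of Lemma~\ref{lem:RIk_map} attached to $R_I^k$, with its almost affine map taken to be the corresponding similarity-conjugate of $g_I^k=\phi_I^k\circ A_I^k$: it is a composition of $\sqrt{1+\epsilon}$-bi-Lipschitz homeomorphisms, is supported in the $(2L\sqrt d)$-enlargement (now disjoint, so the pieces glue to one $(1+\epsilon)$-bi-Lipschitz map per round), agrees on $B_I^k$ with $g_I^k$ up to an additive constant, and --- crucially --- acts as a genuine \emph{translation} on each concentric subcube $(C_2L\sqrt d)^{-1}\widetilde Q_{J'}^k$ of a level-$k$ child $Q$-cube of $R_I^k$. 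The resulting composition, glued over all $Q_J^{k-1}$, is $\Psi_k$.

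Checking that the inductive hypothesis is restored at level $k$ is then bookkeeping. One verifies from the definitions that $\lambda R_I^k\setminus\bigcup_J Q_J^k=B_I^k$, since the only annuli $R_i^m\setminus\lambda R_i^m$ meeting $\lambda R_I^k$ are those of descendant $R$-cubes of $R_I^k$, which (because $R_I^k\in\mathcal R$) lie inside child $Q$-cubes of $R_I^k$; hence item~(2) is precisely the conclusion of Lemma~\ref{lem:RIk_map} post-composed with the shrinking and shuffling similarities. That composite is again a rotation-free similarity --- conjugating $g_I^k$ by such a similarity preserves the almost affine structure and adjoining a translation preserves ``scaling then translation'' --- whose scaling factor is $\epsilon_0$ times the level-$(k-1)$ one, hence at least $\epsilon_0^N=:r_0$ (still a data constant) and at most $1$. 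Item~(4) holds because the shuffling step placed the relocated copies inside $H_{k-1}(c_1Q_{J(I)}^{k-1})$, and every later map $\Psi_{k+1},\dots,\Psi_N$ is supported inside the regions allotted to strictly deeper descendants. Item~(3) combines the separation from the Shuffling Lemma (for cubes with a common parent) with the inductive separation (for cubes with distinct parents), using $\ell(R_I^k)\le2^{-K}\ell(Q_{J(I)}^{k-1})$ so that the parent-level separation dominates; item~(5) is immediate. Finally, the similarity-and-separation clause of the hypothesis propagates to level $k$ precisely because of the translation clause of Lemma~\ref{lem:RIk_map}: on each $(C_2L\sqrt d)^{-1}\widetilde Q_{J'}^k$ the map $H_k$ equals the previous similarity, then the shuffling similarity, then a translation, again a similarity; and since $g_I^k$ is bi-Lipschitz with a data constant (Remark~\ref{rmk:gbilip}), the images of these subcubes remain separated by a data constant times their diameters.

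The hard part will be the preliminary shrink-and-separate step. The cubes $R_I^k$ tile part of their parent, so their enlargements overlap and cannot be made disjoint directly, yet the maps from Lemma~\ref{lem:RIk_map} have supports of order $2L\sqrt d$ times the cube; making those supports disjoint --- which is what keeps $\Psi_k$ a composition of a \emph{data-bounded} number of factors rather than one whose length grows with the possibly infinite total number of cubes --- forces one first to create breathing room. Doing this in parallel is the delicate point: the shrinking can only be performed inside each cube separately via Lemma~\ref{lem:shrink} (hence, strictly, on concentric subcubes --- or else via a bounded colouring of the overlap graph of the enlargements), and one must interleave this with the Shuffling Lemma so that, at every level and for every cube simultaneously, the \emph{entire} set $B_I^k$ ends up correctly positioned, all of items~(2)--(5) hold, and every implied constant, in particular $\epsilon_0$ and $r_0$, depends only on $d,L,\epsilon,\delta$. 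A secondary, routine matter is tracking $(1+\epsilon)$ against $\sqrt{1+\epsilon}$ through the gluings so that each factor of each $\Psi_k$ is genuinely $(1+\epsilon)$-bi-Lipschitz.
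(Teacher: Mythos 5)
Your plan differs structurally from the paper's, and the difference introduces a genuine gap. You interleave the two kinds of maps level by level: $\Psi_k$ first shuffles the level-$k$ cubes $R_I^k$ and then immediately applies (a conjugate of) the map from Lemma~\ref{lem:RIk_map} for $R_I^k$. The paper instead applies \emph{all} the shuffling maps, over every level, as a first phase $H^1\circ\cdots\circ H^N$, and only afterward applies the conjugated almost-affine maps $\widetilde H^1\circ\cdots\circ \widetilde H^N$ as a second phase. This ordering is not an incidental choice; it is what makes the inductive invariant you want to carry actually survive.

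The breakdown in your version is at the invariant ``$H_k$ restricts to a similarity on each $\widetilde Q_J^k$.'' Lemma~\ref{lem:RIk_map}(4) only asserts that its map $G$ acts as a \emph{translation} on the concentric subcube $(C_2L\sqrt d)^{-1}\widetilde Q_J^k$; on the remainder of $\widetilde Q_J^k$ the map $G$ is a genuine (nonlinear, almost-affine) bi-Lipschitz map interpolating between that translation and $f-f(x_I^k)$ on $B_I^k$, and there is no way to strengthen this to a similarity on all of $\widetilde Q_J^k$ because $G$ is pinned to agree with $f$ arbitrarily close to $\partial Q_J^k$ from outside. So after the almost-affine step inside $\Psi_k$, your composition $H_k$ is a similarity only on $(C_2L\sqrt d)^{-1}\widetilde Q_J^k$, and your stated invariant fails. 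Weakening the invariant to the concentric subcube does not save the induction either: the level-$(k+1)$ cubes $R_{I'}^{k+1}$ are maximal good subcubes of $Q_J^k$ in $\cG$ and lie in $\widetilde Q_J^k$, but Lemma~\ref{lem:stopping} gives no control on \emph{where} in $Q_J^k$ they sit --- they can be near $\partial Q_J^k$, well outside $(C_2L\sqrt d)^{-1}\widetilde Q_J^k$. For those cubes, $H_k(\lambda R_{I'}^{k+1})$ is not a cube and Lemma~\ref{lem:shuffling} cannot be applied at level $k+1$. You do flag the ``shrink-and-separate'' step as the hard part, but the real obstruction is not shrinking in parallel; it is that applying any Lemma~\ref{lem:RIk_map} map before all the shuffles are finished destroys the rigidity you need at deeper levels. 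The paper's two-phase structure (shuffle everything from deepest level up; then apply the almost-affine conjugates from deepest level up) is precisely what keeps the relevant restrictions similarities until the very end, at which point Lemma~\ref{lem:RIk_map}(4) is used only to make each $\widetilde H^k$ cohere with the already-placed deeper-level $\widetilde H^i$.

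Two smaller remarks. First, the paper does not shrink the cubes before shuffling; Lemma~\ref{lem:shuffling} already incorporates the shrinking internally, and its hypotheses are met because the $\lambda R_I^k$ inherit disjoint $\mu$-enlargements from the disjointness of the $R_I^k$ --- so your extra appeal to Lemma~\ref{lem:shrink} and to ``bounded colourings'' is not needed and adds complications. Second, the identity $\lambda R_I^k\setminus\bigcup_J Q_J^k=B_I^k$ you invoke is not literally true: $B_I^k$ further removes the annuli $R^m_{i}\setminus\lambda R^m_{i}$ of all $R$-cubes; you are correct that the ones meeting $\lambda R_I^k$ lie inside child $Q$-cubes when $R_I^k\in\mathcal R$, but that still removes a nonempty set from $\bigcup_J Q_J^k$, so the two sets coincide only modulo those annuli, and the argument should be phrased accordingly.
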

\begin{proof}
    Consider a given cube $Q_J^{k-1}$. It follows from Lemma \ref{lem:stopping} that $Q_J^{k-1}$ has a most $M$ children $R_I^k$ for some $M$ depending only on $d,L,\epsilon,\delta$. Let $c_1 = (C_2\sqrt{d}L^2)^{-1}$, where the constant $C_2$ comes from Lemma \ref{lem:RIk_map}. Let $W_I^{k-1}$ be the set $c_1 Q_J^{k-1} \cap \widetilde{Q}_J^{k-1}$. Then $W_I^{k-1}$ is a rectangle of aspect ratio at most $2$ with shortest side length at least $a_J^{k-1} = 2^{-1}c_1\ell(Q_J^{k-1})$. Pick out a straight line segment in $W_I^{k-1}$ of length $a_J^{k-1}/2$ with distance at least $a_J^{k-1}/4$ from $\partial W_I^{k-1}$ and parallel to a coordinate axis. Take a collection of $M+1$ evenly spaced points on this line segment, so that any two consecutive points are at distance $a_J^{k-1}/(2M)$ apart; associate each cube $R_I^k$ with a distinct point $y_I^k$ from this collection. Let $\ell = a_J^{k-1}(16M \sqrt{d}L)^{-1}$. Apply Lemma \ref{lem:shuffling} to obtain a map $G_J^{k-1}\colon \mathbb{R}^d \to \mathbb{R}^d$ that takes each cube $\lambda R_I^k$ to the cube $\mathcal{C}(y_I^k, \ell)$ and can be factored into $\sqrt{1+\epsilon}$-bi-Lipschitz maps. 
    
    For each $1 \leq k \leq N$, we then define a map $H^k \colon \mathbb{R}^d \to \mathbb{R}^d$ by setting $H^k = G_J^{k-1}$ on each set $Q_J^{k-1}$ and taking $H^k$ to be the identity map elsewhere. The map $H^k$ can be factored into $(1+\epsilon)$-bi-Lipschitz maps by gluing together the respective factorizations of $G_J^{k-1}$ on each set $Q_J^{k-1}$ and applying Lemma \ref{lemm:pasting}. Observe that the restriction of $H^k$ to each set $\lambda R_I^k$ is a similarity map with scaling factor 
    \[ \frac{\ell}{\ell(\lambda R_I^k)} \geq \frac{\ell}{(1-2\lambda) \ell(Q_{J(I)}^{k-1})} = \frac{c_1}{32(1-2\lambda) M \sqrt{d}L}. \]
    Take $\widetilde{r}_0$ to be the right-hand side of the above equality.

    For each cube $R_I^k$, let $\widetilde{R}_I^k = (H^N \circ \cdots \circ H^1)(R_I^k)$ and let $\widetilde{s}_I^k$ be the similarity map taking $R_I^k$ to $\widetilde{R}_I^k$. The scaling factor of $\widetilde{s}_I^k$ is at least $r_0$, where $r_0 = (\widetilde{r}_0)^N$.

    Next, we define maps $\widetilde{H}^1, \ldots, \widetilde{H}^N$ as follows. To define $\widetilde{H}^i$ on the set $(2\sqrt{d}L)\widetilde{R}_I^k$ to be $\widetilde{s}_I^k \circ \widetilde{G}_I^k \circ (\widetilde{s}_I^k)^{-1}$, where $\widetilde{G}_I^k$ is the map from Lemma \ref{lem:RIk_map}. Define $\widetilde{H}^i$ to be the identity map otherwise. By Lemma \ref{lem:RIk_map}, the map $\widetilde{H}^i$ factors into $\sqrt{1+\epsilon}$-bi-Lipschitz maps on each set $(2\sqrt{d}L)\widetilde{R}_I^k$. The respective factors paste together to give factorization into globally defined $(1+\epsilon)$-bi-Lipschitz maps. 

    Finally, we set $H = \widetilde{H}^1 \circ \cdots \circ \widetilde{H}^N \circ H^1 \circ H^2 \circ \cdots \circ H^N$. For each cube $R_I^k$, there is a similarity map $s_I^k$ taking $R_I^k$ to $H(R_I^k)$ with scaling factor at least $r_0$. It remains to verify property (3).

    Consider two cubes $R_{I_1}^k$ and $R_{I_2}^k$ that descend from the same $Q_{J}^{k-1}$ and hence the same set $R_I^{k-1}$. Let $\widetilde{s}_I^{k-1}$ denote the similarity map for the cube $R_I^{k-1}$ and $r_I^{k-1}$ its scaling factor. Then the center points $\widetilde{x}_{I_1}^k, \widetilde{x}_{I_2}^k$ of the two cubes $\widetilde{R}_{I_1}^k$ and $\widetilde{R}_{I_2}^k$ satisfy
    \[ \|x_{I_1}^k - x_{I_2}^k\| = r_I^{k-1} \frac{a_J^{k-1}}{2M} \geq \frac{r_I^{k-1} a_J^{k-1} \max\{\ell(R_{I_1}^k), \ell(R_{I_2}^k)\}}{2M}. \]
    Now each $\widetilde{R}_{I_i}^k$ is contained in $B(\widetilde{x}_{I_i}^k, r_I^{k-1} \ell)$, and hence $H(R_{I_i}^k)$ is contained in 
    \[B(\widetilde{x}_{I_i}^k,2r_I^{k-1}\ell \sqrt{d}L) = B\left(\widetilde{x}_{I_i}^k,\frac{r_I^{k-1} a_J^{k-1}}{8M}\right).\]
    It follows that $H(R_{I_1}^k)$ and $H(R_{I_2}^k)$ are separated by at least
    \[\frac{r_I^{k-1} a_J^{k-1}}{4M} \geq \frac{r_0 c_1}{2}\ell(Q_J^{k-1}) \geq \frac{r_0 c_1}{2}\max\{\ell(R_{I_1}^{k}),\ell(R_{I_2}^{k})\}.\]
    Let $c_3 = r_0c_1/2$. 
\end{proof}

\subsubsection{Secondary subdivision}

Ideally, completing the proof would just be a matter of translating and rescaling the sets $H(\lambda R_I^k)$ to move them onto the sets $(\phi_I^k \circ A_I^k)(\lambda R_I^k)$, beginning with the first level $k=1$ and continuing iteratively through $k=N$. However, there are some technical obstacles that need to be dealt with. For example, for cubes $\lambda R_{I_1}^k$ and $\lambda R_{I_2}^k$ of the same level, the sets $(\phi_{I_1}^k \circ A_{I_1}^k)(\lambda R_{I_1}^k)$ and $(\phi_{I_2}^k \circ A_{I_2}^k)(\lambda R_{I_2}^k)$ may overlap. The solution is to cover each set $H(\lambda R_I^k)$ with a collection of small cubes that, after removing a thin collar from each cube and a small subset of cubes, can be moved around more easily. 

For each cube $R_I^k$, we define a collection $\mathcal{U}_I^k$ of small cubes with a thin collar removed that almost cover the set $H(\lambda R_I^k)$.

\begin{lemma} \label{lem:second_subdivision}
    For each cube $R_I^k \in \mathcal{R}$, there is a collection of cubes $\mathcal{U}_I^k$ such that the following hold for each $k$ and each cube $R_I^k$. 
    \begin{enumerate}
        \item Each cube in $\mathcal{U}_I^k$ intersects the set $H(\lambda R_I^k)$.
        \item For all $U \in \mathcal{U}_I^k$, 
        \[ C'c_4 \ell(Q_{J(I)}^{k-1}) \leq \ell(U) \leq c_4 \ell(Q_{J(I)}^{k-1}),\]
        where $c_4$ and $C'$ are small constants depending only on the data. 
        \item For all distinct cubes $U_1,U_2 \in \mathcal{U}_I^k$, $d(U_1,U_2) \geq \mu \ell(Q_{J(I)}^{k-1})$ for some $\mu>0$ depending only on the data.
        \item If $U_1 \in \mathcal{U}_{I_1}^{k_1}$ and $U_2 \in \mathcal{U}_{I_2}^{k_2}$, where $k_2 > k_1$, then either $U_2 \subset U_1$ or the sets $U_1,U_2$ are disjoint. 
        \item
        \[\left|\lambda R_I^k \setminus H^{-1}\left(\bigcup \mathcal{U}_I^k\right)\right| \leq (\delta/2C)\left|R_I^k\right|.
        \] 
        Recall that $C$ is the Carleson packing constant. 
    \end{enumerate}
\end{lemma}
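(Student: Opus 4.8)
\emph{Strategy.} The plan is to build all the families $\mathcal{U}_I^k$ at once by induction on $k$, by laying a single nested family of dyadic grids over the images $H(\lambda R_I^k)$ and then trimming. Write $K_0$ for the bi-Lipschitz constant of $H$; since $H$ is a composite of boundedly many $(1+\epsilon)$-bi-Lipschitz maps, $K_0$ depends only on the data, and $\diam H(\lambda R_I^k)$ is comparable (with data constants) to $\ell(R_I^k)$, hence to $\ell(Q_{J(I)}^{k-1})$ up to the factors $\zeta$ and $2^{-K}$ of Lemma \ref{lem:stopping}(\ref{eq:Rside}). Fix a small collar ratio $\tau\in(0,\tfrac14)$ and a small $c_4>0$, and for $R_I^k\in\mathcal{R}$ let $\ell_I^k$ be the largest dyadic number with $\ell_I^k\le c_4\ell(Q_{J(I)}^{k-1})$, so $\ell_I^k\in(\tfrac12 c_4\ell(Q_{J(I)}^{k-1}),c_4\ell(Q_{J(I)}^{k-1})]$. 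By Lemma \ref{lem:stopping}(\ref{eq:Rside}), $\ell_I^k\le 2^{1-K}\ell_{I'}^{k-1}$ whenever $R_{I'}^{k-1}$ is the level-$(k-1)$ ancestor $R$-cube of $R_I^k$; so all these meshes are powers of $2$ and each finer dyadic grid refines each coarser one. For a dyadic cube $E$ write $E^\ast$ for its concentric $(1-2\tau)$-dilate. Call a mesh-$\ell_I^k$ dyadic cube $D$ \emph{good} for $R_I^k$ if $D^\ast$ meets $H(\lambda R_I^k)$ and, for every ancestor level $j<k$, $D$ does not meet the $\sqrt d\,\ell_I^k$-neighborhood of $\partial(E_j^\ast)$, where $E_j$ is the mesh-$\ell_{I_j}^j$ dyadic cube containing $D$ and $R_{I_j}^j$ is the level-$j$ ancestor of $R_I^k$. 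Set $\mathcal{U}_I^k=\{D^\ast: D\ \text{good for}\ R_I^k\}$.

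Properties (1) and (2) are then immediate, with $C'=\tfrac12(1-2\tau)$, and (3) holds with $\mu\asymp\tau c_4$ since distinct members of $\mathcal{U}_I^k$ are $(1-2\tau)$-dilates of distinct mesh-$\ell_I^k$ cubes. For (4), take $U_r=D_r^\ast\in\mathcal{U}_{I_r}^{k_r}$ with $k_2>k_1$. If $R_{I_2}^{k_2}$ is \emph{not} a descendant of $R_{I_1}^{k_1}$, its level-$k_1$ ancestor $R_{I'}^{k_1}$ differs from $R_{I_1}^{k_1}$, so Proposition \ref{prop:firststage}(3) separates $H(\lambda R_{I'}^{k_1})$ from $H(\lambda R_{I_1}^{k_1})$ by $c_3\max\{\ell(R_{I'}^{k_1}),\ell(R_{I_1}^{k_1})\}$; comparing this with the sizes of $U_1,U_2$ — which lie within $\sqrt d\,\ell(U_1)\le \sqrt d\,c_4\zeta^{-1}\ell(R_{I_1}^{k_1})$, respectively $\lesssim c_4\zeta^{-1}\diam H(\lambda R_{I'}^{k_1})$, of the respective images, using $\ell(R_{I_2}^{k_2})\le 2^{-K}\ell(R_{I'}^{k_1})$ — forces $U_1\cap U_2=\varnothing$ provided $c_4<c_3\zeta/(2\sqrt d)$ and $K$ is large. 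If $R_{I_2}^{k_2}$ \emph{is} a descendant of $R_{I_1}^{k_1}$, then $D_2$ lies in a unique mesh-$\ell_{I_1}^{k_1}$ dyadic cube $E$; if $E\ne D_1$ then $U_2\subseteq D_2\subseteq E$ is disjoint from $D_1\supseteq U_1$, while if $E=D_1$ then goodness of $D_2$ at ancestor level $k_1$ says $D_2$ avoids a neighborhood of $\partial(D_1^\ast)=\partial U_1$, so $D_2$ lies inside $U_1$ or inside $D_1\setminus D_1^\ast$, giving $U_2\subseteq U_1$ or $U_1\cap U_2=\varnothing$.

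The substance is property (5). The set $H(\lambda R_I^k)\setminus\bigcup\mathcal{U}_I^k$ is contained in the union of slabs $\Sigma_1,\dots,\Sigma_k$: the set $\Sigma_k$ consists of the points of $H(\lambda R_I^k)$ within $\lesssim_d\ell_I^k$ of the mesh-$\ell_I^k$ shrink-surfaces, of relative volume $\lesssim_d\tau$; and, for $j<k$, $\Sigma_j$ consists of the points of $H(\lambda R_I^k)$ within $\sqrt d\,\ell_I^k$ of the mesh-$\ell_{I_j}^j$ shrink-surfaces. The quantitative heart of the matter is that, by taking the constant $K$ of Lemma \ref{lem:stopping} large enough (depending only on the data, then choosing $\zeta$ as Lemma \ref{lem:stopping} allows and $c_4=\Theta(K_0\sqrt d\,2^{-2K})$), one arranges simultaneously that $\ell_{I_j}^j\ge \diam H(\lambda R_I^k)$ for all $j<k-1$, that $\ell_I^k/\diam H(\lambda R_I^k)$ and $\ell_I^k/\ell_{I'}^{k-1}$ are as small as desired, and that $c_4<c_3\zeta/(2\sqrt d)$. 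Then for $j<k-1$ the set $H(\lambda R_I^k)$ meets only boundedly many mesh-$\ell_{I_j}^j$ cells, so $|\Sigma_j|\lesssim_d \ell_I^k(\diam H(\lambda R_I^k))^{d-1}$, whereas $|\Sigma_{k-1}|\lesssim_d(\ell_I^k/\ell_{I'}^{k-1})|H(\lambda R_I^k)|$ because the shrink-surfaces are $\ell_{I'}^{k-1}$-separated. Summing the at most $N$ slab bounds and the bound for $\Sigma_k$, and pulling back by the $K_0$-bi-Lipschitz map $H$, gives a total of the shape
\[ C_d\Bigl(N\,\frac{\ell_I^k}{\diam H(\lambda R_I^k)}+N\,\frac{\ell_I^k}{\ell_{I'}^{k-1}}+\tau\Bigr)K_0^{2d}\,|R_I^k|, \]
which is $\le(\delta/2C)|R_I^k|$ once $K$ is large and $\tau$ small, both depending only on the data.

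The main obstacle I anticipate is precisely this estimate for (5). Separation (3) forces us to delete a collar around each fine cube, while nesting (4) forces us to delete \emph{all} fine cells lying near \emph{any} ancestral shrink-surface; and since an ancestral mesh $\ell_{I_j}^j$ can be vastly larger than the set $H(\lambda R_I^k)$ to be covered, a fixed-fraction ancestral collar would swallow $H(\lambda R_I^k)$ entirely. The resolution is to delete only an \emph{absolute} $O(\ell_I^k)$-neighborhood of the ancestral shrink-surfaces, and to exploit the freedom to take $K$ enormous so that $\diam H(\lambda R_I^k)$ becomes negligible against every ancestral scale; verifying that the good cells then still leave only an $O(\delta)$-fraction of $H(\lambda R_I^k)$ uncovered, and that the resulting $\mathcal{U}_I^k$ still satisfy (3) and (4), is where the bookkeeping of the nested constants $K,\zeta,c_4,\tau$ must be carried out carefully.
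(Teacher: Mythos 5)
Your grid construction differs from the paper's in one structural respect, and that difference is where the proof breaks. The paper uses, at level $k$, a lattice of mesh $c_4\ell(Q_{J(I)}^{k-1})/p^{k-1}$ shrunk by the factor $1-1/p$, anchored at the origin; because the side lengths $\ell(Q_J^{k-1})$ are dyadic, the corners of the shrunk level-$(k-1)$ cubes land exactly on the level-$k$ lattice, so every level-$k$ full cube is either contained in a level-$(k-1)$ shrunk cube or disjoint from it. Property (4) is thus purely arithmetic, no level-$k$ cell is ever discarded on account of an ancestor, and the only mass lost in property (5) is the level-$k$ collar, a $3d/p$ fraction per cube, with $p$ chosen \emph{last} (after $\zeta$, $N$, and the distortion $L_0$ of $H$ are all fixed). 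You instead use unaligned standard dyadic grids and enforce (4) by deleting every fine cell that approaches an ancestral shrink-surface $\partial(E_j^\ast)$. This is a legitimate alternative in principle, and your verification of (1)--(4) is fine, but it charges an extra cost to property (5) that your constants cannot pay.

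Concretely: the deleted slab at the immediate ancestor level $j=k-1$ occupies a fraction of $H(\lambda R_I^k)$ of order $\ell_I^k/\ell_{I'}^{k-1}\asymp \ell(Q_{J(I)}^{k-1})/\ell(Q_{J'}^{k-2})$, which is independent of $c_4$ and $\tau$ and is bounded above only by $2^{1-K}$. After pulling back by $H^{-1}$ and comparing with $|R_I^k|$, you must therefore verify an inequality of the shape $K_0^{O(d)}\,2^{-K}\le \delta/(2CN)$, where $K_0$ is the bi-Lipschitz constant of $H$. But $K_0$ is an \emph{output} of Proposition \ref{prop:firststage}, and it depends on $K$ and $\zeta$: the similarity ratios $r_0=(\tilde r_0)^N$ and the shuffling constants there degrade at least polynomially in $1/\zeta\ge 2^{K}$, so $K_0\gtrsim 2^{cK}$ with $c$ depending on $d,N$. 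Hence "take $K$ large depending only on the data" is circular, and the term cannot be made small. (Nor can you escape by forcing the ancestral meshes to exceed $\diam H(\lambda R_I^k)$ at level $k-1$: that requires $c_4\gtrsim K_0 2^{-K}$, which together with the upper bound $c_4\lesssim \delta\zeta K_0^{-O(d)}$ and the constraint $\zeta<2^{-K-3C/\alpha}$ from Lemma \ref{lem:stopping} gives an empty range for $\zeta$.) You correctly identified this as the danger point, but the proposed resolution does not close. The gap is repairable within your framework by inserting a per-level refinement factor chosen last --- e.g.\ mesh $\asymp c_4\,p^{-(k-1)}\ell(Q_{J(I)}^{k-1})$ --- so that $\ell_I^k/\ell_{I'}^{k-1}\lesssim p^{-1}$ with $p$ free; or, more cleanly, by adopting the paper's alignment so that no ancestral deletion is needed at all.
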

\begin{proof}
    We define the collections $\mathcal{U}_I^k$ as follows. For each $R_I^k \in \mathcal{R}$, start with the set of cubes of the form
    \begin{align} \label{equ:U_cubes}
       \frac{c_4 \ell(Q_{J(I)}^{k-1})}{p^{k-1}}\left( [ i_1,i_1+1] \times \cdots \times [i_d, i_d+1]\right), 
    \end{align}
    where $i_1, \ldots, i_d \in \mathbb{Z}$, $c_4\leq \zeta (1-\lambda)/(2\sqrt{d}L)$ is a small constant depending on the data, to be specified further in Proposition \ref{prop:second_shuffle}, 
    and $p \in \mathbb{N}$ is sufficiently large as specified below in the proof.  Then let $\mathcal{U}_I^k$ be the set of all cubes $(1- 1/p)U$, where $U$ is a cube of the form in \eqref{equ:U_cubes}, intersecting the set $H(\lambda R_i^1)$.

    We now verify the required properties. Property (1) follows immediately from our construction, as do properties (2) and (3). To verify (4), observe first that $1/\ell(Q_{J(I)}^{k-1})$ is also an integer that divides $1/\ell(Q_{J'}^{k-2})$, where $Q_{J'}^{k-2}$ is the parent of $Q_{J(I)}^{k-1}$. It follows that the side length of level $k$ cubes as in \eqref{equ:U_cubes} divides the side length of parent cubes in $\mathcal{U}_{I'}^{k-1}$, where $R_{I'}^{k-1}$ is the parent of $R_I^k$. 
    
    To obtain property (5), we now choose $p$ to be sufficiently large as follows. For a given cube $U \in \mathcal{U}_I^k$, the collar that was removed has volume at most 
    \[\frac{2d}{p}\left|\frac{U}{1-1/p}\right| \leq \frac{3d}{p}|U|.\] 
    Note that
    \[\sum_{U \in \mathcal{U}_I^k} |U| \leq \left|H(Q_{J(I)}^{k-1})\right| \leq C_5\left|Q_{J(I)}^{k-1}\right| \leq \frac{C_5}{\zeta^d}\left|R_{I}^{k}\right| \]
    for some constant $C_5 \geq 1$ depending on the data. 
    Thus we have removed at most 
    \[\frac{3dC_5}{p\zeta^d}\left|R_I^k\right|\]
    from the set $H(\lambda R_I^k)$. Next, observe that $H^{-1}$ is $L_0$-bi-Lipschitz for some constant $L_0 >1$ depending only on the data. It follows that
    \[\left|\lambda R_I^k \setminus H^{-1}\left(\bigcup \mathcal{U}_I^k\right)\right| \leq \frac{3dC_5L_0^d}{p\zeta^d}\left|R_I^k\right|.\]
    By taking $p$ sufficiently large, we guarantee that (5) is satisfied.
\end{proof}

Let $\mathcal{U}^k = \bigcup_I \mathcal{U}_I^k$.

\subsubsection{Second stage to construct the map $g$: move the cubes to their final position} 

In this step, we define maps $H_k \colon \mathbb{R}^d \to \mathbb{R}^d$, $k = 1, \ldots, N$, inductively as described in the following proposition. In rough terms, the map $H_k$ moves cubes in $\mathcal{U}^k$ of level $k$ to their final position while bringing cubes of subsequent levels to some point not too far removed from their final position. The map $g$ is then defined to be the map $H_N \circ \cdots \circ H_1 \circ H$, where $H$ is the map constructed in Proposition \ref{prop:firststage}.

In implementing this, we treat some cubes of $\allU$ differently than others. Writing $B = \cup_{k,I} B^k_I$, and fixed $k$, $I$, let
$$\tilde{\allU}^k_I = \{U\in \allU^k_I : U \cap H(B) \neq \emptyset\}.$$
Cubes of $\allU^k_I\setminus \tilde{\allU}^k_I$ will be part of the ``exceptional set'' $E$ in Theorem \ref{thm:factorization}.

Fix a constant 
$$ c_b = \frac{\zeta(1-\lambda)}{10L\theta}<1.$$
Then set 
$$ \badU^k_I = \{U\in\tilde{\allU}^k_I: H^{-1}(U) \text{ is covered by cubes } Q^k_J \text{ with } I(J)=I, \ell(Q^k_J)\geq c_b \ell(R^k_I)\}.$$ 

Note that if $U\in{\badU}^k_I$ then $U$ is ``well inside'' a single set $H(Q^k_J)$, since by Proposition \ref{prop:firststage} ${H(B \cap Q^k_J)} \subseteq H(c_1 Q^k_{J})$
and so if $U\in{\badU}^k_I$ then for some $J$
\begin{equation}\label{eq:wellinside}
 U \subseteq N_{\diam(U)}(H(c_1 Q^k_{J})) \subseteq H(2c_1Q^k_J)
\end{equation}
if the constant $c_4$ from Lemma \ref{lem:second_subdivision} was chosen sufficiently small, depending on the distortion of $H$ and the constant $c_b$.

We also set
$$ \goodU^k_I = \tilde{\allU}^k_I \setminus \badU^k_I.$$
Thus, if $U\in \goodU^k_I$, then it contains at least one point $H(y)$ with $y$ not contained in any ``large'' cube $Q^k_J$ (``large'' meaning $\ell(Q^k_J)\geq c_b \ell(R^k_I$). It follows that for this $y$,
$$ |\phi^k_I\circ A^k_I(y) - f(y)|\leq \theta c_b \ell(R^k_I).$$
Hence, if $U\in \goodU^k_I$, then
\begin{equation}\label{eq:goodU}
 \sup_{H^{-1}(U)} |\phi^k_I\circ A^k_I - f| \leq 2\theta c_b \ell(R^k_I),
\end{equation}
again assuming that the constant $c_4$ from Lemma \ref{lem:second_subdivision} was chosen sufficiently small, depending on the distortion of $H$ and the constants $\theta,c_b$.

Finally, we define
$$ G = H(Q^0) \setminus \bigcup_{k=1}^N \bigcup_{I: |I|=k} \left( (\cup \allU^k_I \setminus \cup\tilde{\allU}^k_I) \cup (H(\lambda R^k_I) \setminus \cup\allU^k_I) \right).$$
The factorization found in the next proposition agrees with $f$ on the set $H^{-1}(G) \cap B$. 

\begin{proposition} \label{prop:second_shuffle}
    There are maps $H_1, \ldots, H_N\colon \mathbb{R}^d \to \mathbb{R}^d$ satisfying the following for each $k$ and each cube $R_I^k$.     
    \begin{enumerate}
        \item $H_k$ can be factored as a controlled number of $(1+\epsilon)$-bi-Lipschitz maps.
        \item If
        \[x \in H^{-1}(G \setminus \bigcup \badU^k_I) \cap \lambda R_I^k \]
        then $(H_k \circ \cdots \circ H_1 \circ H)(x) = (\phi_I^k \circ A_I^k)(x)$. Moreover, if $x \in B_I^k \cap H^{-1}(G)$, then $(H_{l} \circ \cdots \circ H_1 \circ H)(x) = (\phi_I^k \circ A_I^k)(x)$ for all $l \geq k$.
        \item $(H_{k} \circ \cdots \circ H_1)\left(H(\lambda R_I^{k+1}) \cap G\right) \subset f(Q_{J(I)}^{k})$. Moreover if $q\in (H_{k} \circ \cdots \circ H_1)\left(H(\lambda R_I^{k+1}) \cap G\right)$ then $d(q, \partial(f(Q^k_{J(I)})))\gtrsim \ell(Q^k_{J(I)})$ for some implied constant depending on the data and the distortions of the mappings $H, H_1, \dots, H_{k-1}$.
       
        \item The restriction of $H_{k}$ to $(H_{k-1} \circ \cdots \circ H_1)(U)$ for each cube $U \in \mathcal{U}^k$ satisfying $U \subset G$ is a similarity map with controlled scaling factor.
    \end{enumerate}
\end{proposition}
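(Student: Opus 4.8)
I would build $H_1,\dots,H_N$ by induction on $k$. Write $\Psi_k=H_k\circ\cdots\circ H_1$ (with $\Psi_0=\mathrm{id}$) and $\Phi_k=\Psi_k\circ H$ for the map $H$ of Proposition~\ref{prop:firststage}. Assuming (1)--(4) at levels $1,\dots,k-1$, two facts drive the step to level $k$. First, by the nesting property Lemma~\ref{lem:second_subdivision}(4) together with property~(4) at the earlier levels, for each $U\in\allU^k$ with $U\subseteq G$ the restriction $\Psi_{k-1}|_U$ is a similarity of data-controlled scaling factor: it is the composite of the similarity restrictions coming from the $\allU$-ancestors of $U$ at levels $1,\dots,k-1$. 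Second, property~(3) at level $k-1$ gives, for each $R^k_I\in\mathcal{R}$, that $\Psi_{k-1}(H(\lambda R^k_I)\cap G)\subseteq f(Q^{k-1}_{J(I)})$ with $\dist(\cdot,\partial f(Q^{k-1}_{J(I)}))\gtrsim\ell(Q^{k-1}_{J(I)})$ (at the base case $k=1$ this reads $H(\lambda R^1_I)\cap G\subseteq f(Q^0)$ with comparable slack, which I arrange by a harmless normalization of $H$).

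The plan for constructing $H_k$ is a localized application of the Shuffling Lemma. To each $U\in\allU^k_I$ with $U\subseteq G$ I assign the target similarity $T_U:=(s^k_I)^{-1}\circ(\Psi_{k-1}|_U)^{-1}$, whose scaling factor is controlled since that of $s^k_I$ lies in $[r_0,1]$ (Proposition~\ref{prop:firststage}(2)) and that of $\Psi_{k-1}|_U$ is controlled; it carries $\Psi_{k-1}(U)$ onto $(s^k_I)^{-1}(U)$. Fixing a level-$(k-1)$ cube $Q^{k-1}_J$, the source cubes $\{\Psi_{k-1}(U):J(I)=J,\ U\subseteq G\}$ all lie in $f(Q^{k-1}_J)$ by the inductive containment, are of comparable size by Lemma~\ref{lem:second_subdivision}(2), and stay uniformly separated (push the separation of Lemma~\ref{lem:second_subdivision}(3) forward through the controlled bi-Lipschitz map $\Psi_{k-1}$); the targets $(s^k_I)^{-1}(U)$ also lie in $f(Q^{k-1}_J)$, because $H=s^k_I\circ g^k_I$ off the children of $R^k_I$, so $(s^k_I)^{-1}(U)$ lies within $O(\theta\ell(R^k_I))$ of $g^k_I(\lambda R^k_I)\approx f(R^k_I)$, absorbed by the boundary slack. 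Applying the Shuffling Lemma~\ref{lem:shuffling} inside the bi-Lipschitz image $f(Q^{k-1}_J)$, and then — since $f$ is injective, the sets $f(Q^{k-1}_J)$ have disjoint interiors — gluing over all $J$ by Lemma~\ref{lemm:pasting} (and Lemma~\ref{lem:glue_2} at shared boundaries), I obtain $H_k$ as a controlled composition of $(1+\epsilon)$-bi-Lipschitz maps, equal to $T_U$ on each $\Psi_{k-1}(U)$ and the identity off $\bigcup_J f(Q^{k-1}_J)$. This yields (1) and (4).

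To verify (2): if $x\in H^{-1}(G\setminus\bigcup\badU^k_I)\cap\lambda R^k_I$, then $H(x)\in G$ forces $H(x)\in U$ for some $U\in\tilde{\allU}^k_I$ (since $G$ discards $H(\lambda R^k_I)\setminus\bigcup\allU^k_I$ and $\bigcup\allU^k_I\setminus\bigcup\tilde{\allU}^k_I$) and $H(x)\notin\bigcup\badU^k_I$ forces $U\in\goodU^k_I\subseteq G$; one then checks, using the smallness of $c_4$ and the good/bad dichotomy, that $x$ lies off the children of $R^k_I$, where $H=s^k_I\circ g^k_I$, whence $\Phi_k(x)=T_U(\Psi_{k-1}(H(x)))=(s^k_I)^{-1}(H(x))=g^k_I(x)=(\phi^k_I\circ A^k_I)(x)$. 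The ``Moreover'' statement holds because, for $l>k$, $H_l$ is supported in $\bigcup f(Q^{l-1}_{J'})$ with the $Q^{l-1}_{J'}$ descendants of children of $R^k_I$ (or in other branches), all disjoint from $g^k_I(B^k_I)$ since $B^k_I$ avoids every $Q^k_J$ by \eqref{eq:Bni} and $g^k_I$ is injective. Property~(3) at level $k$ is the containment above pushed one level down: the level-$(k+1)$ cubes satisfy $H(\lambda R^{k+1}_{I'})\subseteq H(c_1 Q^k_{J'})$ by Proposition~\ref{prop:firststage}(4), so after $H_k$ they land in $(s^k_I)^{-1}(H(c_1 Q^k_{J'}))\subseteq f(Q^k_{J'})$ at distance $\gtrsim\ell(Q^k_{J'})$ from the boundary, the slack surviving since $H_k$ is controlled bi-Lipschitz and supported away from $\partial f(Q^k_{J'})$.

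The hard part will be the tension inside the construction of $H_k$: the Shuffling Lemma demands uniform separation of both source \emph{and} target cubes inside the host domain $f(Q^{k-1}_J)$, while property~(2) rigidly prescribes each target as $(s^k_I)^{-1}(U)$ so that the telescoping identity $\Phi_k=(s^k_I)^{-1}\circ H=g^k_I$ holds on the good cubes. This is exactly why the good/bad dichotomy is introduced: for a good cube $U\in\goodU^k_I$, \eqref{eq:goodU} pins the target within $O(\theta c_b\ell(R^k_I))$ of $f(H^{-1}(U))$, so separation of good targets follows from injectivity of $f$; a bad cube $U\in\badU^k_I$ lies entirely over a single large child $Q^k_{J'}$ by \eqref{eq:wellinside} and need satisfy no identity with $g^k_I$, so its target can be tucked into the interior of $f(Q^k_{J'})$, clear of all good targets. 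The remaining burden is purely quantitative — keeping the number of factors, the bi-Lipschitz constants, and especially the boundary slack uniformly controlled across all $N$ levels — which is handled by fixing $c_4$ (hence the integer $p$ of Lemma~\ref{lem:second_subdivision}) small at the very end, after all data-dependent constants have been chosen.
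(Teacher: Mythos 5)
Your plan follows the paper's proof essentially step by step: same inductive construction of $H_k$ by applying the Shuffling Lemma inside each $f(Q^{k-1}_J)$, same telescoping identity $\Phi_k=(s^k_I)^{-1}\circ H = g^k_I$ pinning the targets of good cubes, and the same good/bad dichotomy with bad-cube targets tucked into $f(Q^k_{J'})$ to secure the separation hypothesis. One small internal wrinkle worth ironing out when you write this up: the opening sentence of your construction assigns the target $(s^k_I)^{-1}(U)$ uniformly to every $U\subseteq G$, but your final paragraph (correctly) explains that bad cubes must receive a different, shrunken target — in the paper these are the cubes $\mathcal{C}(f(H^{-1}(x(U))),r\,\ell(U))$ with $r$ small — so the target assignment should be split by cases from the outset rather than patched afterward.
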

\begin{proof}
The maps $H_k$ are constructed inductively in $k$. Fix $k \in \{1, \ldots, N\}$, and assume that $H_{k-1}$ is defined with the required properties. (For the case $k=1$, we take $H_0 = H$ and observe that $H$ satisfies the property that $H(\lambda R_I^1 \cap G) \subset f(Q^0)$ and the ``moreover'' conditions by virtue of being bi-Lipschitz.) We define $H_k$ as follows. On each set $Q_{J}^{k-1}$, we define $H_k$ to be the map produced by Lemma \ref{lem:shuffling} with the following assignments. In the role of the set $\Omega$ is $f(Q_{J}^{k-1})$, which is the image of a cube under an $L$-bi-Lipschitz map. In the role of $R_1, \ldots, R_N$, we assign the cubes
\begin{equation}\label{eq:cubestoshuffle}
    \{H_{k-1}\circ \dots \circ H_1(U) :  U \in \tilde{\allU}^k_I, U \subset G, R_I^k \text{ is a child of } Q_J^{k-1}\}.
\end{equation}
Note that the inductive assumption (4) of the current proposition implies that elements of this set are indeed cubes. Indeed, for each $\tilde{U} = H_{k-1}\circ \dots \circ H_1(U)$ from \eqref{eq:cubestoshuffle}, there is a similarity map $t_{\tilde{U}}$ such that $H_{k-1}\circ \dots \circ H_1|_{U} = t_{\tilde{U}}$.

We now describe the ``target'' locations of the cubes in \eqref{eq:cubestoshuffle}, i.e., the cubes playing the roles of $S_1, \dots, S_N$ from Lemma \ref{lem:shuffling}. Let $\tilde{U}= H_{k-1}\circ \dots \circ H_1(U)$ be a cube in \eqref{eq:cubestoshuffle}. There are two possibilities:
\begin{itemize}
    \item If $\tilde{U}\in\goodU^k_I$ for some $I$: In this case, let $s_{\tilde{U}}$ be the similarity map from Proposition \ref{prop:firststage} taking $(\phi_I^k \circ A_I^k)(R_I^k)$ to $H(R_I^k)$. Set $W_{\tilde{U}} = s_{\tilde{U}}^{-1}\circ t_{\tilde{U}}^{-1}(\tilde{U})$.
    \item If $\tilde{U}\in{\badU}^k_I$: Set $W_{\tilde{U}}=\mathcal{C}(f(H^{-1}(x(U)),r\ell(U))$, where $r>0$ is a small parameter to be chosen below, depending on the data and the distortions of $H, H_1, \dots, H_k$ as well.
\end{itemize}
Now each cube $\tilde{U}$ from \eqref{eq:cubestoshuffle} has been assigned a target cube $W_{\tilde{U}}$. To apply Lemma \ref{lem:shuffling}, we verify six conditions, listed with bullet points below. Let $\tilde{U}_1$, $\tilde{U}_2$ be cubes from \eqref{eq:cubestoshuffle}, with associated $W_i = W_{\tilde{U}_i}$.
\begin{itemize}
\item $\ell(\tilde{U}_1) \gtrsim \ell(Q_J^{k-1})$. 

  For this, observe first that $\ell(\tilde{U}_1) \gtrsim \ell(U_1)$, since $H_{k-1} \circ \cdots \circ H_1$ is a similarity map on the parent cube of $U_1$ with controlled scaling factor. 
  Now $\ell(U_1) \geq C'c_4 \ell(Q_J^{k-1})$ by \Cref{lem:second_subdivision}(2).
\item $d(\tilde{U}_1, \tilde{U}_2)\gtrsim \ell(Q^{k-1}_J)\geq\max\{\ell(\tilde{U}_1),\ell(\tilde{U}_2)\}$, with implied constant depending on the previously chosen data (including the distortions of $H, H_1, \dots, H_{k-1}$). In particular, there is a constant $c$ such that $(1+c)\tilde{U}_1$ does not overlap $\tilde{U}_2$.

This is simply because $d(U_1,U_2)\gtrsim \ell(Q^{k-1}_J)$ by Lemma \ref{lem:second_subdivision} and the mappings $H, H_1, \dots, H_{k-1}$ are bi-Lipschitz.

\item $\tilde{U}_1\in f(Q^{k-1}_J)$ and moreover $d(\tilde{U}_1, \partial f(Q_J^{k-1}))\gtrsim \ell(Q^{k-1}_J)$.

This follows from inductive assumption (3) of the present proposition. 

\item $\ell(W_1) \gtrsim \ell(Q_J^{k-1})$.

This follows similarly to the first bullet point.

\item $d(W_1,W_2) \gtrsim \ell(Q^{k-1}_J) \gtrsim \max\{\ell(W_1),\ell(W_2)\}$. 

The second inequality is a consequence of the fact that all mappings defined are bi-Lipschitz; we need only verify the first.

There are a few cases here. If both $U_1$ and $U_2$ are in ${\badU}^k_I$ for their respective $I$, then the factor of $r$ in the definition of $W_{\tilde{U}}$, if chosen sufficiently small depending on the data so far, causes $W_1$ and $W_2$ to be well-separated.

If both $U_1$ and $U_2$ are in $\goodU^k_I$, then they could either come from the same $I$ or different $I$. If the same $I$, then the similarity map $s_{\tilde{U}}^{-1} \circ t_{\tilde{U}}^{-1}$ being applied is the same for both $U_1$ and $U_2$, so the images $W_1$ and $W_2$ remain well-separated. If $U_1$ and $U_2$ come from different indices $I,I'$, then $H^{-1}(U_1)$ and $H^{-1}(U_2)$ are separated by at least $\frac12 (1-\lambda) \zeta \ell(Q^{k-1}_J)$. If $p_1\in W_1$, then $p_1 = \phi^k_I\circ A^k_I(x_1)$ and $p_2 = \phi^k_{I'}\circ A^k_{I'}(x_2)$ for some $x_i\in H^{-1}(U_i)$. Then, using \eqref{eq:goodU} and the definition of $c_b$,
$$|p_1-p_2| \geq |f(x_1)-f(x_2)|-4c_b\theta\ell(Q^{k-1}_J) \gtrsim \ell(Q^{k-1}_J),$$
and this case is complete.

The last case to consider for this bullet point is $U_1\in {\badU}^k_I$ and $U_2\in \goodU^k_{I'}$. If $I\neq I'$, then a very similar argument to that in the previous paragraph shows again that $W_1, W_2$ are well-separated.

If $I=I'$, i.e., $U_1\in {\badU}^k_I$ and $U_2\in \goodU^k_I$ for the same $I$, then we argue as follows: $U_1$ must be inside $H(2c_1Q^k_J)$ with $\ell(Q^k_J)\geq c_b \ell(R^k_I)$, by \eqref{eq:wellinside}. On the other hand $U_2$ cannot be contained in $H(Q^k_J)$ because it is in $\goodU^k_I$. Therefore
$$ d(H^{-1}(U_1), H^{-1}(U_2)) \geq \frac12 \ell(Q^k_J) \geq \frac12 c_b \ell(R^k_I) \geq \frac12 c_b\zeta \ell(Q^{k-1}_J).$$
Now, consider $p_1\in W_1$ and $p_2\in W_2$. Let $x_i$ be the points of $H^{-1}(U_i)$ mapping to $p_i$ under $H_{k-1}\circ\dots H_1\circ H.$ Then, using the definition of $W_{\tilde{U}}$ and \eqref{eq:goodU}, we have
$$ |p_1 - f(x_1)| \leq r\ell(U_1) \text{ and } |p_2 - f(x_2)| \leq 2c_b\theta\ell(R^k_I).$$
Therefore
\begin{align*}
|p_1-p_2| &\geq |f(x_1)-f(x_2)| - r\ell(U_1)-2c_b\theta\ell(R^k_I)\\
&\geq L^{-1}|x_1-x_2| - r\ell(U_1)-2c_b\theta\ell(R^k_I)\\
&\geq (2L)^{-1}c_b \ell(R^k_I) - 3c_b\theta\ell(R^k_I)\\
&\geq c_b\zeta \ell(Q^{k-1}_J)((2L)^{-1} - 3\theta)\\
&\gtrsim \ell(Q^{k-1}_J),
\end{align*}
using our choice of $\theta$.

\item $W_1\in f(Q^{k-1}_J)$ and moreover $d(W_1, \partial f(Q_J^{k-1})) \gtrsim \ell(Q_I^k)$.

There are two cases here. First, suppose that $U_1\in \goodU^k_I$. Let $p_1\in W_1$. Then, by construction, $p_1=\phi^k_I \circ  A^k_I(x_1)$ for $x_1\in H^{-1}(U)$. We have $x_1\in R^k_I \subseteq Q^{k-1}_J$ and moreover $d(x_1, \partial Q^{k-1}_J)\geq \frac12 (1-\lambda) \ell(R^k_I).$ Therefore, $f(x_1)\in f(Q^{k-1}_J)$ and 
$$d(f(x_1), \partial Q^{k-1}_J)\geq \frac12 L^{-1} (1-\lambda) \ell(R^k_I).$$
On the other hand, using \eqref{eq:goodU},
$$ |p_1-f(x_1)| \leq 2\theta c_b\ell(R^k_I) <  \frac14 L^{-1} (1-\lambda) \ell(R^k_I).$$
It follows that $p_1\in f(Q^{k-1}_J)$ and moreover has a lower bound on its distance to the boundary.

The second case is if $U_1\in\badU^k_I$. Again let $p_1\in W_1$. In this case,
$$|p_1-f(x_1)|\leq \ell(W_1)+L\ell(U_1) \leq (r+L)\ell(U_1)< \frac14 L^{-1} (1-\lambda) \ell(R^k_I)$$
and the argument concludes as in the previous case.

\end{itemize}

 This finishes our definition of $H_k$ on the set $Q_J^{k-1}$. We set $H_k$ to be the identity map for all remaining points. By combining the respective factorizations given by Lemma \ref{lem:shuffling}, we obtain a factorization of $H_k$ into $(1+\epsilon)$-bi-Lipschitz maps. 

 We verify the remaining properties for $H_k$. Properties (1), (2), and (4) are direct from the construction. 
It remains to show that property (3) continues to hold at level $k$. This requires showing that
$$ (H_k \circ H_{k-1} \circ \dots \circ H_1)(H(\lambda R^{k+1}_I) \cap G) \subseteq f(Q^{k}_{J(I)})$$
and moreover that all the points in the set on the left-hand side have distance from $\partial f(Q^k_{J(I)})$ that is bounded below by a multiple of $\ell(Q^k_{J(I)})$.

Let $z\in H(\lambda R^{k+1}_I) \cap G$. Then $z\in H(c_1 Q^k_{J(I)})$. Furthermore, $z$ is in a cube $U$ of either $\goodU^k_I$ or ${\badU}^k_I$.

If $z\in U \in \goodU^k_I$, then by construction
$$ (H_k \circ H_{k-1} \circ \dots \circ H_1)(z) = \phi^k_I \circ A^k_I (H^{-1}(z)).$$
We have
$$ |\phi^k_I\circ A^k_I(H^{-1}(z)) - f(H^{-1}(z))| \leq \theta \ell(Q^k_{J(I)}).$$
Write $y$ for the center of $Q^k_{J(I)}$. Then
\begin{align*}
|(H_k \circ H_{k-1} \circ \dots \circ H_1)(z) &- f(y)| = | \phi^k_I \circ A^k_I (H^{-1}(z)) -  f(y)| \\
&\leq  | \phi^k_I \circ A^k_I (H^{-1}(z)) - f (H^{-1}(z))| +  | f (H^{-1}(z)) -  f(y)||\\
&\leq \theta\ell(Q^k_{J(I)}) + Lc_1\ell(Q^k_{J(I)})\\
&< (3L)^{-1} \ell(Q^k_{J(I)}).
\end{align*}
Thus, in this case, $H_k\circ\dots\circ H_1(z)\subseteq f(Q^k_{J(I)})$ with a lower bound on its distance to the boundary.

If $z\in U \in  {\badU}^k_I$, then $Q^k_{J(I)}$ has (for some $J'$)
$$ \ell(Q^k_{J(I)}) \geq c_b \ell(R^{k}_I) \geq c_b \zeta^{-1}Q^{k-1}_{J'} \geq (10L\sqrt{d})^{-2} \ell(U)$$
if the constant $c_4$ was chosen sufficiently small in Lemma \ref{lem:second_subdivision}.

Continue to write $y$ for the center of $Q^k_J$. Then
\begin{align*}
|(H_k \circ H_{k-1} \circ \dots \circ H_1)(z) &- f(y)| \leq
|f(H^{-1}(z)) - f(y)| + 2L\sqrt{d}r\ell(U)\\
&\leq Lc_1\ell(Q^k_{J(I)}) + (5L)^{-1}\ell(Q^k_{J(I)})\\
&\leq (3L)^{-1} \ell(Q^k_J),
\end{align*}
and so again $H_k\circ\dots\circ H_1(z)\subseteq f(Q^k_{J(I)})$ with a lower bound on its distance to the boundary.
\end{proof}

Now we are ready to complete the proof of Theorem \ref{thm:factorization}. As mentioned previously, we define $g$ to be the map $H_N \circ \cdots \circ H_1 \circ H$.

\begin{proof}[Proof of Theorem \ref{thm:factorization}]
Let $Z = H^{-1}(G)$.

By Proposition \ref{prop:second_shuffle}, $g(x) = (\phi_I^k \circ A_I^k)(x)$ for each cube $R_I^k$ and all $x \in B_I^k \cap Z$. It follows immediately from Proposition \ref{prop:coronafunction} that $(\phi_I^k \circ A_I^k)(x) = f(x)$ for all $x \in B_I^k$, and thus $g(x) = f(x)$ for all $x \in B_I^k \cap Z$. Moreover, $g$ factors as the composition of $(1+\epsilon)$-bi-Lipschitz maps. 

To complete the proof, it suffices to verify by Lemma \ref{lem:stopping} (which we applied with $\alpha=\delta/2$) that 
$|B \setminus Z| \leq \delta/2$,
where $B = \cup_{k,I} B^k_I$.

If $x\in B\setminus Z$, there are two possibilities coming from the definition of $G=H(Z)$ above. One is that $H(x)\in \cup\allU^k_I \setminus \cup\tilde{\allU}^k_I$. However, this is actually impossible, since by definition a cube of $\cup\allU^k_I \setminus \cup\tilde{\allU}^k_I$ does not intersect $H(B)$.

The other possibility is that $x\in \lambda R^k_I\setminus H^{-1}(\cup\allU^k_I)$ for some $k,I$. By Lemma \ref{lem:second_subdivision}, the set of all such points, over all choices of $k,I$, has measure at most
\[ \frac{\delta}{2C} \sum_{I,k} |R_I^k| \leq \frac{\delta}{2C} \cdot C|Q^0| = \frac{\delta}{2}.\]
\end{proof}

\begin{remark}\label{rmk:factorization}
Suppose $f\colon [0,1]^d\rightarrow \RR^d$ is $L$-bi-Lipschitz. Then, for any $\epsilon,\delta>0$, the factorization in Theorem \ref{thm:factorization} can be chosen so that
$$ f_i( 0 ) \in B(f(0), 2L\sqrt{d})$$
for each $i$.
\end{remark}

\section{Piecewise affine approximation}\label{sec:PL}

\subsection{Proof of Corollary \ref{cor:PLapproximation}}
In this section, we prove Corollary \ref{cor:PLapproximation}. This will be a straighforward consequence of Proposition \ref{prop:PL}, which we now state.

\begin{proposition}\label{prop:PL}
For each $d\in\mathbb{N}$, there is an $\epsilon_0=\epsilon_0(d)$ with the following property:

Suppose $f\colon \RR^d \rightarrow \RR^d$ is $(1+\epsilon)$-bi-Lipschitz, for some $\epsilon \leq \epsilon_0$. Then for each $\eta>0$, there is a piecewise affine, $(1+2\epsilon)$-bi-Lipschitz homeomorphism $g \colon \RR^d\rightarrow \RR^d$ such that
$$ \sup_{x\in\RR^d}|g(x)-f(x)| \leq \eta.$$
The mapping $g$ may be chosen to be affine on each simplex in a triangulation of $\RR^d$ consisting of congruent simplices of diameter $\eta/4$. In particular, $N(g,[0,1]^d)\lesssim_d \eta^{-d}.$
\end{proposition}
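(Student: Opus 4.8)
The plan is to take $g$ to be the piecewise-affine interpolant of $f$ on a fixed, sufficiently fine triangulation of $\RR^d$, and to verify the four conclusions (sup-norm approximation, the bi-Lipschitz bound, the homeomorphism property, the complexity bound) in turn; the homeomorphism property is where the real content lies, and is where the degree machinery of Section \ref{subsec:localdegree} — in particular Lemma \ref{lem:closedegree} — gets used.

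First I would fix a triangulation $\mathcal{T}$ of $\RR^d$ into congruent simplices of diameter exactly $\eta/4$, e.g.\ a rescaled Kuhn--Freudenthal triangulation; its key feature is that all simplices are affinely equivalent to a single ``fat'' model simplex, so every shape-dependent constant below depends only on $d$. Define $g$ on each closed simplex $\sigma\in\mathcal{T}$ to be the unique affine map agreeing with $f$ at the $d+1$ vertices of $\sigma$. These pieces agree on common faces, so $g\colon\RR^d\to\RR^d$ is a well-defined continuous PL map. The sup-norm bound is then immediate: if $x$ lies in a simplex $\sigma$ with vertex $v$ then $g(v)=f(v)$, while $g(\sigma)$ lies in the convex hull of the $f(v_i)$ and so has diameter at most $(1+\epsilon)\diam(\sigma)$; hence $|g(x)-f(x)|\le|g(x)-g(v)|+|f(v)-f(x)|\le 2(1+\epsilon)(\eta/4)\le\eta$. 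The complexity bound is equally immediate, since $[0,1]^d$ meets $\lesssim_d\eta^{-d}$ simplices of $\mathcal{T}$ (they tile $\RR^d$ and each has volume $\gtrsim_d\eta^d$).

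Next I would establish a \emph{local} bi-Lipschitz estimate: for $\epsilon_0=\epsilon_0(d)$ small enough, each $g|_\sigma$ is $(1+2\epsilon)$-bi-Lipschitz. The input is Lemma \ref{lem:DavidToro}: rescaling the circumball $B_\sigma$ of $\sigma$ (radius comparable to $\eta$) to the unit ball, the hypothesis that $f$ is $(1+\epsilon)$-bi-Lipschitz gives $\bigl||f(x)-f(y)|-|x-y|\bigr|\lesssim\epsilon\diam(\sigma)$ there, so there is an isometry $J_\sigma$ of $\RR^d$ with $|f(v)-J_\sigma(v)|\lesssim_d\epsilon\diam(\sigma)$ at every vertex $v$ of $\sigma$. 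Then $g|_\sigma-J_\sigma|_\sigma$ is the affine interpolant of the error vectors $f(v)-J_\sigma(v)$, so (using fatness to reconstruct a linear map from nodal values) its linear part has norm $\lesssim_d\epsilon$; hence the linear part of $g|_\sigma$ is a small perturbation of the orthogonal matrix $DJ_\sigma$, and a short computation gives the $(1+2\epsilon)$ bound once $\epsilon_0$ is small. This step also records the fact I will need below: $\det D(g|_\sigma)$ has the same sign as $\det DJ_\sigma$.

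The hard part is showing $g$ is a homeomorphism of $\RR^d$. Since $|g-f|\le\eta$ and $f$ is bi-Lipschitz, $g$ is proper. Assume $f$ is orientation-preserving (the other case is symmetric). By Lemma \ref{lem:closedegree} applied on a large ball, the local degree of $g$ equals that of $f$, namely $+1$. Reusing the isometries $J_\sigma$ together with Lemma \ref{lem:closedegree} at the scale of a single simplex, each $J_\sigma$ is orientation-preserving (its degree on $B_\sigma$ agrees with that of $f$, which is uniformly close to it there), so $\det D(g|_\sigma)>0$ for every top-dimensional $\sigma$, i.e.\ $g$ maps all simplices with the same positive orientation. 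This orientation consistency forbids ``folding'': if two simplices sharing a codimension-one face $F$ were folded over $g(F)$, then points just on the common side would have two preimages each contributing $+1$ to the local degree, contradicting that it equals $1$; an analogous argument handles higher-codimension skeleta. Hence $g$ is a local homeomorphism, and a proper local homeomorphism of $\RR^d$ is a covering map onto a set that is open and closed, hence onto all of $\RR^d$, and then (since $\RR^d$ is simply connected) a global homeomorphism. Finally, the global bi-Lipschitz bound follows formally: $g$ is locally $(1+2\epsilon)$-Lipschitz and $\RR^d$ is geodesic, so $g$ is globally $(1+2\epsilon)$-Lipschitz; the same reasoning applied to $g^{-1}$, which is $(1+2\epsilon)$-Lipschitz on each image simplex $g(\sigma)$ and whose image simplices cover $\RR^d$, gives the lower bound. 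I expect the no-folding argument — i.e.\ converting ``all affine pieces carry the same orientation'' into ``$g$ is a local homeomorphism'' via the local degree — to be the main obstacle; everything else is either a direct estimate or a routine invocation of John's theorem.
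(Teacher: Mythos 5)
Your plan tracks the paper's closely: same Freudenthal-type triangulation, same piecewise-affine interpolant $g$, same use of Lemma \ref{lem:closedegree} to compare $f$ with affine models, and the same degree-counting idea to rule out two preimages. The main difference is the local bi-Lipschitz estimate: you invoke John's theorem via Lemma \ref{lem:DavidToro} to produce isometries $J_\sigma$ and control $g|_\sigma$ as a perturbation, whereas the paper's Lemma \ref{lem:affine} gets the same conclusion by polarizing the $(1+\epsilon)$-bi-Lipschitz identity directly on the vertices; both are fine, though note that either route really produces a $(1+C_d\epsilon)$-bi-Lipschitz bound on each piece, so the clean ``$2\epsilon$'' should be read as shorthand.

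The genuine gap is the step ``Hence $g$ is a local homeomorphism.'' Your degree argument — if a point off the image of the codimension-one skeleton had two preimages, each would contribute $+1$, contradicting global degree $1$ — establishes that $g$ is injective on the dense open set $W = \RR^d\setminus g(B)$ (where $B$ is the skeleton). That is the content of the paper's Lemma \ref{lem:Vinjective}. But injectivity on a dense set together with orientation-consistency of all pieces does not by itself give local injectivity: the potential failure is a ``branched'' behavior at a lower-dimensional face (imagine, in $d=2$, a vertex whose star maps with total angle $4\pi$; adjacent triangles don't fold, yet the map is $2$-to-$1$ near the vertex, with both sheets meeting only along the $1$-skeleton). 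Your parenthetical ``an analogous argument handles higher-codimension skeleta'' does not address this: the bad coincidences $g(x)=g(y)$ you need to rule out have $g(x)\in g(B)$, where the naive preimage-counting in $W$ says nothing. The paper closes this gap with a separate openness argument (Lemma \ref{lem:open}), which combines local constancy of degree with density of $V=g^{-1}(W)$ to show $g$ is an open map; only then does ``$g(A)\cap g(A')$ open and nonempty must meet $W$'' let you promote injectivity-on-$W$ to global injectivity. Your covering-space endgame is valid once local-homeomorphism is in hand, but as written you have asserted, not proved, the crucial openness/local-injectivity step, and that is where the real work lies.
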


Proposition \ref{prop:PL} may very well be already known, but we did not find a proof in the literature. We provide one in the next subsection based only on some basic topological degree theory. Pieces of the argument are similar to those appearing in \cite{Vai:86, GCD}.

Before proving this proposition, we show how it combines with Theorem \ref{thm:factorization} to yield Corollary \ref{cor:PLapproximation}:

\begin{proof}[Proof of Corollary \ref{cor:PLapproximation}]
Apply Theorem \ref{thm:factorization} with the given value of $\delta$ and $\epsilon=\epsilon_0(d)$ from Proposition \ref{prop:PL}. This provides a set $E\subseteq [0,1]^d$ with $|E|<\delta$ and $(1+\epsilon)$-bi-Lipschitz mappings $f_T, \dots, f_1$ of $\RR^d$ such that 
$$ f = f_T \circ \dots \circ f_1 \text{ on } [0,1]^d\setminus E.$$
Here $T$ depends only on $d, \delta, L$.

We may now use Proposition \ref{prop:PL} to approximate each $f_i$ by a $(1+2\epsilon)$-bi-Lipschitz piecewise affine homeomorphism $g_i$ with arbitrarily small error $\eta'>0$, and set $g=g_T \circ \dots \circ g_1$. If the error $\eta'$ in each approximation is sufficiently small, depending on $T$, then $|g-f|\leq\eta$ on $[0,1]^d\setminus E$. 

The bi-Lipschitz constant of $g$ is bounded by $(1+2\epsilon)^T$, which is controlled by $\delta,d,L$.

To bound $N(g, [0,1]^d)$, recall that the mappings $g_i$ are all $2$-bi-Lipschitz and defined on a fixed triangulation of $\RR^d$ consisting of congruent simplices $\{S:S\in\mathcal{T}\}$ of diameter $\eta'/4$. For each $k\in\{1, \dots, N\}$, the image of each such simplex $S$ under $g_k \circ \dots g_1$ is covered by at most $n=n(d,T)$ simplices from $\mathcal{T}$. It follows that each simplex $S$ can be decomposed into a controlled number of pieces on which $g_T \circ \dots g_1$ is affine, since the number of such simplices in $[0,1]^d$ is bounded depending on $d$ and $\eta'=\eta'(\eta, d, L, \eta)$.
\end{proof}

\subsection{Proof of Proposition \ref{prop:PL}}
The proof of Proposition \ref{prop:PL} proceeds via a few lemmas. Throughout, we will reference the numbered properties (i)-(vi) of the local degree $\mu$ that are stated in subsection \ref{subsec:localdegree}.

For each $d\in\mathbb{N}$, we fix a triangulation of $\mathbb{R}^d$ into a collection of congruent $d$-simplices of diameter $1$, which we call $\mathcal{T}_d$. As usual, we require that different simplices intersect only in a full lower-dimensional face, if at all. The particular triangulation we choose with these requirements does not matter too much, but for concreteness we fix the following method:

Start with the unit cube of $\RR^d$. Partition it into the so-called ``Freudenthal triangulation'', consisting of $d!$ simplices of the form 
\begin{equation}\label{eq:Freudenthal}
 \{ x\in\RR^d : 0\leq x_{\pi(1)} \leq x_{\pi(2)} \leq \dots \leq x_{\pi(d)}\leq 1\},
\end{equation}
where $\pi$ is any permutation of $\{1,\dots, d\}$. Translate this triangulation to each cube in the standard tiling of $\RR^d$ by unit cubes, and finally rescale it so that all the simplices have diameter $1$.

If $T$ is an image of one (and hence any) of the simplices in \eqref{eq:Freudenthal} under an isometry and scaling of $\RR^d$, we will call $T$ a ``Freudenthal simplex''.

\begin{lemma}\label{lem:affine}
For each $d\in\mathbb{N}$, there is an $\epsilon_0=\epsilon_0(d)\in (0,1]$ with the following property:

Suppose $A$ is an affine map on $\RR^d$, $T$ is a Freudenthal simplex, and $\epsilon\leq \epsilon_0$. Assume that there is a $(1+\epsilon)$-bi-Lipschitz, orientation-preserving mapping $f\colon\RR^d\rightarrow\RR^d$ that agrees with $A$ on the vertices of $T$.

Then $A$ is $(1+2\epsilon)$-bi-Lipschitz and orientation-preserving.

\end{lemma}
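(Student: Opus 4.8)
The idea is to treat this as a compactness argument combined with the observation that an affine map is determined by its values on the vertices of a non-degenerate simplex. First, since $T$ is a Freudenthal simplex, it is the image of the reference simplex $T_0 = \{0\le x_{\pi(1)}\le\dots\le x_{\pi(d)}\le 1\}$ under a similarity; because the hypothesis (being $(1+2\epsilon)$-bi-Lipschitz and orientation-preserving) and the hypothesis on $f$ are invariant under pre- and post-composition with isometries and dilations, it suffices to prove the statement for a single fixed reference simplex $T_0$, with its $d+1$ vertices $v_0,\dots,v_d$. An affine map $A$ with $A(v_i) = f(v_i)$ exists and is unique since $v_0,\dots,v_d$ are affinely independent; concretely, $A$ is determined by the linear system $A(v_i)=f(v_i)$, and the linear part $A'$ of $A$ is a continuous (indeed linear-algebraic, hence smooth) function of the data $(f(v_0),\dots,f(v_d))$.

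Next comes the compactness step. Suppose the conclusion fails: then for every $n$ there is a $(1+1/n)$-bi-Lipschitz orientation-preserving $f_n\colon\RR^d\to\RR^d$ whose associated affine interpolant $A_n$ on $T_0$ is either not $(1+2/n)$-bi-Lipschitz or not orientation-preserving. Normalize by post-composing with a translation so that $f_n(v_0)=v_0$; then the $f_n$ are equi-Lipschitz and fix a point, so by Arzelà--Ascoli a subsequence converges locally uniformly to a $1$-Lipschitz map $f_\infty$. The bi-Lipschitz lower bounds pass to the limit, so $f_\infty$ is an isometry of $\RR^d$; since each $f_n$ is orientation-preserving, $f_\infty$ is an orientation-preserving isometry (this needs a short degree argument: the local degree of $f_n$ on a fixed ball is eventually equal to that of $f_\infty$ by Lemma \ref{lem:closedegree}, and it is $+1$). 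In particular $f_\infty$ agrees with its own affine interpolant on the $v_i$, i.e. $A_\infty = f_\infty$ is an orientation-preserving isometry. By the continuous dependence of the linear part $A_n'$ on the vertex data $f_n(v_i)\to f_\infty(v_i)$, we get $A_n' \to A_\infty'$, and $A_\infty'$ is orthogonal with $\det A_\infty' = 1$. Hence for large $n$, $A_n$ is as close as we like to an orientation-preserving isometry; in particular $\det A_n' > 0$ (so $A_n$ is orientation-preserving) and the singular values of $A_n'$ are within any prescribed tolerance of $1$.

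The remaining point is quantitative: we must get the specific constant $1+2\epsilon$ (not merely $1+o(1)$), and we need it to hold for \emph{all} $\epsilon\le\epsilon_0$ with a single threshold $\epsilon_0$, which is why the bare compactness argument above must be supplemented. I would argue as follows. Let $L_n$ denote the bi-Lipschitz constant of $A_n$. The displacement of $f_n$ on $T_0$ controls $A_n$: for any two vertices $v_i,v_j$ we have $(1+\epsilon)^{-1}|v_i-v_j|\le |A_n(v_i)-A_n(v_j)| = |f_n(v_i)-f_n(v_j)| \le (1+\epsilon)|v_i-v_j|$, and since $A_n$ is affine and $T_0$ is a \emph{fixed} non-degenerate simplex, these $\binom{d+1}{2}$ inequalities force $|A_n' w|\le c_d(\epsilon)|w|$ and $|A_n'w|\ge c_d(\epsilon)^{-1}|w|$ for all $w$, where $c_d(\epsilon)\to 1$ as $\epsilon\to 0$ with a rate depending only on $d$ (this is elementary linear algebra on the fixed frame of edge vectors of $T_0$, analogous to the argument in Remark \ref{rmk:gbilip}). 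Thus $L_n\le c_d(\epsilon)$ automatically. To upgrade $c_d(\epsilon)$ to the sharp $1+2\epsilon$ we use the compactness input: it shows $\det A_n' \ge 1/2$ say, for $\epsilon\le\epsilon_0$, which combined with the operator-norm bound $\|A_n'\|\le c_d(\epsilon)$ and the analogous bound on $\|(A_n')^{-1}\|$ pins the singular values into $[ (1+\epsilon)/c_d(\epsilon)^{?},\dots]$; choosing $\epsilon_0$ small enough (depending only on $d$) that $c_d(\epsilon)\le 1+2\epsilon$ on $[0,\epsilon_0]$ closes the argument.

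The main obstacle I anticipate is precisely this last quantitative matching: the clean compactness argument only yields $L_n = 1 + O(1/n)$, whereas the statement demands the explicit linear-in-$\epsilon$ bound $1+2\epsilon$ valid uniformly for all $\epsilon \le \epsilon_0(d)$. Reconciling these requires separating the estimate into (a) an elementary, fully explicit linear-algebra bound $L\le c_d(\epsilon)$ coming directly from the edge vectors of the fixed simplex, and (b) a \emph{qualitative} compactness/degree input used only to guarantee orientation-preservation (the sign of $\det A'$), for which no sharp constant is needed. The orientation statement is where the local degree machinery of Section \ref{subsec:localdegree} — specifically Lemma \ref{lem:closedegree} and properties \eqref{deg:homeo}, \eqref{deg:homotopic} — genuinely enters; everything else is soft analysis plus bookkeeping on a fixed simplex.
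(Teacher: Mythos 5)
Your overall structure — reduce to a fixed reference simplex, use a compactness argument plus degree theory (Lemma \ref{lem:closedegree}) for orientation-preservation, and linear algebra on the simplex vertices for the bi-Lipschitz bound — matches the paper's proof. The orientation-preservation step in particular is essentially the paper's argument.

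However, the bi-Lipschitz step has a genuine gap. You derive a soft bound $L_n \le c_d(\epsilon)$ with $c_d(\epsilon)\to 1$ as $\epsilon\to 0$, and then propose to finish by ``choosing $\epsilon_0$ small enough (depending only on $d$) that $c_d(\epsilon)\le 1+2\epsilon$ on $[0,\epsilon_0]$.'' This does not follow from $c_d(\epsilon)\to 1$: the inequality $c_d(\epsilon)\le 1+2\epsilon$ is about the \emph{rate} of convergence, i.e.\ $\limsup_{\epsilon\to 0}(c_d(\epsilon)-1)/\epsilon \le 2$, and if $c_d(\epsilon)-1\sim C_d\,\epsilon$ with $C_d>2$ then no choice of $\epsilon_0$ makes the inequality hold on $(0,\epsilon_0]$. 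The compactness input (a lower bound $\det A_n'\ge 1/2$) also cannot rescue this: a determinant bound plus an $O(1)$ operator-norm bound does not pin the singular values to within $2\epsilon$ of $1$. In other words, ``$c_d(\epsilon)\le 1+2\epsilon$'' is precisely the quantitative conclusion of the lemma, and asserting it by shrinking $\epsilon_0$ is circular. The paper instead obtains the explicit constant directly: applying the polarization identity to the bi-Lipschitz constraints at the vertices yields entrywise bounds of the form $(1-3\epsilon)\langle v_i,v_j\rangle \le \langle Av_i,Av_j\rangle \le (1+3\epsilon)\langle v_i,v_j\rangle$ on the Gram matrix of $A$, from which the bound $\max\{(1+3\epsilon)^{1/2},(1-3\epsilon)^{-1/2}\}\le 1+2\epsilon$ is read off. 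Your proposal needs some concrete calculation of this kind to replace the ``choose $\epsilon_0$'' step; without it, the explicit constant $1+2\epsilon$ is never established.
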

\begin{proof}
It suffices to find a value of $\epsilon_0$ that works under the additional assumptions that $T$ is the specific Freudenthal simplex
$$ T = \{x\in\RR^d : 0\leq x_1 \leq x_2 \leq \dots \leq x_d \leq 1\}, $$
and $A(0)=f(0)=0$.

To see that this suffices, suppose we have proven the lemma under these assumptions and $T'$ is another Freudenthal simplex. Then there is a affine map $S$, the composition of a scaling and an isometry, with some scaling factor $a$ that sends $T$ to $T'$. If $A=f$ on the vertices of $T'$, then $a^{-1} r \circ A\circ S = a^{-1} r \circ f\circ S$ on the vertices of $T$, where $r$ is either the identity or a reflection, depending on whether $S$ is orientation-preserving or -reversing. Applying the lemma for these maps on $T$ yields the desired result for $A$ on $T'$.

Next, we show that $A$ must be $(1+2\epsilon)$-bi-Lipschitz.  Write $v_1, \dots, v_n$ for the non-zero vertices of $T$, which of course form a basis of $\RR^d$. The fact that $A$ is $(1+\epsilon)$-bi-Lipschitz on $\{0,v_1, \dots ,v_n\}$, and the polarization identity yield, for each $i,j$, that
\begin{align*}
\langle Av_i, Av_j \rangle &= \frac12\left( |Av_i|^2 + |Av_j|^2 - |Av_i - Av_j^2|\right)\\
&\leq \frac12\left((1+\epsilon)^2(|v_i|^2+|v_j|^2) - (1+\epsilon)^{-2}|v_i-v_j|^2\right)\\
&= \langle v_i, v_j \rangle + \frac12\left(((1+\epsilon)^2-1)(|v_i|^2+|v_j|^2) + (1-(1+\epsilon)^{-2})(|v_i-v_j|^2)\right)\\
&\leq (1+3\epsilon)\langle v_i, v_j \rangle.
\end{align*}
A similar calculation yields
$$\langle Av_i, Av_j \rangle \geq (1-3\epsilon)\langle v_i, v_j \rangle. $$
For an arbitrary vector $v=\sum_{i=1}^d a_i v_i\in\RR^d$,
$$ |Av|^2 = \sum_{1\leq i,j \leq d} a_i a_j \langle Av_i, Av_j \rangle,$$
so
$$ (1-3\epsilon)|v|^2 \leq |Av|^2 \leq (1+3\epsilon)|v|^2,$$
from which it follows that $A$ is bi-Lipschitz with constant $1+2\epsilon\geq\max\{(1+3\epsilon)^{1/2}, (1-3\epsilon)^{-1/2}\}$, if $\epsilon$ is sufficiently small.

We now prove the orientation-preserving property of $A$ on $T$, for $\epsilon$ sufficiently small, by a compactness argument. Suppose that this conclusion were to fail. Then there would be a sequence of orientation-preserving $(1+\epsilon_n)$-bi-Lipschitz mappings $f_n$ of $\RR^d$ ($\epsilon_n\rightarrow 0$) and corresponding orientation-reversing linear maps $A_n$ agreeing with $f_n$ on the vertices of $T$. By the Arzel\`a-Ascoli Theorem, we may pass to a subsequence along which $\{f_n\}$ and $\{A_n\}$ both converge uniformly on compact sets, to maps $f$ and $A$, respectively. Then $f$ is an isometry, therefore affine, and so $A=f$ since they agree on the vertices of the simplex. 

Choose a point $q\in \text{int}(T)$. If $n$ is sufficiently large in our subsequence, the domain $D=\text{int}(T)$, the mappings $A_n$ and $f_n$, and the point $p=f_n(q)$ satisfy the assumptions of Lemma \ref{lem:closedegree}. Therefore
$$ \mu(p, D, A_n) = \mu(p, D, f_n).$$
This contradicts the supposition that $A_n$ are orientation-reversing and $f_n$ are orientation-preserving, and thus completes the proof.
\end{proof}

For the remainder of this section, we fix $d\in\mathbb{N}$ and $\epsilon_0\in (0,1]$ as in Lemma \ref{lem:affine}, and a $(1+\epsilon)$-bi-Lipschitz map $f$ with $\epsilon\leq\epsilon_0\leq 1$. Fix any $\eta>0$; our goal is to approximate $f$ up to error $\eta$ by a global PL homeomorphism of $\RR^d$. We also assume, without loss of generality, that $f$ is orientation-preserving.

Let $\mathcal{T}$ denote the Freudenthal triangulation $\mathcal{T}_d$ of $\RR^d$, with all the simplices rescaled to have diameter $\eta/4$. Let $g$ be affine on each simplex, as uniquely determined by the values of $f$ on the vertices. Observe that $g$ is continuous and in fact $(1+2\epsilon)$-Lipschitz, since (by Lemma \ref{lem:affine}) its restriction to each simplex of $\mathcal{T}$ is. It follows immediately that
$$|g(x)-f(x)|\leq \eta \text{ for all } x\in\RR^d,$$
since both mappings are $2$-Lipschitz and they agree on an $\eta/4$-dense subset of $\RR^d$.

\begin{lemma}\label{lem:surjective}
The map $g\colon\RR^d \rightarrow \RR^d$ is surjective.
\end{lemma}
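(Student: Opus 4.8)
The plan is to fix an arbitrary point $p\in\RR^d$ and show that $p\in g(\RR^d)$ by a local degree computation, comparing $g$ to the homeomorphism $f$ on a large ball. The key input is Lemma \ref{lem:closedegree}, which lets us transfer the local degree of $f$ to that of $g$ since $|g-f|\le\eta$.

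First I would record that $g$ is proper. Since $|g(x)-f(x)|\le\eta$ for all $x\in\RR^d$ and $f$ is a bi-Lipschitz homeomorphism of $\RR^d$ (so $|f(x)|\to\infty$ as $|x|\to\infty$), we get $|g(x)|\ge|f(x)|-\eta\to\infty$, hence $g$ is proper; it is also continuous. Next, given $p$, choose $R>0$ with $R>8\eta$ and large enough that $p=f(q)$ for some $q$ with $|q|<R/2$; such an $R$ exists because $f$ is surjective. Put $D=B(0,R)$. For $x\in\partial D$ we have $|x-q|\ge R/2$, so, using that $f$ is $(1+\epsilon)$-bi-Lipschitz with $\epsilon\le\epsilon_0\le 1$,
\[ |f(x)-p|=|f(x)-f(q)|\ge (1+\epsilon)^{-1}|x-q|\ge \tfrac{1}{4}R. \]
Thus $\dist(p,f(\partial D))\ge R/4$, and since $|g-f|\le\eta$ on $\partial D$ we also get $\dist(p,g(\partial D))\ge R/4-\eta$. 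Consequently
\[ \sup_{x\in\partial D}|g(x)-f(x)|\le \eta < \tfrac{1}{4}R-\eta\le \dist\big(p,\,f(\partial D)\cup g(\partial D)\big), \]
where the middle inequality uses $R>8\eta$.

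Now I would apply Lemma \ref{lem:closedegree} with $h_1=f$, $h_2=g$, domain $D$, and the point $p$ (both maps being proper and continuous), obtaining $\mu(p,D,g)=\mu(p,D,f)$. Since $f|_D\colon D\to f(D)$ is a homeomorphism and $p=f(q)\in f(D)$, property \eqref{deg:homeo} of the local degree gives $\mu(p,D,f)=\pm 1$, hence $\mu(p,D,g)\ne 0$. By property \eqref{deg:notin}, if $p\notin g(\overline{D})$ then $\mu(p,D,g)=0$; therefore $p\in g(\overline{D})\subseteq g(\RR^d)$. As $p\in\RR^d$ was arbitrary, $g$ is surjective. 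The argument is essentially routine once Lemma \ref{lem:closedegree} is in hand; the only points needing care are verifying that $g$ is proper (so that the homotopy used in Lemma \ref{lem:closedegree} stays proper) and choosing $R$ large enough that $p$ lies well inside $f(D)$ relative to the uniform error $\eta$.
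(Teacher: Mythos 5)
Your proof is correct and follows essentially the same route as the paper: both surjectivity arguments compare $\mu(p,D,g)$ to $\mu(p,D,f)$ via Lemma \ref{lem:closedegree} on a ball chosen so that $p$ is far from $f(\partial D)\cup g(\partial D)$ relative to the uniform error $\eta$ (the paper uses the small ball $B(f^{-1}(p),10\eta)$ rather than a large ball about the origin, but this is immaterial). Your explicit verification that $g$ is proper is a nice touch that the paper leaves implicit.
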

\begin{proof}
First, observe that $f$ is surjective: since $f$ is bi-Lipschitz, $f(\RR^d)$ is both open and closed in $\RR^d$.

Degree theory then shows that a continuous $g$ within bounded distance of $f$, as our map $g$ is, must also be surjective. In particular, fix any $y\in\RR^d$ with $f(x)=y$. Let $B = B(x, 10\eta)$. Then
$$ \mu(y, B, f) = 1,$$
since $f$ is an orientation-preserving homeomorphism.

We have
$$ \sup_{x\in B} |f(x)-g(x)| \leq \eta $$
and
$$ \dist(y, f(\partial B) \cup g(\partial B) ) \geq 10\eta/2 - 2\eta > \eta,$$
since $f$ is $2$-bi-Lipschitz and $g$ is within $\eta$ of $f$. Lemma \ref{lem:closedegree} then implies that
$$ \mu(y,B, g) = 1.$$
From this, it follows that $y\in g(B)$ and hence $g$ is surjective.
\end{proof}

We now work to show that $g$ is injective. Partition $\RR^d$ into two sets: 
$$ U = \cup_{T\in\mathcal{T}} \text{int}(T) $$
and
$$ B = \cup_{T\in\mathcal{T}} \partial T = \RR^d \setminus U. $$

\begin{lemma}\label{lem:closed}
The sets $B$ and $g(B)$ are closed in $\RR^d$.
\end{lemma}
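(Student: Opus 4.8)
The plan is to treat the two assertions separately: the closedness of $B$ is essentially formal, while the closedness of $g(B)$ requires one genuine (if mild) input. For $B$: each simplex $T\in\mathcal{T}$ is closed with nonempty interior, so each $\text{int}(T)$ is open, hence $U=\bigcup_{T\in\mathcal{T}}\text{int}(T)$ is open and $B=\RR^d\setminus U$ is closed. (That $\bigcup_{T}\partial T$ genuinely equals $\RR^d\setminus U$ is part of the setup: the simplices cover $\RR^d$, and since two distinct simplices meet only in a common proper face — which lies in the boundary of each — a point interior to one simplex lies on no other simplex, so $U\cap\bigcup_T\partial T=\emptyset$ and $U\cup\bigcup_T\partial T=\RR^d$.)

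For $g(B)$ the point is that $B$ is closed but \emph{unbounded}, so one cannot simply invoke ``the continuous image of a compact set is compact.'' First I would show that $g$ is a \emph{proper} map. Since $f$ is $(1+\epsilon)$-bi-Lipschitz with $\epsilon\le 1$, we have $|f(x)|\ge |f(x)-f(0)|-|f(0)|\ge \tfrac12|x|-|f(0)|$; combining this with $\sup_{\RR^d}|g-f|\le\eta$ yields $|g(x)|\ge \tfrac12|x|-|f(0)|-\eta$, which tends to $\infty$ as $|x|\to\infty$. Hence for every compact $K\subset\RR^d$ the preimage $g^{-1}(K)$ is bounded, and it is closed by continuity of $g$, so it is compact; thus $g$ is proper.

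It then remains to conclude that $g$ carries closed sets to closed sets, which I would do by a short sequential argument: given a closed $A\subseteq\RR^d$ and $y\in\overline{g(A)}$, choose $a_n\in A$ with $g(a_n)\to y$; the set $\{y\}\cup\{g(a_n):n\in\bN\}$ is compact, so the $a_n$ all lie in the compact set $g^{-1}(\{y\}\cup\{g(a_n):n\in\bN\})$, whence after passing to a subsequence $a_n\to a$ for some $a\in A$, and $g(a)=\lim g(a_n)=y$ by continuity, so $y\in g(A)$. Applying this with $A=B$ gives that $g(B)$ is closed. (An alternative, more hands-on route: $g(B)=\bigcup_{T\in\mathcal{T}}g(\partial T)$ is a union of compact sets which is locally finite — each bounded region meets only finitely many of the $g(\partial T)$, again using coercivity of $g$ and local finiteness of $\mathcal{T}$ — and a locally finite union of closed sets is closed.)

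I do not anticipate a serious obstacle here; the only point that needs care is exactly the one noted above, namely that $B$ is non-compact, so the closedness of $g(B)$ has to be routed through properness (equivalently, coercivity) of $g$ rather than through a compactness argument for $B$ itself.
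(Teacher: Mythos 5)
Your proposal is correct and takes essentially the same approach as the paper: both hinge on $g$ being proper (coercive), which you and the paper both derive from $\sup|g-f|\le\eta$ together with $f$ being bi-Lipschitz. Your parenthetical ``alternative'' route (that $g(B)$ is a locally finite union of compact sets $g(\partial T)$, since any compact $K$ has bounded and hence finite-simplex-meeting preimage) is in fact exactly the argument the paper gives.
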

\begin{proof}
That $B$ is closed is immediate from the fact that $U$ is open.

For $g(B)$, consider an arbitrary closed ball $K = \overline{B}(f(x),r)$ in $\RR^d$. The set $K$ can only intersect $g(T)$ for finitely many simplices $T\in\mathcal{T}$. Indeed, $g^{-1}(K) \subseteq N_{2\eta}(f^{-1}(K))$, so it is bounded and can therefore only intersect finitely many simplices of $\mathcal{T}$.

Thus, $K \cap g(B) = \cup_i (K \cap g(\partial T_i))$ is the union of finitely many compact sets, and thus closed. Since $K$ was an arbitrary closed ball in $\RR^d$, $g(B)$ is closed.
\end{proof}

Now set
$$ W = \RR^d \setminus g(B), $$
$$ V = g^{-1}(W)\subseteq U.$$

\begin{lemma}\label{lem:dense}
The sets $V$ and $W$ are open, dense subsets of $\RR^d$.
\end{lemma}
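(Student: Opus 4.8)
The plan is to dispatch $W$ first and then deduce the statement for $V = g^{-1}(W)$. The set $W = \RR^d \setminus g(B)$ is open immediately from Lemma \ref{lem:closed}, which says that $g(B)$ is closed. For density of $W$, I would observe that $g(B)$ is Lebesgue-null: $B$ is the $(d-1)$-skeleton of the triangulation $\mathcal{T}$, i.e.\ a countable union of $(d-1)$-dimensional simplices, so $|B| = 0$, and since $g$ is Lipschitz (indeed $(1+2\epsilon)$-Lipschitz) it sends null sets to null sets, whence $|g(B)| = 0$. A null set has empty interior, so $W$ is dense in $\RR^d$.

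Next, $V = g^{-1}(W)$ is open because $g$ is continuous and $W$ is open. For density of $V$, fix an arbitrary nonempty open set $\Omega \subseteq \RR^d$; the goal is to produce a point of $\Omega \cap V$. Since $B$ is closed (Lemma \ref{lem:closed}) and null, its complement $U = \bigcup_{T \in \mathcal{T}} \text{int}(T)$ is open and dense, so $\Omega$ meets $\text{int}(T)$ for some simplex $T \in \mathcal{T}$; put $\Omega' = \Omega \cap \text{int}(T)$, a nonempty open set. The key input is Lemma \ref{lem:affine}: the affine map agreeing with $f$ on the vertices of $T$ is exactly $g|_T$, and the lemma guarantees that it is $(1+2\epsilon)$-bi-Lipschitz, hence a bi-Lipschitz affine homeomorphism of $\RR^d$ onto itself. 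In particular $g$ is an open map on $\text{int}(T)$, so $g(\Omega')$ is a nonempty open subset of $\RR^d$. Since $g(B)$ has empty interior, $g(\Omega') \setminus g(B) \neq \emptyset$; choosing $y$ in this set and $x \in \Omega'$ with $g(x) = y$, we get $y \in W$ and hence $x \in \Omega' \cap g^{-1}(W) \subseteq \Omega \cap V$. Thus $V$ is dense.

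All steps here are routine, and I do not expect a serious obstacle. The only point requiring a little care is the correct invocation of Lemma \ref{lem:affine} to upgrade ``$g$ affine and bi-Lipschitz on the simplex $T$'' to ``$g$ is an open map on $\text{int}(T)$'', which is exactly what converts the nonempty open set $\Omega'$ into the nonempty open set $g(\Omega')$ on which one can avoid the nowhere-dense set $g(B)$.
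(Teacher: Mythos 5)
Your proof is correct and takes essentially the same route as the paper: openness from Lemma \ref{lem:closed} and continuity, density of $W$ from the null-set argument, and density of $V$ by intersecting an arbitrary open set with the interior of a simplex, using Lemma \ref{lem:affine} to see $g$ is open there, and then avoiding the null set $g(B)$. The paper phrases this last step as a proof by contradiction and appeals to invariance of domain where you note directly that the affine map is a global homeomorphism, but the logical content is identical.
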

\begin{proof}
$W$ is open because $g(B)$ is closed (Lemma \ref{lem:closed}), and $V$ is open because $g$ is continuous.

For density of $W$, observe that $B$ and therefore $g(B)$ are measure zero sets; here we use that $g$ is Lipschitz. It follows immediately that $W$ is dense in $\RR^d$.

If $V$ were not dense, then there would be an open set $A$ in $\RR^d$ disjoint from $V$. This open set contains an open subset $A'$ lying in the interior of some simplex $T\in\mathcal{T}$, and so $g(A')$ is open (by Lemma \ref{lem:affine} and invariance of domain). But if $A$, hence $A'$, is disjoint from $V$, it follows that $g(A')\subseteq g(A)\subseteq g(B)$, which has measure zero. This is impossible.
\end{proof}

\begin{lemma}\label{lem:Vinjective}
Every point in $W$ has exactly one pre-image in $\RR^d$ under $g$.
\end{lemma}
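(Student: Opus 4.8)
The plan is to use local degree theory, in the same spirit as the proofs of Lemmas \ref{lem:surjective} and \ref{lem:closedegree}. Fix a point $p \in W$. First I would show that $p$ has at least one preimage: this is exactly the content of Lemma \ref{lem:surjective}, so $g^{-1}(p) \neq \emptyset$. Moreover, since $p \notin g(B)$, every preimage of $p$ lies in $U = \bigcup_T \mathrm{int}(T)$, i.e., in the interior of some simplex of $\mathcal{T}$. Since $g$ is $(1+2\epsilon)$-Lipschitz hence proper (it is within bounded distance of the bi-Lipschitz, hence proper, map $f$), the preimage set $g^{-1}(p)$ is compact; combined with the fact that it avoids the closed set $B$ and meets only finitely many simplices, $g^{-1}(p)$ is a finite set, say $g^{-1}(p) = \{x_1, \dots, x_m\}$ with each $x_i$ in the interior of a simplex $T_i$.

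The core step is a degree computation. Pick a large ball $D = B(x_0, \rho)$ containing $f^{-1}(K)$ for a suitable closed ball $K$ around $p$, chosen so that $p \notin f(\partial D) \cup g(\partial D)$ and so that $D$ contains all the $x_i$; as in Lemma \ref{lem:surjective}, since $f$ is an orientation-preserving homeomorphism and $g$ is uniformly within $\eta$ of $f$, Lemma \ref{lem:closedegree} gives
$$ \mu(p, D, g) = \mu(p, D, f) = 1. $$
On the other hand, choose small disjoint open balls $D_i \ni x_i$ with $\overline{D_i} \subset \mathrm{int}(T_i)$ and $D_i \subset D$. Then $g^{-1}(p) \subseteq D_1 \cup \dots \cup D_m$ and $p \notin g(\partial D_i)$ for each $i$, so the additivity property \eqref{deg:preimage} of the local degree gives
$$ 1 = \mu(p, D, g) = \sum_{i=1}^m \mu(p, D_i, g). $$
Now on each $T_i$ the map $g$ is affine, and by Lemma \ref{lem:affine} it is $(1+2\epsilon)$-bi-Lipschitz and orientation-preserving; in particular $g|_{D_i}$ is an orientation-preserving homeomorphism onto its image, so by \eqref{deg:homeo} (together with the orientation-preserving sign convention) $\mu(p, D_i, g) = +1$ for each $i$. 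Therefore $m = 1$, which is exactly the claim.

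The main obstacle is making the bookkeeping around the ball $D$ and the choice of $K$ clean: one must ensure simultaneously that $D$ is large enough to contain $f^{-1}(K)$ and all the $x_i$, that $p$ avoids $f(\partial D) \cup g(\partial D)$, and that the hypothesis $\sup_{\partial D}|f - g| < \mathrm{dist}(p, f(\partial D) \cup g(\partial D))$ of Lemma \ref{lem:closedegree} holds — all of which follow from the uniform bound $|f - g| \le \eta$ and the $2$-bi-Lipschitz property of $f$ exactly as in the proof of Lemma \ref{lem:surjective}, by taking the radius a fixed multiple of $\eta$ larger than needed. A secondary point requiring a line of justification is that each $\mu(p, D_i, g) = +1$ rather than $-1$: this is where orientation-preservation of the affine pieces from Lemma \ref{lem:affine} is essential, since a priori \eqref{deg:homeo} only gives $\pm 1$.
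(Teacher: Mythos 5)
Your proposal is correct and follows essentially the same route as the paper: bound the preimage set, pick a suitable domain $D$ on which Lemma \ref{lem:closedegree} gives $\mu(p,D,g)=\mu(p,D,f)=1$, then use additivity of the degree over neighborhoods of the preimages together with the orientation-preservation from Lemma \ref{lem:affine} to conclude each local degree is $+1$ and hence there is exactly one preimage. The only cosmetic differences are that the paper takes $D$ to be the interior of the union of simplices meeting $B(f^{-1}(p),10\eta)$ and sums the degree over the simplex interiors $\mathrm{int}(T_i)$ themselves rather than over small balls $D_i$.
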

\begin{proof}
Fix any $z\in W$. Let $x=f^{-1}(z)$. Note that if $g(y)=z$, then
$$ |x-y| \leq 2|f(x)-f(y)| = 2|g(y)-f(y)| \leq 2\eta,$$
and so
\begin{equation}\label{eq:zball}
g^{-1}(z) \subseteq \overline{B}(x,2\eta).
\end{equation}

Let $D$ be a domain formed by taking all simplices of $\mathcal{T}$ that intersect $B(x,10\eta)$, forming their union, and taking the interior of this set. This is a pre-compact domain in $\RR^d$. Observe that if $y\in\partial D$, then $|y-x|\geq 10\eta$ and so
\begin{equation}\label{eq:far}
|g(y)-z| \geq |f(y)-z| - \eta = |f(y) - f(x)| - \eta \geq 4\eta,
\end{equation}
since $f$ is $2$-bi-Lipschitz.

Let $x_i$ enumerate all the pre-images of $z$ in $\RR^d$, which by \eqref{eq:zball} must all be contained in $D$. There are finitely many, since there are at most one in each simplex of $\mathcal{T}$. Since $z\in W$,  each $x_i$ is in $V\subseteq U$, and so is in the interior of a simplex $T_i\in\mathcal{T}$.

Therefore, 
\begin{equation}\label{eq:zpreimage}
 \mu(z, D, g) = \sum_i \mu(z, \text{int}(T_i), g) = \#\{x_i\}.
\end{equation}
since each $\mu(z, \text{int}(T_i), g)$ is $1$ by Lemma \ref{lem:affine} and property \ref{deg:homeo} of local degree.

On the other hand, by \eqref{eq:far}, we have
$$ \sup_{\partial D} |f(x)-g(x)| \leq \eta < \dist(z, f(\partial D) \cup g(\partial D)).$$
So Lemma \ref{lem:closedegree} says that
$$ \mu(z,D,g) = \mu(z,D,f)=1,$$
since $f$ is an orientation-preserving homeomorphism. Combining this with \eqref{eq:zpreimage} yields
$$ \#\{x_i\} = 1,$$
i.e., that $z$ has a unique pre-image.

\end{proof}

\begin{lemma}\label{lem:open}
The map $g$ is an open mapping.
\end{lemma}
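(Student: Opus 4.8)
The plan is to show that for every $x\in\RR^d$ and every sufficiently small $r>0$, the image $g(B(x,r))$ contains a ball centered at $g(x)$; since such $r$ can be taken arbitrarily small, this makes $g(x)$ an interior point of $g(O)$ for every open $O\ni x$, which is exactly openness of $g$. For $x\in U$ there is nothing to do: $x$ lies in the interior of a simplex $T\in\mathcal T$, and by Lemma \ref{lem:affine} $g|_T$ is an affine, orientation-preserving, $(1+2\epsilon)$-bi-Lipschitz map, hence an open map near $x$ by invariance of domain. (For $d=1$ the statement is trivial, since an orientation-preserving PL map is strictly increasing and, being surjective by Lemma \ref{lem:surjective}, is a homeomorphism; so assume $d\ge 2$.) The interesting case is $x\in B$, which I would handle by a local-degree argument comparing $g$ on a small ball to a single affine map.

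So, fix $x\in B$ and pick $r>0$ small enough that $\overline B(x,r)$ is covered by the (finitely many) simplices of $\mathcal T$ that contain $x$; call them $T_1,\dots,T_m$ and let $A_i$ be the affine map agreeing with $g$ on $T_i$, so that $A_i(x)=g(x)$ for each $i$. The crucial step — and the one I expect to be the main obstacle — is to prove that all the $A_i$ are mutually close, $\|A_i-A_1\|\lesssim_d\epsilon$ (operator norm of linear parts), once $\epsilon_0=\epsilon_0(d)$ is chosen small enough. I would prove this using the bounded, connected combinatorics of the Freudenthal triangulation: the simplices containing $x$ form the closed star of the face whose relative interior contains $x$, there are at most $N_d$ of them, and any two are joined by a chain of length $\le N_d$ of consecutive simplices sharing a $(d-1)$-face $F$ with $x\in F$. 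For such a consecutive pair $T_i,T_j$, $A_i$ and $A_j$ agree on the hyperplane $\mathrm{aff}(F)\ni x$, so the linear part of $A_i-A_j$ has rank at most one, say $u\otimes\nu$ with $\nu$ normal to $F$; writing $(DA_j)^{-1}DA_i=I+u'\otimes\nu$, which is $(1+2\epsilon)^2$-bi-Lipschitz, a short computation of $(I+u'\otimes\nu)^{\top}(I+u'\otimes\nu)$ together with $\det(I+u'\otimes\nu)>0$ — valid because $A_i,A_j$ are orientation-preserving by Lemma \ref{lem:affine}, which excludes the ``folded'' alternative $\langle u',\nu\rangle\approx-2$ — forces $|u'|\lesssim_d\epsilon$, hence $\|A_i-A_j\|\lesssim_d\epsilon$. (Alternatively one may use Lemma \ref{lem:DavidToro} to replace each $A_i$ by a nearby isometry and argue that orientation-preserving isometries agreeing along a hyperplane must be close.) Chaining these bounds gives the claim, and since $A_i-A_1$ vanishes at $x$ it yields $\sup_{\overline B(x,r)}|g-A_1|\le C_d\epsilon r$.

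With that in hand the rest is routine. Since $A_1$ is $(1+2\epsilon)$-bi-Lipschitz, $|A_1(v)-g(x)|=|A_1(v)-A_1(x)|\ge r/2$ for $v\in\partial B(x,r)$, so
$$\dist\bigl(g(x),\,g(\partial B(x,r))\cup A_1(\partial B(x,r))\bigr)\ \ge\ r/2-C_d\epsilon r\ \ge\ r/4$$
for $\epsilon_0$ small. I would then apply Lemma \ref{lem:closedegree} with $h_1=g$, $h_2=A_1$, $D=B(x,r)$, $p=g(x)$ (both maps are proper — $g$ because it stays within $\eta$ of the bi-Lipschitz $f$ — and $\sup_{\partial D}|g-A_1|\le C_d\epsilon r<r/4\le\dist(p,g(\partial D)\cup A_1(\partial D))$) to obtain
$$\mu(g(x),B(x,r),g)=\mu(g(x),B(x,r),A_1)=1,$$
the last equality because $A_1$ is an orientation-preserving homeomorphism with $g(x)=A_1(x)\in A_1(B(x,r))$. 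Finally, for any $z$ with $|z-g(x)|<r/4$, the points $z$ and $g(x)$ lie in the same connected component of $\RR^d\setminus g(\partial B(x,r))$, so property \eqref{deg:constant} gives $\mu(z,B(x,r),g)=1\ne0$; as also $z\notin g(\partial B(x,r))$, property \eqref{deg:notin} forces $z\in g(B(x,r))$. Hence $B(g(x),r/4)\subseteq g(B(x,r))$, and letting $r\to0$ completes the proof. The only genuinely delicate point is the closeness of the affine pieces at a point of $B$, where both the orientation hypothesis and the uniform local geometry of the Freudenthal triangulation are needed to rule out folding or branching of $g$.
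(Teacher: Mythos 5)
Your proof is correct, but it takes a genuinely different route from the paper. The paper argues by contradiction and stays ``soft'': if $g$ failed to be open at $x$, local constancy of the degree would force $\mu(g(x),B(x,r),g)=0$; one then picks a nearby point $y'\in W$ in the same component of $\RR^d\setminus g(\partial B(x,r))$, uses Lemma \ref{lem:dense} (density of $V,W$) and Lemma \ref{lem:Vinjective} (unique preimage on $W$) to see that the degree at $y'$ equals $1$, a contradiction. Your argument is direct and local: its engine is a new structural fact, not in the paper, that the affine pieces of $g$ on the simplices of the star of $x$ are mutually $O_d(\epsilon)$-close. Your verification of that fact is sound --- two affine pieces sharing a $(d-1)$-face differ by a rank-one perturbation $I+u'\otimes\nu$, and the bi-Lipschitz bound on $(DA_j)^{-1}DA_i$ pins $|u'|\lesssim_d\epsilon$ once the orientation conclusion of Lemma \ref{lem:affine} excludes the reflection branch $\langle u',\nu\rangle\approx -2$; chaining through the (uniformly bounded, strongly connected) star of the face containing $x$ gives the global comparison with a single affine model $A_1$, after which Lemma \ref{lem:closedegree} and properties \eqref{deg:constant}, \eqref{deg:notin} finish the job. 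What your approach buys is a quantitative modulus of openness, $B(g(x),r/4)\subseteq g(B(x,r))$, and indeed local injectivity everywhere, with no appeal to Lemmas \ref{lem:dense} or \ref{lem:Vinjective}; what it costs is the extra combinatorial and linear-algebra lemma, and a dependence of $\epsilon_0(d)$ on that step. Both proofs ultimately rest on the same two pillars (degree theory plus the orientation/non-degeneracy content of Lemma \ref{lem:affine}), but yours localizes the work where the paper globalizes it.
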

\begin{proof}
This lemma follows the proof of \cite[Lemma 5.5]{GCD}, with some modifications.

Suppose that $g$ were not an open mapping. Then there must be a point $x\in\RR^d$ and an open set $A$ containing $x$ such that $y=g(x)$ is not an interior point of $g(A)$.

Let $\mathcal{T}_x$ be the collection of all (finitely many) simplices of $\mathcal{T}$ that contain $x$. The union $ \cup_{T\in\mathcal{T}\setminus \mathcal{T}_x} T$ is a closed set not containing $x$, so it follows that there is an $r>0$ such that the closed ball $\overline{B}(x,r)$ is contained inside $A \cap \cup_{T\in\mathcal{T}_x} T.$

Since $g$ is injective on each simplex of $\mathcal{T}$, the set $\cup_{T\in\mathcal{T}_x} T$ and therefore the ball $\overline{B}(x,r)$ contains no pre-images of $y$ other than $x$. 

We now claim that
$$ \mu(y,B(x,r),g) = 0.$$
Suppose not. Since the compact set $g(\partial B(x,r))$  does not touch $y$, we can find a small open neighborhood $N$ around $y$ that does not intersect $g(\partial B(x,r))$. By the local constancy of local degree, we must have
$$ \mu(y',B(x,r),g) \neq 0 \text{ for all } y'\in N.$$
It follows that $N\subseteq g(B(x,r))$, but this contradicts the fact that $y$ is not an interior point of $g(A)$.

Therefore, 
$$ \mu(y,B(x,r),g) = 0.$$

We will show that this yields a contradiction. Recalling that the set $V$ is dense in $\RR^d$ (Lemma \ref{lem:dense}), we may choose $x'\in V\cap B(x,r)$ such that $y'=g(x')\in W$ is in the same component of $\RR^d\setminus g(\partial B(x,r))$ as $y$ is. The local constancy of local degree (property \eqref{deg:constant}) then says that
$$ \mu(y', B(x,r), g) = \mu(y,B(x,r),g) = 0.$$

On the other hand, since $x'\in V\subseteq U$, there is a small ball $B'\subseteq B(x,r) \cap U$ containing $x'$ on which $g$ is an orientation-preserving homeomorphism. In addition, Lemma \ref{lem:Vinjective} says that $x'$ is the only pre-image of $y'$ under $g$. Therefore, using property \eqref{deg:preimage}
 of local degree,
 $$ \mu(y', B(x,r), g) = \mu(y', B',g) = 1.$$
This is a contradiction.
\end{proof}

\begin{proof}[Proof of Proposition \ref{prop:PL}]
To prove Proposition \ref{prop:PL}, it remains only to show that the map $g$ constructed above is injective and $(1+2\epsilon)$-bi-Lipschitz.

Injectivity follows exactly as in \cite[p. 211]{GCD}: Suppose $g(x)=g(x')$ for some $x,x'\in \RR^d$. Choose small disjoint open sets $A, A'$ containing $x$ and $x'$, respectively. Because $g$ is an open mapping (Lemma \ref{lem:open}), $g(A)\cap g(A')$ is a non-empty open set. It therefore must contain a point $y\in W$, by Lemma \ref{lem:dense}. But then $y$ has at least two pre-images, one in $A$ and one in $A'$, which contradicts Lemma \ref{lem:Vinjective}.

That $g$ is $(1+2\epsilon)$-Lipschitz was noted above. By Lemma \ref{lem:affine}, if $T\in \mathcal{T}$ then $g^{-1}$ is $(1+2\epsilon)$-Lipschitz on $g(T)$. Since the sets $\{g(T):T\in\mathcal{T}\}$ tile $\RR^d$, it follows easily that $g^{-1}$ is globally $(1+2\epsilon)$-Lipschitz and hence that $g$ is globally $(1+2\epsilon)$-bi-Lipschitz.
\end{proof}

\section{Homeomorphisms of the sphere}\label{sec:sphere}

We follow \cite{FletcherMarkovic} and consider the chordal metric $\chi$ on the unit sphere $\mathbb{S}^d\subseteq \RR^{d+1}$. Identifying $\mathbb{S}^d$ with $\RR^d \cup \{\infty\}$, the chordal metric $\chi$ can be expressed as
\begin{equation*}
\chi(x,y) = 
 \begin{cases}
        \frac{|x-y|}{\sqrt{1+|x|^2}\sqrt{1+|y|^2}} & \text{if } x,y\in\RR^d\\
        \frac{1}{\sqrt{1+|x|^2}}, & \text{if } x\in\RR^d, y=\infty\\
        \frac{1}{\sqrt{1+|y|^2}}, & \text{if } y\in\RR^d, x=\infty
\end{cases}
\end{equation*}

We observe that if $g\colon\RR^d\rightarrow\RR^d$ is bi-Lipschitz, then it extends continuously (setting $g(\infty)=\infty$) to a homeomorphism of $\mathbb{S}^d$. The next lemma relates the distortion of $g$ to the distortion of this extension. It is a mild adustment of \cite[Lemma 2.4]{FletcherMarkovic}, and its proof is very similar.

\begin{lemma}\label{lem:stereographic}
For every $\epsilon>0$, there is an $\epsilon'>0$ with the following property. If $g\colon \RR^d\rightarrow\RR^d$ is $(1+\epsilon')$-bi-Lipschitz in the Euclidean metric and $g(0)=0$, then $g\colon\mathbb{S}^d\rightarrow\mathbb{S}^d$ is $(1+\epsilon)$-bi-Lipschitz in the metric $\chi$.
\end{lemma}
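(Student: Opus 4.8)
The plan is to work in the model $\mathbb{S}^d\cong\RR^d\cup\{\infty\}$ and estimate the ratio $\chi(g(x),g(y))/\chi(x,y)$ directly from the given formula for $\chi$. For $x,y\in\RR^d$ one factors
$$ \frac{\chi(g(x),g(y))}{\chi(x,y)} \;=\; \frac{|g(x)-g(y)|}{|x-y|}\,\cdot\,\frac{\sqrt{1+|x|^2}}{\sqrt{1+|g(x)|^2}}\,\cdot\,\frac{\sqrt{1+|y|^2}}{\sqrt{1+|g(y)|^2}}, $$
so the task reduces to bounding the Euclidean distortion factor — which lies in $[(1+\epsilon')^{-1},1+\epsilon']$ by hypothesis — together with the two ``conformal factors'' $\sqrt{1+|x|^2}/\sqrt{1+|g(x)|^2}$.

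The key point, and the only place the normalization $g(0)=0$ enters, is that this forces $|g(x)|$ to be comparable to $|x|$: since $g$ is $(1+\epsilon')$-bi-Lipschitz, $(1+\epsilon')^{-1}|x|\le|g(x)|\le(1+\epsilon')|x|$ for all $x\in\RR^d$. From the lower bound, $1+|g(x)|^2\ge 1+(1+\epsilon')^{-2}|x|^2\ge(1+\epsilon')^{-2}(1+|x|^2)$, and from the upper bound, $1+|g(x)|^2\le(1+\epsilon')^2(1+|x|^2)$; hence each conformal factor also lies in $[(1+\epsilon')^{-1},1+\epsilon']$. Multiplying the three factors gives $\chi(g(x),g(y))/\chi(x,y)\in[(1+\epsilon')^{-3},(1+\epsilon')^3]$ for all $x,y\in\RR^d$. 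I would then simply choose $\epsilon'$ small enough that $(1+\epsilon')^3\le 1+\epsilon$ (which also yields $(1+\epsilon')^{-3}\ge(1+\epsilon)^{-1}$), obtaining the $(1+\epsilon)$-bi-Lipschitz bound for pairs in $\RR^d$.

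It remains to treat the point at infinity. As recorded in the paragraph preceding the lemma, a bi-Lipschitz map $g\colon\RR^d\to\RR^d$ (automatically surjective, hence a homeomorphism of $\RR^d$) extends continuously to a homeomorphism of $\mathbb{S}^d$ with $g(\infty)=\infty$, so it suffices to verify the bi-Lipschitz inequality for pairs $(x,\infty)$, $x\in\RR^d$. For these, $\chi(g(x),\infty)/\chi(x,\infty)=\sqrt{1+|x|^2}/\sqrt{1+|g(x)|^2}$, which by the previous paragraph already lies in $[(1+\epsilon')^{-1},1+\epsilon']\subseteq[(1+\epsilon)^{-1},1+\epsilon]$. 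This finishes the proof.

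I do not expect a genuine obstacle: the whole argument is a short computation. The one thing requiring care is that all the estimates be uniform over $\RR^d$, in particular as $|x|\to\infty$, and this is precisely what the hypothesis $g(0)=0$ provides — without a fixed point the conformal factors could be unbounded (consider a translation, whose chordal distortion at infinity is arbitrary), so the statement would be false.
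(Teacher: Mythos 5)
Your proof is correct and takes essentially the same approach as the paper: both factor the chordal ratio into the Euclidean distortion factor times the two conformal factors $\sqrt{(1+|x|^2)/(1+|g(x)|^2)}$, and both use the normalization $g(0)=0$ to get the two-sided comparability $|g(x)|\approx|x|$ that controls these factors uniformly. The only cosmetic difference is that you estimate the conformal factors from both sides directly, while the paper establishes only the Lipschitz upper bound and then applies the same estimate to $g^{-1}$; the content is identical.
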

\begin{proof}
Suppose that $g\colon\RR^d\rightarrow\RR^d$ fixes the origin and is $(1+\epsilon')$-bi-Lipschitz, with $\epsilon'>0$ to be decided below.

Exactly as in equation (3.1) of \cite{FletcherMarkovic}, we have, for all $x,y\in \mathbb{R}^d$, that
$$\chi(g(x),g(y)) \leq (1+\epsilon')\chi(x,y)\left(\frac{1+|x|^2}{1+|g(x)|^2}\right)^{1/2}\left(\frac{1+|y|^2}{1+|g(y)|^2}\right)^{1/2}$$

Since $g$ is $(1+\epsilon')$-bi-Lipschitz and fixes the origin, we have
$$ \frac{1+|x|^2}{1+|g(x)|^2} \leq \frac{1+|x|^2}{1+(1-\epsilon')|x|^2} = \left(1- \frac{\epsilon'}{1+|x|^2}\right)^{-1} \leq (1-\epsilon')^{-1}.$$
It follows that
$$ \chi(g(x),g(y)) \leq (1+\epsilon')(1-\epsilon')^{-2}\chi(x,y),$$
and so $g\colon\mathbb{S}^d\rightarrow\mathbb{S}^d$ is $(1+\epsilon)$-Lipschitz if $\epsilon'$ is sufficiently small. (The case where $x$ or $y$ is $\infty$ follows by continuity.) Applying the same to $g^{-1}$ finishes the proof of the lemma.
\end{proof}

We also need the following fact. 
\begin{lemma}\label{lem:translationscaling}
Let $v\in \RR^d$ and $a>0$. Let
$$ \tau(x) = x+v \text{ and } \sigma(x) = ax$$
be the associated translation and scaling maps.

Fix $\epsilon>0$. Both $\tau$ and $\sigma$, considered as homeomorphisms of $\mathbb{S}^d$, can be factored as finite compositions of $(1+\epsilon)$-bi-Lipschitz homeomorphisms.

The number of mappings needed depends only on $\epsilon$,  $|v|$ (in the case of $\tau$), and $|a|$ (in the case of $\sigma$).
\end{lemma}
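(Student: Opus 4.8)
\textbf{Proof plan for Lemma \ref{lem:translationscaling}.}

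The plan is to handle the translation $\tau$ and the scaling $\sigma$ separately, in both cases reducing to the already-established ``local'' factorization lemmas (Lemma \ref{lemm:translation} and Lemma \ref{lem:shrink}/Lemma \ref{lem:linear}) by the following principle: a map supported on a bounded region can be transported to the sphere as a finite composition of $(1+\epsilon)$-bi-Lipschitz maps, because Lemma \ref{lem:stereographic} says a Euclidean $(1+\epsilon')$-bi-Lipschitz map fixing the origin is $(1+\epsilon)$-bi-Lipschitz in the chordal metric $\chi$, and any $(1+\epsilon')$-bi-Lipschitz self-map of $\RR^d$ that is the identity outside a bounded set automatically fixes the origin once we conjugate by a fixed translation putting that bounded set away from $0$. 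The subtlety is that $\tau$ and $\sigma$ themselves do \emph{not} fix the origin and are not the identity outside a bounded set, so the first move is to realize them as compositions of ``compactly supported'' pieces.

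\emph{Translation.} Conjugating by the reflection $x\mapsto -x$ if necessary, we may assume $\tau$ moves things ``outward''; but more simply, write $\tau$ on $\mathbb{S}^d=\RR^d\cup\{\infty\}$ and observe that $\tau$ fixes $\infty$. Pick a large cube $Q$ around the origin in a chart centered at a point far from both $0$ and the translation axis — concretely, first apply a fixed $(1+\epsilon)$-bi-Lipschitz chordal map (itself a bounded-support bi-Lipschitz map of $\RR^d$, realizing a rotation of $\mathbb{S}^d$) so that the segment from $0$ to $v$ lies inside a cube $Q$ with $\ell(Q)\approx 1+|v|$ that is itself well inside a coordinate chart avoiding a neighborhood of $0$. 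Now $\tau$ restricted to $Q$ is a translation of $Q$ by a vector of length $|v| \le C\ell(Q)$ along a straight segment, so Lemma \ref{lemm:translation} produces $(1+\epsilon')$-bi-Lipschitz maps $h_1,\dots,h_N$ of $\RR^d$, each the identity outside $2Q$, whose composition agrees with $\tau$ on $Q$; extending each $h_j$ by the identity on $\mathbb{S}^d\setminus 2Q$ and invoking Lemma \ref{lem:stereographic} (the $h_j$ fix the origin since $0\notin 2Q$) makes each $h_j$ a $(1+\epsilon)$-bi-Lipschitz homeomorphism of $(\mathbb{S}^d,\chi)$. The composition of these (together with the two fixed rotation maps and their inverses) equals $\tau$ on $\mathbb{S}^d$, since outside $Q$ — in particular at $\infty$ — the relevant maps are all the identity and $\tau$ is determined by its values on $Q$ together with fixing $\infty$. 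Actually one should be slightly careful: $\tau$ is not the identity outside $Q$. The clean fix is to use the standard fact that a translation of $\mathbb{S}^d=\RR^d\cup\{\infty\}$ is conjugate, via stereographic projection/inversion, to a map supported near the fixed point, but it is cleaner still to just factor $\tau$ through a bounded number of steps each of which is the identity off a bounded set: write $v = v_1 + \dots + v_m$ with $|v_i|$ small and note each $x\mapsto x+v_i$ differs from the identity by a map we can cut off at large scale at the cost of a further controlled number of $(1+\epsilon)$-factors using Lemma \ref{lem:shrink}-type surgery near $\infty$; this is the step I expect to require the most care.

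\emph{Scaling.} For $\sigma(x)=ax$, the map fixes both $0$ and $\infty$. Here Lemma \ref{lem:shrink} (or Lemma \ref{lem:linear}(i) with $A = aI$, assuming $a<1$; for $a>1$ use the inverse) gives, for a large cube $Q$ centered at $0$, a composition of $(1+\epsilon')$-bi-Lipschitz maps of $\RR^d$, each the identity outside $\lambda Q$, equal to $x\mapsto ax$ on $Q$. But $\sigma$ is \emph{not} the identity outside $Q$, so a single application does not suffice: instead one works outward in finitely many shells. Iterating Lemma \ref{lem:shrink} on the annular regions $\lambda^j Q \setminus \lambda^{j-1}Q$ realizes $\sigma$ on all of $\RR^d$ as a composition of boundedly many such ``annular rescalings,'' each of bounded support, hence each a $(1+\epsilon)$-bi-Lipschitz chordal map by Lemma \ref{lem:stereographic}; since $\sigma$ fixes $\infty$, the composition agrees with $\sigma$ on all of $\mathbb{S}^d$. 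The number of shells needed is controlled by $|a|$ and $\epsilon$, as required.

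\emph{Main obstacle.} The genuine technical point is the passage from a compactly supported Euclidean factorization to one on $\mathbb{S}^d$ when the target map $\tau$ or $\sigma$ is \emph{not} compactly supported: the naive cutoff changes the map near $\infty$, so one must absorb the discrepancy at $\infty$ by an additional bounded-support move (a rescaling near $\infty$ in a chart at $\infty$, via the inversion $x\mapsto x/|x|^2$), and one must check this inversion-conjugation does not blow up the bi-Lipschitz constants — which it does not, because the offending maps are supported in a fixed annulus bounded away from $0$ and $\infty$, where the chordal and Euclidean metrics are bi-Lipschitz equivalent with a constant depending only on $|v|$ or $|a|$ and $d$. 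Once that equivalence is in hand, every piece is a $(1+\epsilon)$-bi-Lipschitz chordal homeomorphism and the count of pieces depends only on $\epsilon$, $|v|$ (resp. $|a|$), and $d$, completing the proof.
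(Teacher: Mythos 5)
The paper's proof is much more direct than what you propose, and your proposal has a genuine gap in the translation case.

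The paper does \emph{not} route through Lemma \ref{lem:stereographic} or any of the compactly-supported factorization lemmas. Instead it simply verifies, by an explicit computation with the chordal metric formula, that a translation $\tau(x)=x+v$ with $|v|$ small and a scaling $\sigma(x)=ax$ with $a$ close to $1$ are themselves $(1+\epsilon)$-bi-Lipschitz in $\chi$ on all of $\mathbb{S}^d$ (with $\tau(\infty)=\sigma(\infty)=\infty$). For example,
$$\chi(\tau(x),\tau(y)) = \chi(x,y)\sqrt{\tfrac{1+|x|^2}{1+|x+v|^2}}\sqrt{\tfrac{1+|y|^2}{1+|y+v|^2}}$$
and the elementary bound $\tfrac{1+|x|^2}{1+|x+v|^2}\le (1-|v|)^{-1}$ gives chordal Lipschitz constant $(1-|v|)^{-1}$; the analogous computation handles scalings. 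An arbitrary $\tau$ or $\sigma$ is then a finite composition of these small pieces. No cutoff, no shells, no conjugation near $\infty$.

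Your proposal, by contrast, tries to reduce to compactly supported Euclidean factorizations and then feed them through Lemma \ref{lem:stereographic}. You correctly notice the obstruction: Lemma \ref{lem:stereographic} requires $g(0)=0$, and a small translation does not fix the origin. Your workaround — first ``rotate the sphere so the segment from $0$ to $v$ avoids a neighborhood of $0$'' and then cut off near $\infty$ with ``Lemma \ref{lem:shrink}-type surgery'' — is not carried out and cannot work as stated: a rotation of $\RR^d$ (i.e.\ of $\mathbb{S}^d$ fixing $\infty$) fixes the origin, so it cannot move the segment off $0$; and a rotation of $\mathbb{S}^d$ moving $\infty$ is not a bounded-support map of $\RR^d$. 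The residual ``discrepancy at $\infty$'' you flag as the ``main obstacle'' is precisely the part you leave unresolved, and it is the heart of the matter if one insists on going through compactly supported pieces. For scalings you are in better shape (scalings fix $0$, so Lemma \ref{lem:stereographic} does apply to a single scaling with factor near $1$, and one can then compose), but the shell argument you outline is unnecessary once you observe that. In both cases the missing observation is the one the paper uses: a small translation or scaling is \emph{already} a global near-isometry of $(\mathbb{S}^d,\chi)$, which can be checked in one line from the chordal metric formula.
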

\begin{proof}
It suffices to show that $\tau$ and $\sigma$ are themselves $(1+\epsilon)$-bi-Lipschitz in the  metric $\chi$ if $|v|$ is sufficiently small or $a$ is sufficiently close to $1$. (Then one can factor an arbitrary translation or scaling as a finite sequence of translations or rescalings with small constants in the obvious way.) As above, we may restrict attention to points in $\RR^d$ by continuity.

We have
\begin{equation}\label{eq:translation}
\chi(\tau(x),\tau(y)) = \chi(x,y) \sqrt{\frac{1+|x|^2}{1+|x+v|^2}}\sqrt{\frac{1+|y|^2}{1+|y+v|^2}}.
\end{equation}
We can write
\begin{align*}
\frac{1+|x|^2}{1+|x+v|^2} &= \left(1 + \frac{2x\cdot v + |v|^2}{1+|x|^2} \right)^{-1}\\
&\leq \left(1 - \frac{2|x||v|}{1+|x|^2} \right)^{-1}\\
&\leq \left(1 - |v|\right)^{-1}
\end{align*}
It follows from this and \eqref{eq:translation} that $\tau$ is $(1+\epsilon)$-Lipschitz in the spherical metric if $|v|$ is small compared to $\epsilon$. Applying this to the inverse map proves that $\tau$ is  $(1+\epsilon)$-bi-Lipschitz if $|v|$ is small.

For $\sigma$, we note that
\begin{equation}\label{eq:scaling}
\chi(\sigma(x),\sigma(y)) = \chi(x,y) \sqrt{\frac{a(1+|x|^2)}{1+a^2|x|^2}}\sqrt{\frac{a(1+|y|^2)}{1+a^2|y|^2}}.
\end{equation}
We have
\begin{align*}
\frac{a(1+|x|^2)}{1+a^2|x|^2} &= a \left(1 + (a^2-1)\frac{|x|^2}{1+|x|^2}\right)^{-1}\\
&\leq a\left(1-|a^2-1|\right)^{-1},
\end{align*}
from which it follows that $\sigma$ is $(1+\epsilon)$-Lipschitz in the spherical metric if $a$ is sufficiently close to $1$. As above, this completes the proof.
\end{proof}

\begin{proof}[Proof of Corollary \ref{cor:sphere}]
Let $f\colon \mathbb{S}^d\rightarrow\mathbb{S}^d$ be an $L$-bi-Lipschitz homeomorphism of the sphere, with respect to $\chi$, and fix $\delta,\epsilon>0$. By post-composing $f$ with an isometry of $\mathbb{S}^d$, which can be ``undone'' in the factorization, we may assume that $f(\infty)=\infty$. In this case, we have from the fact that $f$ is $L$-bi-Lipschitz and the definition of $\chi$ that
\begin{equation}\label{eq:f0}
 |f(0)| \leq \sqrt{L^2-1}.
\end{equation}

Choose $R=R(\delta)>0$ sufficiently large so that the $d$-dimensional Hausdorff measure of
$$\mathbb{S}^d\setminus [0,R]^d$$
in the metric $\chi$ is at most $\delta/2$.

Let $F\colon [0,R]^d\rightarrow \RR^d$ be the restriction of the map $f$, viewed as a mapping on $\RR^d$. The Euclidean and spherical metrics are comparable on $[0,R]^d$, with constant $C = C(\delta,d)$. Therefore, $F$ is bi-Lipschitz in the Euclidean metric, with constant $L'=CL = L'(L,\delta,d)$.

Let $G(x) = R^{-1}F(Rx)$, an $L'$-bi-Lipschitz map from $[0,1]^d$ into $\RR^d$. Fix $\epsilon'>0$ as provided by Lemma \ref{lem:stereographic}. By Theorem \ref{thm:factorization}, there are $T=T(\epsilon,\delta,d)$ $(1+\epsilon')$-bi-Lipschitz maps $g_i\colon \RR^d\rightarrow\RR^d$ and a set $A\subseteq [0,1]^d$ such that
$$ G = g_T \circ \dots \circ g_1 \text{ on } [0,1]^d\setminus A$$
and the Euclidean Hausdorff measure of $A$ (which is comparable to Lebesgue measure) satisfies
$$ \mathcal{H}^d(A) < \delta/(2CR)^d.$$

By interposing translations, we may rewrite the above factorization of $G$ as 
$$ G = \tau_T \circ h_T \circ \dots \tau_2 \circ h_2 \circ \tau_1\circ h_1 \text{ on } [0,1]^d\setminus F,$$
where $h_i(x) = g_i(x) - g_i(0)$ and $\tau_i(x) = x+g_i(0)$.

We may therefore factor $F$ as 
$$ F =  \sigma_R \circ \tau_T \circ h_T \circ \dots \circ \tau_1\circ h_1\circ  \sigma_R^{-1} \text{ on } [0,R]^d\setminus \sigma_R(F),$$
where $\sigma_R(x) = Rx$

Each $h_i$ is a $(1+\epsilon')$-bi-Lipschitz map fixing the origin, and therefore extends to a map $h_i\colon \mathbb{S}^d\rightarrow\mathbb{S}^d$ that is $(1+\epsilon)$-bi-Lipschitz in the spherical metric. Each $\tau_i$ is a translation of Euclidean space, and by Remark \ref{rmk:factorization} and \eqref{eq:f0}, the lengths of these translations are bounded depending only on $d$ and the distortion of $F$. Therefore, by Lemma \ref{lem:translationscaling} each $\tau_i$ factors into a controlled number of $(1+\epsilon)$-bi-Lipschitz homeomorphisms of the sphere. Finally, Lemma \ref{lem:translationscaling} says that the same holds for $\sigma_R$ and $\sigma_R^{-1}$.

Therefore, off of the set $E = A \cup (\mathbb{S}^d\setminus [0,R]^d)$, the original spherical homeomorphism $f$ agrees with the composition of a controlled number of $(1+\epsilon)$-bi-Lipschitz spherical homeomorphisms. The set $E\subseteq \mathbb{S}^d$ has $d$-dimensional Hausdorff measure less than $\delta$ in the sphere by our choices above.

\end{proof}

\printbibliography
\end{document}